\numberwithin{equation}{section}
\DeclareMathOperator{\Sign}{Sign}
\DeclareMathOperator{\Id}{Id}
\DeclareMathOperator{\Ai}{Ai}
\DeclareMathOperator{\BBP}{BBP}
\DeclareMathOperator{\TW}{TW}
\DeclareMathOperator{\Sym}{Sym}
\title{Phase Transitions in the ASEP and Stochastic Six-Vertex Model} 
\author{Amol Aggarwal and Alexei Borodin}
\begin{document}

\newtheorem{thm}{Theorem}[section]
\newtheorem{prop}[thm]{Proposition}
\newtheorem{lem}[thm]{Lemma}
\newtheorem{cor}[thm]{Corollary}
\newtheorem{conj}[thm]{Conjecture}
\newtheorem{que}[thm]{Question}
\theoremstyle{remark}
\newtheorem{rem}[thm]{Remark}
\theoremstyle{definition}
\newtheorem{definition}[thm]{Definition}
\newtheorem{exa}[thm]{Example}

\begin{abstract}

In this paper we consider two models in the Kardar-Parisi-Zhang (KPZ) universality class, the asymmetric simple exclusion process (ASEP) and the stochastic six-vertex model. We introduce a new class of initial data (which we call {\itshape generalized step Bernoulli initial data}) for both of these models that generalizes the step Bernoulli initial data studied in a number of recent works on the ASEP. Under this class of initial data, we analyze the current fluctuations of both the ASEP and stochastic six-vertex model and establish the existence of a phase transition along a characteristic line, across which the fluctuation exponent changes from $1 / 2$ to $1 / 3$. On the characteristic line, the current fluctuations converge to the general (rank $k$) Baik-Ben-Arous-P\'{e}ch\'{e} distribution for the law of the largest eigenvalue of a critically spiked covariance matrix. For $k = 1$, this was established for the ASEP by Tracy and Widom; for $k > 1$ (and also $k = 1$, for the stochastic six-vertex model), the appearance of these distributions in both models is new. 
\end{abstract}

\maketitle

\tableofcontents

\section{Introduction}

\label{Introduction}

In this paper we study two interacting particle systems in the Kardar-Parisi-Zhang (KPZ) universality class, the asymmetric simple exclusion process (ASEP) and the stochastic six-vertex model. We begin in Section \ref{ProcessModel} by defining these two models and their associated observables. In Section \ref{EquationBoundary}, we provide some context for our results, which will be more carefully stated in Section \ref{PhaseTransitionAsymptotics}.

\subsection{The ASEP and Stochastic Six-Vertex Model}

\label{ProcessModel}

\subsubsection{The Asymmetric Simple Exclusion Process}

\label{AsymmetricExclusions}

Introduced to the mathematics community by Spitzer \cite{IMP} in 1970 (and also appearing two years earlier in the biology work of Macdonald, Gibbs, and Pipkin \cite{KBNAT}), the \emph{asymmetric simple exclusion process} (ASEP) is a continuous time Markov process that can be described as follows. Particles are initially (at time $0$) placed on $\mathbb{Z}$ in such a way that at most one particle occupies any site. Associated with each particle are two exponential clocks, one of rate $L$ and one of rate $R$; we assume that $R > L \ge 0$ and that all clocks are mutually independent. When some particle's left clock rings, the particle attempts to jump one space to the left; similarly, when its right clock rings, it attempts to jump one space to the right. If the destination of the jump is unoccupied, the jump is performed; otherwise it is not. This is sometimes referred to as the \emph{exclusion restriction}. 

Associated with the ASEP is an observable called the \emph{current}. For the purpose of this paper, the \emph{current} (denoted $J_t (x)$) is the number of particles strictly to the right of $x$ at time $t$, for $x \in \mathbb{R}$ and $t \in \mathbb{R}_{\ge 0}$. In all instances we encounter in this paper, $J_t (x)$ will be almost surely finite since we only consider asymmetric simple exclusion processes in which no sites in $\mathbb{Z}_{> 0}$ are initially occupied. 

One of the purposes of this paper will be to understand the large-time current fluctuations of the ASEP under a certain type of initial data; we postpone further discussion on this to Section \ref{EquationBoundary} and Section \ref{PhaseTransitionAsymptotics}.

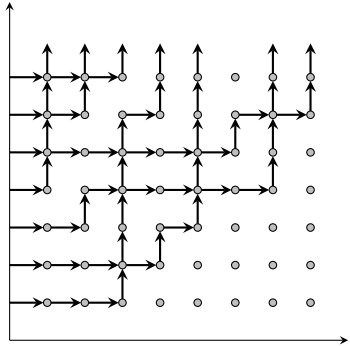
\begin{figure}[t]

\begin{center}

\begin{tikzpicture}[
      >=stealth,
			]

			\draw[->, black	] (0, 0) -- (0, 4.5);
			\draw[->, black] (0, 0) -- (4.5, 0);
			\draw[->,black, thick] (0, .5) -- (.45, .5);
			\draw[->,black, thick] (0, 1) -- (.45, 1);
			\draw[->,black, thick] (0, 1.5) -- (.45, 1.5);
			\draw[->,black, thick] (0, 2) -- (.45, 2);
			\draw[->,black, thick] (0, 2.5) -- (.45, 2.5);
			\draw[->,black, thick] (0, 3) -- (.45, 3);
			\draw[->,black, thick] (0, 3.5) -- (.45, 3.5);

			\draw[->,black, thick] (.55, .5) -- (.95, .5);
			\draw[->,black, thick] (.55, 1) -- (.95, 1);
			\draw[->,black, thick] (.55, 1.5) -- (.95, 1.5);
			\draw[->,black, thick] (.55, 2.5) -- (.95, 2.5);
			\draw[->,black, thick] (.55, 3) -- (.95, 3);
			\draw[->,black, thick] (.55, 3.5) -- (.95, 3.5);
			\draw[->,black, thick] (.5, 2.05) -- (.5, 2.45);
			\draw[->,black, thick] (.5, 2.55) -- (.5, 2.95);
			\draw[->,black, thick] (.5, 3.05) -- (.5, 3.45);
			\draw[->,black, thick] (.5, 3.55) -- (.5, 3.95);
			
			\draw[->,black, thick] (1.05, .5) -- (1.45, .5);
			\draw[->,black, thick] (1.05, 1) -- (1.45, 1);
			\draw[->,black, thick] (1.05, 2) -- (1.45, 2);
			\draw[->,black, thick] (1.05, 2.5) -- (1.45, 2.5);
			\draw[->,black, thick] (1.05, 3.5) -- (1.45, 3.5);
			\draw[->,black, thick] (1, 1.55) -- (1, 1.95);
			\draw[->,black, thick] (1, 3.05) -- (1, 3.45);
			\draw[->,black, thick] (1, 3.55) -- (1, 3.95);
			
			\draw[->,black, thick] (1.5, .55) -- (1.5, .95);
			\draw[->,black, thick] (1.5, 1.05) -- (1.5, 1.45);
			\draw[->,black, thick] (1.5, 1.55) -- (1.5, 1.95);
			\draw[->,black, thick] (1.5, 2.05) -- (1.5, 2.45);
			\draw[->,black, thick] (1.5, 2.55) -- (1.5, 2.95);
			\draw[->,black, thick] (1.5, 3.55) -- (1.5, 3.95);
			\draw[->,black, thick] (1.55, 1) -- (1.95, 1);
			\draw[->,black, thick] (1.55, 2) -- (1.95, 2);
			\draw[->,black, thick] (1.55, 2.5) -- (1.95, 2.5);
			\draw[->,black, thick] (1.55, 3) -- (1.95, 3);

			\draw[->,black, thick] (2, 1.05) -- (2, 1.45);
			\draw[->,black, thick] (2, 3.05) -- (2, 3.45);
			\draw[->,black, thick] (2, 3.55) -- (2, 3.95);
			\draw[->,black, thick] (2.05, 1.5) -- (2.45, 1.5);
			\draw[->,black, thick] (2.05, 2) -- (2.45, 2);
			\draw[->,black, thick] (2.05, 2.5) -- (2.45, 2.5);

			\draw[->,black, thick] (2.5, 1.55) -- (2.5, 1.95);
			\draw[->,black, thick] (2.5, 2.05) -- (2.5, 2.45);
			\draw[->,black, thick] (2.5, 2.55) -- (2.5, 2.95);
			\draw[->,black, thick] (2.5, 3.05) -- (2.5, 3.45);
			\draw[->,black, thick] (2.5, 3.55) -- (2.5, 3.95);
			\draw[->,black, thick] (2.55, 2) -- (2.95, 2);
			\draw[->,black, thick] (2.55, 2.5) -- (2.95, 2.5);

			\draw[->,black, thick] (3, 2.55) -- (3, 2.95);
			\draw[->,black, thick] (3.05, 2) -- (3.45, 2);
			\draw[->,black, thick] (3.05, 3) -- (3.45, 3);

			\draw[->,black, thick] (3.5, 2.05) -- (3.5, 2.45);
			\draw[->,black, thick] (3.5, 2.55) -- (3.5, 2.95);
			\draw[->,black, thick] (3.5, 3.05) -- (3.5, 3.45);
			\draw[->,black, thick] (3.5, 3.55) -- (3.5, 3.95);
			\draw[->,black, thick] (3.55, 3) -- (3.95, 3);
		
			\draw[->,black, thick] (4, 3.05) -- (4, 3.45);
			\draw[->,black, thick] (4, 3.55) -- (4, 3.95);

			\filldraw[fill=gray!50!white, draw=black] (.5, .5) circle [radius=.05];
			\filldraw[fill=gray!50!white, draw=black] (.5, 1) circle [radius=.05];
			\filldraw[fill=gray!50!white, draw=black] (.5, 1.5) circle [radius=.05];
			\filldraw[fill=gray!50!white, draw=black] (.5, 2) circle [radius=.05];
			\filldraw[fill=gray!50!white, draw=black] (.5, 2.5) circle [radius=.05];
			\filldraw[fill=gray!50!white, draw=black] (.5, 3) circle [radius=.05];
			\filldraw[fill=gray!50!white, draw=black] (.5, 3.5) circle [radius=.05];

			\filldraw[fill=gray!50!white, draw=black] (1, .5) circle [radius=.05];
			\filldraw[fill=gray!50!white, draw=black] (1, 1) circle [radius=.05];
			\filldraw[fill=gray!50!white, draw=black] (1, 1.5) circle [radius=.05];
			\filldraw[fill=gray!50!white, draw=black] (1, 2) circle [radius=.05];
			\filldraw[fill=gray!50!white, draw=black] (1, 2.5) circle [radius=.05];
			\filldraw[fill=gray!50!white, draw=black] (1, 3) circle [radius=.05];
			\filldraw[fill=gray!50!white, draw=black] (1, 3.5) circle [radius=.05];
			
			\filldraw[fill=gray!50!white, draw=black] (1.5, .5) circle [radius=.05];
			\filldraw[fill=gray!50!white, draw=black] (1.5, 1) circle [radius=.05];
			\filldraw[fill=gray!50!white, draw=black] (1.5, 1.5) circle [radius=.05];
			\filldraw[fill=gray!50!white, draw=black] (1.5, 2) circle [radius=.05];
			\filldraw[fill=gray!50!white, draw=black] (1.5, 2.5) circle [radius=.05];
			\filldraw[fill=gray!50!white, draw=black] (1.5, 3) circle [radius=.05];
			\filldraw[fill=gray!50!white, draw=black] (1.5, 3.5) circle [radius=.05];
			
			\filldraw[fill=gray!50!white, draw=black] (2, .5) circle [radius=.05];
			\filldraw[fill=gray!50!white, draw=black] (2, 1) circle [radius=.05];
			\filldraw[fill=gray!50!white, draw=black] (2, 1.5) circle [radius=.05];
			\filldraw[fill=gray!50!white, draw=black] (2, 2) circle [radius=.05];
			\filldraw[fill=gray!50!white, draw=black] (2, 2.5) circle [radius=.05];
			\filldraw[fill=gray!50!white, draw=black] (2, 3) circle [radius=.05];
			\filldraw[fill=gray!50!white, draw=black] (2, 3.5) circle [radius=.05];
			
			\filldraw[fill=gray!50!white, draw=black] (2.5, .5) circle [radius=.05];
			\filldraw[fill=gray!50!white, draw=black] (2.5, 1) circle [radius=.05];
			\filldraw[fill=gray!50!white, draw=black] (2.5, 1.5) circle [radius=.05];
			\filldraw[fill=gray!50!white, draw=black] (2.5, 2) circle [radius=.05];
			\filldraw[fill=gray!50!white, draw=black] (2.5, 2.5) circle [radius=.05];
			\filldraw[fill=gray!50!white, draw=black] (2.5, 3) circle [radius=.05];
			\filldraw[fill=gray!50!white, draw=black] (2.5, 3.5) circle [radius=.05];
			
			\filldraw[fill=gray!50!white, draw=black] (3, .5) circle [radius=.05];
			\filldraw[fill=gray!50!white, draw=black] (3, 1) circle [radius=.05];
			\filldraw[fill=gray!50!white, draw=black] (3, 1.5) circle [radius=.05];
			\filldraw[fill=gray!50!white, draw=black] (3, 2) circle [radius=.05];
			\filldraw[fill=gray!50!white, draw=black] (3, 2.5) circle [radius=.05];
			\filldraw[fill=gray!50!white, draw=black] (3, 3) circle [radius=.05];
			\filldraw[fill=gray!50!white, draw=black] (3, 3.5) circle [radius=.05];
			
			\filldraw[fill=gray!50!white, draw=black] (3.5, .5) circle [radius=.05];
			\filldraw[fill=gray!50!white, draw=black] (3.5, 1) circle [radius=.05];
			\filldraw[fill=gray!50!white, draw=black] (3.5, 1.5) circle [radius=.05];
			\filldraw[fill=gray!50!white, draw=black] (3.5, 2) circle [radius=.05];
			\filldraw[fill=gray!50!white, draw=black] (3.5, 2.5) circle [radius=.05];
			\filldraw[fill=gray!50!white, draw=black] (3.5, 3) circle [radius=.05];
			\filldraw[fill=gray!50!white, draw=black] (3.5, 3.5) circle [radius=.05];
			
			\filldraw[fill=gray!50!white, draw=black] (4, .5) circle [radius=.05];
			\filldraw[fill=gray!50!white, draw=black] (4, 1) circle [radius=.05];
			\filldraw[fill=gray!50!white, draw=black] (4, 1.5) circle [radius=.05];
			\filldraw[fill=gray!50!white, draw=black] (4, 2) circle [radius=.05];
			\filldraw[fill=gray!50!white, draw=black] (4, 2.5) circle [radius=.05];
			\filldraw[fill=gray!50!white, draw=black] (4, 3) circle [radius=.05];
			\filldraw[fill=gray!50!white, draw=black] (4, 3.5) circle [radius=.05];

\end{tikzpicture}

\end{center}	

\caption{\label{figurevertexwedge} A sample of the stochastic six-vertex model with step boundary data is depicted above.} 
\end{figure}

\subsubsection{The Stochastic Six-Vertex Model}

\label{StochasticVertex}

The stochastic six-vertex model was first introduced to the mathematical physics community by Gwa and Spohn \cite{SVMRSASH} in 1992 as a stochastic version of the more well-known six-vertex (ice) model studied by Lieb \cite{RESI} and Baxter \cite{ESMSM}; it was also studied more recently in \cite{SSVM, HSVMRSF, IPSVMSF}. This model can be defined in several equivalent ways, including as a ferromagnetic, asymmetric six-vertex model on a long rectangle with specific vertex weights (see \cite{SVMRSASH} or Section 2.1 of \cite{SSVM}); as an interacting particle system with push dynamics and an exclusion restriction (see \cite{SVMRSASH} or Section 2.2 of \cite{SSVM}); or as a probability measure on directed path ensembles (see Section 2 of \cite{SSVM} or Section 1 of \cite{HSVMRSF}). In this section, we will define the model through path ensembles. 

A \emph{six-vertex directed path ensemble} is a family of up-right directed paths in the non-negative quadrant $\mathbb{Z}_{> 0}^2$, such that each path emanates from either the $x$-axis or $y$-axis, and such that no two paths share an edge (although they may share vertices); see Figure \ref{figurevertexwedge}. In particular, each vertex has six possible \emph{arrow configurations}, which are listed in the top row of Figure \ref{sixvertexfigure}. \emph{Initial data}, or \emph{boundary data}, for such an ensemble is prescribed by dictating which vertices on the positive $x$-axis and positive $y$-axis are entrance sites for a directed path. One example of initial data is \emph{step initial data}, in which paths only enter through the $y$-axis, and all vertices on the $y$-axis are entrance sites for paths; see Figure \ref{figurevertexwedge}. 

Now, fix parameters $\delta_1, \delta_2 \in [0, 1]$ and some initial data. The \emph{stochastic six-vertex model} $\mathcal{P} = \mathcal{P} (\delta_1, \delta_2)$ will be the infinite-volume limit of a family of probability measures $\mathcal{P}_n = \mathcal{P}_n (\delta_1, \delta_2)$ defined on the set of six-vertex directed path ensembles whose vertices are all contained in triangles of the form $\mathbb{T}_n = \{ (x, y) \in \mathbb{Z}_{\ge 0}^2: x + y \le n \}$. The first such probability measure $\mathcal{P}_1$ is supported by the empty ensemble.

For each positive integer $n$, we define $\mathcal{P}_{n + 1}$ from $\mathcal{P}_n$ through the following Markovian update rules. Use $\mathcal{P}_n$ to sample a directed path ensemble $\mathcal{E}_n$ on $\mathbb{T}_n$. This gives arrow configurations (of the type shown in Figure \ref{sixvertexfigure}) to all vertices in the positive quadrant strictly below the diagonal $\mathbb{D}_n = \{ (x, y) \in \mathbb{Z}_{> 0}^2: x + y = n \}$. Each vertex on $\mathbb{D}_n$ is also given ``half'' of an arrow configuration, in the sense that it is given the directions of all entering paths but no direction of any exiting path. 

To extend $\mathcal{E}_n$ to a path ensemble on $\mathbb{T}_{n + 1}$, we must ``complete'' the configurations (specify the exiting paths) at all vertices $(x, y) \in \mathbb{D}_n$. Any half-configuration can be completed in at most two ways; selecting between these completions is done randomly, according to the probabilities given in the second row of Figure \ref{sixvertexfigure}, and independently among all vertices. 

In this way, we obtain a random ensemble $\mathcal{E}_{n + 1}$ on $\mathbb{T}_{n + 1}$; the resulting probability measure on path ensembles with vertices in $\mathbb{T}_{n + 1}$ is denoted $\mathcal{P}_{n + 1}$. Now, set $\mathcal{P} = \lim_{n \rightarrow \infty} \mathcal{P}_n$. 

As in the ASEP, there exists an observable of interest for stochastic six-vertex model called the \emph{current}. For our purposes, it will be defined as follows. Let $(X, Y) \in \mathbb{R}_{> 0}^2$. The \emph{current} (or \emph{height function}) $\mathfrak{H} (X, Y)$ of the stochastic six-vertex model at $(X, Y)$ is the number of paths that intersect the line $y = Y$ strictly to the right of $(X, Y)$. Since in this paper we only consider stochastic six-vertex models in which no vertices on the $x$-axis are entrance sites for paths, the quantity $\mathfrak{H} (X, Y)$ is always finite. 

We will again be interested in understanding the current fluctuations for the stochastic six-vertex model under certain types of initial data; we will discuss this further in the next two sections.

\begin{figure}[t]

\begin{center}

\begin{tikzpicture}[
      >=stealth,
			scale = .7
			]

			\draw[-, black] (-7.5, -.8) -- (7.5, -.8);
			\draw[-, black] (-7.5, 0) -- (7.5, 0);
			\draw[-, black] (-7.5, 2) -- (7.5, 2);
			\draw[-, black] (-7.5, -.8) -- (-7.5, 2);
			\draw[-, black] (7.5, -.8) -- (7.5, 2);
			\draw[-, black] (-5, -.8) -- (-5, 2);
			\draw[-, black] (5, -.8) -- (5, 2);
			\draw[-, black] (-2.5, -.8) -- (-2.5, 2);
			\draw[-, black] (2.5, -.8) -- (2.5, 2);
			\draw[-, black] (0, -.8) -- (0, 2);

			\draw[->, black,  thick] (-3.75, 1.1) -- (-3.75, 1.9);
			\draw[->, black,  thick] (-3.75, .1) -- (-3.75, .9);

			\draw[->, black,  thick] (-1.25, .1) -- (-1.25, .9);
			\draw[->, black,  thick] (-1.15, 1) -- (-.35, 1);

			\draw[->, black,  thick] (1.35, 1) -- (2.15, 1);
			\draw[->, black,  thick] (.35, 1) -- (1.15, 1);
			
			\draw[->, black,  thick] (3.75, 1.1) -- (3.75, 1.9);
			\draw[->, black,  thick] (2.85, 1) -- (3.65, 1);
			
			\draw[->, black,  thick] (6.25, 1.1) -- (6.25, 1.9);
			\draw[->, black,  thick] (6.25, .1) -- (6.25, .9);
			\draw[->, black,  thick] (6.35, 1) -- (7.15, 1);
			\draw[->, black,  thick] (5.35, 1) -- (6.15, 1);
				
			\filldraw[fill=gray!50!white, draw=black] (-6.25, 1) circle [radius=.1] node [black,below=21] {$1$};
			\filldraw[fill=gray!50!white, draw=black] (-3.75, 1) circle [radius=.1] node [black,below=21] {$\delta_1$};
			\filldraw[fill=gray!50!white, draw=black] (-1.25, 1) circle [radius=.1] node [black,below=21] {$1 - \delta_1$};
			\filldraw[fill=gray!50!white, draw=black] (1.25, 1) circle [radius=.1] node [black,below=21] {$\delta_2$};
			\filldraw[fill=gray!50!white, draw=black] (3.75, 1) circle [radius=.1] node [black,below=21] {$1 - \delta_2$};
			\filldraw[fill=gray!50!white, draw=black] (6.25, 1) circle [radius=.1] node [black,below=21] {$1$};

\end{tikzpicture}

\end{center}

\caption{\label{sixvertexfigure} The top row in the chart shows the six possible arrow configurations at vertices in the stochastic six-vertex model; the bottom row shows the corresponding probabilities. }
\end{figure}

\subsection{Context and Background}

\label{EquationBoundary}

The phenomenon that guides our results is commonly termed \emph{KPZ universality}. About thirty years ago in their seminal paper \cite{DSGI}, Kardar, Parisi, and Zhang considered a family of random growth models that exhibit ostensibly unusual (although now known to be quite ubiquitous) scaling behavior. As part of this work, they predicted the scaling exponents for all one-dimensional models in this family; specifically, after running such a model (with certain deterministic initial data) for some large time $T$, they predicted fluctuations of order $T^{1 / 3}$ and non-trivial spacial correlation on scales $T^{2 / 3}$. This (rather vaguely defined) family of random growth models is now called the \emph{Kardar-Parisi-Zhang (KPZ) universality class}, and consists of many more models (including the ASEP and stochastic six-vertex model) than those originally considered in \cite{DSGI}; we refer to the surveys \cite{EUC} and \cite{IUE} for more information.

In addition to predicting these exponents, Kardar, Parisi, and Zhang proposed a stochastic differential equation that in a sense embodies all of the models in their class; this equation, now known as the \emph{KPZ equation} is 
\begin{flalign}
\label{stochasticequation} 
\partial_t h = \partial_x^2 h + \displaystyle\frac{1}{2} \left( \partial_x h \right)^2 + \dot{\mathscr{W}},
\end{flalign}

\noindent where $\dot{\mathscr{W}}$ refers to space-time white noise. 

Granting the well-posedness \cite{SEPS, ATR, SE} of \eqref{stochasticequation} (which by no means immediate, due to the non-linearity $\frac{1}{2} \big( \partial_x h\big)^2$), it is widely believed that the long-time statistics of a stochastic model in the KPZ universality class should coincide with the long-time statistics of \eqref{stochasticequation}, whose initial data should be somehow chosen to ``match'' the initial data of the model, in a suitable way. In particular, distributions appearing as asymptotic marginals of the KPZ equation should also appear as the asymptotic one-point functions for models in the KPZ universality class.

For the ASEP and stochastic six-vertex model, these predictions have been verified in only a few contexts. The first such result was established by Johansson \cite{SFRM} in 2000 for the \emph{totally asymmetric simple exclusion process} (TASEP), which is the $L = 0$ case of the ASEP. Specifically, in the case of step initial data (all sites in $\mathbb{Z}_{\le 0}$ are occupied and all other sites are unoccupied), he showed that, after $T^{1 / 3}$ scaling, the current fluctuations converge to the Tracy-Widom distribution for the limiting law of the largest eigenvalue of a large Gaussian Hermitian random (GUE) matrix \cite{LSK}. This is in agreement with the result later established by Amir-Corwin-Quastel \cite{PDFECDRP}, Calabrese-Le-Doussal-Rosso \cite{FEDDPHT}, Dotsenko \cite{RADFEFDDP}, and Sasamoto-Spohn \cite{HDENWIC}, who showed that the long-time height fluctuations of the KPZ equation under what is known as \emph{narrow wedge initial data} also converge to the Tracy-Widom distribution, after $T^{1 / 3}$ scaling; see the survey \cite{EUC} for a thorough discussion on the level of mathematical justification behind these developments. 

The result of Johansson strongly relies on the \emph{free-fermionicity} (complete determinantal structure) that underlies the TASEP. This property is not believed to hold for the more general ASEP, which had posed trouble for extending Johansson's result to the ASEP with non-zero $L$ for several years. This was eventually overcome by Tracy and Widom in 2008 \cite{FDRA, IASEP}. Specifically, in the case of step initial data, they showed that the current fluctuations of the ASEP converge to the Tracy-Widom distribution after $T^{1 / 3}$ scaling \cite{AASIC}, which again establishes the KPZ universality conjecture for the ASEP with step initial data. 

Due to its two-dimensional nature and also its more complex Markov update rule, the stochastic six-vertex model has been less amenable to analysis. Only very recently has the analogous result been established for that model; this was done in the work \cite{SSVM} of Borodin, Corwin, and Gorin. In that paper, the authors analyze the stochastic six-vertex model with step initial data and show that the fluctuations of the current are of order $T^{1 / 3}$ when $(X, Y)$ lies in a certain cone (called a \emph{rarefaction fan} or \emph{liquid region}) of the positive quadrant. After rescaling by $T^{1 / 3}$, they show that the current fluctuations again converge to the Tracy-Widom distribution, thereby establishing the KPZ universality conjecture for the stochastic six-vertex model with step initial data. 

One might ask whether this type of KPZ universality phenomenon can be proven for the ASEP and stochastic six-vertex model under other classes of initial data or, equally interesting, what other types of distributions can appear as the long-time current fluctuations for these two models. For the stochastic six-vertex model, we know of no results in this direction. 

For the general ASEP with $L \ne 0$, we know of only one, which considers the case of \emph{step Bernoulli initial data}, in which sites in $\mathbb{Z}_{\le 0}$ are independently occupied with probability $b \in (0, 1]$ and no other sites are occupied (however, see the works \cite{DPPAEPWAIC, RFAEP, RGHFID} for partial results for the ASEP with flat and half-flat initial data; see also the works \cite{FEP, OCVDA} for partial results for the ASEP with stationary initial data, which are in fact completed to include the exact limiting statistics in the work \cite{CFCLEP} that is parallel with this paper). Under this initial data, Tracy and Widom \cite{ASBIC} considered the fluctuations of the current $J_T (\eta T)$ as $T$ tends to $\infty$, for fixed $\eta < 1$, and established the existence of a \emph{phase transition} (already known in the case of the TASEP \cite{LDCSCM, PTLECSCM, CFTP, CFTEP}). Specifically, they showed the following. 

\begin{enumerate}

\item{\label{process1} If $\eta \in (1 - 2b, 1)$, then the fluctuations of $J_T (\eta T)$ are GUE Tracy-Widom and of order $T^{1 / 3}$. }

\item{\label{process2} If $\eta = 1 - 2b$, then the fluctuations of $J_T (\eta T)$ are $F_1^2$ and of order $T^{1 / 3}$, where $F_1$ is the limiting law of the largest eigenvalue for a large random Gaussian real symmetric matrix. }

\item{\label{process3} If $\eta < 1 - 2b$, then the fluctuations of $J_T (\eta T)$ are Gaussian and of order $T^{1 / 2}$. }

\end{enumerate}

In particular, across the line $x = (1 - 2b) T$, the fluctuation exponent of $J_T (x)$ changes from $1 / 2$ to $1 / 3$ and, on this line, the fluctuations are of order $T^{1 / 3}$ and are $F_1^2$ instead of GUE Tracy-Widom. 

The corresponding initial data for the KPZ equation is half-Brownian initial data; this was analyzed by Corwin and Quastel in \cite{CDERF}, where it was shown that the asymptotic height fluctuations of this KPZ equation are of order $T^{1 / 3}$ and scale to the same $F_1^2$ distribution, in agreement with the second part of Tracy-Widom's result. 

Similar to the Tracy-Widom distribution, the phase transition described above was not first observed in the framework of interacting particle systems but rather in the context of random matrices \cite{PTLECSCM}. Specifically, they originally arose as the limiting law of the largest eigenvalue $\lambda_1$ of large $N \times N$ \emph{spiked covariance matrices}; these were studied at length by Baik, Ben Arous, and P\'{e}ch\'{e} in the paper \cite{PTLECSCM}, which we refer to for more information. They also appeared in the work of P\'{e}ch\'{e} \cite{LESRPHRM}, as the limiting law of the largest eigenvalue of finite-rank perturbations of large GUE matrices. 

We will not state P\'{e}ch\'{e}'s result is precise detail, but informally it can be described as follows. Fix an integer $m \ge 1$ and consider a rank $m$ perturbation of an $N \times N$ GUE matrix. 

\begin{enumerate}

\item{\label{matrix1} If the perturbation is sufficiently small, then the fluctuations $\lambda_1$ are of order $N^{- 2 / 3}$ and Tracy-Widom. }

\item{\label{matrix2} If the perturbation is \emph{critical} (in some appropriate sense), then the fluctuations of $\lambda_1$ are of order $N^{- 2 / 3}$ and scale to the distribution $F_{\BBP; 0^m}$, which is a \emph{rank $m$ Baik-Ben-Arous-P\'{e}ch\'{e} distribution} (see Definition \ref{righttransitiondistribution}). }

\item{\label{matrix3} If the perturbation is sufficiently large, then the fluctuations of $\lambda_1$ are of order $N^{- 1 / 2}$ and $G_m$, where $G_m$ denotes the law of the largest eigenvalue of an $m \times m$ GUE matrix (see Definition \ref{lefttransitiondistribution}). }

\end{enumerate}

The distribution $G_1$ is a Gaussian distribution, and the distribution $F_{\BBP, 0}$ is also known \cite{PTLECSCM} to coincide with $F_1^2$. Thus, the results of Tracy-Widom \cite{ASBIC} on the ASEP with step Bernoulli initial data strongly resemble the random matrix phase transitions described above in the case $m = 1$. 

This stimulates two questions. The first is whether one can generalize the previously mentioned result of Corwin-Quastel \cite{CDERF} on the KPZ equation with half-Brownian initial data, that is, whether it is possible to observe any higher rank Baik-Ben-Arous-P\'{e}ch\'{e} distributions as marginals of the KPZ equation with some (explicit) initial data. The second is whether one can generalize Tracy-Widom's \cite{ASBIC} to produce some explicit class of initial data (generalizing the step Bernoulli initial data) for the ASEP (and stochastic six-vertex model) under which one observes a phase transition across a characteristic line and the higher rank Baik-Ben-Arous-P\'{e}ch\'{e} distributions along the characteristic line. 

The first question was addressed by Borodin, Corwin, and Ferrari in the paper \cite{FEF}. In that work, the authors consider the KPZ equation with what they call \emph{$m$-spiked initial data} and establish that its height fluctuations (after $T^{1 / 3}$ scaling) converge to rank $m$ Baik-Ben-Arous-P\'{e}ch\'{e} distributions. 

The purpose of the present paper is to affirmatively answer the second question, thereby establishing KPZ universality of the ASEP and stochastic six-vertex model under this type of $m$-spiked distribution. We state our results more precisely in the next section.

\subsection{Results}

\label{PhaseTransitionAsymptotics}

In this section, we state our main results, which establish phase transitions for the ASEP and stochastic six-vertex model under what we call generalized step Bernoulli initial data; these are given by Theorem \ref{asymmetriclimit} and Theorem \ref{hlimit}. 

To state these results, we must define the relevant distributions, namely the GUE Tracy-Widom distribution, the Baik-Ben-Arous-P\'{e}ch\'{e} distributions, and the finite GUE distributions $G_m$. To define the former two distributions, we require the following kernels; each of these kernels will be normalized by a factor of $2 \pi \textbf{i}$ in order to simplify notation. 

\begin{definition}

\label{functionkernel}

Denoting by $\Ai (x)$ the Airy function, the (normalized) \emph{Airy kernel} $K_{\Ai} (x, y)$ is defined by 
\begin{flalign*}
K_{\Ai} (x, y) & = \displaystyle\frac{1}{2 \pi \textbf{i}} \displaystyle\oint \displaystyle\oint \exp \left( \displaystyle\frac{w^3}{3} - \displaystyle\frac{v^3}{3} - xv + yw \right) \displaystyle\frac{dw dv}{w - v}, 
\end{flalign*}

\noindent where in the latter identity the contour for $w$ is piecewise linear from $\infty e^{- \pi \textbf{i} / 3}$ to $0$ to $\infty e^{\pi \textbf{i} / 3}$, and the contour for $v$ is piecewise linear from $\infty e^{- 2 \pi \textbf{i} / 3}$ to $-1$ to $\infty e^{2 \pi \textbf{i} / 3}$; see Figure \ref{l1l2l3} in Section \ref{RightKernel} below. 

\end{definition}

\begin{definition}
\label{functionkernel2} 

Let $m$ be a positive integer, and let $\textbf{c} = (c_1, c_2, \ldots , c_m) \in \mathbb{R}^m$ be a sequence of $m$ real numbers. The (normalized) \emph{Baik-Ben-Arous-P\'{e}ch\'{e} kernel} $K_{\BBP; \textbf{c}} (x, y)$ is defined by 
\begin{flalign*}
K_{\BBP; \textbf{c}} (x, y) & = \displaystyle\frac{1}{2 \pi \textbf{i}} \displaystyle\oint \displaystyle\oint \exp \left( \displaystyle\frac{w^3}{3} - \displaystyle\frac{v^3}{3} - xv + yw \right) \displaystyle\prod_{j = 1}^m \displaystyle\frac{v + c_j}{w + c_j} \displaystyle\frac{dw dv}{w - v}, 
\end{flalign*}

\noindent where the contour for $w$ is piecewise linear from $\infty e^{- \pi \textbf{i} / 3}$ to $-E$ to $\infty e^{\pi \textbf{i} / 3}$, and the contour for $v$ is piecewise linear from $\infty e^{- 2 \pi \textbf{i} / 3}$ to $- E - 1$ to $\infty e^{2 \pi \textbf{i} / 3}$. Here, $E \in \mathbb{R}$ is chosen so that $E > \max_{1 \le i \le m} c_i$; see Figure \ref{shiftedcontours13} in Section \ref{VertexNear} below.
\end{definition}

Now we can define the GUE Tracy-Widom distribution and the Baik-Ben-Arous-P\'{e}ch\'{e} distributions. We refer to Appendix \ref{Determinants1} for our conventions on Fredholm determinants (in particular, there will sometimes be a normalization of $2 \pi \textbf{i}$). 

\begin{definition}

\label{righttransitiondistribution}

The \emph{GUE Tracy-Widom distribution} $F_{\TW} (s)$ is defined by 
\begin{flalign*}
F_{\TW} (s) = \det \big( \Id - K_{\Ai} \big)_{L^2 (s, \infty)}, 
\end{flalign*}

\noindent for each real number $s \in \mathbb{R}$. 

Similarly, if $m$ is a positive integer and $\textbf{c} = (c_1, c_2, \ldots , c_m) \in \mathbb{R}^m$ is a sequence of $m$ real numbers, the Baik-Ben-Arous-P\'{e}ch\'{e} distribution $F_{\BBP; \textbf{c}} (s)$ is defined by 
\begin{flalign*}
F_{\BBP; \textbf{c}} (s) = \det \big( \Id - K_{\BBP; \textbf{c}} \big)_{L^2 (s, \infty)}. 
\end{flalign*}
\end{definition}

 The following defines the GUE distribution $G_m$. 

\begin{definition}
\label{lefttransitiondistribution} 

For any positive integer $m$, define the distribution $G_m (s)$ through 
\begin{flalign*}
G_m (s) = Z_m^{-1} \displaystyle\int_{-\infty}^s \displaystyle\int_{-\infty}^s \cdots \displaystyle\int_{-\infty}^s \displaystyle\prod_{1 \le j < k \le m} |x_j - x_k|^2 \displaystyle\prod_{j = 1}^m e^{-x_j^2 /2} d x_j, 
\end{flalign*}

\noindent for each real number $s \in \mathbb{R}$; here, $Z_m$ is a normalization constant chosen so that $G_m (\infty) = 1$. 
\end{definition}

Having defined the above distributions, we can now proceed to state our results. They concern the ASEP and stochastic six-vertex model with \emph{generalized $(b_1, b_2, \ldots , b_m)$-Bernoulli initial data}. This initial data is not so quickly described; in particular, it relies on the \emph{stochastic higher spin vertex models}, to be defined in Section \ref{GeneralVertexModels}. Thus, we reserve the definition of this initial data to Definition \ref{generalized}. 

However, let us give a few properties of this initial data that might be of immediate interest. 

\begin{itemize}

\item{In the case of the ASEP, all sites in $\mathbb{Z}_{> 0}$ are initially unoccupied; in the case of the stochastic six-vertex model, no vertex on the $x$-axis is an entrance site for a path. }

\item{When $m = 1$ and $b_1 = b \in (0, 1)$, this degenerates to \emph{step-Bernoulli initial data}. For the ASEP, this means that particles in $\mathbb{Z}_{> 0 }$ are initially occupied independently, with probability $b$. For the stochastic six-vertex model, this means that vertices on the positive $y$-axis are entrance sites for paths independently, with probability $b$.}

\item{Suppose $m > 1$ and $b_1 = b_2 = \cdots = b_m = b \in (0, 1)$. In the case of the ASEP, one expects (with high probability) the density of particles to the left of the origin to be approximately $b$ (in the sense that one expects approximately $b |I|$ sites to initially be occupied in an interval $I \subset \mathbb{Z}_{< 0}$). Similarly, in the case of the stochastic six-vertex model, one expects the density of entrance sites on the positive $y$-axis to be approximately $b$.}

\item{However, in the setting above (with $m > 1$ and all $b_j = b$), the events that different sites are occupied (for the ASEP) or different vertices are entrance sites (for the stochastic six-vertex model) are highly correlated. This correlation is what leads to the new phase transitions $F_{\BBP; \textbf{c}}$ and higher GUE distributions $G_m$. }

\item{In the case of the TASEP ($L = 0$), this initial data corresponds to the random particle configuration formed after running $m$ steps of the geometric \emph{Push TASEP} (with geometric rates $b_1, b_2, \ldots , b_m$) \cite{LTAGMSLP} on step initial data. This is known to have a much more direct relationship with the spectral distribution of the spiked GUE matrices considered by P\'{e}ch\'{e} \cite{AGRSD, LESRPHRM}, which provides some explanation for the asymptotics stated in Theorem \ref{asymmetriclimit} below, when $L = 0$. In the general case $L \ne 0$, we know of no such explanation. }

\end{itemize}

The first property is directly stipulated as part of Definition \ref{generalized}, and the second property is explained in Remark \ref{m1generalized}. The third, fourth, and fifth properties can also be derived from Definition \ref{generalized}, but establishing them carefully does not seem so relevant for the purposes of this paper. Thus, we omit their proofs.  

Our results are as follows; Theorem \ref{asymmetriclimit} considers the ASEP with generalized step Bernoulli initial data and Theorem \ref{hlimit} considers the stochastic six-vertex model with this initial data. 

\begin{thm} 
\label{asymmetriclimit}

Fix positive real numbers $R > L$, a real number $b \in (0, 1)$, an integer $m \ge 1$, and infinite families of real numbers $\{ b_{1, T} \}_{T \in \mathbb{R}_{> 0}}, \{ b_{2, T} \}_{T \in \mathbb{R}_{> 0}}, \ldots , \{ b_{m, T} \}_{T \in \mathbb{R}_{> 0}} \subset (0, 1)$. Assume that $R - L = 1$ and that there exist real numbers $d_1, d_2, \ldots , d_m \in \mathbb{R}$ such that $\lim_{T \rightarrow \infty} T^{1 / 3} (b_{i, T} - b) = d_i$, for each $i \in [1, m]$. 

Let $T$ be a positive real number. Consider the ASEP, run for time $T$, with left jump rate $L$, right jump rate $R$, and generalized $(b_{1, T}, b_{2, T}, \ldots , b_{m, T})$-Bernoulli initial data (given by Definition \ref{generalized}). 

Then we have the following results, where in the below $\theta = 1 - 2 b$ and $\chi = b (1 - b)$. 

\begin{enumerate}
\item{Assume that $\eta \in (\theta, 1)$ is a real number. Set  
\begin{flalign}
\label{processfluctuationslargeeta}
m_{\eta} & = \left( \displaystyle\frac{1 - \eta}{2} \right)^2; \qquad f_{\eta} =\left( \displaystyle\frac{1 - \eta^2}{4} \right)^{2 / 3}. 
\end{flalign}

\noindent Then, for any real number $s \in \mathbb{R}$, we have that 
\begin{flalign*}
\displaystyle\lim_{T \rightarrow \infty} \mathbb{P} \left[ \displaystyle\frac{m_{\eta} T - J_T (\eta T)}{f_{\eta} T^{1 / 3}} \le s \right] = F_{\TW} (s). 
\end{flalign*}

}

\item{Assume that $\{ \eta_T \}_{T \ge 0}$ is a sequence of real numbers such that $\lim_{T \rightarrow \infty} T^{1 / 3} (\eta_T - \theta) = d$, for some real number $d$. Set $m_{\eta}$ and $f_{\eta}$ as in \eqref{processfluctuationslargeeta}, and for each index $j \in [1, m]$, define 
\begin{flalign}
\label{cprocessphasetransition}
c_j = - \displaystyle\frac{f_{\eta} (2 d_j + d)}{2 \chi}; \qquad \textbf{\emph{c}} = (c_1, c_2, \ldots , c_m). 
\end{flalign} 

\noindent Then, for any real number $s \in \mathbb{R}$, we have that 
\begin{flalign*}
\displaystyle\lim_{T \rightarrow \infty} \mathbb{P} \left[ \displaystyle\frac{m_{\eta_T} T - J_{T} (\eta_T T )}{f_{\theta} T^{1 / 3}} \le s \right] = F_{\BBP; \textbf{\emph{c}}} (s). 
\end{flalign*} 
}

\item{Assume that $\eta \in (-b, \theta)$ is a real number, and that $b_j = b$ for all indices $j \in [1, m]$. Set  
\begin{flalign}
\label{processfluctuationslargeetaleft}
m_{\eta}' & = \chi  - b \eta ; \qquad f_{\eta}' = \chi^{1 / 2} (\theta - \eta)^{1 / 2}. 
\end{flalign}

\noindent Then, for any real number $s \in \mathbb{R}$, we have that 
\begin{flalign*}
\displaystyle\lim_{T \rightarrow \infty} \mathbb{P} \left[ \displaystyle\frac{m_{\eta}' T - J_T (\eta T)}{f_{\eta}' T^{1 / 2}} \le s \right] = G_h (s). 
\end{flalign*}
}
\end{enumerate}
\end{thm}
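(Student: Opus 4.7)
The plan is to reduce the statement to a steepest descent analysis of a Fredholm determinant representation for the current $J_T(\eta T)$. Since the generalized step Bernoulli initial data is defined (Definition \ref{generalized}) through the stochastic higher spin vertex model, the $q$-moments and $q$-Laplace transform of $J_T(x)$ admit explicit double contour integral / Fredholm determinant formulas in the spirit of \cite{SSVM, ASBIC, FEF}; degenerating to the ASEP, one obtains a kernel of the schematic form
\begin{flalign*}
\mathcal{K}_T(u,u') = \frac{1}{2\pi\mathbf{i}} \oint \oint e^{T(G_\eta(w) - G_\eta(v))} \, \prod_{j=1}^{m} \frac{v - b_{j,T}}{w - b_{j,T}} \, \frac{dv\,dw}{(v-u')(w-v)},
\end{flalign*}
for an explicit exponent $G_\eta(z)$ depending on $\eta,L,R$, with contours chosen so that $w$ surrounds the poles at the $b_{j,T}$ and $v$ lies outside. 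The function $G_\eta$ admits a double critical point $z_c=z_c(\eta)$, and a direct computation shows that $z_c(\theta)=b$: that is, the value $\eta = \theta = 1-2b$ is precisely the one at which the steepest descent saddle collides with the cluster of poles at $b_{j,T}$. The three regimes of the theorem correspond to the three possible positions of $z_c(\eta)$ relative to $b$.

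In Regime (1), $\eta > \theta$ places $z_c(\eta)$ strictly separated from $b$, so the $w$- and $v$-contours can be deformed to pass through $z_c$ along steepest descent directions without crossing any pole. Under the scaling $w = z_c + f_\eta T^{-1/3}\tilde w$ and analogously for $v$, the cubic portion of $G_\eta$ survives, the rational prefactor tends to $1$, and the kernel converges to $K_{\Ai}$; this recovers the ASEP Tracy--Widom limit exactly as in \cite{AASIC, ASBIC}. In Regime (2), the hypothesis $T^{1/3}(\eta_T - \theta) \to d$ and $T^{1/3}(b_{j,T}-b)\to d_j$ makes $z_c(\eta_T) - b_{j,T}$ of order $T^{-1/3}$, so under the same $T^{-1/3}$ rescaling the prefactor converges to
\begin{flalign*}
\prod_{j=1}^{m} \frac{v - b_{j,T}}{w - b_{j,T}} \longrightarrow \prod_{j=1}^{m} \frac{\tilde v + c_j}{\tilde w + c_j},
\end{flalign*}
with $c_j$ given by \eqref{cprocessphasetransition}; the pointwise limit is then exactly $K_{\BBP;\mathbf{c}}$ and the determinant tends to $F_{\BBP;\mathbf{c}}(s)$. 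The choice of normalization factors $m_\eta, f_\eta, f_\theta$ in \eqref{processfluctuationslargeeta}--\eqref{cprocessphasetransition} is calibrated so that these limits come out with the standard Airy/BBP contours of Definitions \ref{functionkernel} and \ref{functionkernel2}.

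Regime (3) is the genuinely new phenomenon. Here $z_c(\eta)$ lies on the opposite side of $b$ from the original $w$-contour, so in order to push $w$ onto its steepest descent contour through $z_c$ one must cross the pole cluster at $b$. Since all $b_{j,T}=b$, this pole is of order $m$; the contour deformation produces a bulk piece plus an $m$-th order residue at $w=b$. The bulk piece has exponential decay of order $e^{-cT}$ in the relevant range of $u,u'$ (by strict negativity of $\Re(G_\eta(w)-G_\eta(v))$ along the deformed contours, which must be checked), so it vanishes in the $T^{1/2}$ scaling. What remains is a finite-dimensional object: the $m$-th residue at $w=b$ produces $m$ derivatives of $e^{TG_\eta(w)}$ evaluated at $b$, and under the rescaling $u = m_\eta' T + f_\eta' T^{1/2}\tilde u$ dictated by \eqref{processfluctuationslargeetaleft} a standard stationary-phase / Laplace expansion of these derivatives turns the resulting $m\times m$ determinant into a product of Hermite polynomials. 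The main obstacle is the last identification: one must recognize the residue-derived determinant as the density of the largest eigenvalue of an $m\times m$ GUE matrix. This is naturally done by rewriting the $m$-th derivative residue via Leibniz/Andr\'eief, applying Cauchy--Binet to absorb the Hermite polynomials, and invoking the standard Hermite-ensemble representation of $G_m$; a parallel computation for the Push-TASEP / spiked-GUE link (the fifth bullet following Definition \ref{generalized}) provides a consistency check in the $L=0$ case. Beyond this identification, the technical work consists of (i) justifying exchange of the limit with the Fredholm expansion, using uniform tail estimates on $\mathcal{K}_T$ along the deformed contours, and (ii) verifying that the constants $m_\eta', f_\eta'$ produce the Gaussian scaling in precisely the form required by Definition \ref{lefttransitiondistribution}.
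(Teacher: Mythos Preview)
Your outline for Regimes (1) and (2) matches the paper's approach closely: a Fredholm determinant for the $q$-Laplace transform is analyzed by steepest descent around a critical point $\psi$ of an exponent $G$, with the $q$-Pochhammer prefactor tending to $1$ in Regime (1) and to $\prod_j (\widehat v+c_j)/(\widehat w+c_j)$ in Regime (2) once the contours are shifted by $O(T^{-1/3})$ to stay clear of the poles at $q\beta_j$. The paper then passes from the $q$-Laplace transform to the probability via a standard lemma (Lemma~4.1.39 of \cite{MP}).

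For Regime (3), however, your picture diverges from the paper and contains a gap. The paper does \emph{not} cross the pole and extract an $m$-th order residue. Instead, once one uses the correct centering $m_\eta'$ (which you invoke in your scaling), the exponent $G$ changes: its critical point is now \emph{exactly} $\psi=q\beta$, with $G''(\psi)=-f_\eta'^2/\psi^2\neq 0$, i.e.\ quadratic rather than cubic vanishing. There is no ``opposite side'' to cross to; the saddle sits on the pole. The contours are placed at distance $O(T^{-1/2})$ to the left of $q\beta$, still leaving the pole outside, and under the change of variables $w=q\beta(1+\sigma\widehat w)$, $v=q\beta(1+\sigma\widehat v)$ with $\sigma=\psi f_\eta'^{-1}T^{-1/2}$ the $q$-Pochhammer ratio
\[
\frac{(q^{-1}\beta^{-1}v;q)_\infty^m}{(q^{-1}\beta^{-1}w;q)_\infty^m}
\longrightarrow \left(\frac{\widehat v}{\widehat w}\right)^m,
\]
since the first factor $1-q^{-1}\beta^{-1}(q\beta+\sigma\,\cdot)=-q^{-1}\beta^{-1}\sigma\,\cdot$ is what survives. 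The limiting kernel is then
\[
L_{s;m}(\widehat w,\widehat w')=\frac{1}{2\pi\mathbf i}\int \frac{1}{(\widehat v-\widehat w)(\widehat w'-\widehat v)}\exp\Big(\tfrac{\widehat v^2}{2}-\tfrac{\widehat w^2}{2}+s(\widehat v-\widehat w)\Big)\Big(\tfrac{\widehat v}{\widehat w}\Big)^m d\widehat v,
\]
and the identification $\det(\Id+L_{s;m})=G_m(s)$ is quoted from Barraquand \cite{PTQT}. No Hermite/Andr\'eief manipulation or finite-rank reduction is needed; the Fredholm determinant remains infinite-dimensional throughout.

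The specific gap in your version is this: with the centering $m_\eta'$ that you correctly use for the $T^{1/2}$ scaling, the critical point of $G$ does not lie beyond the pole, so the contour deformation you describe never occurs. If instead you kept the Regime (1)--(2) exponent (centering by $m_\eta$), the critical point would indeed move past $q\beta$, but it would remain a \emph{double} zero of $G'$, so the ``bulk'' saddle contribution after residue extraction would live on the $T^{1/3}$ scale, not decay like $e^{-cT}$; your claim that it vanishes is then unjustified. Either way the residue mechanism as stated does not go through, and the paper's route via the quadratic saddle at $q\beta$ is both the correct geometry and considerably simpler.
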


\begin{thm} 
\label{hlimit}

Fix positive real numbers $\delta_1 < \delta_2 < 1$, a real number $b \in (0, 1)$, an integer $m \ge 1$, and infinite families of real numbers $\{ b_{1, T} \}_{T \in \mathbb{Z}_{> 0}}, \{ b_{2, T} \}_{T \in \mathbb{Z}_{> 0}}, \ldots , \{ b_{m, T} \}_{T \in \mathbb{Z}_{> 0}} \subset (0, 1)$. Assume that there exist real numbers $d_1, d_2, \ldots , d_m \in \mathbb{R}$ such that $\lim_{T \rightarrow \infty} T^{1 / 3} (b_{i, T} - b) = d_i$, for each $i \in [1, m]$. 

Let $T > 0$ be a positive integer. Consider the stochastic six-vertex model, run for time $T$, with left jump probability $\delta_1$, right jump probability $\delta_2$, and generalized $(b_{1, T}, b_{2, T}, \ldots , b_{m, T})$-Bernoulli initial data (given by Definition \ref{generalized}). 

We have the following results, where in the below we set 
\begin{flalign}
\label{processlocationtransition}
\chi = b (1 - b); \qquad \kappa = \displaystyle\frac{1 - \delta_1}{1 - \delta_2}; \qquad \Lambda = b + \kappa (1 - b); \qquad \theta = \kappa^{-1} \Lambda^2.
\end{flalign} 

\begin{enumerate}
\item{Assume that $x$ and $y$ are positive real numbers such that $x / y \in (\theta, \kappa)$. Set  
\begin{flalign}
\label{modelfluctuationslargeeta}
\mathcal{H} (x, y) & = \displaystyle\frac{\big( \sqrt{(1 - \delta_1) y } - \sqrt{(1 - \delta_2) x} \big)^2}{\delta_1 - \delta_2} \nonumber  \\
 \mathcal{F} (x, y) & = \displaystyle\frac{\kappa^{1 / 6} (\sqrt{\kappa x } - \sqrt{y})^{2 / 3} (\sqrt{\kappa y} - \sqrt{x})^{2 / 3} }{(\kappa - 1) x^{1 / 6} y^{1 / 6}}. 
\end{flalign}

\noindent Then, for any real number $s \in \mathbb{R}$, we have that 
\begin{flalign*}
\displaystyle\lim_{T \rightarrow \infty} \mathbb{P} \left[ \displaystyle\frac{\mathcal{H} (x, y) T - \mathfrak{H} (xT, yT)}{\mathcal{F} (x, y) T^{1 / 3}} \le s \right] = F_{\TW} (s). 
\end{flalign*}

}

\item{Assume that $\{ x_T \}_{T \in \mathbb{Z}_{> 0}}$ and $\{ y_T \}_{T \in \mathbb{Z}_{> 0}}$ are sequences of positive real numbers such that $\eta_T = x_T / y_T$ satisfies $\lim_{T \rightarrow \infty} T^{1 / 3} (\eta_T - \theta) = d$, for some real number $d$. Set $\mathcal{H} (x, y)$ and $\mathcal{F} (x, y)$ as in \eqref{processfluctuationslargeeta}, and for each index $j \in [1, m]$ define 
\begin{flalign}
\label{cmodelphasetransition}
c_j = - \displaystyle\frac{f_{\eta} d_j}{\chi} - \displaystyle\frac{ \kappa f_{\eta} d}{2 (\kappa - 1) \chi \Lambda}; \qquad \textbf{\emph{c}} = (c_1, c_2, \ldots , c_m). 
\end{flalign} 

\noindent Then, for any real number $s \in \mathbb{R}$, we have that 
\begin{flalign*}
\displaystyle\lim_{T \rightarrow \infty} \mathbb{P} \left[ \displaystyle\frac{\mathcal{H} (x, y) T - \mathfrak{H} (xT, yT)}{\mathcal{F} (x, y) T^{1 / 3}} \le s \right] = F_{\BBP; \textbf{\emph{c}}} (s). 
\end{flalign*} 
}

\item{Assume that $x$ and $y$ are positive real numbers such that $x \in \big( \Lambda^{-1} \theta y, \theta y \big)$, and that $b_j = b$ for all indices $j \in [1, m]$. Set  
\begin{flalign}
\label{modelfluctuationslargeetaleft}
\mathcal{H}' (x, y) & = b y  - \Lambda^{-1} b x ; \qquad \mathcal{F}' (x, y) = \chi^{1 / 2} \left(  y - \theta^{-1} x \right)^{1 / 2}. 
\end{flalign}

\noindent Then, for any real number $s \in \mathbb{R}$, we have that 
\begin{flalign*}
\displaystyle\lim_{T \rightarrow \infty} \mathbb{P} \left[ \displaystyle\frac{\mathcal{H}' (x, y) T - \mathfrak{H} (xT, yT)}{\mathcal{F}' (x, y) T^{1 / 2}} \le s \right] = G_m (s). 
\end{flalign*}
}
\end{enumerate}
	
\end{thm}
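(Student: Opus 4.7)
The plan is to reduce Theorem \ref{hlimit} to a steepest-descent analysis of a Fredholm determinant representation for the one-point marginal of $\mathfrak{H}(xT, yT)$. Since the generalized $(b_{1,T}, \ldots, b_{m,T})$-Bernoulli initial data is built out of the stochastic higher spin vertex models (per Definition \ref{generalized}), the standard $q$-moment/$q$-Laplace machinery for these models should produce, after contour manipulation, an expression of the schematic form
\begin{flalign*}
\mathbb{E}\Big[ \frac{1}{(\zeta q^{\mathfrak{H}(xT, yT)}; q)_\infty} \Big] = \det\big( \Id + K_\zeta \big)_{L^2(\mathcal{C})},
\end{flalign*}
where the kernel $K_\zeta(u,v)$ factors as an exponential $\exp\bigl(T \Psi(w) - T \Psi(u)\bigr)$ against a rational prefactor that carries the poles at $\{b_{j,T}\}$ through the ratio $\prod_{j = 1}^m \frac{u - b_{j,T}}{w - b_{j,T}}$. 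The action $\Psi = \Psi_{x/y,\delta_1,\delta_2,b}$ is obtained from the vertex weights and the step-type reference state, and the position of its critical points relative to the pole cluster near $b$ controls which of the three regimes we are in.

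Part (1) is the standard Tracy-Widom regime. For $x/y \in (\theta, \kappa)$, the action $\Psi$ admits a real double critical point $\psi_0 = \psi_0(x/y)$ with $\Psi''(\psi_0) = 0$ and $\Psi'''(\psi_0) \neq 0$, separated from $b$ so that the contour can be chosen to leave all the poles $\{b_{j,T}\}$ on the correct side, rendering the ratio $\prod_j \frac{u - b_{j,T}}{w - b_{j,T}}$ asymptotically trivial. The cubic rescaling $u = \psi_0 + \widetilde{u} T^{-1/3}$, $w = \psi_0 + \widetilde{w} T^{-1/3}$ then produces the Airy kernel in the limit; the centering $\mathcal{H}(x,y) T$ comes from $\Psi(\psi_0)$ and the scaling $\mathcal{F}(x,y) T^{1/3}$ from $\Psi'''(\psi_0)$. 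This is essentially the rarefaction-fan analysis already carried out in \cite{SSVM} for pure step data, since the generalized-Bernoulli modification is invisible at leading order in this regime.

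Part (2) is the critical case. When $\eta_T = x_T/y_T \to \theta$ at rate $T^{-1/3}$, the double critical point $\psi_0$ and the limiting pole location $b$ merge, and the pole cluster $b_{j,T} = b + d_j T^{-1/3}$ lives at the same microscopic scale $T^{-1/3}$ around $\psi_0$. Under the rescaling $u = \psi_0 + \widetilde{u} T^{-1/3}$, $w = \psi_0 + \widetilde{w} T^{-1/3}$ the pole prefactor now survives, producing a limiting factor $\prod_{j=1}^m \frac{\widetilde{u} - \widetilde{d}_j}{\widetilde{w} - \widetilde{d}_j}$ where $\widetilde{d}_j$ depends linearly on $d_j$ and $d$; this together with the cubic exponential gives the BBP kernel $K_{\BBP; \mathbf{c}}$, and tracking the affine change of variables produces the explicit expression \eqref{cmodelphasetransition} for $c_j$. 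The main calculation here is to verify that the shift of the saddle by $dT^{-1/3}$ (coming from the deviation $\eta_T - \theta$) and the shift of the poles by $d_j T^{-1/3}$ combine additively into the single parameter sequence $\mathbf{c}$.

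Part (3) is the shock regime and is where the main difficulty lies. For $x/y \in (\Lambda^{-1} \theta, \theta)$ the real critical point of $\Psi$ sits on the opposite side of the pole cluster $\{b_{j,T}\}$ from where it sat in Part (1), and any steepest-descent deformation must cross all $m$ poles. The residues at these crossings produce a finite-dimensional determinantal contribution of size $T^{1/2}$, while the remaining smooth saddle contribution is subleading after rescaling by $T^{1/2}$. The hard part is to: (i) control uniformly the contour tails so that the saddle contribution is indeed negligible compared to the residue contribution; (ii) identify the limit of the residue sum — an $m$-fold integral with an $m \times m$ Vandermonde squared arising from the product structure of the residues — with the eigenvalue density of an $m \times m$ GUE matrix, yielding $G_m$. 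The centering $\mathcal{H}'(x,y)T$ comes from $\Psi(b)$ (the poles dominate the saddle) and the scale $\mathcal{F}'(x,y) T^{1/2}$ comes from $\Psi''(b)$; Theorem \ref{asymmetriclimit}(3) for the ASEP serves as a template, with the role of the ASEP asymmetry replaced by $\kappa = (1-\delta_1)/(1-\delta_2)$.
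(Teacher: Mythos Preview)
Your outline for Parts (1) and (2) matches the paper's proof closely: a Fredholm determinant identity for the $q$-Laplace transform (Theorem \ref{hdeterminant}, reformulated in Proposition \ref{asymptoticheightvertex}), followed by a cubic saddle-point analysis at a double critical point $\psi$, with the pole factor $\prod_j (q^{-1}\beta_j^{-1}v;q)_\infty / (q^{-1}\beta_j^{-1}w;q)_\infty$ being asymptotically trivial in regime (1) and surviving as $\prod_j (\widehat v + c_j)/(\widehat w + c_j)$ in regime (2) once the contours are shifted left by $O(T^{-1/3})$ to avoid the $q\beta_j$.

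For Part (3), however, your mechanism differs from the paper's and, as written, has a gap. You propose to keep the same action, locate the saddle on the far side of the pole cluster, deform through the $m$ poles, and read off $G_m$ from the residues as an $m$-fold integral with a squared Vandermonde. The paper does \emph{not} cross any poles. Instead, because the centering changes from $m_\eta$ to $m_\eta'$, the action $G_V$ in \eqref{gv12} is genuinely different from the one in Part (1): its critical point is now \emph{exactly} $\psi = q\beta$ (the pole location), and it is a \emph{simple} critical point with $G_V''(\psi) = -(f_\eta'/\psi)^2 \neq 0$. The paper places both contours just to the left of $q\beta$ (at distance $O(T^{-1/2})$; see Definition \ref{linearvertexleft}), so the $q\beta_j$ remain outside throughout. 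Under the quadratic rescaling $v = q\beta(1+\sigma\widehat v)$, $w = q\beta(1+\sigma\widehat w)$ with $\sigma = \psi/(f_\eta' T^{1/2})$, the first factor of the $q$-Pochhammer product contributes $(\widehat v/\widehat w)^m$ (equation \eqref{leftproductqv}), yielding the limiting kernel $L_{s;m}$ of \eqref{leftdefinitionls}. The identification $\det(\Id + L_{s;m})_{L^2(\mathfrak U_{-1,\infty})} = G_m(s)$ is then quoted from Barraquand \cite{PTQT}.

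Your residue picture is not obviously wrong in spirit---rank-$m$ perturbations of Fredholm determinants can indeed be handled by pole-crossing---but the sentence ``the residues at these crossings produce a finite-dimensional determinantal contribution \ldots\ an $m$-fold integral with an $m\times m$ Vandermonde squared'' skips the actual work: in the Fredholm setting one must show that crossing the $m$ poles replaces $K$ by $K$ plus a rank-$m$ kernel, that $\det(\Id+K)$ factors accordingly, and that the continuous piece tends to $1$ while the finite piece tends to $G_m$. None of that is automatic, and in particular the Vandermonde structure does not drop out of a single residue computation on the kernel integrand. The paper's route avoids this by never leaving the saddle-point framework.
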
 

\begin{rem}

We expect Theorem \ref{asymmetriclimit} to hold for all $\eta < \theta$ and Theorem \ref{hlimit} to hold for all $x / y \in (0, \theta)$ but will not pursue this potential improvement further. However, in Appendix \ref{Asymptotics2}, we outline a different method to establish asymptotics through a comparison with Schur measures. It seems likely that one can use this method to remove the limitation on $\eta$ in the Gaussian regime.
\end{rem}

What enables the proofs of Theorem \ref{asymmetriclimit} and Theorem \ref{hlimit} is the integrability of the \emph{inhomogeneous stochastic higher spin vertex models} \cite{HSVMRSF}, which we will define in Section \ref{GeneralVertexModels}. These are a family of quite general vertex models in the positive quadrant that exhibit remarkable integrability properties and degenerate to many known models in the KPZ universality class; in particular, they degenerate to the stochastic six-vertex model and the ASEP (both with generalized step Bernoulli initial data), facts which will be discussed further in Section \ref{Spin12} and Section \ref{GeneralizedSpecialization}. 

These inhomogeneous stochastic higher spin vertex models were studied at length in the paper \cite{HSVMRSF}, in which multi-fold contour integral identities were obtained for certain observables ($q$-moments) of these models. In particular, it will be possible to directly degenerate these identities to the stochastic six-vertex model and ASEP with generalized step Bernoulli initial data; this will be done in Section \ref{ContourMoments}. It is known \cite{PTQT, MP, FEF, SSVM, DDQA} how to use such identities to produce Fredholm determinant identities that are directly amenable to asymptotic analysis, which we will do in Section \ref{DeterminantVertical}. We especially emphasize here the work \cite{PTQT} of Barraquand, who used similar Fredholm determinant identities to derive Baik-Ben-Arous-P\'{e}ch\'{e} phase transitions (of the type given in Theorem \ref{asymmetriclimit} and Theorem \ref{hlimit}) in the context of the $q$-TASEP with several slow particles. 

In Section \ref{TransitionsModel}, we will begin the asymptotic analysis of these Fredholm determinant identities and explain how they can be used for the proofs of Theorem \ref{asymmetriclimit} and Theorem \ref{hlimit}. In Section \ref{RightKernel}, Section \ref{VertexNear}, and Section \ref{Fluctuations12}, we will complete this analysis in the first, second, and third regimes, respectively, of Theorem \ref{asymmetriclimit} and Theorem \ref{hlimit}.

\subsection*{Acknowledgements}

We heartily thank Vadim Gorin and Guillaume Barraquand for very valuable conversations. The work of Amol Aggarwal was partially funded by the Eric Cooper and Naomi Siegel Graduate Student Fellowship I and the NSF Graduate Research Fellowship under grant number DGE1144152. The work of Alexei Borodin was partially supported by the NSF grants DMS-1056390 and DMS-1607901.

\section{Stochastic Higher Spin Vertex Models}

\label{GeneralVertexModels}

Both the ASEP and stochastic six-vertex model are degenerations of a larger class of vertex models called the \emph{inhomogeneous stochastic higher spin vertex models}, which were recently introduced by Borodin and Petrov in \cite{HSVMRSF}. These models are in a sense the original source of integrability for the ASEP, the stochastic six-vertex model, and in fact most models proven to be in the Kardar-Parisi-Zhang (KPZ) universality class. 

For that reason, we will begin our discussion by defining these vertex models. Similar to the stochastic six-vertex model, these models will take place on \emph{directed path ensembles}. We will first carefully define what we mean by a directed path ensemble in Section \ref{PathEnsembles}, and then we will define the stochastic higher spin vertex model in Section \ref{ProbabilityMeasures}. In Section \ref{InteractingParticles}, we will re-interpret these models as interacting particle systems, which will provide a useful framework for discussing observables in Section \ref{ObservablesVertical}.

\subsection{Directed Path Ensembles}

\label{PathEnsembles}

For the purpose of this paper, a \emph{directed path} is a collection of \emph{vertices}, which are lattice points in the non-negative quadrant $\mathbb{Z}_{\ge 0}^2$, connected by \emph{directed edges} (which we may also refer to as \emph{arrows}). A directed edge can connect a vertex $(i, j)$ to either $(i + 1, j)$ or $(i, j + 1)$, if $(i, j) \in \mathbb{Z}_{> 0}^2$; we also allow directed edges to connect $(k, 0)$ to $(k, 1)$ or $(0, k)$ to $(1, k)$, for any positive integer $k$. Thus, directed edges connect adjacent vertices, always point either up or to the right, and do not lie on the $x$-axis or $y$-axis. 

A \emph{directed path ensemble} is a collection of paths with the following two properties.

\begin{itemize}
\item{ Each path must contain an edge connecting $(0, k)$ to $(1, k)$ or $(k, 0)$ to $(k, 1)$ for some $k > 0$; stated alternatively, every path ``emanates'' from either the $x$-axis or the $y$-axis.}

\item{ No two distinct paths can share a horizontal edge; however, in contrast with the six-vertex case, they may share vertical edges.}

\end{itemize}

\noindent  An example of a directed path ensemble was previously shown in Figure \ref{figurevertexwedge}. See also Figure \ref{shift2generalizedvertex} in Section \ref{Spin12} below for more examples. 

Associated with each $(x, y) \in \mathbb{Z}_{> 0 }^2$ in a path ensemble is an \emph{arrow configuration}, which is a quadruple $(i_1, j_1; i_2, j_2) = (i_1, j_1; i_2, j_2)_{(x, y)}$ of non-negative integers. Here, $i_1$ denotes the number of directed edges from $(x, y - 1)$ to $(x, y)$; equivalently, $i_1$ denotes the number of vertical incoming arrows at $(x, y)$. Similarly, $j_1$ denotes the number of horizontal incoming arrows; $i_2$ denotes the number of vertical outgoing arrows (that is, the number of directed edges from $(x, y)$ to $(x, y +1)$); and $j_2$ denotes the number of horizontal outgoing arrows. Thus $j_1, j_2 \in \{ 0, 1 \}$ at every vertex in a path ensemble, since no two paths share a horizontal edge. An example of an arrow configuration (which cannot be a vertex in a directed path ensemble, since $j_1, j_2 \notin \{ 0, 1 \}$) is depicted in Figure \ref{arrows}. 

Assigning values $j_1$ to vertices on the line $(1, y)$ and values $i_1$ to vertices on the line $(x, 1)$ can be viewed as imposing boundary conditions on the vertex model. If $j_1 = 1$ at $(1, k)$ and $i_1 = 0$ at $(k, 1)$ for each $k > 0$, then all paths enter through the $y$-axis, and every vertex on the positive $y$-axis is an entrance site for some path. We will refer to this particular assignment as \emph{step initial data}; it was depicted in Figure \ref{figurevertexwedge}, and it is also depicted on the left side of Figure \ref{shift2generalizedvertex}. In general, we will refer to any assignment of $i_1$ to $\mathbb{Z}_{> 0} \times \{ 1 \}$ and $j_1$ to $\{ 1 \} \times \mathbb{Z}_{> 0}$ as \emph{initial data}, which can be deterministic (like step) or random. 
	
Observe that, at any vertex in the positive quadrant, the total number of incoming arrows is equal to the total number of outgoing arrows; that is, $i_1 + j_1 = i_2 + j_2$. This is sometimes referred to as \emph{arrow conservation} (or \emph{spin conservation}). Any arrow configuration to all vertices of $\mathbb{Z}_{> 0}^2$ that satisfies arrow conservation corresponds to a unique directed path ensemble, in which paths can share both vertical and horizontal edges. 

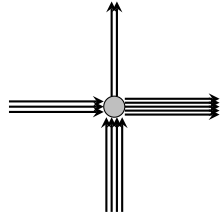
\begin{figure}

\begin{center} 

\begin{tikzpicture}[
      >=stealth,
      scale = .7
			]
			
			\draw[->,black, thick] (-.05, .2) -- (-.05, 2);
			\draw[->,black, thick] (.05, .2) -- (.05, 2);
			\draw[->,black, thick] (-.15,-2) -- (-.15, -.2);
			\draw[->,black, thick] (-.05,-2) -- (-.05, -.2);
			\draw[->,black, thick] (.05,-2) -- (.05, -.2);
			\draw[->,black, thick] (.15,-2) -- (.15, -.2);
			\draw[->,black, thick] (-2, -.1) -- (-.2, -.1);
			\draw[->,black, thick] (-2, 0) -- (-.2, 0);
			\draw[->,black, thick] (-2,.1) -- (-.2, .1);
			\draw[->,black, thick] (.2, -.15) -- (2, -.15); 
			\draw[->,black, thick] (.2, -.075) -- (2, -.075); 
			\draw[->,black, thick] (.2, 0) -- (2, 0); 
			\draw[->,black, thick] (.2, .075) -- (2, .075); 
			\draw[->,black, thick] (.2, .15) -- (2, .15);

			\filldraw[fill=gray!50!white, draw=black] (0, 0) circle [radius=.2];

\end{tikzpicture}

\end{center}

\caption{\label{arrows} Above is a vertex at which $(i_1, j_1; i_2, j_2) = (4, 3; 2, 5)$.} 
\end{figure}

\subsection{Probability Measures on Path Ensembles}

\label{ProbabilityMeasures}

The definition of the stochastic higher spin vertex models will closely resemble the definition of the stochastic six-vertex model given in Section \ref{StochasticVertex}. Specifically, we will first define probability measures $\mathbb{P}_n$ on the set of directed path ensembles whose vertices are all contained in triangles of the form $\mathbb{T}_n = \{ (x, y) \in \mathbb{Z}_{\ge 0}^2: x + y \le n \}$, and then we will take a limit as $n$ tends to infinity to obtain the vertex models in infinite volume. The first two measures $\mathbb{P}_0$ and $\mathbb{P}_1$ are both supported by the empty ensembles. 

For each positive integer $n \ge 1$, we will define $\mathbb{P}_{n + 1}$ from $\mathbb{P}_n$ through the following Markovian update rules. Use $\mathbb{P}_n$ to sample a directed path ensemble $\mathcal{E}_n$ on $\mathbb{T}_n$. This yields arrow configurations for all vertices in the triangle $\mathbb{T}_{n - 1}$. To extend this to a path ensemble on $\mathbb{T}_{n + 1}$, we must prescribe arrow configurations to all vertices $(x, y)$ on the complement $\mathbb{T}_n \setminus\mathbb{T}_{n - 1}$, which is the diagonal $\mathbb{D}_n = \{ (x, y) \in \mathbb{Z}_{> 0}^2: x + y = n \}$. Since any incoming arrow to $\mathbb{D}_n$ is an outgoing arrow from $\mathbb{D}_{n - 1}$, $\mathcal{E}_n$ and the initial data prescribe the first two coordinates, $i_1$ and $j_1$, of the arrow configuration to each $(x, y) \in \mathbb{D}_n$. Thus, it remains to explain how to assign the second two coordinates ($i_2$ and $j_2$) to any vertex on $\mathbb{D}_n$, given the first two coordinates. 

This is done by producing $(i_2, j_2)_{(x, y)}$ from $(i_1, j_1)_{(x, y)}$ according to the transition probabilities 
\begin{flalign}
\begin{aligned}
\label{configurationprobabilities} 
& \mathbb{P}_n \big[ (i_2, j_2) = (k, 0) \big| (i_1, j_1) = (k, 0) \big] = \displaystyle\frac{1 - q^k s_x \xi_x u_y}{1 - s_x \xi_x u_y}, \\
& \mathbb{P}_n \big[ (i_2, j_2) = (k - 1, 1) \big| (i_1, j_1) = (k, 0) \big] = \displaystyle\frac{(q^k - 1) s_x \xi_x u_y}{1 - s_x \xi_x u_y}, \\
& \mathbb{P}_n \big[ (i_2, j_2) = (k + 1, 0) \big| (i_1, j_1) = (k, 1) \big] = \displaystyle\frac{1 - q^k s_x^2}{1 - s_x \xi_x u_y},  \\
& \mathbb{P}_n \big[ (i_2, j_2) = (k, 1) \big| (i_1, j_1) = (k, 1) \big] = \displaystyle\frac{q^k s_x^2 - s_x \xi_x u_y}{1 - s_x \xi_x u_y},
\end{aligned}   
\end{flalign}

\noindent for any non-negative integer $k$. We also set $\mathbb{P}_n [(i_2, j_2) | (i_1, j_1)] = 0$ for all $(i_1, j_1; i_2, j_2)$ not of the above form. In the above, $q \in \mathbb{C}$ is a complex number and $U = (u_1, u_2, \ldots ) \subset \mathbb{C}$, $\Xi = (\xi_1, \xi_2, \ldots ) \subset \mathbb{C}$, and $S = (s_1, s_2, \ldots ) \subset \mathbb{C}$ are infinite sets of complex numbers, chosen to ensure that all of the above probabilities are non-negative. This can be arranged for instance when $q \in (0, 1)$, $U \subset (-\infty, 0]$, and $S, \Xi \subset [0, 1]$, although there are also other suitable choices. 

Choosing $(i_2, j_2)$ according to the above transition probabilities yields a random directed path ensemble $\mathcal{E}_{n + 1}$, now defined on $\mathbb{T}_{n + 1}$; the probability distribution of $\mathcal{E}_{n + 1}$ is then denoted by $\mathbb{P}_{n + 1}$. We define $\mathbb{P} = \lim_{n \rightarrow \infty} \mathbb{P}_n$. Then, $\mathbb{P}$ is a probability measure on the set of directed path ensembles that is dependent on the complex parameters $q$, $U$, $\Xi$, and $S$. The variables $U = (u_1, u_2, \ldots )$ are occasionally referred to as \emph{spectral parameters}, the variables $\Xi = (\xi_1, \xi_2, \ldots )$ as \emph{spacial inhomogeneity parameters}, and the variables $S = (s_1, s_2, \ldots )$ as \emph{spin parameters}. 

If there exists a positive integer $I$ such that $s^2 q^I = 1$ for all $s \in S$, then $\mathbb{P}[(i_2, j_2) = (I + 1, 0) | (i_1, j_1) = (I, 1)] = 0$; thus, the number of vertical incoming or outgoing arrows at any vertex remains less than $I + 1$. In this case, the vertex model is said to have \emph{spin $I / 2$}; if no such $I$ exists, then the spin of the model is said to be infinite.

\subsection{Vertex Models and Interacting Particle Systems}

\label{InteractingParticles}

Let $\mathbb{P}^{(T)}$ denote the restriction of the random path ensemble with step initial data (given by the measure $\mathbb{P}$ from the previous section) to the strip $\mathbb{Z}_{> 0} \times [0, T]$. Assume that all $T$ paths in this restriction almost surely exit the strip $\mathbb{Z}_{> 0} \times [0, T]$ through its top boundary; this will be the case, for instance, if $\mathbb{P}_n \big[ (i_2, j_2) = (0, k) \big| (i_1, j_1) = (0, k) \big] < 1$ for each $n$ and $k$. 
	
We will use the probability measure $\mathbb{P}^{(T)}$ to produce a discrete-time interacting particle system on $\mathbb{Z}_{> 0}$, defined up to time $T - 1$, as follows. Sample a line ensemble $\mathcal{E}$ randomly under $\mathbb{P}^{(T)}$, and consider the arrow configuration it associates to some vertex $(p, t) \in \mathbb{Z}_{> 0} \times [1, T]$. We will place $k$ particles at position $p$ and time $t - 1$ if and only if $i_1 = k $ at the vertex $(p, t)$. Therefore, the paths in the path ensemble $\mathcal{E}$ correspond to space-time trajectories of the particles. 

Let us introduce notation for particle positions. A \emph{non-negative signature} $\lambda = (\lambda_1, \lambda_2, \ldots , \lambda_n)$ of \emph{length} $n$ is a non-increasing sequence of $n$ integers $\lambda_1 \ge \lambda_2 \ge \cdots \ge \lambda_n \ge 0$. We denote the set of non-negative signatures of length $n$ by $\Sign_n^+$, and the set of all non-negative signatures by $\Sign^+ = \bigcup_{N = 0}^{\infty} \Sign_n^+$. For any signature $\lambda$ and integer $j$, let $m_j (\lambda)$ denote the number of indices $i$ for which $\lambda_i = j$; that is, $m_j (\lambda)$ is the multiplicity of $j$ in $\lambda$. 

We can associate a configuration of $n$ particles in $\mathbb{Z}_{\ge 0}$ with a signature of length $n$ as follows. A signature $\lambda = (\lambda_1, \lambda_2, \ldots , \lambda_n)$ is associated with the particle configuration that has $m_j (\lambda)$ particles at position $j$, for each non-negative integer $j$. Stated alternatively, $\lambda$ is the ordered set of positions in the particle configuration. If $\mathcal{E}$ is a directed line ensemble on $\mathbb{Z}_{\ge 0} \times [0, n]$, let $p_n (\mathcal{E}) \in \Sign_n^+$ denote the signature associated with the particle configuration produced from $\mathcal{E}$ at time $n$. 

Then, $\mathbb{P}^{(n)}$ induces a probability measure on $\Sign_n^+$,  defined as follows. 

\begin{definition}

\label{measuresignaturesvertexmodel} 

For any positive integer $n$, let $\textbf{M}_n$ denote the measure on $\Sign_n^+$ (dependent on the parameters $U$, $S$, and $\Xi$) defined by setting $\textbf{M}_n (\lambda) = \mathbb{P}^{(n)} [p_n (\mathcal{E}) = \lambda]$, for each $\lambda \in \Sign_n^+$. 
\end{definition}

If a model has spin $I / 2$, then it exhibits a type of \emph{weak exclusion principle}, which states that at most $I$ particles can occupy a given location (or equivalently that $m_j \big( p(\mathcal{E}) \big) \le I$ for each $j \in p(\mathcal{E}) $). We will be interested in the case when $I = 1$, which yields the stochastic six-vertex model.

\section{The Spin \texorpdfstring{$1/2$}{1/2} Vertex Models}

\label{Spin12}

We will be particularly interested in the stochastic higher spin vertex model in the \emph{spin $1 / 2$} setting, when all of the $s_k$ are equal to $s = q^{-1 / 2}$. Under this degeneration, the model becomes the stochastic six-vertex model. In Section \ref{StochasticSixVertexModel}, we we will explain this degeneration in more detail. Then, in Section \ref{AsymmetricExclusion}, we will discuss the ASEP and state how the stochastic six-vertex model degenerates to it.

\subsection{The Stochastic Six-Vertex Model}

\label{StochasticSixVertexModel}

The goal of this section is twofold. First, we explain how the stochastic six-vertex model can be recovered directly from a degeneration of the stochastic higher spin vertex model. Second, we formally define and give examples of initial data for the stochastic six-vertex model.

\subsubsection{The Stochastic Six-Vertex Model Through Higher Spin Vertex Models}

\label{VertexHigherSpinVertex}

Let us examine what happens to the probabilities \eqref{configurationprobabilities} when $u_k = u$, $\xi_k = 1$, and $s_k = q^{-1 / 2}$ for all $k \ge 0$. Since $\mathbb{P}_n [(i_2, j_2) = (2, 0) | (i_1, j_1) = (1, 1)] = (1 - q s^2) / (1 - su) = 0$, we have that $i_1, i_2 \in \{ 0, 1 \}$ at each vertex. Thus no two paths can share a vertical edge, meaning that $\mathbb{P}$ is supported on the set of six-vertex directed path ensembles defined in Section \ref{StochasticVertex}. 

Now let us re-parameterize. Let $\delta_1, \delta_2 \in (0, 1)$ satisfy $\delta_1 < \delta_2$, and define $\kappa = (1 - \delta_1) / (1 - \delta_2) > 1$. Denote
\begin{flalign*}
q = \displaystyle\frac{\delta_1}{\delta_2} < 1; \qquad u = \kappa s = \displaystyle\frac{1 - \delta_1}{1 - \delta_2} \sqrt{\displaystyle\frac{\delta_2}{\delta_1}} > 1.
\end{flalign*} 

\noindent Substituting this specialization into the other probabilities in \eqref{configurationprobabilities}, we find that
\begin{flalign}
\begin{aligned} 
\label{sixvertexprobabilities} 
& \qquad \mathbb{P}_n \big[ (i_2, j_2) = (0, 0) \big| (i_1, j_1) = (0, 0) \big] = 1 =  \mathbb{P}_n \big[ (i_2, j_2) = (1, 1) \big| (i_1, j_1) = (1, 1) \big],  \\
& \mathbb{P}_n \big[ (i_2, j_2) = (1, 0) \big| (i_1, j_1) = (1, 0) \big] = \delta_1; \qquad \mathbb{P}_n \big[ (i_2, j_2) = (0, 1) \big| (i_1, j_1) = (1, 0) \big] = 1 - \delta_1,  \\
& \mathbb{P}_n \big[ (i_2, j_2) = (0, 1) \big| (i_1, j_1) = (0, 1) \big] = \delta_2; \qquad \mathbb{P}_n \big[ (i_2, j_2) = (1, 0) \big| (i_1, j_1) = (0, 1) \big] = 1 - \delta_2. 
\end{aligned}
\end{flalign}

\noindent Since the probabilities \eqref{sixvertexprobabilities} coincide with the probabilities depicted in Figure \ref{sixvertexfigure}, it follows that the measure $\mathbb{P}$ of Section \ref{ProbabilityMeasures} (under the above specialization) is equal to the stochastic six-vertex measure $\mathcal{P} (\delta_1, \delta_2)$ defined in Section \ref{StochasticVertex}.

\subsubsection{Initial Data for the Stochastic Six-Vertex Model}

\label{InitialDataVertex}

We start with the following definition. 

\begin{definition}

\label{initialxy} 

Let $\varphi = \big( \varphi (1), \varphi (2), \ldots \big)$, where $\varphi (k) = \big( \varphi_k^{(x)}, \varphi_k^{(y)} \big) \in \mathbb{Z}_{\ge 0} \times \{ 0, 1 \}$ for each $k$, be a stochastic process. The stochastic higher spin vertex model with \emph{initial data $\varphi$} is the stochastic higher spin vertex model (as defined in Section \ref{ProbabilityMeasures} that evolves according to the transition probabilities given by \eqref{configurationprobabilities}) whose boundary condition is defined by setting $j_1 = \varphi_k^{(y)}$ at $(1, k)$ and $i_1 = \varphi_k^{(x)}$ at $(k, 1)$, for each $k$. 
\end{definition}

\begin{exa}

\label{doublesided}

\emph{Step Bernoulli initial data} with \emph{parameter} $b$ arises when the $\varphi (i)$ are all mutually independent, $\varphi_i^{(x)} \equiv 0$, and the $\varphi_i^{(y)}$ are independent $0-1$ Bernoulli random variables with mean $b$. 

Under this initial data, paths can only enter from the $y$-axis, and each vertex on the positive $y$-axis is an entrance point for a path with probability $b$. The case of step initial data is recovered when $b = 1$. 
\end{exa} 

\noindent In the example above, the $\varphi (i)$ are mutually independent, but they need not be in general. For instance, this is the case for the following example, which introduces a class of initial data that generalizes step Bernoulli initial data.

\begin{definition}

\label{generalized}

If $m$ is a positive integer and $b_1, b_2, \ldots , b_m \in [0, 1]$ are positive real numbers, then \emph{step $(b_1, b_2, \ldots , b_m)$-Bernoulli initial data} is defined as follows. First, we set $\varphi_i^{(x)} \equiv 0$, so that no paths enter through the $x$-axis. 

Next, to define $\varphi_i^{(y)}$, consider the stochastic higher spin vertex model (with step initial data), restricted to the strip $[0, m + 1] \times [0, \infty)$, in which the transition probabilities given by \eqref{configurationprobabilities} are replaced with the probabilities 
\begin{flalign}
\begin{aligned}
\label{generalizedprobabilities} 
& \mathbb{P}_n \big[ (i_2, j_2) = (k, 0) \big| (i_1, j_1) = (k, 0) \big] = 1 - (1 - q^k) b_x, \\
& \mathbb{P}_n \big[ (i_2, j_2) = (k - 1, 1) \big| (i_1, j_1) = (k, 0) \big] = (1 - q^k) b_x, \\
& \mathbb{P}_n \big[ (i_2, j_2) = (k + 1, 0) \big| (i_1, j_1) = (k, 1) \big] = 1 - b_x,  \\
& \mathbb{P}_n \big[ (i_2, j_2) = (k, 1) \big| (i_1, j_1) = (k, 1) \big] = b_x,  
\end{aligned} 
\end{flalign}

\noindent at the vertex $(x, y)$ for all integers $x \in [1, m]$ and $y \in [1, \infty)$. Then, define the random variable $\varphi_i^{(y)}$ to be the value of $j_2$ at the vertex $(m, i)$, for each positive integer $i$. 

\end{definition}

\begin{rem}

\label{m1generalized} 

Observe that this coincides with step $b_1$-Bernoulli initial data when $m = 1$. Indeed, in that case, $j_1 = 1$ deterministically at each vertex on the positive $y$-axis due to the step initial data. Therefore, the top two probabilities in \eqref{generalizedprobabilities} have no effect, and the latter two probabilities imply that $j_2$ is a Bernoulli $0-1$ random variable with mean $b_1$ at each vertex on the positive $y$-axis. 
\end{rem}

\begin{rem}

\label{columnsgeneralizedinitial} 

Running a stochastic six-vertex model with this initial data can be viewed as running a stochastic higher spin vertex model with step initial data, whose transition probabilities in the first $m$ columns match those given by \eqref{generalizedprobabilities}, and whose probabilities in all other columns match those of \eqref{sixvertexprobabilities}. This is depicted on the left side of Figure \ref{shift2generalizedvertex} in the case $m = 2$. 

On the right side of Figure \ref{shift2generalizedvertex} is the same model ``shifted'' to the left by $2$ with all arrows originally in the first two columns removed; this is how a stochastic six-vertex model with generalized step Bernoulli initial data might look. 

\end{rem}

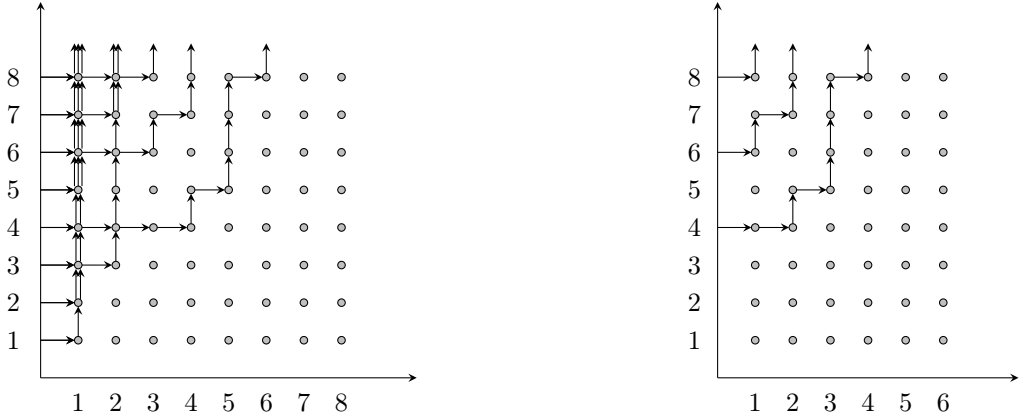
\begin{figure} 

\begin{center}

\begin{tikzpicture}[
      >=stealth,
			scale = 1
			]

			\draw[->, black	] (0, 0) -- (5, 0);
			\draw[->, black] (0, 0) -- (0, 5);
			
			\draw[->, black] (0, .5) -- (.45, .5) node[black, left=17] {1};
			\draw[->, black] (0, 1) -- (.45, 1) node[black, left=17] {2};
			\draw[->, black] (0, 1.5) -- (.45, 1.5) node[black, left=17] {3};
			\draw[->, black] (0, 2) -- (.45, 2) node[black, left=17] {4};
			\draw[->, black] (0, 2.5) -- (.45, 2.5) node[black, left=17] {5};
			\draw[->, black] (0, 3) -- (.45, 3) node[black, left=17] {6};
			\draw[->, black] (0, 3.5) -- (.45, 3.5) node[black, left=17] {7};
			\draw[->, black] (0, 4) -- (.45, 4) node[black, left=17] {8};

			\draw[->, black] (.55, 1.5) -- (.95, 1.5);
			\draw[->, black] (.55, 2) -- (.95, 2);
			\draw[->, black] (.55, 3) -- (.95, 3);
			\draw[->, black] (.55, 3.5) -- (.95, 3.5);
			\draw[->, black] (.55, 4) -- (.95, 4);

			\draw[->, black] (1.05, 2) -- (1.45, 2);
			\draw[->, black] (1.05, 3) -- (1.45, 3);
			\draw[->, black] (1.05, 4) -- (1.45, 4);
			
			\draw[->, black] (1.55, 2) -- (1.95, 2);
			\draw[->, black] (1.55, 3.5) -- (1.95, 3.5);
			
			\draw[->, black] (2.05, 2.5) -- (2.45, 2.5);
			
			\draw[->, black] (2.55, 4) -- (2.95, 4);
			
			\draw[->, black] (0, .5) -- (.45, .5);
			\draw[->, black] (0, 1) -- (.45, 1);
			\draw[->, black] (0, 1.5) -- (.45, 1.5);
			\draw[->, black] (0, 2) -- (.45, 2);
			\draw[->, black] (0, 2.5) -- (.45, 2.5);
			\draw[->, black] (0, 3) -- (.45, 3);
			\draw[->, black] (0, 3.5) -- (.45, 3.5);
			\draw[->, black] (0, 4) -- (.45, 4);

			\draw[->, black] (.5, .55) -- (.5, .95);
			\draw[->, black] (.47, 1.05) -- (.47, 1.45);
			\draw[->, black] (.53, 1.05) -- (.53, 1.45);
			\draw[->, black] (.47, 1.55) -- (.47, 1.95);
			\draw[->, black] (.53, 1.55) -- (.53, 1.95);
			\draw[->, black] (.47, 2.05) -- (.47, 2.45);
			\draw[->, black] (.53, 2.05) -- (.53, 2.45);
			\draw[->, black] (.55, 2.55) -- (.55, 2.95);
			\draw[->, black] (.45, 2.55) -- (.45, 2.95);
			\draw[->, black] (.5, 2.55) -- (.5, 2.95);
			\draw[->, black] (.55, 3.05 ) -- (.55, 3.45);
			\draw[->, black] (.45, 3.05) -- (.45, 3.45);
			\draw[->, black] (.5, 3.05) -- (.5, 3.45);
			\draw[->, black] (.55, 3.55) -- (.55, 3.95);
			\draw[->, black] (.45, 3.55) -- (.45, 3.95);
			\draw[->, black] (.5, 3.55) -- (.5, 3.95);
			\draw[->, black] (.55, 4.05 ) -- (.55, 4.45);
			\draw[->, black] (.45, 4.05) -- (.45, 4.45);
			\draw[->, black] (.5, 4.05) -- (.5, 4.45);

			\draw[->, black] (1, 1.55) -- (1, 1.95);
			\draw[->, black] (1, 2.05) -- (1, 2.45);
			\draw[->, black] (1, 2.55) -- (1, 2.95);
			\draw[->, black] (1, 3.05 ) -- (1, 3.45);
			\draw[->, black] (.97, 3.55) -- (.97, 3.95);
			\draw[->, black] (1.03, 3.55) -- (1.03, 3.95);
			\draw[->, black] (.97, 4.05) -- (.97, 4.45);
			\draw[->, black] (1.03, 4.05) -- (1.03, 4.45);

			\draw[->, black] (1.5, 3.05 ) -- (1.5, 3.45);
			\draw[->, black] (1.5, 4.05) -- (1.5, 4.45); 
			
			\draw[->, black] (2, 2.05 ) -- (2, 2.45);
			\draw[->, black] (2, 3.55) -- (2, 3.95);
			\draw[->, black] (2, 4.05) -- (2, 4.45);

			\draw[->, black] (2.5, 2.55 ) -- (2.5, 2.95);
			\draw[->, black] (2.5, 3.05) -- (2.5, 3.45);
			\draw[->, black] (2.5, 3.55) -- (2.5, 3.95);
			
			\draw[->, black] (3, 4.05) -- (3, 4.45);

			\filldraw[fill=gray!50!white, draw=black] (.5, .5) circle [radius=.05] node[black, below = 17] {1};
			\filldraw[fill=gray!50!white, draw=black] (.5, 1) circle [radius=.05];
			\filldraw[fill=gray!50!white, draw=black] (.5, 1.5) circle [radius=.05];
			\filldraw[fill=gray!50!white, draw=black] (.5, 2) circle [radius=.05];
			\filldraw[fill=gray!50!white, draw=black] (.5, 2.5) circle [radius=.05];
			\filldraw[fill=gray!50!white, draw=black] (.5, 3) circle [radius=.05];
			\filldraw[fill=gray!50!white, draw=black] (.5, 3.5) circle [radius=.05];
			\filldraw[fill=gray!50!white, draw=black] (.5, 4) circle [radius=.05];

			\filldraw[fill=gray!50!white, draw=black] (1, .5) circle [radius=.05] node[black, below = 17] {2};
			\filldraw[fill=gray!50!white, draw=black] (1, 1) circle [radius=.05];
			\filldraw[fill=gray!50!white, draw=black] (1, 1.5) circle [radius=.05];
			\filldraw[fill=gray!50!white, draw=black] (1, 2) circle [radius=.05];
			\filldraw[fill=gray!50!white, draw=black] (1, 2.5) circle [radius=.05];
			\filldraw[fill=gray!50!white, draw=black] (1, 3) circle [radius=.05];
			\filldraw[fill=gray!50!white, draw=black] (1, 3.5) circle [radius=.05];
			\filldraw[fill=gray!50!white, draw=black] (1, 4) circle [radius=.05];
			
			\filldraw[fill=gray!50!white, draw=black] (1.5, .5) circle [radius=.05] node[black, below = 17] {3};
			\filldraw[fill=gray!50!white, draw=black] (1.5, 1) circle [radius=.05];
			\filldraw[fill=gray!50!white, draw=black] (1.5, 1.5) circle [radius=.05];
			\filldraw[fill=gray!50!white, draw=black] (1.5, 2) circle [radius=.05];
			\filldraw[fill=gray!50!white, draw=black] (1.5, 2.5) circle [radius=.05];
			\filldraw[fill=gray!50!white, draw=black] (1.5, 3) circle [radius=.05];
			\filldraw[fill=gray!50!white, draw=black] (1.5, 3.5) circle [radius=.05];
			\filldraw[fill=gray!50!white, draw=black] (1.5, 4) circle [radius=.05];

			\filldraw[fill=gray!50!white, draw=black] (2, .5) circle [radius=.05] node[black, below = 17] {4};
			\filldraw[fill=gray!50!white, draw=black] (2, 1) circle [radius=.05];
			\filldraw[fill=gray!50!white, draw=black] (2, 1.5) circle [radius=.05];
			\filldraw[fill=gray!50!white, draw=black] (2, 2) circle [radius=.05];
			\filldraw[fill=gray!50!white, draw=black] (2, 2.5) circle [radius=.05];
			\filldraw[fill=gray!50!white, draw=black] (2, 3) circle [radius=.05];
			\filldraw[fill=gray!50!white, draw=black] (2, 3.5) circle [radius=.05];
			\filldraw[fill=gray!50!white, draw=black] (2, 4) circle [radius=.05];
			
			\filldraw[fill=gray!50!white, draw=black] (2.5, .5) circle [radius=.05] node[black, below = 17] {5};
			\filldraw[fill=gray!50!white, draw=black] (2.5, 1) circle [radius=.05];
			\filldraw[fill=gray!50!white, draw=black] (2.5, 1.5) circle [radius=.05];
			\filldraw[fill=gray!50!white, draw=black] (2.5, 2) circle [radius=.05];
			\filldraw[fill=gray!50!white, draw=black] (2.5, 2.5) circle [radius=.05];
			\filldraw[fill=gray!50!white, draw=black] (2.5, 3) circle [radius=.05];
			\filldraw[fill=gray!50!white, draw=black] (2.5, 3.5) circle [radius=.05];
			\filldraw[fill=gray!50!white, draw=black] (2.5, 4) circle [radius=.05];

			\filldraw[fill=gray!50!white, draw=black] (3, .5) circle [radius=.05] node[black, below = 17] {6};
			\filldraw[fill=gray!50!white, draw=black] (3, 1) circle [radius=.05];
			\filldraw[fill=gray!50!white, draw=black] (3, 1.5) circle [radius=.05];
			\filldraw[fill=gray!50!white, draw=black] (3, 2) circle [radius=.05];
			\filldraw[fill=gray!50!white, draw=black] (3, 2.5) circle [radius=.05];
			\filldraw[fill=gray!50!white, draw=black] (3, 3) circle [radius=.05];
			\filldraw[fill=gray!50!white, draw=black] (3, 3.5) circle [radius=.05];
			\filldraw[fill=gray!50!white, draw=black] (3, 4) circle [radius=.05];
			
			\filldraw[fill=gray!50!white, draw=black] (3.5, .5) circle [radius=.05] node[black, below = 17] {7};
			\filldraw[fill=gray!50!white, draw=black] (3.5, 1) circle [radius=.05];
			\filldraw[fill=gray!50!white, draw=black] (3.5, 1.5) circle [radius=.05];
			\filldraw[fill=gray!50!white, draw=black] (3.5, 2) circle [radius=.05];
			\filldraw[fill=gray!50!white, draw=black] (3.5, 2.5) circle [radius=.05];
			\filldraw[fill=gray!50!white, draw=black] (3.5, 3) circle [radius=.05];
			\filldraw[fill=gray!50!white, draw=black] (3.5, 3.5) circle [radius=.05];
			\filldraw[fill=gray!50!white, draw=black] (3.5, 4) circle [radius=.05];
			
			\filldraw[fill=gray!50!white, draw=black] (4, .5) circle [radius=.05] node[black, below = 17] {8};
			\filldraw[fill=gray!50!white, draw=black] (4, 1) circle [radius=.05];
			\filldraw[fill=gray!50!white, draw=black] (4, 1.5) circle [radius=.05];
			\filldraw[fill=gray!50!white, draw=black] (4, 2) circle [radius=.05];
			\filldraw[fill=gray!50!white, draw=black] (4, 2.5) circle [radius=.05];
			\filldraw[fill=gray!50!white, draw=black] (4, 3) circle [radius=.05];
			\filldraw[fill=gray!50!white, draw=black] (4, 3.5) circle [radius=.05];
			\filldraw[fill=gray!50!white, draw=black] (4, 4) circle [radius=.05];

			\draw[->, black	] (9, 0) -- (13, 0);
			\draw[->, black] (9, 0) -- (9, 5);

			\draw[->, black] (9, 2) -- (9.45, 2);
			\draw[->, black] (9, 3) -- (9.45, 3);
			\draw[->, black] (9, 4) -- (9.45, 4);
			
			\draw[->, black] (9.55, 2) -- (9.95, 2);
			\draw[->, black] (9.55, 3.5) -- (9.95, 3.5);
			
			\draw[->, black] (10.05, 2.5) -- (10.45, 2.5);
			
			\draw[->, black] (10.55, 4) -- (10.95, 4);

			\draw[->, black] (9.5, 3.05 ) -- (9.5, 3.45);
			\draw[->, black] (9.5, 4.05) -- (9.5, 4.45);
			
			\draw[->, black] (10, 2.05 ) -- (10, 2.45);
			\draw[->, black] (10, 3.55) -- (10, 3.95);
			\draw[->, black] (10, 4.05) -- (10, 4.45);

			\draw[->, black] (10.5, 2.55 ) -- (10.5, 2.95);
			\draw[->, black] (10.5, 3.05) -- (10.5, 3.45);
			\draw[->, black] (10.5, 3.55) -- (10.5, 3.95);
			
			\draw[->, black] (11, 4.05) -- (11, 4.45);

			\filldraw[fill=gray!50!white, draw=black] (9.5, .5) circle [radius=.05] node[black, below = 17] {1}  node[black, left = 17] {1};
			\filldraw[fill=gray!50!white, draw=black] (9.5, 1) circle [radius=.05] node[black, left = 17] {2};
			\filldraw[fill=gray!50!white, draw=black] (9.5, 1.5) circle [radius=.05] node[black, left = 17] {3};
			\filldraw[fill=gray!50!white, draw=black] (9.5, 2) circle [radius=.05] node[black, left = 17] {4};
			\filldraw[fill=gray!50!white, draw=black] (9.5, 2.5) circle [radius=.05] node[black, left = 17] {5};
			\filldraw[fill=gray!50!white, draw=black] (9.5, 3) circle [radius=.05] node[black, left = 17] {6};
			\filldraw[fill=gray!50!white, draw=black] (9.5, 3.5) circle [radius=.05] node[black, left = 17] {7};
			\filldraw[fill=gray!50!white, draw=black] (9.5, 4) circle [radius=.05] node[black, left = 17] {8};

			\filldraw[fill=gray!50!white, draw=black] (10, .5) circle [radius=.05] node[black, below = 17] {2};
			\filldraw[fill=gray!50!white, draw=black] (10, 1) circle [radius=.05];
			\filldraw[fill=gray!50!white, draw=black] (10, 1.5) circle [radius=.05];
			\filldraw[fill=gray!50!white, draw=black] (10, 2) circle [radius=.05];
			\filldraw[fill=gray!50!white, draw=black] (10, 2.5) circle [radius=.05];
			\filldraw[fill=gray!50!white, draw=black] (10, 3) circle [radius=.05];
			\filldraw[fill=gray!50!white, draw=black] (10, 3.5) circle [radius=.05];
			\filldraw[fill=gray!50!white, draw=black] (10, 4) circle [radius=.05];
			
			\filldraw[fill=gray!50!white, draw=black] (10.5, .5) circle [radius=.05] node[black, below = 17] {3};
			\filldraw[fill=gray!50!white, draw=black] (10.5, 1) circle [radius=.05];
			\filldraw[fill=gray!50!white, draw=black] (10.5, 1.5) circle [radius=.05];
			\filldraw[fill=gray!50!white, draw=black] (10.5, 2) circle [radius=.05];
			\filldraw[fill=gray!50!white, draw=black] (10.5, 2.5) circle [radius=.05];
			\filldraw[fill=gray!50!white, draw=black] (10.5, 3) circle [radius=.05];
			\filldraw[fill=gray!50!white, draw=black] (10.5, 3.5) circle [radius=.05];
			\filldraw[fill=gray!50!white, draw=black] (10.5, 4) circle [radius=.05];
			
			\filldraw[fill=gray!50!white, draw=black] (11, .5) circle [radius=.05] node[black, below = 17] {4};
			\filldraw[fill=gray!50!white, draw=black] (11, 1) circle [radius=.05];
			\filldraw[fill=gray!50!white, draw=black] (11, 1.5) circle [radius=.05];
			\filldraw[fill=gray!50!white, draw=black] (11, 2) circle [radius=.05];
			\filldraw[fill=gray!50!white, draw=black] (11, 2.5) circle [radius=.05];
			\filldraw[fill=gray!50!white, draw=black] (11, 3) circle [radius=.05];
			\filldraw[fill=gray!50!white, draw=black] (11, 3.5) circle [radius=.05];
			\filldraw[fill=gray!50!white, draw=black] (11, 4) circle [radius=.05];
			
			\filldraw[fill=gray!50!white, draw=black] (11.5, .5) circle [radius=.05] node[black, below = 17] {5};
			\filldraw[fill=gray!50!white, draw=black] (11.5, 1) circle [radius=.05];
			\filldraw[fill=gray!50!white, draw=black] (11.5, 1.5) circle [radius=.05];
			\filldraw[fill=gray!50!white, draw=black] (11.5, 2) circle [radius=.05];
			\filldraw[fill=gray!50!white, draw=black] (11.5, 2.5) circle [radius=.05];
			\filldraw[fill=gray!50!white, draw=black] (11.5, 3) circle [radius=.05];
			\filldraw[fill=gray!50!white, draw=black] (11.5, 3.5) circle [radius=.05];
			\filldraw[fill=gray!50!white, draw=black] (11.5, 4) circle [radius=.05];
			
			\filldraw[fill=gray!50!white, draw=black] (12, .5) circle [radius=.05] node[black, below = 17] {6};
			\filldraw[fill=gray!50!white, draw=black] (12, 1) circle [radius=.05];
			\filldraw[fill=gray!50!white, draw=black] (12, 1.5) circle [radius=.05];
			\filldraw[fill=gray!50!white, draw=black] (12, 2) circle [radius=.05];
			\filldraw[fill=gray!50!white, draw=black] (12, 2.5) circle [radius=.05];
			\filldraw[fill=gray!50!white, draw=black] (12, 3) circle [radius=.05];
			\filldraw[fill=gray!50!white, draw=black] (12, 3.5) circle [radius=.05];
			\filldraw[fill=gray!50!white, draw=black] (12, 4) circle [radius=.05];

\end{tikzpicture}

\end{center}

\caption{\label{shift2generalizedvertex} Shown above and to the left is the stochastic higher spin vertex model specialized as in Example \ref{generalized}, with $m = 2$. Shown to the right is this model shifted to the left by $2$, yielding a stochastic six-vertex model with generalized step Bernoulli initial data.  }

\end{figure}

\subsection{The ASEP}

\label{AsymmetricExclusion}

In this section we discuss the asymmetric simple exclusion process (ASEP). Our goals are again two-fold. First, we formally define initial data for the ASEP; then, we explain a limit degeneration that maps the stochastic six-vertex model to the ASEP. The latter point will be particularly useful to us later, in Section \ref{DeterminantVertical}, where it will allow us to produce identities for the ASEP from identities for the stochastic six-vertex model.

\subsubsection{Initial Data for the ASEP}

\label{InitialDataProcess}

 Recall from Section \ref{AsymmetricExclusions} that the ASEP is a continuous time Markov process $\{ \eta_t (i) \}_{i \in \mathbb{Z}, t \in \mathbb{R}_{\ge 0}} \subset \{ 0, 1 \}^{\mathbb{Z}} \times \mathbb{R}_{\ge 0}$, whose dynamics can be defined as follows. 

For each pair of consecutive integers $(i, i + 1) \in \mathbb{Z}^2$, the variables $\eta_t (i)$ and $\eta_t (i + 1)$ are interchanged (meaning we set $\eta_t (i) = \eta_{t^-} (i + 1)$ and $\eta_t (i + 1) = \eta_{t^-} (i)$, but leave all other $\eta_t (j)$ fixed) at exponential rate $L \ge 0$ if $\big( \eta_{t^-} (i), \eta_{t^-} (i + 1) \big) = (0, 1)$; similarly, the variables $\eta_t (i)$ and $\eta_t (i + 1)$ are interchanged at exponential rate $R \ge 0$ if $\big( \eta_{t^-} (i), \eta_{t^-} (i + 1) \big) = (1, 0)$. The exponential clocks corresponding to each pair $(i, i + 1)$ are mutually independent. Here, $t^-$ refers to the time infinitesimally before time $t$. 

The random variables $\eta_t (i)$ can be viewed as indicators for the event that a particle is at position $i$ at time $t$. Therefore, the dynamics at ASEP are equivalent to particles independently attempting to jump left at rate $L$ and right at rate $R$, subject to the exclusion restriction. Specifically, if the destination of the jump is unoccupied, the jump is performed; otherwise, it is not. Throughout, we will assume that $R > L$, so that particles drift to the right on average. 

Now we can define initial data for the ASEP. 

\begin{definition}

\label{initialprocess}

Let $\varphi = (\varphi (1), \varphi (2), \ldots )$ be a stochastic process, where $\varphi (i) = \big( \varphi_i^{(x)}, \varphi_i^{(y)} \big) \in \{ 0, 1 \} \times \{ 0, 1 \}$. Define the \emph{ASEP with initial data $\varphi$} to be the ASEP with $\eta_0 (i) = \varphi_i^{(x)}$ if $i$ is positive, and $\eta_0 (i) = \varphi_{1 - i}^{(y)}$ if $i$ is non-positive. 

\end{definition}

\begin{exa}
If $b \in [0, 1]$, then \emph{step $b$-Bernoulli initial data} arises when the $\varphi (i)$ are mutually independent, $\varphi_i^{(x)} \equiv 0$, and the $\varphi_i^{(y)}$ are independent $0-1$ Bernoulli random variables with mean $b$, for each $i$. That is, particles are initially placed at or to the left of $0$ with probability $b$, and no particles are placed to the right of $0$; all placements are independent. 
\end{exa}

\subsubsection{Degeneration of the Stochastic Six-Vertex Model to the ASEP}

\label{CurrentConverge} 

In this section we explain a certain limit degeneration under which the stochastic six-vertex model converges to the ASEP. 

To briefly provide a heuristic as to why such a convergence should exist, consider the stochastic six-vertex model $\mathcal{P} (\delta_1, \delta_2)$ in which $\delta_1 = \delta_2 = 0$ (see Section \ref{VertexHigherSpinVertex} for definitions). In this case, all paths will almost surely alternate between turning one space up and one space right. In terms of the interacting particle system interpretation discussed in Section \ref{InteractingParticles}, each particle almost surely jumps one space to the right at each time step; thus, if we translate each particle $t$ spaces to the left at time $t$ (we refer to this as \emph{offsetting by the diagonal}), then the particle system will not evolve over time. 

Now assume that $\delta_1 = \varepsilon L$ and $\delta_2 = \varepsilon R$, for some positive real numbers $R, L > 0$ and some very small number $\varepsilon > 0$. Then, paths will ``usually'' turn up and right; thus, after offsetting by the diagonal, particles will usually not move. However, very rarely, an offset particle will jump to the left (with probability $\varepsilon L$) and to the right (with probability $\varepsilon R$), subject to the restriction that a particle cannot jump to an occupied location. After rescaling time by $\varepsilon^{-1}$, and taking the limit as $\varepsilon$ tends to $0$, this coincides with the dynamics of the ASEP. 

Thus, after offsetting by the diagonal, one expects that the stochastic six-vertex model $\mathcal{P} (\varepsilon L, \varepsilon R)$ should converge to the ASEP. This heuristic was provided in Section 2.2 of \cite{SSVM} (see also Section 6.5 of \cite{HSVMRSF}) and later established in Theorem 3 and Corollary 4 of \cite{CSSVMEP}. 

The following proposition, which appears as Corollary 4 of \cite{CSSVMEP}, states this heuristic precisely in terms of convergence of currents. In what follows, we recall the definitions of the currents $J$ of the ASEP and $\mathfrak{H}$ of the stochastic six-vertex model from Section \ref{AsymmetricExclusions} and Section \ref{StochasticVertex}. 

\begin{prop}[{\cite[Corollary 4]{CSSVMEP}}]

\label{currentmodelprocess}

Fix real numbers $R, L \ge 0$ and a stochastic process $\varphi = \big( \varphi (1), \varphi (2), \ldots \big)$, with $\varphi (i) = \big( \varphi_i^{(x)}, \varphi_i^{(y)} \big) \in \{ 0, 1 \} \times \{ 0, 1 \}$. Let $\varepsilon > 0$ be a real number, and denote $\delta_1 = \delta_{1; \varepsilon} = \varepsilon L$ and $\delta_2 = \delta_{2; \varepsilon} = \varepsilon R$; assume that $\delta_1, \delta_2 \in (0, 1)$. Also fix $r \in \mathbb{R}$, $t \in \mathbb{R}_{> 0}$, and $x \in \mathbb{Z}$.

Let $p_{\varepsilon} (x; r) = \mathbb{P}_V \big[ \mathfrak{H} \big( x + \lfloor \varepsilon^{-1} t \rfloor, \lfloor \varepsilon^{-1} t \rfloor \big) \ge r \big]$, where the probability $\mathbb{P}_V$ is under the stochastic six-vertex model $\mathcal{P} (\delta_1, \delta_2)$ with initial data $\varphi$. Furthermore, let $p (x; r) = \mathbb{P}_A \big[ J_t \big( x \big) \ge r \big]$, where the probability $\mathbb{P}_A$ is under the ASEP with left jump rate $L$, right jump rate $R$, and initial data $\varphi$. 

Then, $\lim_{\varepsilon \rightarrow 0} p_{\varepsilon} (x; r) = p (x; r)$. 

\end{prop}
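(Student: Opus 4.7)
The plan is to exploit the interpretation of the stochastic six-vertex model as a discrete-time interacting particle system (as outlined in Section \ref{InteractingParticles}), and to show that after offsetting by the diagonal and rescaling time, this particle system converges as $\varepsilon \to 0$ to the continuous-time ASEP. Since $\mathfrak{H}(x + \lfloor \varepsilon^{-1} t\rfloor, \lfloor \varepsilon^{-1} t\rfloor)$ counts paths crossing $y = \lfloor \varepsilon^{-1} t\rfloor$ strictly to the right of $x + \lfloor \varepsilon^{-1} t\rfloor$, under the diagonal offset this coincides with the number of particles strictly to the right of $x$ in the offset particle system at time $\lfloor \varepsilon^{-1} t\rfloor$. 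Establishing convergence of the particle systems therefore implies convergence of $p_\varepsilon(x;r)$ to $p(x;r)$.

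First, I would inspect the update probabilities \eqref{sixvertexprobabilities} to describe the one-step transition in the offset coordinates. When $\delta_1 = \varepsilon L$ and $\delta_2 = \varepsilon R$, the "turn right then up" configuration (with $j_1=1$ input becoming $i_2=1$ output) has probability $1 - \delta_1 = 1 - \varepsilon L$, while the deviation configurations each have probability $\varepsilon L$ or $\varepsilon R$. Translated to the offset particle picture: an offset particle stays in place with probability $1 - \varepsilon(L+R) + O(\varepsilon^2)$, attempts a leftward jump with probability $\varepsilon L + O(\varepsilon^2)$, and attempts a rightward jump with probability $\varepsilon R + O(\varepsilon^2)$. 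The blocking rule — an attempted jump to an occupied site is rejected — follows directly from the six-vertex update probabilities together with the left-to-right sequential structure of the Markov update along each new diagonal.

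Next, I would rescale time by $\varepsilon^{-1}$ and apply a standard generator (or infinitesimal) convergence argument (for instance via the Trotter-Kurtz scheme in Ethier-Kurtz, or by direct verification of the martingale problem) to show that the generator of the rescaled offset chain converges to the generator of the ASEP with jump rates $L$ and $R$. Concretely, for any bounded cylinder function $f$ on particle configurations, one checks that
\begin{equation*}
\varepsilon^{-1}\bigl(\mathbb{E}[f(\eta_{\text{next}}) \mid \eta] - f(\eta)\bigr) \;\longrightarrow\; \mathcal{L}_{\text{ASEP}} f(\eta)
\end{equation*}
uniformly on any finite window as $\varepsilon \to 0$. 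The contribution from simultaneous attempted jumps in a single $\varepsilon$-step is $O(\varepsilon^2)$ per pair and therefore negligible, which allows the discrete sequential updates to converge to asynchronous Poissonian exclusion dynamics. Combined with a standard tightness argument (or simply with finite-dimensional convergence, which suffices for the marginal statement), this yields convergence of the offset particle system to the ASEP in distribution on any finite time horizon.

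The main obstacle is handling the combined effect of the sequential left-to-right updates in each diagonal together with the exclusion/blocking rule, which is not a priori identical to the continuous-time exclusion rule. The cleanest route is a coupling: for each $\varepsilon$, drive both processes by the same Poisson clocks, and show that with probability tending to $1$ the only non-trivial order-$\varepsilon$ events are isolated single-particle jumps, which match between the two processes. Given the pointwise convergence at the level of particle trajectories (up to any fixed finite time), the current $\mathfrak{H}$ at the shifted space-time point evaluates to the ASEP current $J_t(x)$ in the limit, yielding $\lim_{\varepsilon \to 0} p_\varepsilon(x;r) = p(x;r)$, as required.
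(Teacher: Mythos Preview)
The paper does not actually prove this proposition: it is quoted verbatim as Corollary 4 of the companion work \cite{CSSVMEP}, and the surrounding text in Section \ref{CurrentConverge} only supplies the heuristic (offset by the diagonal, observe that jumps occur with probability $\varepsilon L$ and $\varepsilon R$, rescale time). So there is no in-paper argument to compare against; your sketch is essentially an attempt to flesh out that heuristic into a proof.

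As a sketch, your outline is the right shape and matches the intended mechanism. A couple of points where a full argument would need more care than you indicate. First, the one-step transition in the offset coordinates is not literally ``each particle independently attempts a jump with probability $\varepsilon L$ or $\varepsilon R$'': the six-vertex update is sequential along the diagonal, and a single vertical-stay event (probability $\delta_1$) can cascade through a block of adjacent particles via the $j_1=1$ rule, producing a correlated multi-site displacement in one discrete step. You gesture at this (``sequential left-to-right updates \ldots not a priori identical to the continuous-time exclusion rule''), but the resolution is not just that double events are $O(\varepsilon^2)$; one has to check that the cascade, when it occurs, produces the same net effect as the corresponding ASEP jump subject to exclusion. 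Second, the initial data $\varphi$ can have infinitely many particles to the left, so the generator/cylinder-function argument needs the usual localization step (finite propagation speed, or the graphical construction) to make the convergence rigorous on non-compact configurations. Both issues are standard but not automatic, and are presumably what the cited paper handles.
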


\section{Observables for Models With Generalized Step Bernoulli Initial Data}

\label{ObservablesVertical}

We now turn to the analysis of the ASEP and stochastic six-vertex model with generalized step Bernoulli initial data. Our results in this section are primarily algebraic; our goal is to establish Theorem \ref{hdeterminant} and Theorem \ref{hdeterminantprocess}, which are Fredholm determinant identities for the $q$-Laplace transform of the current of the stochastic six-vertex model and ASEP, respectively, with generalized step Bernoulli initial data. These identities will be amenable to asymptotic analysis and will therefore enable the proofs of the phase transition results Theorem \ref{asymmetriclimit} and Theorem \ref{hlimit} later in the paper. 

The proofs of these Fredholm determinant identities rely on the integrability of the stochastic six-vertex model under generalized step Bernoulli initial data. In particular, we will first show how to degenerate the stochastic higher spin vertex model with step initial data to the stochastic six-vertex model with generalized step Bernoulli initial data. Using the results of \cite{HSVMRSF}, this then yields contour integral identities for $q$-moments of the current of this model, which will give rise to the Fredholm determinant identities Theorem \ref{hdeterminant} and Theorem \ref{hdeterminantprocess}.

\subsection{Contour Integral Identities for \texorpdfstring{$q$}{q}-Moments of the Current}

\label{ContourCurrentVertical} 

Our goal in this section is to establish Proposition \ref{manydeformationexpectationn}, which is a multi-fold contour integral identity for $q$-moments of the current of the stochastic six-vertex model with step Bernoulli initial data. 

Obtaining such identities for the stochastic higher spin vertex models with step initial data has been investigated at length in \cite{HSVMRSF}; see, for example, Theorem 9.8 of that article. We will show that, under a certain specialization, the stochastic higher spin vertex model with step initial data degenerates to the stochastic six-vertex model with different initial data, namely, generalized step Bernoulli initial data. Then, it will be possible to degenerate the results of \cite{HSVMRSF} to obtain the contour integral identities relevant for the proofs of Theorem \ref{hdeterminant} and Theorem \ref{hdeterminantprocess}.

\subsubsection{A Specialization of the Stochastic Higher Spin Vertex Model}

\label{GeneralizedSpecialization}

Consider the stochastic higher spin vertex model as defined in Section \ref{ProbabilityMeasures}, with the following specialization of parameters. Let $m \ge 1$ be some positive integer and $b_1, b_2, \ldots , b_m \in (0, 1)$ be positive real numbers. Let $\delta_1 < \delta_2$ be positive real numbers in the interval $(0, 1)$, and let 
\begin{flalign}
\label{stochasticparameters}
q = \displaystyle\frac{\delta_1}{\delta_2} < 1; \qquad \kappa = \displaystyle\frac{1 - \delta_1}{1 - \delta_2} > 1; \qquad s = q^{-1 / 2}; \qquad u = \kappa s; \qquad \beta_i = \displaystyle\frac{b_i}{1 - b_i},
\end{flalign}

\noindent for each $i \in [1, m]$, as in Section \ref{StochasticSixVertexModel}. Now, set $u_j = u$ for each $j \ge 1$; $s_j = s$ and $\xi_j = 1$ for each $j \ge m + 1$; $s_i = a_i$ for each $i \in [1, m]$; and $\xi_i = - \beta_i / a_i u$ for each $i \in [1, m]$. 

For $x > m$, the probabilities in \eqref{configurationprobabilities} match with those of the stochastic six-vertex model given by \eqref{sixvertexprobabilities}. If $x \in [1, m]$, then the probabilities become 
\begin{flalign}
\begin{aligned}
\label{verticalconfigurationprobabilities} 
& \mathbb{P}_n \big[ (i_2, j_2) = (k, 0) \big| (i_1, j_1) = (k, 0) \big] = \displaystyle\frac{1 + q^k \beta_x}{1 + \beta_x},  \\
& \mathbb{P}_n \big[ (i_2, j_2) = (k - 1, 1) \big| (i_1, j_1) = (k, 0) \big] = \displaystyle\frac{(1 - q^k) \beta_x}{1 + \beta_x},  \\
& \mathbb{P}_n \big[ (i_2, j_2) = (k + 1, 0) \big| (i_1, j_1) = (k, 1) \big] = \displaystyle\frac{1 - q^k a_x^2}{1 + \beta_x},  \\
& \mathbb{P}_n \big[ (i_2, j_2) = (k, 1) \big| (i_1, j_1) = (k, 1) \big] = \displaystyle\frac{q^k a_x^2 + \beta_x}{1 + \beta_x}.  
\end{aligned}
\end{flalign}

\noindent Taking the limit as $a_i$ tends to $0$ for each $i \in [1, m]$ and recalling that $\beta_i = b_i / (1 - b_i)$ for each $i$ yields the probabilities in \eqref{generalizedprobabilities}. 

In particular, let us define $\varphi_1^{(y)}, \varphi_2^{(y)}, \ldots $ by setting $\varphi_k^{(y)} = 1$ if $j_2 = 1$ at the vertex $(k, m)$ and $\varphi_k^{(y)} = 0$ otherwise; equivalently, $\varphi_k^{(y)}$ is the indicator that there exists some path containing an arrow from $(m, k)$ to $(m + 1, k)$. Then the initial data defined by $\varphi = \big( \varphi (1), \varphi (2), \ldots \big)$, where $\varphi (i) = \big( \varphi_i^{(x)}, \varphi_i^{(y)} \big) = \big( 0, \varphi_i^{(y)} \big)$, coincides with the step $(b_1, b_2, \ldots , b_m)$-Bernoulli initial data given by Definition \ref{generalized}. 

Thus, if we translate the vertex model to the left by $m$ (so that the line $y = m$ is shifted onto the $y$-axis) and then ignore all arrows to the left of the $y$-axis, we obtain the stochastic six-vertex model with step $(b_1, b_2, \ldots , b_m)$-Bernoulli initial data; see Remark \ref{columnsgeneralizedinitial} and Figure \ref{shift2generalizedvertex}.

\subsubsection{Observables for the Stochastic Six-Vertex Model}

\label{ContourMoments}

In this section, we will establish contour integral identities for $q$-moments of the current of the stochastic six-vertex model with generalized step Bernoulli initial data. We will begin with Proposition \ref{firstheightgeneral}, which gives an identity for $q$-moments of the height function of the stochastic higher spin vertex model in a reasonably general setting. However, we first require some notation. 

In what follows, $n$ will be a positive integer; $q \in (0, 1)$ will be a positive real number; $U = (u_1, u_2, \ldots , u_n) \subset \mathbb{C}$ will be a finite set of non-zero complex numbers; and $\Xi = (\xi_1, \xi_2, \ldots ) \subset \mathbb{C}$ and $S = (s_1, s_2, \ldots ) \subset \mathbb{C}$ will be infinite sets of non-zero complex numbers. 

The following definition is a restriction on our parameters so that the contours we consider exist. 

\begin{definition}

\label{spacedparameters}

We call the quadruple $(U; \Xi, S)$ \emph{suitably spaced} if the following two conditions are satisfied. 

\begin{itemize}

\item{The elements of $U$ are real and sufficiently close together so that they all have the same sign and $q \max_{1 \le i \le n} |u_i| < \min_{1 \le i \le n} |u_i|$.}

\item{No number of the form $s_i \xi_i$ is contained in the interval $\big( \min_{1 \le i \le n} u_i^{-1}, \max_{1 \le i \le n} u_i^{-1} \big)$.}

\end{itemize}

\end{definition}

\noindent The following defines the contours we will use in this section. 

\begin{definition}

\label{firstcontours}

Suppose that $(U; \Xi, S)$ is suitably spaced. Let us define the set of $k$ positively oriented contours $\gamma_1 (U; \Xi, S), \gamma_2 (U; \Xi, S), \ldots , \gamma_k (U; \Xi, S)$ as follows. Each contour $\gamma_i = \gamma_i (U; \Xi, S)$ will be the disjoint union of two positively oriented circles $c_i^{(1)}$ and $c_i^{(2)}$. 

Here, $c^{(1)} = c_1^{(1)} = c_2^{(1)} = \cdots = c_k^{(1)}$ are all the same circle that contain each of the $u_i^{-1}$ and leave outside $\xi_1 s_1, \xi_2 s_2, \ldots $. We also assume that $c^{(1)}$ is sufficiently small so that its interior is disjoint with the image of its interior under multiplication by $q$. 

The circles $c_1^{(2)}, c_2^{(2)}, \ldots , c_k^{(2)}$ will be centered at $0$ and sufficiently small such that the following two properties hold. 

\begin{itemize}

\item{The circles are ``nested'' in the sense that, for each integer $i \in [1, k - 1]$, the circle $c_{i + 1}^{(2)}$ strictly contains $q^{-1} c_i^{(2)}$ (defined to be the image of $c_i^{(2)}$ under multiplication by $q^{-1}$).} 

\item{All circles are sufficiently small so that the interior of $c_k^{(2)}$ is disjoint with the interior of $q c^{(1)}$ and so that the interior of $c_k^{(2)}$ does not contain $s_1 \xi_1, s_2 \xi_2, \ldots $. }

\end{itemize}

\end{definition}

The fact that $(U; \Xi, S)$ is suitably spaced ensures the existence of such contours. See Figure \ref{firstcontoursfigure} for an example.

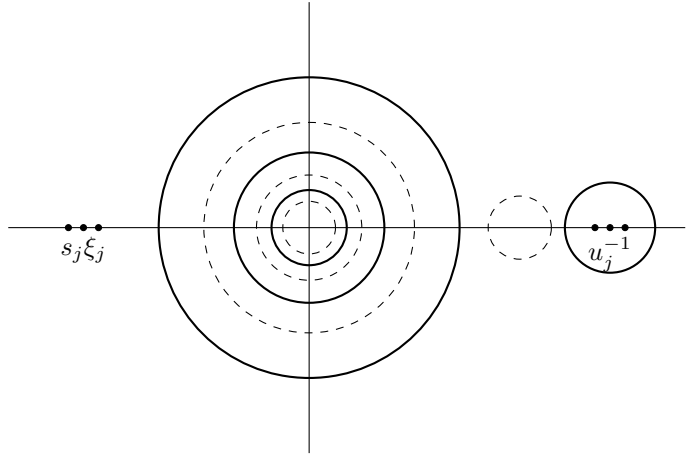
\begin{figure}

\begin{center}

\begin{tikzpicture}[
      >=stealth,
			]

			\draw[-, black] (-4, 0) -- (5, 0); 
			\draw[-, black] (0, -3) -- (0, 3);

			\draw[black, fill] (4, 0) circle [radius=.04] node [black, below=0] {$u_j^{-1}$};
			\draw[black, fill] (4.2, 0) circle [radius=.04];
			\draw[black, fill] (3.8, 0) circle [radius=.04];
			\draw[black, fill] (-3, 0) circle [radius=.04] node [black, below=0] {$s_j \xi_j$};
			\draw[black, fill] (-3.2, 0) circle [radius=.04];
			\draw[black, fill] (-2.8, 0) circle [radius=.04];

			\draw[black, thick] (0, 0) circle [radius=.5]; 
			\draw[black, thick] (0, 0) circle [radius=1]; 
			\draw[black, thick] (0, 0) circle [radius=2];
			
			\draw[black, dashed] (0, 0) circle [radius=.35]; 
			\draw[black, dashed] (0, 0) circle [radius=.7];
			\draw[black, dashed] (0, 0) circle [radius=1.4]; 
			 
			\draw[black, thick] (4, 0) circle [radius=.6];
			\draw[black, dashed] (2.8, 0) circle [radius=.42];

\end{tikzpicture}
	
\end{center}

\caption{\label{firstcontoursfigure} Shown above in solid is an example of possible contours $\gamma_1 (U; \Xi, S)$, $\gamma_2 (U; \Xi, S)$, and $\gamma_3 (U; \Xi, S)$. The dashed curves are the images of the solid curves under multiplication by $q$.}
\end{figure}

Now we can state Proposition 9.5 of \cite{HSVMRSF}. In what follows, $\mathfrak{h}_{\lambda} (x)$ denotes the number of indices $i$ for which $\lambda_i \ge x$, for any $\lambda \in \Sign^+$. 

\begin{prop}[{\cite[Proposition 9.5]{HSVMRSF}}]
\label{firstheightgeneral} 

Let $k$, $x$, and $t$ be positive integers; $q \in (0, 1)$ be a positive real number; $U = (u_1, u_2, \ldots , u_t)$ be a set of $t$ positive real numbers; and $S = (s_0, s_1, s_2, \ldots )$ and $\Xi = (\xi_0, \xi_1, \ldots ) \in (0, \infty)$ be infinite sets of real numbers numbers, such that $(U; \Xi, S)$ is suitably spaced and $|s_j \xi_j u_i - s_j^2| < |1 - s_j \xi_j u_i|$ for each $i, j$	. 

Under the measure $\textbf{\emph{M}} = \textbf{\emph{M}}_t$ given by Definition \ref{measuresignaturesvertexmodel}, with spectral parameters $U$, spin parameters $S = (s_1, s_2, \ldots )$, and spectral inhomogeneity parameters $\Xi = (\xi_1, \xi_2, \ldots )$, we have that 
\begin{flalign}
\label{firstheightgeneralidentity}
\mathbb{E}_{\textbf{\emph{M}}} [q^{k \mathfrak{h}_{\lambda} (x)}] & = \displaystyle\frac{q^{\binom{k}{2}}}{(2 \pi \textbf{\emph{i}})^k} \displaystyle\oint \cdots \displaystyle\oint  \displaystyle\prod_{1 \le i < j \le k} \displaystyle\frac{w_i - w_j}{w_i - q w_j} \displaystyle\prod_{i = 1}^k \left( \displaystyle\prod_{j = 1}^{x - 1} \displaystyle\frac{s_j \xi_j - s_j^2 w_i}{s_j \xi_j - w_i} \displaystyle\prod_{j = 1}^t \displaystyle\frac{1 - q u_j w_i}{1 - u_j w_i} \right) \displaystyle\frac{d w_i}{w_i}, 
\end{flalign}

\noindent where each $w_i$ is integrated along the contour $\gamma_i = \gamma_i (U; \Xi, S)$ of Definition \ref{firstcontours}. 
\end{prop}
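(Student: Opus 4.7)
My plan is to follow the general strategy that Borodin and Corwin developed for computing $q$-moment observables of Macdonald processes and that Borodin and Petrov adapted to the spin Hall-Littlewood setting underlying the stochastic higher spin vertex model. The proof proceeds in three stages.

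First, I would identify $\textbf{M}_t$ as a \emph{spin Hall-Littlewood measure} on $\Sign_t^+$, that is, show that $\textbf{M}_t(\lambda)$ is proportional to $\mathbb{F}_\lambda(u_1,\ldots,u_t;S,\Xi)\cdot\mathbb{G}_\lambda$, where $\mathbb{F}_\lambda$ is the (inhomogeneous) spin Hall-Littlewood function indexed by $\lambda$ and $\mathbb{G}_\lambda$ is a simple boundary weight encoding step initial data. This identification follows from the fact that the transition weights in \eqref{configurationprobabilities} are the stochastic normalization of the fused higher spin $R$-matrix, and iterating the row-to-row transfer matrix on step initial data produces precisely the skew-Cauchy structure of the spin Hall-Littlewood functions. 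Normalization of the measure then comes from a Cauchy-type identity summing $\mathbb{F}_\lambda \mathbb{G}_\lambda$ to an explicit product.

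Second, I would construct a ``Macdonald-type'' $q$-difference operator $\mathscr{D}_{x,k}$ that is diagonal in the basis $\{\mathbb{F}_\lambda\}$ with eigenvalue $q^{k\mathfrak{h}_\lambda(x)}$ (up to a scalar absorbing the $q^{\binom{k}{2}}$). The existence of such an operator, together with its explicit $k$-fold symmetrized integral form, is the algebraic heart of the argument and is the step I expect to be the main obstacle: it requires exploiting the $U_q(\widehat{\mathfrak{sl}_2})$-algebra generated by the vertex-model monodromy entries and carrying out the fusion procedure that realizes the truncation to $\{1,\ldots,x-1\}$. Once $\mathscr{D}_{x,k}$ is in hand, applying it to the Cauchy sum from Step one turns $\mathbb{E}_{\textbf{M}}[q^{k\mathfrak{h}_\lambda(x)}]$ into the action of a known integral operator on an explicit product function.

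Third, I would carry out this action and recognize the result as the right-hand side of \eqref{firstheightgeneralidentity}, matching the integrand factor by factor: the product $\prod_{j=1}^{x-1}(s_j\xi_j-s_j^2 w_i)/(s_j\xi_j-w_i)$ arises from the truncation of $\mathbb{F}_\lambda$ to spatial coordinates $\le x-1$, the factor $\prod_{j=1}^{t}(1-qu_jw_i)/(1-u_jw_i)$ from the eigenvalue of the operator on the spectral variables $U$, and the cross factor $\prod_{i<j}(w_i-w_j)/(w_i-qw_j)$ from the symmetrization of the Bethe-ansatz kernel. The nested contours of Definition \ref{firstcontours} arise here for residue-bookkeeping reasons: the inner circles $c_i^{(2)}$ must satisfy $c_{i+1}^{(2)}\supset q^{-1}c_i^{(2)}$ so that the poles of $(w_i-qw_j)^{-1}$ are not crossed, while the outer component $c^{(1)}$ must enclose the $u_j^{-1}$ to capture the poles of $(1-u_jw_i)^{-1}$ but must exclude the points $s_j\xi_j$; the ``suitably spaced'' hypothesis of Definition \ref{spacedparameters} and the auxiliary inequality $|s_j\xi_j u_i - s_j^2|<|1-s_j\xi_j u_i|$ are exactly what is needed to make these two components coexist. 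The resulting identity is \eqref{firstheightgeneralidentity}.
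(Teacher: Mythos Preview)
Your proposal sketches a from-scratch derivation of the nested contour integral formula, essentially reproducing the machinery of \cite{HSVMRSF} (spin Hall--Littlewood measures, diagonalized $q$-difference operators, Cauchy identity). That outline is broadly faithful to how such formulas are established in that reference, and in that sense it is not wrong as a strategy for the underlying identity.

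However, it misses what the paper actually does here, which is far shorter and is the point of including this proposition. The paper does not re-derive \eqref{firstheightgeneralidentity}; it takes Proposition~9.5 of \cite{HSVMRSF} as given. The only issue is that the cited version is stated under more restrictive analytic assumptions on $U$, $S$, $\Xi$ than those in the present statement. The paper's proof consists of observing that, by Lemma~9.1 of \cite{HSVMRSF}, the left side of \eqref{firstheightgeneralidentity} is a rational function of all parameters, and the right side is manifestly rational as well; hence the identity extends from the restricted domain to the stated one by analytic continuation (as in Section~10 of \cite{HSVMRSF}). Your proposal does not mention this continuation step at all, yet it is the entire content of the paper's argument.

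There is also a genuine gap if one takes your route literally: the difference-operator derivation in \cite{HSVMRSF} is carried out under parameter constraints stricter than ``suitably spaced'' plus $|s_j\xi_j u_i - s_j^2| < |1 - s_j\xi_j u_i|$, because one needs the Cauchy sum to converge absolutely and the operator action to commute with the summation. So even after executing your three stages you would still need the rationality-and-continuation argument to reach the hypotheses as stated. In short, your approach reproves the cited input but omits the step that the paper is actually supplying.
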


In fact, as stated above, Proposition \ref{firstheightgeneral} does not precisely coincide with Proposition 9.5 of \cite{HSVMRSF}; the latter was originally stated with more stringent analytic constraints on the parameters $U$, $S$, and $\Xi$. However, Lemma 9.1 of \cite{HSVMRSF} states that the left side of \eqref{firstheightgeneralidentity} is a rational function in all of these parameters. Thus, Proposition \ref{firstheightgeneral} above follows from Proposition 9.5 of \cite{HSVMRSF} and a standard (see, for instance, Section 10 of \cite{HSVMRSF}) analytic continuation argument, which we will not detail further. 

 Using Proposition \ref{firstheightgeneral}, we can obtain $q$-moments of the current for the stochastic six-vertex model with generalized step Bernoulli initial data; recall Definition \ref{generalized} for a description of this initial data. In what follows, for any $(x, t) \in \mathbb{Z}_{\ge 0}^2$, $\mathfrak{h}_t (x)$ denotes the \emph{height function} of a stochastic higher spin vertex model, that is, the number of integers $k \ge x$ such that $j_2 = 1$ at $(k, t)$; stated alternatively, $\mathfrak{h}_t (x)$ denotes the number of paths that are strictly to the right of $x - 1$ at time $t$. Observe that $\mathfrak{h}_t (x) \le t$ and is thus finite. 

\begin{prop}
\label{manydeformationexpectationn}

Fix $m, t \in \mathbb{Z}_{> 0}$; $\delta_1, \delta_2 \in (0, 1)$; and $b_1, b_2, \ldots , b_m \in (0, 1)$. Consider the stochastic six-vertex model with left jump probability $\delta_1$, right jump probability $\delta_2$, and step $(b_1, b_2, \ldots , b_m)$-Bernoulli initial data. Denoting $q$, $u$, $\kappa$, $s$, and $\beta_i$ as in \eqref{stochasticparameters}, we have that 
\begin{flalign}
\begin{aligned}
\label{heightqgeneralized}
\mathbb{E} [q^{k \mathfrak{h}_t (x)}] = \displaystyle\frac{q^{\binom{k}{2}}}{(2 \pi \textbf{\emph{i}})^k} \displaystyle\oint \cdots & \displaystyle\oint \displaystyle\prod_{1 \le i < j \le k} \displaystyle\frac{y_i - y_j}{y_i - q y_j} \displaystyle\prod_{i = 1}^k  \left( \displaystyle\frac{1 + y_i}{1 + q^{-1} y_i} \right)^t \left( \displaystyle\frac{1 + q^{-1} \kappa^{-1} y}{1 + \kappa^{-1} y_i}  \right)^{x - 1}  \\
& \quad \times \displaystyle\prod_{i = 1}^k \left( \displaystyle\prod_{j = 1}^m \displaystyle\frac{1}{1 - q^{-1} \beta_j^{-1} y_i}  \right) \displaystyle\frac{d y_i}{y_i}, 
\end{aligned}
\end{flalign}

\noindent where each $y_i$ is integrated along the contour $\gamma_i (\widehat{U}; \widehat{\Xi}, \widehat{S})$ of Definition \ref{firstcontours}. Here, we have denoted $\widehat{U} = (-q^{-1}, -q^{-1}, \ldots )$, $\widehat{S} = (1, 1, \ldots )$, and $\widehat{\Xi} = (q \beta_1, q \beta_2, \ldots , q \beta_m, - \kappa, - \kappa, \ldots )$. 

\end{prop}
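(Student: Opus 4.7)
The plan is to derive the identity by specializing the general $q$-moment formula of Proposition \ref{firstheightgeneral} for the stochastic higher spin vertex model with step initial data, and then performing a linear change of integration variables.

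First, I would apply the parameter specialization of Section \ref{GeneralizedSpecialization}: set $u_j = u = \kappa q^{-1/2}$ for all $j$; set $s_j = q^{-1/2}$ and $\xi_j = 1$ for $j > m$; and set $s_i = a_i$, $\xi_i = -\beta_i/(a_i u)$ for $i \in [1, m]$. By the discussion there, in the limit $a_i \to 0$ the resulting higher spin model, restricted to the region $x > m$ and shifted left by $m$, agrees in distribution with the stochastic six-vertex model with step $(b_1, \ldots, b_m)$-Bernoulli initial data. Consequently $\mathfrak{h}_t(x)$ in the six-vertex model equals $\mathfrak{h}_t(x+m)$ in the higher spin model, and applying Proposition \ref{firstheightgeneral} at position $x+m$ yields a contour integral for $\mathbb{E}[q^{k\mathfrak{h}_t(x+m)}]$ in a $w$ variable. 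The $a_i \to 0$ limit is taken under the integral sign, which is legitimate since both sides of \eqref{firstheightgeneralidentity} are rational in the $s_i, \xi_i$ by Lemma 9.1 of \cite{HSVMRSF}, so standard analytic continuation applies.

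Second, I would evaluate the limiting integrand. The product $\prod_{j=1}^{x+m-1}(s_j\xi_j - s_j^2 w)/(s_j\xi_j - w)$ splits at $j=m$: for $j \le m$ each factor tends to $1/(1 + uw/\beta_j)$, while the remaining $x-1$ factors each equal $(1 - q^{-1/2}w)/(1 - q^{1/2}w)$. The spectral product collapses to $((1 - quw)/(1 - uw))^t$. The change of variables $y = -quw = -\kappa q^{1/2} w$, a common rescaling of all $w_i$, preserves $dw/w$, the Cauchy-type factor $\prod_{i<j}(w_i - w_j)/(w_i - qw_j)$, and the prefactor $q^{\binom{k}{2}}$. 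Under it, the three integrand pieces above become $1/(1 - q^{-1}\beta_j^{-1}y)$, $(1 + q^{-1}\kappa^{-1}y)/(1 + \kappa^{-1}y)$, and $(1+y)/(1+q^{-1}y)$ respectively, which match the integrand of \eqref{heightqgeneralized}. At the level of contours, the small $w$-circle around $u^{-1} = \kappa^{-1}q^{1/2}$ becomes a small $y$-circle around $-q = \widehat{u}_j^{-1}$; the nested circles around $0$ stay centered at $0$; and the points $s_j\xi_j$ excluded by the $w$-contour map to $q\beta_j$ (for $j \le m$) and $-\kappa$ (for $j > m$), which are exactly the entries of $\widehat{\Xi}$ with $\widehat{s}_j = 1$.

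The main obstacle lies in the first step: one must check that the suitably-spaced hypothesis and the analytic constraint $|s_j\xi_j u_i - s_j^2| < |1 - s_j\xi_j u_i|$ of Proposition \ref{firstheightgeneral} continue to hold throughout the $a_i \to 0$ degeneration (reducing to $q < 1$ and $a_i^2 < 1$), and that the specialization of Section \ref{GeneralizedSpecialization} really produces the correct generalized step Bernoulli initial data in the limit. Once these are confirmed, the remainder is a direct algebraic verification.
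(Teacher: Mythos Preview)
Your proposal is correct and follows essentially the same approach as the paper: specialize the higher spin model as in Section \ref{GeneralizedSpecialization}, apply Proposition \ref{firstheightgeneral} at position $x+m$, take the limit $a_i \to 0$, and perform the change of variables $y_i = -q u w_i$ (your observation that the original $s_j\xi_j$ map to the entries of $\widehat{\Xi}\widehat{S}$ is exactly how the paper identifies the new contours). The only cosmetic difference is that the paper writes out the intermediate $w$-integral explicitly before taking the $a_i\to 0$ limit, whereas you describe the limiting factors directly; the justification via rationality of both sides in the parameters is the same one the paper invokes.
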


\begin{proof} 

We will apply Proposition \ref{firstheightgeneral} to the stochastic higher spin vertex model, specialized as in Section \ref{GeneralizedSpecialization}, run for $t$ discrete time steps. That is, set $u_i = u$ for each $i \in [1, t]$; $\xi_i = 1$ and $s_i = s$ if $i > m$; $s_i = a_i \in (-1, 0)$ to be some small negative real number if $i \in [1, m]$; and $\xi_i = -\beta_i / a_i u$ if $i \in [1, m]$. Also set $\xi_0 = 1$ and $s_0 = s$. 

Now, applying Proposition \ref{firstheightgeneral}, we obtain  
\begin{flalign*}
\mathbb{E}_{\textbf{M}} [q^{k \mathfrak{h}_{\lambda} (x + m)}] = \displaystyle\frac{q^{\binom{k}{2}}}{(2 \pi \textbf{i})^k} \displaystyle\oint \cdots & \displaystyle\oint \displaystyle\prod_{1 \le i < j \le k} \displaystyle\frac{w_i - w_j}{w_i - q w_j} \displaystyle\prod_{i = 1}^k \left( \displaystyle\frac{1 - q u w_i}{1 - u w_i} \right)^t \left( \displaystyle\frac{s - s^2 w_i}{s - w_i}  \right)^{x - 1} \\
& \quad \times \displaystyle\prod_{i = 1}^k \left( \displaystyle\prod_{j = 1}^m \displaystyle\frac{\beta_j + a_j^2 u w_i}{\beta_j + u w_i} \right) \displaystyle\frac{d w_i}{w_i} , 
\end{flalign*}

\noindent where each $w_i$ is integrated along the contour $\gamma_i = \gamma (U; \Xi, S)$. Let us take the limit as each $a_j$ tends to $0$ (as in Section \ref{GeneralizedSpecialization}) and perform the change of variables sending $w_i$ to $- y_i / q u = - s y_i / \kappa$. We obtain 
\begin{flalign}
\begin{aligned}
\label{heightqgeneralizedm}
\mathbb{E}_{\textbf{M}} [q^{k \mathfrak{h}_{\lambda} (x + m)}] = \displaystyle\frac{q^{\binom{k}{2}}}{(2 \pi \textbf{i})^k} \displaystyle\oint \cdots & \displaystyle\oint \displaystyle\prod_{1 \le i < j \le k} \displaystyle\frac{y_i - y_j}{y_i - q y_j} \displaystyle\prod_{i = 1}^k  \left( \displaystyle\frac{1 + y_i}{1 + q^{-1} y_i} \right)^t \left( \displaystyle\frac{1 + q^{-1} \kappa^{-1} y}{1 + \kappa^{-1} y_i}  \right)^{x - 1}  \\
& \quad \times \displaystyle\prod_{i = 1}^k \left( \displaystyle\prod_{j = 1}^m \displaystyle\frac{1}{1 - q^{-1} \beta_j^{-1} y_i}  \right) \displaystyle\frac{d y_i}{y_i}, 
\end{aligned}
\end{flalign}

\noindent where each $y_i$ is integrated along the contour $\gamma_i (\widehat{U}; \widehat{\Xi}, \widehat{S})$

In the above, $\textbf{M}$ refers to the measure on particle configurations induced by the stochastic higher spin vertex model with spectral parameters $(u, u, \ldots , u)$, inhomogeneity parameters $(-\beta_1 / a_1 u, - \beta_2 / a_2 u, \ldots , - \beta_m / a_m u, 1, 1, \ldots )$, and spin parameters $(a_1, a_2, \ldots , a_m, s, s, \ldots )$, where the first $m$ spins $a_1, a_2, \ldots , a_m$ all tend to $0$. Recall from the explanation in Section \ref{GeneralizedSpecialization} that this stochastic higher spin vertex model in fact coincides with the stochastic six-vertex model with step $(b_1, b_2, \ldots , b_m)$-Bernoulli initial data, shifted to the right $m$ spaces. Hence, $\mathbb{E} [q^{k \mathfrak{h}_t (x)}] = \mathbb{E}_{\textbf{M}} [q^{k \mathfrak{h}_{\lambda} (x + m)}]$, where the former expectation is with respect to the stochastic six-vertex model with step $(b_1, b_2, \ldots , b_m)$-Bernoulli initial data. Thus, \eqref{heightqgeneralized} follows from \eqref{heightqgeneralizedm}. 
\end{proof}

\subsection{Fredholm Determinants for Models With Generalized Step Bernoulli Initial Data} 

\label{DeterminantVertical}

The goal of this section is to establish Fredholm determinant identities for the $q$-Laplace transform of the current of the stochastic six-vertex model (Theorem \ref{hdeterminant}) and the ASEP (Theorem \ref{hdeterminantprocess}), both with generalized step Bernoulli initial data; we refer to Appendix \ref{Determinants1} for the definition and properties of Fredholm determinants. Given Proposition \ref{manydeformationexpectationn}, the derivation of these two theorems will largely follow Section 5 of \cite{DDQA}.

\subsubsection{A General Fredholm Determinant Identity}

\label{DeterminantsGeneral} 

The goal of this section is to state Lemma \ref{momentdeterminant}, which produces Fredholm determinant identities from $q$-moment identities of the type given in Proposition \ref{manydeformationexpectationn}; this lemma is similar to Proposition 3.6 of \cite{DDQA}, and it can be proven analogously. 

The following appears as Definition 5.1 of \cite{DDQA}. 

\begin{definition}[{\cite[Definition 5.1]{DDQA}}]

\label{kmoment}

Let $a \in \mathbb{C}$ be a non-zero complex number and $q \in (0, 1)$ be a positive real number. Let $f$ be a meromorphic function that has a pole at $a$, but no other poles in a neighborhood of the segment joining $0$ and $a$. Define $\textbf{m}_0 = 1$ and, for each positive integer $k$, define 
\begin{flalign*}
\textbf{m}_k = \textbf{m}_{k, f} =  \displaystyle\frac{q^{\binom{k}{2}}}{(2 \pi \textbf{i})^k} \displaystyle\oint \cdots \displaystyle\oint \displaystyle\prod_{1 \le i < j \le k} \displaystyle\frac{z_i - z_j}{z_i - q z_j} \displaystyle\prod_{i = 1}^k f(z_i) z_i^{-1} d z_i ,
\end{flalign*}

\noindent where each $z_i$ is integrated along a positively oriented contour $C_i$ that satisfies the following two properties. First, each contour contains $0$ and $a$, but does not contain any other poles of $f$. Second, for all integers $i < j \in [1, k]$, the interior of $C_i$ does not intersect the contour $q C_j$. 

\end{definition}

\begin{rem}

The equality \eqref{heightqgeneralized} gives an example for $\textbf{m}_k$ if we set $\textbf{m}_k = \mathbb{E} [q^{k \mathfrak{h}_t (x)}]$ and $a = -q$. 

\end{rem}

\noindent We also require the following types of contours; they appeared previously in \cite{DDQA} as Definition 3.5 (in the case $\delta = 1 / 2$). 

\begin{definition}

\label{contourslong}

Let $R, d, \delta > 0$ be positive real numbers with $d, \delta < 1$ and $R > 1$. Let $D_{R, d, \delta} \subset \mathbb{C}$ denote the contour in the complex plane, with nondecreasing imaginary part, formed by taking the union of intervals 
\begin{flalign*}
(R - \textbf{i} \infty, R - \textbf{i} d ) \cup [R - \textbf{i} d , \delta - \textbf{i} d ] \cup [\delta - \textbf{i} d, \delta + \textbf{i} d] \cup [\delta + \textbf{i} d, R + \textbf{i} d ] \cup (R + \textbf{i} d, R + \textbf{i} \infty). 
\end{flalign*}

\noindent Furthermore, let $k > R$ be a positive integer. Let $z \in \mathbb{C}$ be the complex number satisfying $\Re z = R$, $\Im z > 0$, and $|z - \delta| = k$. Let $I$ denote the minor arc of the circle in the complex half-plane, centered at $\delta$, with radius $R$, connecting $z$ to its conjugate $\overline{z}$. Then let $D_{R, d, \delta; k} \subset \mathbb{C}$ denote the negatively oriented contour in the complex plane, formed by the union 
\begin{flalign*}
(\overline{z}, R - \textbf{i} d ) \cup [R - \textbf{i} d , \delta - \textbf{i} d ] \cup [\delta - \textbf{i} d, \delta + \textbf{i} d] \cup [\delta + \textbf{i} d, R + \textbf{i} d] \cup (R + \textbf{i} d, z) \cup I. 
\end{flalign*}
\end{definition}

Observe that the contour $D_{R, d, \delta; k}$ approximates the contour $D_{R, d, \delta}$ as $k$ tends to $\infty$. Examples of these contours are given in Figure \ref{contourslongfigure}. 

\begin{figure}

\begin{center}

\begin{tikzpicture}[
      >=stealth,
			]
			
			\draw[-, black] (-7, 0) -- (-1, 0); 
			\draw[-, black] (-6, -3) -- (-6, 3); 
			\draw[-, black] (7.5, 0) -- (1, 0); 
			\draw[-, black] (2, -3) -- (2, 3); 
			
			\draw[->, black, thick] (-2.2, -2.5) -- (-2.2, -1.4) node[black, below = 10, right = 0] {$D_{R, r, \delta}$};
			\draw[-, black, thick] (-2.2, -1.45) -- (-2.2, -.6);
			\draw[-, black,  thick] (-5.2, -.6) -- (-2.2, -.6);
			\draw[-, black,  thick] (-5.2, -.6) -- (-5.2, .6);
			\draw [<->, black, dashed] (-4.5, 0) -- (-4.5, .6) node[black, right=5, below = 0] {$d$}; 
			\draw [->, black, dashed] (-5.6, -.4) -- (-6, -.4); 
			\draw [->, black, dashed] (-5.6, -.4) -- (-5.2, -.4); 
			\draw [-, white] (-5.595, -.44) -- (-5.605, -.4) node[black, below = 0] {$\delta$}; 
			\draw[-, black,  thick] (-5.2, .6) -- (-2.2, .6);
			\draw[->, black,  thick] (-2.2, .6) -- (-2.2, 2.5);
			\draw[->, black,  dashed] (-4.4, 1.5) -- (-6, 1.5);
			\draw[->, black,  dashed] (-4.4, 1.5) -- (-2.2, 1.5); 
			\draw[->, white] (-4.37, 1.5) -- (-4.36, 1.5) node[black, above = 0] {$R$};

			\draw[->, black,  thick] (5.8, -2.5) -- (5.8, -1.4)  node[black, below = 10, right = 18] {$D_{R, r, \delta; k}$};
			\draw[-, black,  thick] (5.8, -1.45) -- (5.8, -.6);
			\draw[-, black,  thick] (2.8, -.6) -- (5.8, -.6);
			\draw[->, black,  dashed] (3.6, 1.5) -- (2, 1.5);
			\draw[->, black,  dashed] (3.6, 1.5) -- (5.8, 1.5);
			\draw[->, white] (3.63, 1.5) -- (3.64, 1.5) node[black, above = 0] {$R$};
			\draw[-, black,  thick] (2.8, -.6) -- (2.8, .6);
			\draw[-, black,  thick] (2.8, .6) -- (5.8, .6);
			\draw[-, black,  thick] (5.8, 2.5) -- (5.8, .6);
			\draw [<->, black, dashed] (3.5, 0) -- (3.5, .6) node[black, right=5, below = 0] {$d$}; 
			\draw [->, black, dashed] (2.4, -.4) -- (2, -.4); 
			\draw [->, black, dashed] (2.4, -.4) -- (2.8, -.4); 
			\draw [-, white] (2.395, -.4) -- (2.405, -.4) node[black, below = 0] {$\delta$};

			\draw[black, thick] (5.8, -2.5) arc (-45:45:3.55);
			
			\filldraw[fill=white, draw=black] (-5, 0) circle [radius=.07];
			\filldraw[fill=white, draw=black] (-4, 0) circle [radius=.07];
			\filldraw[fill=white, draw=black] (-3, 0) circle [radius=.07];
			\filldraw[fill=white, draw=black] (-2, 0) circle [radius=.07];

			\filldraw[fill=white, draw=black] (3, 0) circle [radius=.07];
			\filldraw[fill=white, draw=black] (4, 0) circle [radius=.07];
			\filldraw[fill=white, draw=black] (5, 0) circle [radius=.07];
			\filldraw[fill=white, draw=black] (6, 0) circle [radius=.07] node[black, below = 2] {$k$};
			\filldraw[fill=white, draw=black] (7, 0) circle [radius=.07];

\end{tikzpicture}

\end{center}

\caption{\label{contourslongfigure} An example of $D_{R, r, \delta}$ is shown to the left, and an example of $D_{R, r, \delta; k}$ is shown to the right; the circles depict the locations of positive integers. }

\end{figure}
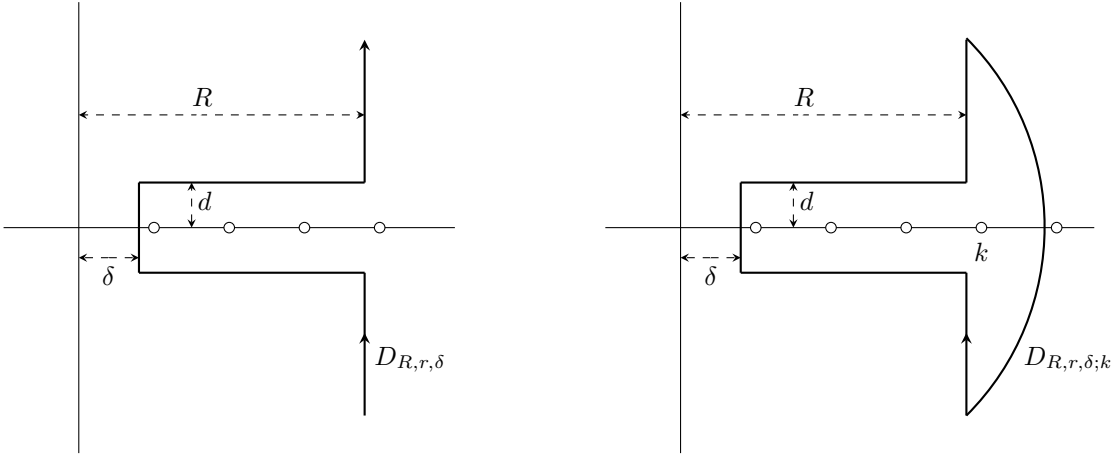

The following proposition, which is similar to Proposition 3.6 of \cite{DDQA}, states that one has a Fredholm determinant identity for a certain generating series of the sequence $\{ \textbf{m}_0, \textbf{m}_1, \ldots \}$. 

\begin{lem}

\label{momentdeterminant}

Let $a \in \mathbb{C}$ be a complex number, and let $f$ be a meromorphic function satisfying the properties of Definition \ref{kmoment}. Assume that $f$ is of the form $f(z) = g(z) / g(qz)$, for some function $g$. Let $C \subset \mathbb{C}$ be a compact contour containing $0$ and $a$ in its interior, but no other pole of $f$; moreover, assume that the interior of $qC$ is contained inside $C$. 

Suppose that there exists some constant $B > 0$ such that $f(q^n w) < B$ and $|q^n w - w'| > B^{-1}$ for all $w, w' \in C$ and integers $n \ge 1$. Also assume that there exist positive real numbers $d, \delta \in (0, 1)$ and $R > 1$ such that 
\begin{flalign}
\label{convergencedeterminant2} 	
\displaystyle\inf_{\substack{w, w' \in C \\ k \in (2R, \infty) \cap \mathbb{Z} \\ s \in D_{R, d, \delta; k}}} |q^s w - w'| > 0; \qquad \displaystyle\sup_{\substack{ w \in C \\ k \in (2R, \infty) \cap \mathbb{Z} \\ s \in D_{R, d, \delta; k}}} \displaystyle\frac{g(w)}{g(q^s w)} < \infty. 
\end{flalign}

\noindent Then, for any complex number $\zeta \in \mathbb{C} \setminus\mathbb{R}_{\ge 0}$ satisfying $|\zeta| < B^{-1} (1 - q)^{-1}$, we have that 
\begin{flalign}
\label{generaldeterminant} 
\displaystyle\sum_{k = 0}^{\infty} \displaystyle\frac{\textbf{\emph{m}}_k \zeta^k}{(q; q)_k} = \det \big( \Id + K_{\zeta} \big)_{L^2 (C)}, 
\end{flalign}

\noindent where the kernel $K$ is defined through 
\begin{flalign*}
K_{\zeta} (w, w') = \displaystyle\frac{1}{2 \textbf{\emph{i}}} \displaystyle\int_{D_{R, d, \delta}} \displaystyle\frac{g(w)}{g(q^r w)} \displaystyle\frac{(-\zeta)^r dr}{\sin (\pi r) \big( q^r w - w' \big)}. 
\end{flalign*}

\noindent Furthermore, the right side of \eqref{generaldeterminant} is analytic in $\zeta \in \mathbb{C} \setminus\mathbb{R}_{\ge 0}$. 
\end{lem}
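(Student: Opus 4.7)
The plan is to mirror the argument of \cite[Proposition 3.6]{DDQA}: expand the Fredholm determinant on the right of \eqref{generaldeterminant} as a power series in $\zeta$ and identify it term-by-term with the generating series on the left. First I would write the standard Fredholm expansion $\det(I + K_\zeta)_{L^2(C)} = \sum_{k \ge 0} \frac{1}{k!} \int_{C^k} \det[K_\zeta(w_i, w_j)]_{i,j=1}^k \prod_i \frac{dw_i}{2\pi \textbf{i}}$, then substitute the Mellin--Barnes definition of $K_\zeta$ into each matrix entry and pull the $r$-integrations outside the determinant by multilinearity. This produces, for each $k$, a single integral over $C^k \times D_{R,d,\delta}^k$ whose integrand is $\prod_i \frac{g(w_i)}{g(q^{r_i} w_i)} \frac{(-\zeta)^{r_i}}{\sin(\pi r_i)}$ multiplied by the Cauchy-type determinant $\det\big[\frac{1}{q^{r_i} w_i - w_j}\big]_{i,j=1}^k$.

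Next I would collapse each $r_i$-contour to a sum of residues at positive integers. Concretely, I would replace $D_{R,d,\delta}$ by the truncated closed contour $D_{R,d,\delta;k}$ of Definition \ref{contourslong} and send the truncation parameter to infinity; the bounds in \eqref{convergencedeterminant2} are designed precisely to guarantee that the contribution of the large arc vanishes in the limit, while the estimate $f(q^n w) < B$ makes the resulting $k$-fold series over $(n_1, \ldots, n_k) \in \mathbb{Z}_{\ge 1}^k$ absolutely convergent whenever $|\zeta| < B^{-1}(1-q)^{-1}$. Using $\operatorname{Res}_{r=n} \frac{(-\zeta)^r}{\sin(\pi r)} = \frac{\zeta^n}{\pi}$ and accounting for the negative orientation of $D_{R,d,\delta;k}$, the integrand reduces to a sum of terms of the shape $\prod_i \frac{g(w_i) \zeta^{n_i}}{g(q^{n_i} w_i)}$ times $\det\big[\frac{1}{q^{n_i} w_i - w_j}\big]$.

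The final step is to reorganize this doubly infinite sum (over $k \ge 0$ and over $(n_1, \ldots, n_k) \in \mathbb{Z}_{\ge 1}^k$) into the nested-contour form of Definition \ref{kmoment}. Grouping terms by the partition $\lambda$ with parts $n_1, \ldots, n_k$, the resulting $\lambda$-sum is handled by the $q$-symmetrization identity expressing a symmetrization over $S_k$ of $\prod_{i<j} \frac{z_{\sigma(i)} - q z_{\sigma(j)}}{z_{\sigma(i)} - z_{\sigma(j)}}$ as a single product over unordered pairs, combined with the Cauchy determinant evaluation of $\det\big[\frac{1}{q^{n_i} w_i - w_j}\big]$. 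This identity converts the single-contour integrals on $C$ into nested-contour integrals satisfying the nesting condition of Definition \ref{kmoment}, with the combinatorial factor from reversing the symmetrization producing exactly $\zeta^{|\lambda|}/(q;q)_{|\lambda|}$. Analyticity of $\det(I + K_\zeta)$ in $\zeta \in \mathbb{C} \setminus \mathbb{R}_{\ge 0}$ is then a consequence of uniform trace-class bounds on $K_\zeta$ supplied by the hypotheses.

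The main obstacle is this last step: the combinatorial identification of the residue-expanded series with the nested contour integral, which demands a careful $q$-symmetrization argument tracking the cancellations from the Cauchy determinant and the factors $\prod_{i<j} (q^{n_i} w_i - q^{n_j} w_j)/(q^{n_i} w_i - w_j)$ that emerge. Once this symmetrization is in place, the remaining verifications -- vanishing of the large-arc contribution, absolute convergence, and the interchange of summation and integration -- are routine consequences of \eqref{convergencedeterminant2} and the $B$-bound, so the argument may largely proceed by pointing to the parallel manipulations in the proof of \cite[Proposition 3.6]{DDQA}.
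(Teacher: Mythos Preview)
Your proposal is correct and matches the paper's approach exactly: the paper omits the proof entirely, stating only that it ``is very similar to that of Proposition 3.6 of \cite{DDQA} and is thus omitted.'' Your outline of the Mellin--Barnes expansion, residue collapse, and $q$-symmetrization identification is precisely the content of that reference, so there is nothing to add.
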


The proof of this lemma is very similar to that of Proposition 3.6 of \cite{DDQA} and is thus omitted.

\subsubsection{Fredholm Determinants for the Stochastic Six-Vertex Model and ASEP} 

\label{DeterminantsSpin}

In this section, we will establish the Fredholm determinant identities Theorem \ref{hdeterminant} and Theorem \ref{hdeterminantprocess}. As stated previously, these two identities will follow from Proposition \ref{manydeformationexpectationn} and Lemma \ref{momentdeterminant}. 

\begin{thm}
\label{hdeterminant}

Let $m$ be a non-negative integer and $t$ be a positive integer. Fix $0 < \delta_1 < \delta_2 < 1$ and $b_1, b_2, \ldots , b_m \in (0, 1)$. Denoting $q$, $u$, $\kappa$, $s$, and $\beta_i$ as in \eqref{stochasticparameters}, let 
\begin{flalign}
\label{gv}
g_V (z; x, t) = \displaystyle\frac{(\kappa^{-1} z + q)^{x - 1}}{(z + q)^t} \displaystyle\prod_{j = 1}^m \displaystyle\frac{1}{(q^{-1} \beta_j^{-1} z; q)_{\infty}}. 
\end{flalign}

\noindent Let $\mathscr{C}_V$ be a positively oriented circle in the complex plane, centered at some non-positive real number, containing $-q$ and $0$, but leaving outside $- 1$ and $q \beta_j$ for each integer $j \in [1, m]$. 

Then, there exist a sufficiently large real number $R > 1$, a real number $\delta \in (0, 1)$ sufficiently close to $1$, and a sufficiently small real number $d \in (0, 1)$ such that 
\begin{flalign}
\begin{aligned}
\label{inequalityrddelta}
q^R < & \displaystyle\min_{w, w' \in \mathscr{C}_V} | w^{-1} w'|; \qquad q^{1 - \delta} > \displaystyle\max_{w \in \mathscr{C}_V} \{ |  \kappa^{-1} w |, | qw | \} ; \\
& \qquad \qquad \big| \Im q^{id} \big| < \displaystyle\max_{\substack{w, w' \in \mathscr{C}_V \\ \Re w^{-1} w' \ge 0 \\ |w^{-1} w' | \le q^{\delta}}} \left| \Im w^{-1} w' \right|. 
\end{aligned}
\end{flalign}

\noindent Consider the stochastic six-vertex model with jump probabilities $\delta_1$ and $\delta_2$, and with $(b_1, b_2, \ldots , b_m)$-step Bernoulli initial data (as defined in Section \ref{StochasticSixVertexModel}). For any $\zeta \in \mathbb{C} \setminus\mathbb{R}_{\ge 0}$, we have that 
\begin{flalign}
\label{hdeterminantidentity2}
\mathbb{E} \left[ \displaystyle\frac{1}{\big( \zeta q^{\mathfrak{h}_t (x)} ; q \big)_{\infty}} \right] = \det \big( \Id + V_{\zeta} \big)_{L^2 (\mathscr{C}_V)}, 
\end{flalign}

\noindent where $V_{\zeta}$ is defined by 

\begin{flalign*}
V_{\zeta} (w, w') = \displaystyle\frac{1}{2 \textbf{\emph{i}}} \displaystyle\int_{D_{R, d, \delta}} \displaystyle\frac{g_V (w; x, t)}{g_V (q^r w; x, t)} \displaystyle\frac{(-\zeta)^r dr}{\sin (\pi r) (q^r w - w')}. 
\end{flalign*}	

\end{thm}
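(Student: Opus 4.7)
Plan: combine Proposition \ref{manydeformationexpectationn} with Lemma \ref{momentdeterminant}, following the template of Section 5 of \cite{DDQA}. First, via the $q$-exponential identity $(a;q)_\infty^{-1}=\sum_{k\geq 0}a^k/(q;q)_k$ applied at $a=\zeta q^{\mathfrak{h}_t(x)}$ (legitimate since $\mathfrak{h}_t(x)\leq t$ is almost surely bounded), I would write the $q$-Laplace transform as a power series in $\zeta$ with coefficients $\mathbb{E}[q^{k\mathfrak{h}_t(x)}]/(q;q)_k$; the exchange of sum and expectation is justified for small $|\zeta|$ by $|\mathbb{E}[q^{k\mathfrak{h}_t(x)}]|\leq 1$. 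The target identity \eqref{hdeterminantidentity2} then follows from \eqref{generaldeterminant} of Lemma \ref{momentdeterminant} (applied with $a=-q$ and $C=\mathscr{C}_V$), provided the $q$-moments in \eqref{heightqgeneralized} are identified with the $\textbf{m}_k$ of Definition \ref{kmoment}.

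Second, I would identify the meromorphic function as
\[
f(z)=\left(\frac{1+z}{1+q^{-1}z}\right)^t\left(\frac{1+q^{-1}\kappa^{-1}z}{1+\kappa^{-1}z}\right)^{x-1}\prod_{j=1}^m\frac{1}{1-q^{-1}\beta_j^{-1}z},
\]
and verify by direct computation that $f(z)=g_V(z;x,t)/g_V(qz;x,t)$ with $g_V$ as in \eqref{gv}; this follows from $(qz+q)^t=q^t(z+1)^t$, $(\kappa^{-1}qz+q)^{x-1}=q^{x-1}(1+\kappa^{-1}z)^{x-1}$, and the telescoping $(q^{-1}\beta_j^{-1}qz;q)_\infty/(q^{-1}\beta_j^{-1}z;q)_\infty=(1-q^{-1}\beta_j^{-1}z)^{-1}$. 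The two-piece nested contours $\gamma_i(\widehat{U};\widehat{\Xi},\widehat{S})$ of Proposition \ref{manydeformationexpectationn} enclose precisely the poles $\{0,-q\}$ and exclude $\{q\beta_j,-\kappa\}$, matching the geometric constraints on $\mathscr{C}_V$; by a standard Cauchy residue argument (the factor $\prod_{i<j}(y_i-y_j)/(y_i-qy_j)$ ensures that residues at $y_i=qy_j$ vanish during the deformation), these may be realized as nested $C_i$ of Definition \ref{kmoment} fitting inside $\mathscr{C}_V$.

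Third, I would verify the analytic hypotheses of Lemma \ref{momentdeterminant} using precisely the inequalities \eqref{inequalityrddelta}. The bound $q^R<\min_{w,w'\in\mathscr{C}_V}|w^{-1}w'|$ guarantees that $|q^rw-w'|$ is bounded away from zero as $|\Im r|\to\infty$ along $D_{R,d,\delta;k}$; the bound $q^{1-\delta}>\max_{w\in\mathscr{C}_V}\{|\kappa^{-1}w|,|qw|\}$ ensures that along the vertical segment $\Re r=\delta$ the shifted point $q^rw$ stays outside the nearest poles of $g_V$ (at $-q$ and at $q^{1-k}\beta_j$) and the zero of $g_V(qz)$ coming from $(1+\kappa^{-1}z)^{x-1}$, which is a pole of $1/g_V(q^rw)$; and the bound on $|\Im q^{id}|$ controls the horizontal portion of $D_{R,d,\delta}$ at height $d$, preventing $q^rw$ from pinching $\mathscr{C}_V$. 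Together these yield the required bound $B$ on $|f(q^nw)|$ and the two estimates of \eqref{convergencedeterminant2}, so Lemma \ref{momentdeterminant} delivers \eqref{hdeterminantidentity2} for $|\zeta|$ small; analytic continuation (using the analyticity claim of that lemma) then extends the identity to all $\zeta\in\mathbb{C}\setminus\mathbb{R}_{\geq 0}$. The hardest step will be this verification: carefully translating the geometric conditions \eqref{inequalityrddelta} into uniform analytic bounds on $|g_V(w)/g_V(q^rw)|$ as $r$ traverses the unbounded contour $D_{R,d,\delta}$ and $w$ ranges over $\mathscr{C}_V$, with the added complication that the number of poles of $1/(q^{-1}\beta_j^{-1}z;q)_\infty$ that $q^rw$ can approach varies with $|q^r|$.
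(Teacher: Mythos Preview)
Your proposal is correct and follows essentially the same route as the paper: identify $f(z)=g_V(z)/g_V(qz)$ via Proposition~\ref{manydeformationexpectationn}, verify the hypotheses of Lemma~\ref{momentdeterminant} using the inequalities~\eqref{inequalityrddelta}, apply the $q$-binomial identity for small $|\zeta|$, and then analytically continue. One small correction to your analysis of the ``hardest step'': the infinitely many poles $q^{1-k}\beta_j$ of $g_V$ are not an obstacle for the bound in~\eqref{convergencedeterminant2}, since if $q^s w$ approaches a \emph{pole} of $g_V$ then $g_V(q^s w)\to\infty$ and the ratio $g_V(w)/g_V(q^s w)\to 0$, which is harmless; the only genuine danger is $q^s w$ approaching the unique \emph{zero} $-q\kappa$ of $g_V$, and that is precisely what the second inequality in~\eqref{inequalityrddelta} rules out (if $q^s w=-q\kappa$ then $|\kappa^{-1}w|=|q^{1-s}|\ge q^{1-\delta}$, contradicting the hypothesis).
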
 

\begin{proof}

Let us begin with the first statement. The existence of $R$ follows from the compactness of $\mathscr{C}_V$ and the fact that $0$ does not lie on the contour $\mathscr{C}_V$. The existence of $\delta$ follows from the compactness $\mathscr{C}_V$ and the fact that $\mathscr{C}_V$ and does not intersect the circle centered at $0$ with radius $\kappa$ or radius $q^{-1}$. The existence of $d$ follows from the compactness of $\mathscr{C}_V$ and the fact that $\Im w^{-1} w' \ne 0$ whenever $\Re w^{-1} w' \ge 0$ and $|w^{-1} w'| < q^{\delta}$. Thus, $R$, $\delta$, and $d$ satisfying \eqref{inequalityrddelta} exist. 

To establish the Fredholm determinant identity \eqref{hdeterminantidentity2}, we will apply Lemma \ref{momentdeterminant}, with $\textbf{m}_k = \mathbb{E} [q^{k \mathfrak{h}_t (x)}]$ and $a = -q$; Proposition \ref{manydeformationexpectationn} shows that $\textbf{m}_k$ has the form given by Definition \ref{kmoment}, with $f(z) = g_V (z) / g_V (qz)$. To apply Lemma \ref{momentdeterminant}, we must verify several conditions on the contours $\mathscr{C}_V$ and $D_{R, d, \delta}$. 

First, we require that $\mathscr{C}_V$ contains $0$, $-q$, and its image under multiplication by $q$, but no other pole of $f$; these statements all quickly follow from the definition of $\mathscr{C}_V$. 

Second, we require the existence of some positive real number $B > 0$ such that $|q^n w - w'| > B^{-1}$ and $f (q^n w) < B$ for all integers $n \ge 1$ and all $w, w' \in \mathscr{C}_V$. By compactness of $\mathscr{C}_V$, the existence of such a $B$ would follow if $|q^n w - w'| > 0$ and $f (q^n w) < \infty$ for all $w, w' \in \mathcal{C}$ and $n \ge 1$. The first inequality holds by convexity of $\mathscr{C}_V$. To establish the second inequality, recall that the only poles of $f$ are at $-q$ and the $q \beta_j$. The $q \beta_j$ are outside the contour $\mathscr{C}_V$, so no element of the form $q^n w$, with $w \in \mathscr{C}_V$, is equal to some $q \beta_j$. Furthermore, since no non-positive power of $q$ lies on $\mathscr{C}_V$, we have that $q^n w \ne q^{-1}$ for any $w \in \mathscr{C}_V$. Thus, the second inequality holds since $q^n w$ is not a pole of $f$ for any $w \in \mathscr{C}_V$. 

Third, we require that 
\begin{flalign}	
\label{convergencevertexg}
\displaystyle\inf_{\substack{w, w' \in \mathscr{C}_V \\ k \in (2R, \infty) \cap \mathbb{Z} \\ s \in D_{R, d, \delta; k}}} |q^s w - w'| > 0; \qquad \displaystyle\sup_{\substack{ w \in \mathscr{C}_V \\ k \in (2R, \infty) \cap \mathbb{Z} \\ s \in D_{R, d, \delta; k}}} \displaystyle\frac{g_V (w)}{g_V (q^s w)} < \infty. 
\end{flalign}

To see that the first inequality in \eqref{convergencevertexg} holds, observe that if $s \in D_{R, d, \delta; k}$ then either $\Re s \ge R$, $\Re s = \delta$ and $\Im s \in (-d, d)$, or $\Im s = d$ and $\Re s \in (\delta, R)$. The first inequality in \eqref{inequalityrddelta} implies that $q^s \ne w^{-1} w'$ in the first case. Furthermore, the third inequality in \eqref{inequalityrddelta} implies that $q^s \ne w^{-1} w'$ in either the second or third case. Therefore, $q^s \ne w^{-1} w'$, for any $w, w' \in \mathscr{C}_V$ and $s \in D_{R, d, \delta; k}$ with $k > 2 R$; thus, the first inequality in \eqref{convergencevertexg} follows from compactness of $\mathscr{C}_V$. 

Now let us establish the second inequality in \eqref{convergencevertexg}. Again, since $\mathscr{C}_V$ is compact, it suffices to show that no pole of $g_V$ is on $\mathscr{C}_V$, and that no element of the form $q^s w$ (with $w \in \mathscr{C}_V$ and $s \in D_{R, d, \delta; k}$) is a zero of $g_V$. The first statement holds by the definition of $\mathscr{C}_V$. To establish the second statement, observe that the zeroes of $g_V$ are $- q \kappa$ and $-1$. If $q^s w = -q \kappa$, then we must have that $|w| = \kappa |q^{1 - s}| \le q^{1 - \delta} \kappa$, which cannot hold due to the second inequality in \eqref{inequalityrddelta}; by similar reasoning, $q^s w \ne -1$. Thus, the second statement also holds. 

Hence, the contours $\mathscr{C}_V$ and $D_{R, d, \delta}$ satisfy the conditions of Lemma \ref{momentdeterminant}. Applying this lemma to $\textbf{m}_k = \mathbb{E} [q^{k \mathfrak{h}_t (x)}]$, we deduce that 
\begin{flalign}
\label{determinant1vertex}
\displaystyle\sum_{k = 0}^{\infty} \displaystyle\frac{\zeta^k \mathbb{E} [q^{k \mathfrak{h}_t (x)}]}{(q; q)_k} = \det \big( \Id + V_{\zeta} \big)_{L^2 (\mathscr{C}_V)}, 
\end{flalign}

\noindent for any $\zeta \in \mathbb{C} \setminus\mathbb{R}_{\ge 0}$ satisfying $|\zeta| < (1 - q)^{-1} B^{-1}$. Since $q \in (0, 1)$, we have that $\mathbb{E} [q^{k \mathfrak{h}_t (x)}] < 1$. Hence, if $|\zeta| < 1$, then the left side of \eqref{determinant1vertex} converges absolutely, meaning that we can commute the sum with the expectation. Therefore, if $|\zeta| < 1$, we have that 
\begin{flalign*}
\displaystyle\sum_{k = 0}^{\infty} \displaystyle\frac{\zeta^k \mathbb{E} [q^{k \mathfrak{h}_t (x)}]}{(q; q)_k} & = \mathbb{E} \left[ \displaystyle\sum_{k = 0}^{\infty} \displaystyle\frac{ \big( \zeta q^{\mathfrak{h}_t (x)} \big)^k}{(q; q)_k} \right] = \mathbb{E} \left[ \displaystyle\frac{1}{\big( \zeta q^{\mathfrak{h}_t (x)}; q \big)_{\infty}} \right], 
\end{flalign*}

\noindent where we have used the $q$-binomial identity in the second equality. Substituting this into \eqref{determinant1vertex} yields 
\begin{flalign}
\label{determinant2vertex}
\mathbb{E} \left[ \displaystyle\frac{1}{\big( \zeta q^{\mathfrak{h}_t (x)}; q \big)_{\infty}} \right] = \det \big( \Id + V_{\zeta} \big)_{L^2 (\mathscr{C}_V)},
\end{flalign}

\noindent for any $\zeta \in \mathbb{C} \setminus\mathbb{R}_{\ge 0}$ such that $|\zeta| < \min \{1 , (1 - q)^{-1} B^{-1} \}$. 

Finally, for any real number $z \ge 0$, the function $(z \zeta; q)_{\infty}^{-1}$ is analytic in $\zeta$ in the domain $\mathbb{C} \setminus\mathbb{R}_{\ge 0}$. Since the left side of \eqref{determinant2vertex} is a locally uniformly convergent sum of such expressions, we deduce that the left side of \eqref{determinant2vertex} is analytic in $\zeta$ in the domain $\mathbb{C} \setminus\mathbb{R}_{\ge 0}$. Furthermore, Proposition \ref{momentdeterminant} states that the right side of \eqref{determinant2vertex} is also analytic in $\zeta$ in the domain $\mathbb{C} \setminus\mathbb{R}_{\ge 0}$. Uniqueness of analytic continuation concludes the proof.  
\end{proof}

Degenerating the stochastic six-vertex model to the ASEP as explained in Section \ref{CurrentConverge} yields the following Fredholm determinant identity for the ASEP with generalized step Bernoulli initial data. 

\begin{thm}
\label{hdeterminantprocess}

Let $m$ be a non-negative integer and $t$ be a positive real number. Fix $0 < L < R$ and $b_1, b_2, \ldots , b_m \in (0, 1)$. Denote $q = L / R < 1$ and $\beta_i = b_i / (1 - b_i)$ for each integer $i \in [1, m]$. Define
\begin{flalign}
\label{hdeterminantidentity}
g_A (z; x, T) = (z + q)^{x - 1} \exp \left( \displaystyle\frac{q T (R - L)}{z + q} \right) \displaystyle\prod_{j = 1}^m \displaystyle\frac{1}{(q^{-1} \beta_j^{-1} z; q)_{\infty}}. 
\end{flalign}

\noindent Let $\mathscr{C}_A$ be a positively oriented circle in the complex plane, centered at a non-positive real number, containing $-q$ and $0$, but leaving outside $- 1$ and $q \beta_j$ for each integer $j \in [1, m]$. There exist a sufficiently large real number $R > 1$, a real number $\delta \in (0, 1)$ sufficiently close to $1$, and a sufficiently small real number $d \in (0, 1)$ such that 
\begin{flalign}
\label{inequalityrddeltaa}
q^R < \displaystyle\min_{w, w' \in \mathscr{C}_A} | w^{-1} w'|; \qquad q^{1 - \delta} > \displaystyle\max_{w \in \mathscr{C}_A} |w| ; \qquad \big| \Im q^{id} \big| < \displaystyle\max_{\substack{w, w' \in \mathscr{C}_A \\ \Re w^{-1} w' \ge 0 \\ |w^{-1} w' | \le q^{\delta}}} \left| \Im w^{-1} w' \right|. 
\end{flalign}

\noindent Consider the ASEP with left jump rate $L$ and right jump rate $R$, with step $(b_1, b_2, \ldots , b_m)$-Bernoulli initial data (as defined in Section \ref{InitialDataVertex}). For any $\zeta \in \mathbb{C} \setminus\mathbb{R}_{\ge 0}$, we have that 
\begin{flalign}
\label{hdeterminantidentity3}
\mathbb{E} \left[ \displaystyle\frac{1}{\big( \zeta q^{\mathfrak{h}_t (x)} ; q \big)_{\infty}} \right] = \det \big( \Id + A_{\zeta} \big)_{L^2 (\mathscr{C}_A)}, 
\end{flalign}

\noindent where $\mathfrak{h}_t (x) = J_t (x - 1)$, and $A_{\zeta}$ is defined by 

\begin{flalign*}
A_{\zeta} (w, w') = \displaystyle\frac{1}{2 \textbf{\emph{i}}} \displaystyle\int_{D_{R, d, \delta}} \displaystyle\frac{g_A (w; x, T)}{g_A (q^r w; x, T)} \displaystyle\frac{(-\zeta)^r dr}{\sin (\pi r) (q^r w - w')} . 
\end{flalign*}

\end{thm}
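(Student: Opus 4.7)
The plan is to derive Theorem \ref{hdeterminantprocess} from Theorem \ref{hdeterminant} by the degeneration scheme described in Section \ref{CurrentConverge}, namely by setting $\delta_1 = \varepsilon L$ and $\delta_2 = \varepsilon R$ in the stochastic six-vertex model and letting $\varepsilon \to 0$. Under this specialization, $q = \delta_1/\delta_2 = L/R$ is preserved, while $\kappa = (1 - \varepsilon L)/(1 - \varepsilon R) \to 1$. The existence of $R$, $d$, $\delta$ satisfying \eqref{inequalityrddeltaa} is obtained by a compactness argument identical to the one used at the start of the proof of Theorem \ref{hdeterminant}: the contour $\mathscr{C}_A$ is bounded and bounded away from $0$, from $-1$, and from the $q\beta_j$, so each of the three inequalities can be arranged separately.

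The substantive step is to apply Theorem \ref{hdeterminant} at time $t_\varepsilon = \lfloor \varepsilon^{-1} T \rfloor$ and position $x_\varepsilon = x + \lfloor \varepsilon^{-1} T \rfloor$, fix a single contour $\mathscr{C} = \mathscr{C}_A$ (which also satisfies the hypotheses on $\mathscr{C}_V$ for all small $\varepsilon$ because $\kappa \to 1$), and take $\varepsilon \to 0$ on both sides of \eqref{hdeterminantidentity2}. On the left-hand side, Proposition \ref{currentmodelprocess} yields convergence in distribution of the six-vertex height $\mathfrak{h}_{t_\varepsilon}(x_\varepsilon)$ to the ASEP current $J_T(x-1)$; since $q \in (0,1)$ and $\zeta \notin \mathbb{R}_{\ge 0}$, the map $y \mapsto 1/(\zeta y; q)_\infty$ is continuous and bounded on $[0,1]$, so bounded convergence delivers convergence of the left-hand side to the corresponding ASEP expectation.

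For the right-hand side, the key computation is the pointwise limit
\begin{equation*}
\frac{g_V(w; x_\varepsilon, t_\varepsilon)}{g_V(q^r w; x_\varepsilon, t_\varepsilon)} \longrightarrow \frac{g_A(w; x, T)}{g_A(q^r w; x, T)},
\end{equation*}
which I would check by expanding $\kappa^{-1} = 1 - \varepsilon(R-L)/(1 - \varepsilon L)$ and taking the logarithm of the $t_\varepsilon$-th power factor
$\bigl[(\kappa^{-1} w + q)(q^r w + q) \big/ (w + q)(\kappa^{-1} q^r w + q)\bigr]^{t_\varepsilon}$; to first order in $\varepsilon$ this log equals $\varepsilon(R-L)\bigl[q/(w+q) - q/(q^r w + q)\bigr] + O(\varepsilon^2)$, so multiplication by $t_\varepsilon \sim \varepsilon^{-1} T$ produces precisely the ratio of exponential factors $\exp\bigl(qT(R-L)/(w+q)\bigr)/\exp\bigl(qT(R-L)/(q^r w + q)\bigr)$ appearing in $g_A/g_A$, while the $(x-1)$-st powers and the Pochhammer products pass through the limit unchanged.

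Passing from pointwise convergence of the integrand to convergence of the kernel $V_\zeta^{(\varepsilon)}(w,w') \to A_\zeta(w,w')$ and then to convergence of the Fredholm determinants is where I expect the main technical obstacle. One needs a dominating bound uniform in $\varepsilon$, $w, w' \in \mathscr{C}$, and $r \in D_{R, d, \delta}$, in order to interchange $\lim_{\varepsilon \to 0}$ with both the $r$-integral defining the kernel and the Fredholm expansion. The delicate part is the $t_\varepsilon$-th power factor on the vertical rays $\Re r = R$, $|\Im r| \to \infty$: one must verify that the base of this power stays in a region where its modulus is bounded by $1 + O(\varepsilon)$, uniformly in $\Im r$, so that its $t_\varepsilon$-th power is bounded by a constant independent of $\varepsilon$; this can be arranged by tightening the choice of contour and invoking the hypotheses \eqref{inequalityrddeltaa}, after which Hadamard's inequality together with dominated convergence in the Fredholm series completes the proof. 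As in Theorem \ref{hdeterminant}, a final analytic continuation argument in $\zeta$ extends the identity from a small neighborhood of $0$ to all of $\mathbb{C} \setminus \mathbb{R}_{\ge 0}$.
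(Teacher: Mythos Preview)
Your approach is essentially the same as the paper's: degenerate Theorem \ref{hdeterminant} via $\delta_1 = \varepsilon L$, $\delta_2 = \varepsilon R$, $t = \lfloor \varepsilon^{-1} T\rfloor$, use Proposition \ref{currentmodelprocess} plus bounded convergence on the left, and verify $g_V(\cdot; x+t,t)/g_V(q^r\cdot; x+t,t) \to g_A/g_A$ on the right; the paper in fact records the slightly sharper statement $\lim_{\varepsilon\to 0} g_V(z;x+t,t) = e^{T(L-R)} g_A(z;x,T)$, the constant cancelling in the ratio. Two small remarks: the paper disposes of your ``main technical obstacle'' more briskly than you anticipate, by observing that once \eqref{inequalityrddeltaa} holds it implies \eqref{inequalityrddelta} for all small $\varepsilon$ (since $\kappa\to 1$), so the uniform-in-$\varepsilon$ bounds on the $r$-integrand come for free and compactness of $\mathscr{C}_A$ together with Corollary \ref{determinantlimitkernels} finishes; and your final analytic-continuation step is unnecessary, since Theorem \ref{hdeterminant} already holds for every $\zeta\in\mathbb{C}\setminus\mathbb{R}_{\ge 0}$ before taking the limit.
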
 

\begin{proof}

The proof of the first statement is similar to that of the first statement in Theorem \ref{hdeterminant} (in fact, it is the $\kappa = 1$ degeneration of that statement), and is thus omitted. 

For the proof of the second part of the theorem, we apply the degeneration of the stochastic six-vertex model to the ASEP explained in Section \ref{CurrentConverge}. Specifically, set $\delta_1 = \varepsilon L$, $\delta_2 = \varepsilon R$, $t = \varepsilon^{-1} T$, and let $\varepsilon$ tend to $0$. Then, the distribution of $\mathfrak{h}_t (x + t) = \mathfrak{H} (x + t - 1, t)$ under the the stochastic six-vertex model with jump rates $\delta_1$ and $\delta_2$ (as defined in Section \ref{StochasticSixVertexModel}) converges to the distribution of $J_t (x)$ under the ASEP with left jump rate $L$ and right jump rate $R$, due to Proposition \ref{currentmodelprocess}. 

Therefore, $\mathbb{E}_V \big[ (\zeta q^{\mathfrak{h}_t (x + t)}; q)_{\infty}^{-1} \big]$ converges to $\mathbb{E}_A \big[ (\zeta q^{\mathfrak{h}_t (x)}; q)_{\infty}^{-1} \big]$, since both $(\zeta q^{\mathfrak{h}_t (x + t)}; q)_{\infty}^{-1}$ and $(\zeta q^{\mathfrak{h}_t (x)}; q)_{\infty}^{-1}$ are uniformly bounded; here, the first expectation is with respect to the stochastic six-vertex model and the second expectation is with respect to the ASEP. Thus, the left side of \eqref{hdeterminantidentity2} (with $x$ replaced by $x + t$) converges to the left side of \eqref{hdeterminantidentity3}. 

Moreover, it is quickly verified that 
\begin{flalign*}
\displaystyle\lim_{\varepsilon \rightarrow 0} g_V (z; x + t, t) & = e^{T (L - R)} g_A (z; x, T). 
\end{flalign*}

\noindent Hence, $V_{\zeta} (w, w')$ tends to $A_{\zeta} (w, w')$ as $\varepsilon$ tends to $0$, for all $w, w' \in \mathscr{C}_A$. 

Now, as $\varepsilon$ tends to $0$, $\kappa$ tends to $1$. Therefore, if $\varepsilon$ is sufficiently small (in comparison to $R$, $\delta$, and $d$), the estimates \eqref{inequalityrddelta} are implied by the estimates \eqref{inequalityrddeltaa}. Hence, we may take the contour $\mathscr{C}_V$ from Theorem \ref{hdeterminant} to be the contour $\mathscr{C}_A$. 

Since $\mathscr{C}_A$ is compact, the fact that $\lim_{\varepsilon \rightarrow 0} V_{\zeta} (w, w') = A_{\zeta} (w, w')$ implies that the right side of \eqref{hdeterminantidentity2} converges to the right side of \eqref{hdeterminantidentity3} as $\varepsilon$ tends to $0$ (for instance, by Corollary \ref{determinantlimitkernels}). 

Therefore, \eqref{hdeterminantidentity3} is a limit degeneration of \eqref{hdeterminantidentity2}, and the proof is complete. 
\end{proof}

\begin{rem}

\label{limitprocessequation}

Taking the \emph{weakly asymmetric limit} \cite{SEPS} of Theorem \ref{hdeterminantprocess} might provide an alternative proof of Theorem 1.10 of \cite{FEF}, which is a Fredholm determinant identity for the Laplace transform of the free energy of the continuum directed random polymer with an $m$-spiked perturbation. 
\end{rem}

\section{Preliminary Remarks for the Asymptotic Analysis} 

\label{TransitionsModel}

In this section we begin the analysis of the Fredholm determinant identities given by Theorem \ref{hdeterminant} and Theorem \ref{hdeterminantprocess}. Our goal is twofold. First, in Section \ref{ModifiedDeterminants}, we will rewrite the kernels $V_{\zeta}$ and $A_{\zeta}$ (defined in Theorem \ref{hdeterminant} and Theorem \ref{hdeterminantprocess}, respectively) in a way that will be useful for asymptotic analysis later. Then, in Section \ref{ApplicationsDeterminantModelProcess}, we will state the asymptotics (see Proposition \ref{processdeterminant} and Proposition \ref{modeldeterminant}) for the corresponding Fredholm determinants and also show how these asymptotics imply Theorem \ref{hlimit} and Theorem \ref{asymmetriclimit}. The proofs of Proposition \ref{processdeterminant} and Proposition \ref{modeldeterminant} will be the topics of later sections of the paper.

\subsection{Reformulations of Theorem \ref{hdeterminant} and Theorem \ref{hdeterminantprocess}}

\label{ModifiedDeterminants}

Theorem \ref{hdeterminant} and Theorem \ref{hdeterminantprocess} are of a similar form. Specifically, they both state that 
\begin{flalign}
\label{hboth}
\mathbb{E}_K \left[ \displaystyle\frac{1}{\big( \zeta q^{\mathfrak{h}_T (x)}; q \big)_{\infty}} \right] = \det \big( \Id + K_{\zeta} \big)_{L^2 (\mathscr{C}_K)}, 
\end{flalign}

\noindent where $\mathfrak{h}_T (x) = \mathfrak{H} (x - 1, T)$ in the case of the stochastic six-vertex model, and $\mathfrak{h}_T (x) = J_T (x - 1)$ in the case of the ASEP. In \eqref{hboth}, the notation $K$ either stands for $V$ (for the stochastic six-vertex model) or $A$ (for the ASEP); the kernels $V_{\zeta}$ and $A_{\zeta}$ are defined in Theorem \ref{hdeterminant} and Theorem \ref{hdeterminantprocess}, respectively. If $K = V$, then the expectation on the left side of \eqref{hboth} is with respect to the stochastic six-vertex model with step $(b_1, b_2, \ldots , b_m)$-Bernoulli initial data, run to some integer time $T$. If $K = A$, then the expectation on the left side of \eqref{hboth} is with respect to the ASEP with step $(b_1, b_2, \ldots , b_m)$-Bernoulli initial data, run up to some real time $T$. 

Recall from Theorem \ref{hdeterminant} and Theorem \ref{hdeterminantprocess} that 
\begin{flalign*}
K_{\zeta} (w, w') = \displaystyle\frac{1}{2 \textbf{i}} \displaystyle\int_{D_{R, d, \delta}} \displaystyle\frac{g_K (w; x, T)}{g_K (q^r w; x, t)} \displaystyle\frac{(-\zeta)^r dr}{\sin (\pi r) \big( q^r w - w' \big)}, 
\end{flalign*}
	
\noindent for any $w, w' \in \mathscr{C}_K$; the functions $g_V$ and $g_A$ are given by \eqref{gv} and \eqref{hdeterminantidentity}, respectively. 

Let us set $\zeta = - q^p$, for some real number $p$, and change variables from $r$ to $v = q^r w$. To understand how this affects the above identity for $K_{\zeta}$, we must analyze the contour for $v$. 

To that end, first observe that if $d$ is sufficiently small (which we will always assume to be the case), the contour $D_{R, d, \delta}$ can be written as the union $D_{R, d, \delta} = I' \cup \bigcup_{k \in \mathbb{Z} \setminus\{ -1 \}} I_k$. Here, $I_k$ is defined to be the interval
\begin{flalign*}
I_k = \Big[ R + \textbf{i} \big(d + 2 \pi |\log q|^{-1} k \big), R + \textbf{i} \big(d + 2 \pi |\log q|^{-1} (k + 1) \big) \Big],
\end{flalign*} 

\noindent for each integer $k$, and $I'$ is defined to be the piecewise linear curve
\begin{flalign*}
I' = \Big[ R + \textbf{i} \big(d - 2 \pi |\log q|^{-1} \big), R - \textbf{i} d \Big] \cup \big[ R -  \textbf{i} d, \delta - \textbf{i} d \big] \cup \big[ \delta - \textbf{i} d, \delta + \textbf{i} d \big] \cup [\delta + \textbf{i} d, R + \textbf{i} d]. 
\end{flalign*}

Now, for each integer $k$, the map from $r$ to $q^r w$ is a bijection from $I_k$ to $-Q_{R, d, \delta; w}^{(1)}$, where the negative refers to reversal of orientation and the contour $Q_{R, d, \delta; w}^{(1)}$ is a positively oriented circle of radius $q^R$, centered at $0$; here, the negative orientation is due to the fact that $\log q < 0$. 

Furthermore, the map from $r$ to $q^r w$ is a bijection from $I'$ to the contour $- Q_{R, d, \delta; w}$, where $Q_{R, d, \delta; w}$ is defined to be the union of four curves $\mathcal{J}_1 \cup \mathcal{J}_2 \cup \mathcal{J}_3 \cup \mathcal{J}_4$, which are defined are as follows. The curve $\mathcal{J}_1$ is the major arc of the circle centered at $0$, with radius $q^R |w|$, connecting $q^{R - id} w$ to $q^{R + id } w$. The curve $\mathcal{J}_2$ is the line segment in the complex plane connecting $q^{R + id} w$ to $q^{\delta + id} w$; the curve $\mathcal{J}_3$ is the minor arc of the circle centered at $0$, with radius $q^{\delta} |w|$, connecting $q^{\delta + id} w$ to $q^{\delta - id} w$; and the curve $\mathcal{J}_4$ is the line segment in the complex plane connecting $q^{\delta - id} w$ to $q^{R - id} w$. We refer to Figure \ref{contoursgammaq} for an example of the contour $Q_{R, d, \delta; w}$. 

Thus, since $dv = v \log q dr$ and $(-\zeta)^r = v^p w^{-p}$, we obtain that
\begin{flalign*}
K_{\zeta} (w, w') & = - \displaystyle\frac{1}{2 \textbf{i} \log q} \displaystyle\sum_{j \ne 0} \displaystyle\oint_{Q_{R, d, \delta; w}^{(1)}} \displaystyle\frac{g_K (w; x, T)}{g_K (v; x, t)}  \displaystyle\frac{v^{p - 1} w^{-p} }{\sin \big( \frac{\pi}{\log q} ( \log v - \log w + 2 \pi \textbf{i} j ) \big)} \displaystyle\frac{dv}{v - w'}  \nonumber \\
& \qquad - \displaystyle\frac{1}{2 \textbf{i} \log q} \displaystyle\oint_{Q_{R, d, \delta; w}} \displaystyle\frac{g_K (w; x, T)}{g_K (v; x, T)}  \displaystyle\frac{v^{p - 1} w^{-p} 	}{\sin \big( \frac{\pi}{\log q} ( \log v - \log w ) \big) } \displaystyle\frac{dv}{v - w'} , 
\end{flalign*}

\noindent where we have taken the principal branch of the logarithm. 

We can rewrite $K_{\zeta} = K_{\zeta} (w, w')$ as 
\begin{flalign}
\label{kvw}
\begin{aligned}
K_{\zeta} & = - \displaystyle\frac{1}{2 \textbf{i} \log q} \displaystyle\sum_{j = -\infty}^{\infty} \displaystyle\oint_{Q_{R, d, \delta; w}} \displaystyle\frac{g_K (w; x, T)}{g_K (v; x, t)}  \displaystyle\frac{v^{p - 1} w^{-p}}{\sin \big( \frac{\pi}{\log q} ( \log v - \log w + 2 \pi \textbf{i} j ) \big)}\displaystyle\frac{dv}{ v - w'}  \\
& \quad - \displaystyle\frac{1}{2 \textbf{i} \log q} \displaystyle\sum_{j \ne 0} \displaystyle\oint_{Q_{R, d, \delta; w}^{(2)} } \displaystyle\frac{g_K (w; x, T)}{g_K (v; x, T)}  \displaystyle\frac{v^{p - 1} w^{-p}} {\sin \big( \frac{\pi}{\log q} ( \log v - \log w + 2 \pi \textbf{i} j) \big)} \displaystyle\frac{dv}{ v - w'} , 
\end{aligned}
\end{flalign}

\noindent where the contour $Q_{R, d, \delta; w}^{(2)} = Q_{R, d, \delta; w}^{(1)} - Q_{R, d, \delta; w}$ is the union of four curves $\mathcal{C}_1 \cup \mathcal{C}_2 \cup \mathcal{C}_3 \cup \mathcal{C}_4$, which are defined as follows. The curve $\mathcal{C}_1$ is the line segment in the complex plane connecting $q^{R - id} w$ to $q^{\delta - id} w$. The curve $\mathcal{C}_2$ is the minor arc of the circle centered at $0$, with radius $q^{\delta} |w|$, connecting $q^{\delta - id} w$ to $q^{\delta + id} w$. The curve $\mathcal{C}_3$ is the line segment connecting $q^{\delta + id} w$ to $q^{R + id} w$. The curve $\mathcal{C}_4$ is the minor arc of the circle centered at $0$, with radius $q^R |w|$, connecting $q^{R + id} w$ to $q^{R - id} w$. We refer to Figure \ref{contoursgammaq} for an example of the contour $Q_{R, d, \delta; w}^{(2)}$. 

\begin{figure}

\begin{center}

\begin{tikzpicture}[
      >=stealth,
			scale=.8
			]

			\draw[-, black] (5, 0) -- (-1, 0); 
			\draw[-, black] (0, -1) -- (0, 3);

			\draw[->, black,  thick] (.57923,.40558) -- (1.44808, 1.01395);
			\draw[-, black,  thick] (1.44808, 1.01395) -- (2.31692, 1.62232);
			\draw[-, black,  thick] (.40558, .57923) -- (1.62232, 2.31692);
			
			\draw[black, thick] (2, 2) arc (45:55:2.8284);
			\draw[black, thick] (2, 2) arc (45:35:2.8284);
			\draw[black, thick] (.40558, .57923) arc (55:180:.7071);
			\draw[black, thick] (.57923, .40558) arc (35:-180:.7071) node [black, below = 24, right = 15] {$Q_{R, d, \delta; w}$};

			\draw[<->, black, dashed] (0, 0) -- (-.5, .5) node[black, left = 10, above = 0] {$q^R |w|$};
			\draw[<->, black, dashed] (0, 0) -- (2, 2) node[black, below = 4, left = 18] {$q^{\delta} |w|$};

			\draw[black, fill] (2.5, 2.5) circle [radius=.04] node [black, above = 2] {$w$};

			\draw[-, black] (15, 0) -- (9, 0); 
			\draw[-, black] (10, -1) -- (10, 3);

			\draw[-, black,  thick] (10.57923,.40558) -- (11.44808, 1.01395);
			\draw[->, black,  thick] (12.31692, 1.62232) -- (11.44808, 1.01395);
			\draw[-, black,  thick] (10.40558, .57923) -- (11.62232, 2.31692);
			
			\draw[black, thick] (12, 2) arc (45:55:2.8284);
			\draw[black, thick] (12, 2) arc (45:35:2.8284);
			\draw[black, thick] (10.5, .5) arc (45:55:.7071);
			\draw[black, thick] (10.5, .5) arc (45:35:.7071) node [black, below = 1, right = 4] {$Q_{R, d, \delta; w}^{(2)}$};

			\draw[black, fill] (12.5, 2.5) circle [radius=.04] node [black, above = 2] {$w$};

\end{tikzpicture}

\end{center}

\caption{\label{contoursgammaq} Shown to the left is the contour $Q_{R, d, \delta; w}$. Shown to the right is the contour $Q_{R, d, \delta; w}^{(2)}$. } 
\end{figure}
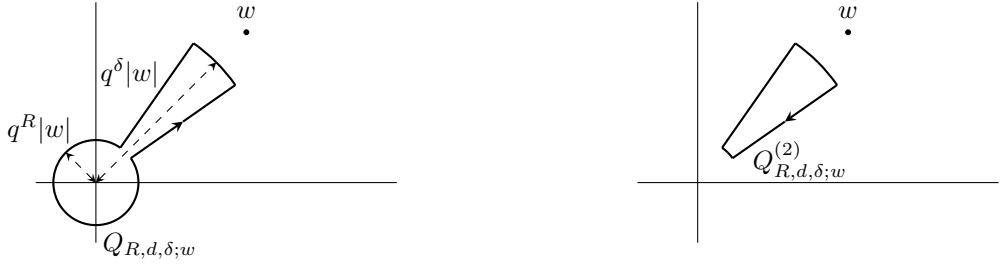

We claim that each summand of the second sum on the right side of \eqref{kvw} is equal to $0$. To verify this, we analyze the poles of the corresponding integrands. When $K = V$, these poles are at $v \in \{0, w', -q \kappa \}$ and, when $K = A$, these poles are at $v \in \{ 0, w', -q \}$. Observe that there is no pole at $v = w$ (or at $v \in w q^{\mathbb{Z}}$) since $j$ is non-zero in each summand. 

The pole $0$ is not contained inside $Q_{R, d, \delta; w}^{(2)}$. Furthermore, in the case $K = V$, the inequalities \eqref{inequalityrddelta} imply that the poles at $v \in \{ w', -q \kappa \}$ are not contained inside $Q_{R, d, \delta; w}^{(2)}$; similarly, in the case $K = A$, the inequalities \eqref{inequalityrddeltaa} imply that the poles at $v \in \{ w', - q \}$ are not contained inside $Q_{R, d, \delta; w}^{(2)}$. Thus, no poles of the integrand of the second sum on the right side of \eqref{kvw} are contained in the interior of $Q_{R, d, \delta; w}^{(2)}$, from which we deduce that the corresponding integrals are equal to $0$. 

Next, we would like to deform the contour $Q_{R, d, \delta; w}$ on the right side of \eqref{kvw} to one (to be chosen later) that will be better suited for asymptotic analysis. To that end, recall that a contour $\gamma \subset \mathbb{C}$ is called \emph{star-shaped} (with respect to the origin) if, for each real number $a \in \mathbb{R}$, there exists exactly one complex number $z_a \in \gamma$ such that $z_a / |z_a| = e^{ \textbf{i} a}$. 

The following proposition is a reformulation of Theorem \ref{hdeterminant} and Theorem \ref{hdeterminantprocess}, specialized to the case $\zeta = - q^p$, but with more general, star-shaped contours. As above, the notation $K \in \{ V, A \}$ will be used to discuss the stochastic six-vertex model and ASEP simultaneously. 

\begin{prop}

\label{asymptoticheightvertex}

Fix $\delta_1, \delta_2 \in (0, 1)$; $R > L > 0$; $m \in \mathbb{Z}_{\ge 1}$; $b_1, b_2, \ldots , b_m \in (0, 1)$; $x \in \mathbb{Z}$; and $p \in \mathbb{R}$. Denote $\beta_j = b_j / (1 - b_j)$ for each $j \in [1, m]$. 

Let $\Gamma_V \subset \mathbb{C}$ be a positively oriented, star-shaped contour in the complex plane containing $0$, but leaving outside $-q \kappa$ and all $q \beta_j$. Let $\mathcal{C}_V \subset \mathbb{C}$ be a positively oriented, star-shaped contour contained inside $q^{-1} \Gamma_V$; that contains $0$, $-q$, and $\Gamma_V$; but that leaves outside all $q \beta_j$. Let $\Gamma_A \subset \mathbb{C}$ be a positively oriented, star-shaped contour containing $0$, but leaving outside $-q$ and all $q \beta_j$. Furthermore, let $\mathcal{C}_A \subset \mathbb{C}$ be a positively oriented, star-shaped contour contained inside $q^{-1} \Gamma_A$; that contains $0$, $-q$, and $\Gamma_A$; but that leaves outside all $q \beta_j$.

Let $\mathbb{E}_V$ denote the expectation with respect to the stochastic six-vertex model with left jump probability $\delta_1$, right jump probability $\delta_2$, and step $(b_1, b_2, \ldots , b_m)$-Bernoulli initial data. Similarly, let $\mathbb{E}_A$ denote the expectation with respect to the ASEP with left jump rate $L$, right jump rate $R$, and step $(b_1, b_2, \ldots , b_m)$-Bernoulli initial data. 

Then, for each $K \in \{V, A \}$, we have that 
\begin{flalign}
\label{limitheightmodel}
\mathbb{E}_K \left[ \displaystyle\frac{1}{\big( -q^{\mathfrak{h}_T (x) + p}; q \big)_{\infty}} \right] = \det \big( \Id + K^{(p)} \big)_{L^2 (\mathcal{C}_K)},
\end{flalign}

\noindent where 
\begin{flalign}
\label{kvw1}
K^{(p)} (w, w') & = \displaystyle\frac{1}{2 \textbf{\emph{i}} \log q} \displaystyle\sum_{j = -\infty}^{\infty} \displaystyle\oint_{\Gamma_K} \displaystyle\frac{g_K (w; x, T)}{g_K (v; x, t)} \displaystyle\frac{v^{p - 1} w^{-p}}{\sin \big( \frac{\pi}{\log q} ( \log v - \log w + 2 \pi \textbf{\emph{i}} j ) \big)} \displaystyle\frac{dv}{ w' - v },
\end{flalign}

\noindent and $g_V$ and $g_A$ are defined in \eqref{gv} and \eqref{hdeterminantidentity}, respectively. 
\end{prop}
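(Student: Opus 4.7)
The plan is to derive the proposition from Theorem \ref{hdeterminant} (for $K = V$) or Theorem \ref{hdeterminantprocess} (for $K = A$), specialized to $\zeta = -q^p$, by performing two independent contour deformations. My starting point is the manipulation of $K_\zeta$ already completed in the discussion preceding the statement: the change of variables $v = q^r w$ and the period resummation of $q^r$ rewrite the one-dimensional $r$-integral over $D_{R,d,\delta}$ as a double sum over $j \in \mathbb{Z}$ of integrals over $Q_{R,d,\delta;w}$ and $Q^{(2)}_{R,d,\delta;w}$. The $Q^{(2)}$ contributions vanish, as verified in the excerpt, because the finite poles $v \in \{0, w', -q\kappa\}$ (or $\{0, w', -q\}$ for the ASEP) all lie outside the corresponding contours by the inequalities \eqref{inequalityrddelta} and \eqref{inequalityrddeltaa}, while the sine zeros $v = w q^k$ are absent when $j \neq 0$ under the principal branch of the logarithm. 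Only the $Q_{R,d,\delta;w}$ integrals survive.

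The first deformation I would carry out moves the inner $v$-contour from $Q_{R,d,\delta;w}$ to $\Gamma_K$. For each fixed $j$ the integrand in $v$ has poles at $v = 0$, $v = w'$, $v = -q\kappa$ (or $v = -q$), and $v = w q^k$ for $k \in \mathbb{Z}$ arising from the sine denominator. The hypotheses on $\Gamma_K$ force $v = -q\kappa$ (resp.\ $v = -q$) and each $v = q \beta_l$ outside $\Gamma_K$; the inclusion $\mathcal{C}_K \supset \Gamma_K$ then also places $w$ and $w'$ outside $\Gamma_K$. Since $\mathcal{C}_K \subset q^{-1}\Gamma_K$ we have $q w \in \Gamma_K$, so by the star-shaped property exactly the family $\{w q^k : k \geq 1\}$ of sine poles lies inside $\Gamma_K$. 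The step is then to verify by direct geometric inspection, using \eqref{inequalityrddelta} or \eqref{inequalityrddeltaa}, that $Q_{R,d,\delta;w}$ (as the $v = q^r w$ image of a single period of $D_{R,d,\delta}$) encloses exactly this same set of $w q^k$ poles. Consequently the deformation crosses no pole and produces the integral representation \eqref{kvw1}.

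The second deformation would replace the outer contour $\mathscr{C}_K$ by $\mathcal{C}_K$ in the Fredholm determinant. Both are positively oriented and star-shaped, both contain $\{0, -q\}$, and both exclude each $q \beta_l$ (and also $-q \kappa$ in the six-vertex case). The kernel $K^{(p)}(w, w')$ is therefore holomorphic in $w, w'$ throughout the annular region bounded by these two contours, and standard analytic-continuation arguments for trace-class Fredholm determinants (cf.\ Appendix \ref{Determinants1}) give
\begin{equation*}
\det \bigl( \Id + K^{(p)} \bigr)_{L^2(\mathscr{C}_K)} = \det \bigl( \Id + K^{(p)} \bigr)_{L^2(\mathcal{C}_K)},
\end{equation*}
which combined with the kernel identity from the first step yields \eqref{limitheightmodel}.

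The hard part will be the first deformation, specifically the confirmation that the infinite family $\{w q^k : k \geq 1\}$ of sine-induced poles is enclosed consistently by $Q_{R,d,\delta;w}$ and $\Gamma_K$. Because these poles accumulate at $v = 0$ and the ``keyhole'' shape of $Q_{R,d,\delta;w}$ has a slit running along the ray through $w$, careful pole counting near the origin is required; the inequalities \eqref{inequalityrddelta} and \eqref{inequalityrddeltaa} are precisely what ensure the right $w q^k$ lie on the expected side of the keyhole. A secondary subtlety is the branch of $v^{p-1}$ when $p$ is non-integer, which I would handle by a consistent choice of principal branch throughout, matching the convention in the original formulation of $K_\zeta$.
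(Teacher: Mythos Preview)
Your overall strategy coincides with the paper's: start from \eqref{kvw} (with the $Q^{(2)}$ contributions already shown to vanish) and pass from the pair $(\mathscr{C}_K, Q_{R,d,\delta;w})$ to $(\mathcal{C}_K, \Gamma_K)$ by contour deformation. The gap is that you treat the two deformations as independent sequential steps, while they are in fact coupled. In your first step you justify the $v$-deformation by invoking $\mathcal{C}_K \supset \Gamma_K$ (to place $w'$ outside $\Gamma_K$) and $\mathcal{C}_K \subset q^{-1}\Gamma_K$ (to place $qw$ inside), but at that moment $w,w'$ still lie on $\mathscr{C}_K$, and nothing in the hypotheses relates $\mathscr{C}_K$ to $\Gamma_K$. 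Conversely, in your second step you assert that $K^{(p)}(w,w')$ is holomorphic in the region between $\mathscr{C}_K$ and $\mathcal{C}_K$; but the $v$-integral over $\Gamma_K$ defining $K^{(p)}$ has poles at $v=w'$ and $v=q^k w$, which move with $w,w'$ and may cross $\Gamma_K$ unless $w,w'$ already sit between $\Gamma_K$ and $q^{-1}\Gamma_K$. Each step thus presupposes the other.

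The paper handles this by deforming both contours \emph{simultaneously}: since $\mathscr{C}_K$, $\mathcal{C}_K$, and $\Gamma_K$ are all star-shaped, one slides every contour along rays through the origin in tandem, so that at each intermediate stage the outer contour stays nested between the inner contour and $q^{-1}$ times the inner contour. This keeps the moving poles $v=w'$ and $v\in\{q^k w\}_{k\ge 1}$ on the correct side of the inner contour throughout, while the fixed singularities $0$, $-q$, $-q\kappa$, $q\beta_j$ never change sides. Two minor bookkeeping corrections: the $q\beta_\ell$ are poles in $w$ (via $g_K(w)$), not in $v$, so they play no role in the inner deformation; and $-q\kappa$ is a zero, not a pole, of $g_V(w)$, so it is irrelevant to the outer one.
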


\begin{proof}

In view of \eqref{kvw} and the fact that the second sum on the right side of this identity is equal to $0$, there are two differences between \eqref{hboth} and \eqref{limitheightmodel}. The first is that the contour along which the Fredholm determinants are taken is different; in the former it is $\mathscr{C}_K$ and in the latter it is $\mathcal{C}_K$. The second is that the contour of integration for $v$ in \eqref{kvw} is different from that in \eqref{kvw1}; in the former it is $Q_{R, d, \delta; w}$ and in the latter it is $\Gamma_K$. Thus, it suffices (see, for instance, Remark \ref{determinantcontourdeform}) to show that we can continuously and simultaneously deform $Q_{R, d, \delta; w}$ and $\mathscr{C}_K$ to $\Gamma_K$ and $\mathcal{C}_K$, respectively, without crossing any poles of any integrand on the right side of \eqref{kvw1}. 

We will only do this in the case $K = V$; the case $K = A$ is entirely analogous. To that end, observe that the poles for $v$ of the integrands in the first sum on the right side of \eqref{kvw} are at $v \in \{ 0, w', -q \kappa \} \cup \{ q^j w \}_{j \in \mathbb{Z}}$; of these, the poles at $v \in \{ 0, q w, q^2 w, \ldots \}$ are contained inside the contour $Q_{R, d, \delta; w}$ and the others are left outside. The poles for $w$ of the same integrands are at $w \in \{ -q, 0, q \beta_1, q \beta_2, \ldots , q \beta_m \}$; of these, the poles at $w \in \{ 0, -q \}$ are contained inside the contour $\mathscr{C}_V$ and the others are left outside. 

Now, recall that $\mathcal{C}_V$ contains $0$, $-q$, and $\Gamma_V$, but leaves outside all $q \beta_j$; similarly, $\Gamma_V$ contains $0$ and $q \mathcal{C}_V$, but leaves outside $-q \kappa$. Thus, $\mathcal{C}_V$ and $\mathscr{C}_V$ contain and leave outside the same poles of $K^{(p)}$; similarly, $Q_{R, d, \delta; w}$ and $\Gamma_V$ contain and leave outside the same poles of the integrand on the right side of \eqref{kvw1}. Therefore, since $\mathcal{C}_V$ and $\Gamma_V$ are star-shaped, it follows by moving along rays through the origin that $Q_{R, d, \delta; w}$ and $\mathscr{C}_V$ can be continuously and simultaneously deformed to $\Gamma_V$ and $\mathcal{C}_V$, respectively, without crossing any poles of any integrand on the right side of \eqref{kvw1}. Thus, \eqref{limitheightmodel} follows from \eqref{hboth} and \eqref{kvw}. 
\end{proof}

Observe that $\big( -q^{\mathfrak{h}_T (x) + p}; q \big)_{\infty}^{-1}$ from \eqref{limitheightmodel} tends to $0$ when $p$ is much less than than $-\mathfrak{h}_T (x)$, and that it tends to $1$ when $p$ is much greater than than $-\mathfrak{h}_T (x)$. Therefore, one might expect the left side of \eqref{limitheightmodel} to approximate the quantity $\mathbb{E}_{\mathbb{P}^K} [\textbf{1}_{\mathfrak{h}_T (x) \ge -p}] = \mathbb{P}^K [\mathfrak{h}_T (x) \ge -p]$. Hence, in order to analyze this probability, one requires asymptotics of the determinant on the right side of \eqref{limitheightmodel}. 

In the next section, we will clarify this heuristic and explain how to reduce Theorem \ref{asymmetriclimit} and Theorem \ref{hlimit} to an asymptotic analysis of Fredholm determinants of the type given on the right side of \eqref{limitheightmodel}.

\subsection{Applications of Proposition \ref{asymptoticheightvertex}}

\label{ApplicationsDeterminantModelProcess}

The following two results provide the large $T$ asymptotics for the determinant on the right side of \eqref{limitheightmodel}. Proposition \ref{processdeterminant} addresses the case $K = A$ (ASEP), and Proposition \ref{modeldeterminant} addresses the case $K = V$ (stochastic six-vertex model). In what follows, we recall that the integer $x$ appears in the definition of the function $g_K$ (of \eqref{gv} and \eqref{hdeterminantidentity}) that is used to define the kernel $K^{(p)}$ (of \eqref{kvw1}). 

\begin{prop}

\label{processdeterminant}

Adopt the notation of Theorem \ref{asymmetriclimit} and Proposition \ref{asymptoticheightvertex}. 

\begin{enumerate}

\item{Let $\eta \in (\theta, 1)$. For each positive real number $T$, let $x = x(T) = \lfloor \eta T \rfloor + 1 $ and $p_T = s f_{\eta} T^{1 / 3} - m_{\eta} T$. We have that 
\begin{flalign*}
\displaystyle\lim_{T \rightarrow \infty} \det \big( \Id + A_{\zeta}^{(p_T)} \big)_{L^2 (\mathcal{C}_A)} = F_{\TW} (s). 
\end{flalign*}}

\item{Assume that $\{ \eta = \eta_T \}_{T \in \mathbb{Z}_{> 0}}$ is a sequence of real numbers such that $\lim_{T \rightarrow \infty} T^{1 / 3} \big( \eta_T - \theta \big) = d$.  For each positive real number $T$, let $x = x(T) = \lfloor \eta T \rfloor + 1$ and $p_T = s f_{\eta} T^{1 / 3} - m_{\eta} T $. We have that 
\begin{flalign*}
\displaystyle\lim_{T \rightarrow \infty} \det \big( \Id + A_{\zeta}^{(p_T)}  \big)_{L^2 (\mathcal{C}_A)} = F_{\BBP; \textbf{\emph{c}}} (s). 
\end{flalign*}}

\item{Let $\eta \in (-b, \theta)$, and assume that $b_j = b$ for all indices $j \in [1, m]$. For each positive real number $T$, let $x = x(T) = \lfloor \eta T \rfloor + 1$ and $p_T = s f_{\eta}' T^{1 / 3} - m_{\eta}' T$. We have that 
\begin{flalign*}
\displaystyle\lim_{T \rightarrow \infty} \det \big( \Id + A_{\zeta}^{(p_T)} \big)_{L^2 (\mathcal{C}_A)} = G_m (s). 
\end{flalign*}}

\end{enumerate}

\end{prop}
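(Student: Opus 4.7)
The plan is to carry out a steepest descent analysis of the Fredholm determinant $\det(\Id + A_\zeta^{(p_T)})_{L^2(\mathcal{C}_A)}$, treating all three regimes as instances of a Baik--Ben Arous--P\'ech\'e-type phase transition governed by a single phase function. Factoring out the leading $T$-dependence of the inner $v$-integrand defining $A^{(p_T)}(w, w')$ as $\exp(T[\Psi(w) - \Psi(v)])$ times lower-order corrections (including the spike ratio $\prod_j(q^{-1}\beta_j^{-1}v; q)_\infty/(q^{-1}\beta_j^{-1}w; q)_\infty$ and the linear correction from $p_T$), one reads off
\[
\Psi(z) = \eta \log(z + q) + \frac{q(R - L)}{z + q} + M \log z,
\]
with $M = m_\eta$ in parts~(1), (2) and $M = m_\eta'$ in part~(3). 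The constants $(m_\eta, f_\eta)$ are tuned so that $\Psi'(w_c) = \Psi''(w_c) = 0$ at some critical $w_c$ (the source of the $T^{1/3}$ scaling via a cubic expansion), while $(m_\eta', f_\eta')$ instead yield a simple critical point responsible for the $T^{1/2}$ scaling. The three regimes are distinguished by the position of the spike pole at $v = q\beta$ relative to the steepest descent contour for $\Psi$ through $w_c$: the contour does not enclose $q\beta$ in part~(1), they coincide in the limit in part~(2), and the contour does enclose $q\beta$ in part~(3).

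In parts~(1) and (2), the contours $\Gamma_A, \mathcal{C}_A$ of Proposition~\ref{asymptoticheightvertex} are deformed to steepest descent contours for $\Psi$ through $w_c$ without crossing any pole. Rescaling $w, v = w_c(1 + \tilde{z} T^{-1/3})$, the cubic expansion of $\Psi$ combined with the $T^{1/3}$-order linear correction from $p_T$ yields the Airy kernel in the limit, giving $F_{\TW}(s)$ by the standard argument in the spirit of \cite{SSVM, PTQT}. Part~(2) additionally incorporates the shifts $b_{j,T} - b \sim d_j T^{-1/3}$ and $\eta_T - \theta \sim d T^{-1/3}$, which displace the $m$ spike poles by $O(T^{-1/3})$ and produce the extra rational factor $\prod_{j=1}^m (v + c_j)/(w + c_j)$ with the $c_j$ of \eqref{cprocessphasetransition}; this matches Definition~\ref{functionkernel2}, giving $F_{\BBP; \textbf{c}}(s)$.

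For part~(3), with $\eta < \theta$ and all $b_j = b$, the critical point $w_c$ is positioned so that the steepest descent contour through it encloses the spike $q\beta$, which is an order-$m$ pole. Deforming $\Gamma_A$ to the steepest descent contour picks up the residue of this $m$-fold pole at $v = q\beta$, and similarly $\mathcal{C}_A$ is deformed past the $m$-fold pole at $w = q\beta$; the residues contribute a rank-$m$ term, while the remaining steepest descent integral decays exponentially in operator norm because $\Re \Psi(w_c) < \Re \Psi(q\beta)$ on the deformed contour. Rescaling $w, w' = q\beta(1 + z T^{-1/2}), q\beta(1 + z' T^{-1/2})$, a quadratic Taylor expansion of $\Psi$ around $q\beta$ combined with the linear correction $T^{1/2} s f_\eta' \log(v/w)$ leaves behind a Gaussian weight $e^{-z^2/2}$; the $m$-fold residue structure contributes derivatives of orders $0, \ldots, m-1$, producing the Vandermonde squared $\prod_{1 \le i < j \le m} |z_i - z_j|^2$. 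The resulting rank-$m$ kernel is thereby identified with the $m$-dimensional GUE Hermite correlation kernel, whose Fredholm determinant on $(s, \infty)$ equals $G_m(s)$ as in Definition~\ref{lefttransitiondistribution}.

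The main obstacle is part~(3): computing the $m$-fold residue at $v = q\beta$ cleanly in the presence of the infinite sum over $j \in \mathbb{Z}$ in the $1/\sin$ factor, and identifying the resulting rank-$m$ kernel as the GUE Hermite kernel rather than some arbitrary finite-rank object. The sum over $j$ is tamed by restricting to the $T^{-1/2}$-neighborhood of $q\beta$, where only the $j = 0$ term survives in the limit; the Hermite identification then follows from matching the Gaussian factor $\Psi''(q\beta)$ to the weight $e^{-z^2/2}$ and applying the Christoffel--Darboux identity to recognize the rank-$m$ determinantal kernel of monomial residues as the Hermite kernel for an $m \times m$ GUE matrix.
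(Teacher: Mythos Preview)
Your treatment of parts~(1) and~(2) is essentially the paper's approach: rewrite the kernel in the exponential form $\exp\big(T(G_A(w)-G_A(v))\big)$ times lower-order factors, locate the double critical point $\psi_A$ of $G_A$, choose steepest-descent contours following the level lines of $\Re G_A$, and localize. In part~(2) the paper shifts both contours to the left by a multiple of $T^{-1/3}$ so they stay clear of the $q\beta_j$, and Taylor-expands the $q$-Pochhammer ratio to produce the rational factor $\prod_j(\widehat v+c_j)/(\widehat w+c_j)$, exactly as you describe.

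Part~(3) is where your plan diverges from the paper, and there is a genuine gap. With $M=m_\eta'$ the critical point of your phase function $\Psi$ is \emph{exactly} $\psi=q\beta$; a short computation (or see \eqref{derivativegaleft}) gives
\[
G_A'(z)=\frac{(1-b)(b+\eta)}{z(z+q)^2}\,(z-q\beta)(z-\vartheta_A),
\]
so the saddle and the spike coincide rather than the saddle's descent contour ``enclosing'' the spike. Moreover, in the $v$-variable the $q$-Pochhammer factor $(q^{-1}\beta^{-1}v;q)_\infty^m$ sits in the \emph{numerator}, so $v=q\beta$ is an $m$-fold \emph{zero}, not a pole; there is no residue to extract when deforming $\Gamma_A$. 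The pole in $w$ at $q\beta$ is real, but the paper never crosses it.

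What the paper actually does in part~(3) is run the saddle-point analysis at $\psi=q\beta$ itself, with a \emph{quadratic} local expansion $G_A(z)-G_A(\psi)=-\tfrac12\big(f_\eta'(z-\psi)/\psi\big)^2+O(|z-\psi|^3)$, and with both contours $\mathcal C_A,\Gamma_A$ shifted to pass just to the left of $q\beta$ by $O(T^{-1/2})$ (Definitions~\ref{linearvertexleft}--\ref{vertexleftcontours}). After rescaling $w=\psi(1+\sigma\widehat w)$, $v=\psi(1+\sigma\widehat v)$ with $\sigma=\psi f_\eta'^{-1}T^{-1/2}$, the $q$-Pochhammer ratio contributes the limiting factor $(\widehat v/\widehat w)^m$ (see \eqref{leftproductqv}), and the kernel converges to
\[
L_{s;m}(\widehat w,\widehat w')=\frac{1}{2\pi\textbf{i}}\int_{\mathfrak X_{-2,\infty}}\frac{1}{(\widehat v-\widehat w)(\widehat w'-\widehat v)}\exp\Big(\tfrac{\widehat v^2}{2}-\tfrac{\widehat w^2}{2}+s(\widehat v-\widehat w)\Big)\Big(\tfrac{\widehat v}{\widehat w}\Big)^m d\widehat v.
\]
The identification $\det(\Id+L_{s;m})_{L^2(\mathfrak U_{-1,\infty})}=G_m(s)$ is then quoted from Barraquand~\cite{PTQT}, so no Christoffel--Darboux reduction or rank-$m$ residue computation is needed. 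Your residue-plus-Hermite-kernel route may be salvageable if reformulated (it is close in spirit to how \cite{PTQT} proves that very identity), but as written it misidentifies both the location of the saddle relative to the spike and the pole structure in $v$.
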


\begin{prop}

\label{modeldeterminant}

Recall the definitions of $\chi$, $\kappa$, $\Lambda$, and $\theta$ given in \eqref{processlocationtransition}, and also the definition of $\textbf{\emph{c}} = (c_1, c_2, \ldots , c_m)$ given in \eqref{cmodelphasetransition}. Adopt the notation of Proposition \ref{asymptoticheightvertex}, and further define
\begin{flalign}
\label{modelfluctuationslargeetadeterminant}
m_{\eta} = \displaystyle\frac{\big( \sqrt{\kappa} - \sqrt{\eta} \big)^2}{\kappa - 1}; \qquad f_{\eta} = \displaystyle\frac{(\sqrt{\kappa \eta} - 1)^{2 / 3} (\kappa - \sqrt{\kappa \eta})^{2 / 3} }{(\kappa - 1) \kappa^{1 / 6} \eta^{1 / 6}}, 
\end{flalign}
 
\noindent and 
\begin{flalign}
\label{modelfluctuationslargeetaleftdeterminant}
m_{\eta}' & = b  - \Lambda^{-1} b \eta ; \qquad f_{\eta}' = \chi^{1 / 2} (1 - \theta^{-1} \eta)^{1 / 2}. 
\end{flalign}

\begin{enumerate}

\item{Let $\eta \in (\theta, \kappa)$. For each positive integer $T$, let $x = x(T) = \lfloor \eta T \rfloor + 1$ and $p_T = s f_{\eta} T^{1 / 3} - m_{\eta} T$. We have that 
\begin{flalign*}
\displaystyle\lim_{T \rightarrow \infty} \det \big( \Id + V_{\zeta}^{(p_T)} \big)_{L^2 (\mathcal{C}_V)} = F_{\TW} (s). 
\end{flalign*}}

\item{Assume that $\{ \eta = \eta_T \}_{T \in \mathbb{Z}_{> 0}}$ is a sequence of real numbers such that $\lim_{T \rightarrow \infty} T^{1 / 3} \big( \eta_T - \theta \big) = d$. For each positive integer $T$, let $x = x(T) = \lfloor \eta T \rfloor + 1$ and $p_T = s f_{\eta} T^{1 / 3} - m_{\eta} T $. We have that 
\begin{flalign*}
\displaystyle\lim_{T \rightarrow \infty} \det \big( \Id + V_{\zeta}^{(p_T)} \big)_{L^2 (\mathcal{C}_V)} = F_{\BBP; \textbf{\emph{c}}} (s). 
\end{flalign*}}

\item{Let $\eta \in (\Lambda^{-1} \theta, \theta)$, and assume that $b_j = b$ for all indices $j \in [1, m]$. For each positive integer $T$, let $x = x(T) = \lfloor \eta T \rfloor + 1$ and $p_T = s f_{\eta}' T^{1 / 3} - m_{\eta}' T$. We have that 
\begin{flalign*}
\displaystyle\lim_{T \rightarrow \infty} \det \big( \Id + V_{\zeta}^{(p_T)} \big)_{L^2 (\mathcal{C}_V)} = G_m (s). 
\end{flalign*}}

\end{enumerate}

\end{prop}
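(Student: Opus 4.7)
The plan is to perform a saddle-point analysis of the determinant $\det(\Id + V^{(p_T)}_\zeta)_{L^2(\mathcal{C}_V)}$ defined by \eqref{limitheightmodel} and \eqref{kvw1}. Writing $x = \lfloor \eta T \rfloor + 1$ and absorbing the factor $v^{p-1} w^{-p}$ from \eqref{kvw1} together with the algebraic pieces of $g_V$ in \eqref{gv}, the integrand takes the form $e^{T(G(v) - G(w))}$ times a $T$-independent prefactor coming from the product $\prod_{j=1}^m (q^{-1}\beta_j^{-1} z;q)_\infty^{-1}$, where
\begin{equation*}
G(z) = \eta \log(\kappa^{-1} z + q) - \log(z + q) - \mu \log z
\end{equation*}
and $\mu$ equals $m_\eta$ or $m_\eta'$ as appropriate. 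The rational product $\prod_j (q^{-1}\beta_j^{-1} z;q)_\infty^{-1}$ has simple poles at $z = q\beta_j$; its interaction with the saddle of $G$ is what distinguishes the three regimes. Throughout, the infinite sum over $j$ in \eqref{kvw1} is dominated by its $j=0$ term, since $|\log q|$ induces exponential suppression in $|j|$ at the relevant $T^{1/3}$-scaling.

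For parts (1) and (2), the centering $m_\eta$ of \eqref{modelfluctuationslargeetadeterminant} is chosen so that $G$ possesses a real double critical point $w_c = w_c(\eta)$. When $\eta \in (\theta,\kappa)$, one checks that $w_c$ is separated from every pole $-q\beta_j$ by an $O(1)$ distance. I would deform $\mathcal{C}_V$ and $\Gamma_V$ to star-shaped contours crossing at $w_c$ along the steepest-descent/ascent rays (tangent angles $\pm\pi/3$ and $\pm 2\pi/3$); applying the rescaling $v = w_c + (f_\eta T^{1/3})^{-1}\tilde v$, $w = w_c + (f_\eta T^{1/3})^{-1}\tilde w$ drives the exponent to the cubic $\tilde v^3/3 - \tilde w^3/3$, and the trace-class bounds of Appendix \ref{Determinants1} yield $\det(\Id - K_{\Ai})_{L^2(s,\infty)} = F_{\TW}(s)$, proving part (1). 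For part (2), under $\eta_T = \theta + dT^{-1/3} + o(T^{-1/3})$ and $b_{i,T} = b + d_i T^{-1/3} + o(T^{-1/3})$, the double saddle $w_c$ coalesces with the $m$ perturbation poles on the scale $T^{-1/3}$, and the same cubic rescaling converts the rational product into the factor $\prod_j(v+c_j)/(w+c_j)$ with $c_j$ from \eqref{cmodelphasetransition}; the limiting kernel becomes $K_{\BBP;\textbf{c}}$ of Definition \ref{functionkernel2}. The approach here is parallel to \cite{SSVM} in the unperturbed case and to \cite{PTQT} for the BBP perturbation.

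For part (3), the saddle of $G$ with $\mu = m_\eta'$ from \eqref{modelfluctuationslargeetaleftdeterminant} lies on the opposite side of the $m$-fold pole $v = -q\beta$ (since all $b_j = b$), so no contour through the saddle avoids this pole. The plan is to deform $\Gamma_V$ to a contour $\Gamma_V^{\mathrm{out}}$ crossing through the saddle and strictly satisfying $\Re G(v) < \Re G(-q\beta)$, while picking up a residue contribution from the $m$-fold pole at $-q\beta$. This decomposes the kernel as a finite-rank piece (rank at most $m$) plus a remainder of trace-class norm $o(1)$. Under the Gaussian rescaling $v = -q\beta + T^{-1/2}(f_\eta')^{-1}\tilde v$ and $w = -q\beta + T^{-1/2}(f_\eta')^{-1}\tilde w$, the second-order Taylor expansion of $G$ at $-q\beta$ produces Gaussian factors $e^{-\tilde v^2/2}$ and $e^{\tilde w^2/2}$, while the Laurent expansion of $(q^{-1}\beta^{-1}v;q)_\infty^{-m}$ around $v=-q\beta$ produces powers $\tilde v^{-m+k}$. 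Expanding the resulting finite-rank determinant via Andr\'eief should recover the Hermite polynomials and Vandermonde squared $\prod|\tilde x_j - \tilde x_k|^2$, matching the integral representation of $G_m(s)$ in Definition \ref{lefttransitiondistribution}. The hypothesis $\eta \in (\Lambda^{-1}\theta,\theta)$ enters twice: $\eta < \theta$ forces the residue to dominate, while $\eta > \Lambda^{-1}\theta$ ensures $G''(-q\beta)$ has the correct sign for the Gaussian rescaling and that $f_\eta'$ is real.

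The main obstacle is part (3). Parts (1) and (2) are structurally parallel to Tracy-Widom and BBP analyses already in the literature for related models, and the work there is largely a careful check of the cubic expansion and of the choice of star-shaped contours avoiding $-q\kappa$ and the $q\beta_j$. In contrast, part (3) requires genuinely new input: justifying the residue expansion at an $m$-fold coalescing pole, a sharp estimate proving $\Re G(v) < \Re G(-q\beta)$ along the deformed contour $\Gamma_V^{\mathrm{out}}$ (exactly where $\eta > \Lambda^{-1}\theta$ enters), and the algebraic identification of the resulting finite-rank Fredholm determinant with the Hermite-integral formula for $G_m(s)$. The uniform trace-class control needed to exchange $T\to\infty$ with the Fredholm series, under the $T^{1/2}$ rather than $T^{1/3}$ scaling, is also more delicate than in parts (1) and (2).
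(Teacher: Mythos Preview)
Your outline for parts (1) and (2) matches the paper's approach closely: the function $G_V$, the double critical point $\psi_V$, the cubic rescaling, and (for part 2) the coalescence of $\psi_V$ with the poles $q\beta_j$ on the $T^{-1/3}$ scale are exactly what the paper does in Sections~\ref{RightKernel} and~\ref{VertexNear}. A minor correction: the poles are at $z=q\beta_j>0$, not $-q\beta_j$.

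For part (3), however, your plan diverges from the paper and rests on a misconception about the geometry. With $\mu=m_\eta'$ the derivative factors as
\[
G_V'(z)=\frac{(1-b)(\Lambda^{-1}\kappa\eta-1)}{z(z+q)(z+q\kappa)}\,(z-q\beta)(z-\vartheta_V),
\]
so one critical point is \emph{exactly} $\psi=q\beta$, the location of the $m$-fold pole. The saddle does not lie ``on the opposite side'' of the pole; it sits on top of it. The paper therefore never extracts a residue or passes to a finite-rank kernel. Instead it shifts the contours $\mathcal{C}_V$ and $\Gamma_V$ to intersect the positive real axis at $q\beta-O(T^{-1/2})$, just to the left of the pole (Definition~\ref{linearvertexleft}), and zooms in via $v=q\beta(1+\sigma\widehat{v})$, $w=q\beta(1+\sigma\widehat{w})$ with $\sigma=\psi (f_\eta')^{-1}T^{-1/2}$. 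Under this rescaling the ratio
\[
\frac{(q^{-1}\beta^{-1}v;q)_\infty^m}{(q^{-1}\beta^{-1}w;q)_\infty^m}
=\frac{(1+\psi^{-1}\sigma\widehat{v};q)_\infty^m}{(1+\psi^{-1}\sigma\widehat{w};q)_\infty^m}
\longrightarrow\Bigl(\frac{\widehat{v}}{\widehat{w}}\Bigr)^m,
\]
since the first factor $1-(1+\psi^{-1}\sigma\widehat{v})\sim-\psi^{-1}\sigma\widehat{v}$ carries the only vanishing term. Combined with the quadratic expansion $G_V(z)-G_V(\psi)=-\tfrac12(f_\eta'(z-\psi)/\psi)^2+O(|z-\psi|^3)$, this yields directly the limiting (still infinite-rank) kernel
\[
L_{s;m}(\widehat{w},\widehat{w}')=\frac{1}{2\pi\mathbf{i}}\int_{\mathfrak{X}_{-2,\infty}}
\frac{1}{(\widehat{v}-\widehat{w})(\widehat{w}'-\widehat{v})}
\exp\Bigl(\tfrac{\widehat{v}^2}{2}-\tfrac{\widehat{w}^2}{2}+s(\widehat{v}-\widehat{w})\Bigr)
\Bigl(\frac{\widehat{v}}{\widehat{w}}\Bigr)^m d\widehat{v},
\]
and the identification $\det(\Id+L_{s;m})_{L^2(\mathfrak{U}_{-1,\infty})}=G_m(s)$ is quoted from Barraquand \cite[Proposition~5]{PTQT}. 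The condition $\eta>\Lambda^{-1}\theta$ enters not through a sign of $G''$ at the pole but through the level-line analysis (Proposition~\ref{l1l2left}): it guarantees the second critical point $\vartheta_V$ is finite and positive, so that $\mathcal{L}_2$ remains star-shaped and excludes $-q\kappa$.

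Your residue/finite-rank route may be salvageable, but it is more laborious than necessary and the description as written would not go through: there is no separate saddle to deform to, so the inequality ``$\Re G(v)<\Re G(-q\beta)$ along $\Gamma_V^{\mathrm{out}}$'' has no content. The paper's device of letting the saddle coincide with the pole and absorbing the pole into the rescaled kernel is the cleaner mechanism here.
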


Now let us explain how to deduce Theorem \ref{asymmetriclimit} and Theorem \ref{hlimit} from Proposition \ref{processdeterminant} and Proposition \ref{modeldeterminant}, respectively. Both of these reductions use the following lemma, which is Lemma 4.1.39 of \cite{MP}. 

\begin{lem}[{\cite[Lemma 4.1.39]{MP}}] 

\label{convergenceprobabilityexpectationfunction}

Let $g_1, g_2, \ldots : \mathbb{R} \rightarrow [0, 1]$ be decreasing functions, such that $\lim_{x \rightarrow -\infty} g_j (x) = 1$ and $\lim_{x \rightarrow \infty} g_j (x) = 0$ for each positive integer $j$. Assume moreover that, for each $\varepsilon > 0$, the $\{ g_j (x) \}_{j \in \mathbb{Z}_{> 0}}$ converge uniformly to $\textbf{\emph{1}}_{x \le 0}$ on $x \in \mathbb{R} \setminus[-\varepsilon, \varepsilon]$, as $j$ tends to $\infty$. 

Suppose that $F$ is a continuous probability distribution and that $X_1, X_2, \ldots $ are random variables such that $\lim_{n \rightarrow \infty} \mathbb{E} \big[ g_n (X_n - s) \big] = F(s)$ for each real number $s \in \mathbb{R}$. Then, $\lim_{n \rightarrow \infty} \mathbb{P} [X_n \le s] = F (s)$ for each $s \in \mathbb{R}$. 
\end{lem}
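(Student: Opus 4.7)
The plan is to sandwich $\mathbb{P}[X_n \le s]$ between shifted expectations $\mathbb{E}\big[g_n(X_n - (s \pm \varepsilon))\big]$, pass to $\liminf_n$ and $\limsup_n$, and then close the gap by sending $\varepsilon \to 0$ and invoking continuity of $F$ at $s$. The monotonicity of each $g_n$ together with the uniform convergence outside $[-\varepsilon, \varepsilon]$ is precisely what lets one replace an indicator by a shifted $g_n$ up to an arbitrarily small error.

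Fix $s \in \mathbb{R}$ and $\varepsilon > 0$. For the upper bound on $\limsup_n \mathbb{P}[X_n \le s]$, note that on $\{X_n \le s\}$ we have $X_n - s - \varepsilon \le -\varepsilon$, so by monotonicity $g_n(X_n - s - \varepsilon) \ge g_n(-\varepsilon)$. Restricting to this event and taking expectations,
\begin{flalign*}
\mathbb{E}\big[g_n\big(X_n - (s+\varepsilon)\big)\big] \;\ge\; g_n(-\varepsilon)\cdot \mathbb{P}[X_n \le s].
\end{flalign*}
The uniform convergence $g_n \to \textbf{1}_{x \le 0}$ on $\mathbb{R} \setminus [-\varepsilon/2, \varepsilon/2]$ forces $g_n(-\varepsilon) \to 1$, so dividing and taking $\limsup_n$ yields $\limsup_n \mathbb{P}[X_n \le s] \le F(s+\varepsilon)$.

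For the matching lower bound, split
\begin{flalign*}
\mathbb{E}\big[g_n\big(X_n - (s-\varepsilon)\big)\big] = \mathbb{E}\big[g_n(X_n - s + \varepsilon);\, X_n < s\big] + \mathbb{E}\big[g_n(X_n - s + \varepsilon);\, X_n \ge s\big].
\end{flalign*}
The first piece is at most $\mathbb{P}[X_n \le s]$ since $g_n \le 1$; on $\{X_n \ge s\}$ we have $X_n - s + \varepsilon \ge \varepsilon$, hence $g_n(X_n - s + \varepsilon) \le g_n(\varepsilon) \to 0$. Therefore
\begin{flalign*}
\mathbb{E}\big[g_n\big(X_n - (s-\varepsilon)\big)\big] \;\le\; \mathbb{P}[X_n \le s] + g_n(\varepsilon),
\end{flalign*}
and passing to $\liminf_n$ gives $\liminf_n \mathbb{P}[X_n \le s] \ge F(s - \varepsilon)$.

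Combining the two displays, $F(s-\varepsilon) \le \liminf_n \mathbb{P}[X_n \le s] \le \limsup_n \mathbb{P}[X_n \le s] \le F(s+\varepsilon)$ for every $\varepsilon > 0$. Continuity of $F$ at $s$ then collapses the bounds to $F(s)$, completing the proof. There is no real obstacle here: the argument is elementary once one notices that the hypothesis on $g_n$ is engineered exactly to produce these sandwich inequalities. The only point requiring a bit of care is extracting the pointwise limits $g_n(\mp \varepsilon) \to (1,0)$ from the uniform-convergence assumption, which is done by applying it on the complement of a slightly smaller symmetric interval such as $[-\varepsilon/2, \varepsilon/2]$.
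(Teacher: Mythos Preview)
Your argument is correct. The paper does not supply its own proof of this lemma; it simply quotes the statement from \cite[Lemma~4.1.39]{MP} and uses it as a black box, so there is nothing to compare against beyond noting that your sandwich argument is the standard one.
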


\noindent Using Lemma \ref{convergenceprobabilityexpectationfunction}, we can establish Theorem \ref{asymmetriclimit} from Proposition \ref{processdeterminant}. 

\begin{proof}[Proof of Theorem \ref{asymmetriclimit} Assuming Proposition \ref{processdeterminant}] 

Let $g_N (x) = (- q^{- N^{1 / 3} f_{\eta} x}; q)_{\infty}^{-1}$, for each $N > 0$, and suppose that $\eta \in (\theta, 1)$. From Proposition \ref{asymptoticheightvertex} and the first part of Proposition \ref{processdeterminant}, it follows that 
\begin{flalign}
\label{hsprocess}
\displaystyle\lim_{T \rightarrow \infty} \mathbb{E} \bigg[ g_T \Big( T^{-1 / 3} f_{\eta}^{-1} \big( m_{\eta} T - \mathfrak{h}_t ( \lfloor \eta T \rfloor + 1 ) \big) - s \Big) \bigg] & = \displaystyle\lim_{T \rightarrow \infty} \det \big( \Id + A_{\zeta}^{(p_T)} \big) = F_{\TW} (s). 
\end{flalign}

\noindent Now, observe that the sequence of functions $\{ g_N \}_{N \ge 0}$ satisfies the conditions of Lemma \ref{convergenceprobabilityexpectationfunction}. Applying this lemma with $X_T = T^{-1 / 3} f_{\eta}^{-1} \big( m_{\eta} T - \mathfrak{h}_t ( \lfloor \eta T \rfloor + 1) \big)$ and using \eqref{hsprocess}, we find that 
\begin{flalign*}
\displaystyle\lim_{T \rightarrow \infty} \mathbb{P} \left[ m_{\eta} T - \mathfrak{h}_t ( \lfloor \eta T \rfloor + 1 ) \le s f_{\eta} T^{1 / 3} \right] = F_{\TW} (s). 
\end{flalign*}

\noindent From this, we deduce the first part of Theorem \ref{asymmetriclimit}, since $\mathfrak{h}_T (\lfloor \eta T \rfloor + 1 ) = J_T (\lfloor \eta T \rfloor) = J_T (\eta T)$. 

The second and third parts follow similarly. 
\end{proof}

\noindent We omit the verification of Theorem \ref{hlimit} from Proposition \ref{modeldeterminant} since it follows in a very similar way, after setting $\eta = x / y$ and observing that the height function $\mathfrak{H} (xT, yT)$ from Theorem \ref{hlimit} satisfies $\mathfrak{H} (xT, yT) = \mathfrak{h}_{yT} (xT - 1)$ in Proposition \ref{modeldeterminant}.  

In the remaining sections, we establish Proposition \ref{processdeterminant} and Proposition \ref{modeldeterminant}.

\section{Tracy-Widom Fluctuations}

\label{RightKernel}

In this section we establish the first parts of Proposition \ref{processdeterminant} and Proposition \ref{modeldeterminant} through a saddle point analysis of the Fredholm determinant $\det \big( \Id + K_{\zeta}^{(p_T)} \big)_{L^2 (\mathcal{C}_K)}$, defined in Proposition \ref{asymptoticheightvertex}. Similar analyses have been performed in several previous papers; in particular, we will largely follow the reasoning presented in Section 5 of \cite{SSVM}, although we will be more detailed here. 

To implement this analysis, we will first select the contours $\mathcal{C}_K$ and $\Gamma_K$ from Proposition \ref{asymptoticheightvertex} in a specific way, so as to ensure that the kernel $K_{\zeta}^{(p_T)} (w, w')$ decays exponentially (in $T$) for all $w, w' \in \mathcal{C}_K$ away from a certain point $\psi \in \mathcal{C}_K$; this choice of contours will be made in Section \ref{ckgammakcontours}. Once the contours have been fixed, we will be able to proceed with the asymptotic analysis through the Laplace method; this will be done in Section \ref{VertexRight}.

\subsection{Choosing the Contours \texorpdfstring{$\mathcal{C}_K$}{} and \texorpdfstring{$\Gamma_K$}{}}

\label{ckgammakcontours}

In this section we exhibit one choice of contours $\mathcal{C}_K$ and $\Gamma_K$ that will later lead to proofs of the first parts of Proposition \ref{processdeterminant} and Proposition \ref{modeldeterminant}. The choice of these contours will depend on whether $K = A$ or $K = V$. In Section \ref{ContoursVertexRight} we consider the case $K = V$, and in Section \ref{ContoursProcessRight} we consider the case $K = A$. In both cases, our definitions are based on the same observation, which we now describe.

\subsubsection{An Exponential Form for the Kernel \texorpdfstring{$K_{\zeta}^{(p_T)}$}{}}

\label{KExponentialRight}

For notational convenience, we set $x(T) = \eta T + 1$, as opposed to $x(T) = \lfloor \eta T \rfloor + 1$ (which is what was stated in Proposition \ref{processdeterminant} and Proposition \ref{modeldeterminant}); this will not affect the asymptotics. 

Now, let us rewrite the kernel $K (w, w') = K^{(p_T)} (w, w')$ given in \eqref{kvw1} through the identity 
\begin{flalign}
\begin{aligned}
\label{vptright} 
K (w, w') = \displaystyle\frac{1}{2 \textbf{i} \log q} \displaystyle\sum_{j \in \mathbb{Z}} & \displaystyle\oint_{\Gamma_V} \displaystyle\frac{ \exp \left( T \big( G_K (w) - G_K (v) \big) \right) }{\sin \big( \pi (\log q)^{-1} (2 \pi \textbf{i} j + \log v - \log w) \big) } \displaystyle\prod_{k = 1}^m \displaystyle\frac{(q^{-1} \beta_k^{-1} v; q)_{\infty}}{(q^{-1} \beta_k^{-1} w; q)_{\infty}} \\
& \qquad \times \left( \displaystyle\frac{v}{w} \right)^{s f_{\eta; K} T^{1 / 3}} \displaystyle\frac{dv}{v (w' - v)} , 
\end{aligned}
\end{flalign}

\noindent where $G_K (z)$ and $f_{\eta; K}$ depend on $K$. Explicitly, when $K = A$, 
\begin{flalign}
\label{ga13}
G_A (z) = \displaystyle\frac{q}{z + q} + \eta \log (z + q) + m_{\eta; A} \log z; \qquad f_{\eta; A} = \left( \displaystyle\frac{1 - \eta^2}{4} \right)^{2 / 3}, 
\end{flalign}

\noindent and when $K = V$,
\begin{flalign}
\label{gv13}
G_V (z) = \eta \log (\kappa^{-1} z + q) - \log (z + q) + m_{\eta; V} \log z; \quad f_{\eta; V} = \displaystyle\frac{(\sqrt{\kappa \eta} - 1)^{2 / 3} (\kappa - \sqrt{\kappa \eta})^{2 / 3} }{(\kappa - 1) \kappa^{1 / 6} \eta^{1 / 6}}. 
\end{flalign}

\noindent Here, 
\begin{flalign}
\label{m13}
m_{\eta; A} = \left( \displaystyle\frac{1 - \eta}{2} \right)^2; \qquad m_{\eta; V} = \displaystyle\frac{\big( \sqrt{\kappa} - \sqrt{\eta} \big)^2}{\kappa - 1}. 
\end{flalign}

Let us now locate the critical points of $G_K$; these will again depend on whether $K = A$ or $K = V$. We find that 	
\begin{flalign}
\begin{aligned}
\label{derivativega} 
\qquad \quad G_A' (z) = \left( \displaystyle\frac{\eta + 1}{2} \right)^2 \displaystyle\frac{(z - \psi_A)^2 }{z (z + q)^2},  \qquad \text{with} \qquad \psi_A =  \displaystyle\frac{q (1 - \eta)}{1 + \eta}; 
\end{aligned}
\end{flalign}

\begin{flalign}
\begin{aligned}
\label{derivativegv} 
G_V' (z) =  \displaystyle\frac{(\sqrt{\kappa \eta} - 1)^2}{\kappa - 1}  \displaystyle\frac{(z - \psi_V)^2 }{z (z + q \kappa) (z + q)}, \qquad \text{with} \qquad \psi_V = \displaystyle\frac{q (\kappa - \sqrt{\kappa \eta})}{\sqrt{\kappa \eta} - 1}. 
\end{aligned} 
\end{flalign} 

\noindent Thus, in both cases $K \in \{ V, A \}$, $\psi_K$ is a critical point of $G_K$. Furthermore, $G_K'' (\psi_K) = 0$ and 
\begin{flalign*}
\displaystyle\frac{G_A''' (\psi_K)}{2} = \displaystyle\frac{(\eta + 1)^5}{16 q^3 (1 - \eta)} = \left( \displaystyle\frac{f_{\eta; A}}{\psi_K } \right)^3; \qquad \displaystyle\frac{G_V''' (\psi_K)}{2} = \displaystyle\frac{(\sqrt{\kappa \eta} - 1)^5}{q^3 (\kappa - 1)^3 (\kappa - \sqrt{\kappa \eta}) \sqrt{\kappa \eta}} = \left( \displaystyle\frac{f_{\eta; V}}{\psi_K } \right)^3. 
\end{flalign*}

\noindent Thus, for both $K \in \{ V, A \}$, we have that $G_K''' (\psi_K) = \big( \psi_K^{-1} f_{\eta; K} \big)^3$, which implies from a Taylor expansion that	
\begin{flalign}
\begin{aligned}
\label{gzpsi}
G_K (z) - G_K (\psi_K) & = \displaystyle\frac{1}{3} \left( \displaystyle\frac{f_{\eta; K} (z - \psi_K)}{\psi_K} \right)^3 + R_K \left( \displaystyle\frac{f_{\eta; K} (z - \psi_K)}{\psi_K} \right) \\
& = \displaystyle\frac{1}{3} \left( \displaystyle\frac{f_{\eta; K} (z - \psi_K)}{\psi_K} \right)^3 + \mathcal{O} \big( |z - \psi_K|^4 \big), \qquad \qquad \text{as} \quad |z - \psi_K| \rightarrow 0. 
\end{aligned}
\end{flalign}

\noindent where 
\begin{flalign}
\label{r13}
R_K \left( \displaystyle\frac{f_{\eta; K} (z - \psi_K)}{\psi_K} \right) = G_K (z) - G_K (\psi_K) - \displaystyle\frac{1}{3} \left( \displaystyle\frac{f_{\eta; K} (z - \psi_K)}{\psi_K} \right)^3 . 
\end{flalign} 

We would now like to select contours $\mathcal{C}_K$ and $\Gamma_K$ such that $\Re \big( G_K (w) - G_K (v) \big) < 0$, for all $w \in \mathcal{C}_K$ and $v \in \Gamma_K$ away from $\psi_K$, subject to the restrictions stated in Proposition \ref{asymptoticheightvertex}. If we are able to do this, then the integrand on the right side of \eqref{vptright} will decay exponentially away from $\psi_K$. This will allow us to localize the integral around the critical point $\psi_K$ and use the estimate \eqref{gzpsi} to simplify the asymptotics.

\subsubsection{Choice of Contours \texorpdfstring{$\mathcal{C}_V$}{} and \texorpdfstring{$\Gamma_V$}{}}

\label{ContoursVertexRight}

In this section we show how to choose contours $\mathcal{C}_V$ and $\Gamma_V$ satisfying the conditions of Proposition \ref{asymptoticheightvertex} such that $\Re \big( G_V (w) - G_V (v) \big) < 0$ for $w \in \mathcal{C}_V$ and $v \in \Gamma_V$ both away from $\psi_V$. In what follows, we omit the subscript $V$ to simplify notation. 

Our choice for $\mathcal{C}$ and $\Gamma$ will be similar to the contours selected in Section 5 of \cite{SSVM}, in which $\mathcal{C}$ and $\Gamma$ are chosen to follow level lines of the equation $\Re G(z) = G(\psi)$. 

That work required some properties of these level lines that are summarized in the following proposition; see Section 5.1 of \cite{SSVM} and Figure \ref{l1l2l3}.

\begin{prop}[{\cite[Section 5.1]{SSVM}}]

\label{linesgv}

There exist three simple, closed curves, $\mathcal{L}_1 = \mathcal{L}_{1; V} $, $\mathcal{L}_2 = \mathcal{L}_{2; V} $, and $\mathcal{L}_3 = \mathcal{L}_{3; V}$, that all pass through $\psi$ and satisfy the following properties. 

\begin{enumerate}

\item{ \label{vzpsij} Any $z \in \mathbb{C}$ satisfying $\Re G(z) = G(\psi)$ lies on $\mathcal{L}_j$ for some $j \in \{1, 2, 3 \}$. }

\item{ \label{vzpsiall} Any complex number $z \in \mathcal{L}_1 \cup \mathcal{L}_2 \cup \mathcal{L}_3$ satisfies $\Re G(z) = G(\psi)$. }

\item{ \label{anglev} The level lines $\mathcal{L}_1$, $\mathcal{L}_2$, and $\mathcal{L}_3$ are all star-shaped. }

\item{ \label{containmentv} We have that $\mathcal{L}_1 \cap \mathcal{L}_2 = \mathcal{L}_2 \cap \mathcal{L}_3 = \mathcal{L}_1 \cap \mathcal{L}_3 = \{ \psi \}$. Furthermore, $\mathcal{L}_1 \setminus \{ \psi \}$ is contained in the interior of $\mathcal{L}_2$, and $\mathcal{L}_2 \setminus \{ \psi \}$ is contained in the interior of $\mathcal{L}_3$. }

\item{ \label{qkappaq0v} The interior of $\mathcal{L}_1$ contains $0$, but not $-q$; the interior of $\mathcal{L}_2$ contains $0$ and $-q$, but not $- q \kappa$; and the interior of $\mathcal{L}_3$ contains $0$, $-q$, and $-q \kappa$. }

\item{ \label{psianglesv} The level line $\mathcal{L}_1$ meets the positive real axis (at $\psi$) at angles $5 \pi / 6$ and $- 5 \pi /6$; the level line $\mathcal{L}_2$ meets the positive real axis (at $\psi$) at angles $\pi / 2$ and $- \pi / 2$; and the level line $\mathcal{L}_3$ meets the positive real axis (at $\psi$) at angles $\pi / 6$ and $- \pi /6$. }

\item{ \label{positiverealv} For all $z$ in the interior of $\mathcal{L}_2$ but strictly outside of $\mathcal{L}_1$, we have that $\Re \big( G(z) - G(\psi) \big) > 0$. }

\item{ \label{negativerealv} For all $z$ in the interior of $\mathcal{L}_3$ but strictly outside of $\mathcal{L}_2$, we have that $\Re \big( G(z) - G (\psi) \big) < 0$. }

\end{enumerate} 

\end{prop}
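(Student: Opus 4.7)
The plan is a level-set analysis for the harmonic function $\Re G$, where $G = G_V$, carried out in the spirit of Section~5.1 of \cite{SSVM}.

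First I would carry out the local analysis at $\psi$. From \eqref{derivativegv}, the function $G$ has a unique critical point $\psi$ in $\mathbb{C}\setminus\{0,-q,-q\kappa\}$, and $G'$ vanishes there to order two, so $G-G(\psi)$ has a triple zero at $\psi$. Using \eqref{gzpsi} together with the fact that $f_\eta/\psi$ is a positive real number, for $z = \psi + r e^{\textbf{i}\theta}$ with $r$ small one obtains
\begin{equation*}
\Re\bigl(G(z) - G(\psi)\bigr) \;=\; \tfrac{1}{3}\bigl(f_\eta/\psi\bigr)^3\, r^3 \cos(3\theta) \;+\; O(r^4).
\end{equation*}
Hence the local zero set consists of six smooth rays emanating from $\psi$ at the angles $\pm\pi/6,\pm\pi/2,\pm 5\pi/6$, and the sign of $\Re(G-G(\psi))$ alternates between consecutive sectors. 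This already yields the angle statement (6).

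Next I would globalize. Since $\Re G$ is harmonic on $\mathbb{C}\setminus\{0,-q,-q\kappa\}$ with gradient vanishing only at $\psi$, the level set $\{\Re G = G(\psi)\}$ is a disjoint union of smooth simple arcs away from $\psi$, giving (1) and (2). The map $z\mapsto\bar z$ preserves $\Re G$ (on the principal branch), so the level set is symmetric about $\mathbb{R}$. Each upper ray leaving $\psi$ extends to a simple arc that (i)~cannot accumulate at $0$ or $-q\kappa$, near which $\Re G\to-\infty$ logarithmically; (ii)~cannot accumulate at $-q$, near which $\Re G\to+\infty$; (iii)~cannot escape to infinity, since a direct asymptotic calculation gives $\Re G(z) = (m_\eta + \eta - 1)\log|z| + O(1)$ with $m_\eta + \eta - 1 = (\sqrt{\kappa\eta}-1)^2/(\kappa-1) > 0$; and (iv)~cannot return to $\psi$ before crossing $\mathbb{R}$, since $\psi$ is the only critical point. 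Hence each upper ray must cross $\mathbb{R}$ transversally and continue by reflection as a lower ray, producing simple closed curves through $\psi$.

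Third I would pin down the exact count and nesting. Restricting $G$ to each of $(-\infty,-q\kappa)$, $(-q\kappa,-q)$, $(-q,0)$, $(0,\infty)$ via principal branches, the sign of $z(z+q)(z+q\kappa)$ is positive on $(0,\infty)$ and $(-q\kappa,-q)$ and negative on the other two intervals, so by \eqref{derivativegv} the restriction of $\Re G$ is monotone on each interval. Combining strict monotonicity with the endpoint limits (each interval has $\Re G$ ranging between $\pm\infty$) shows that $\Re G(x) = G(\psi)$ has exactly one solution on each of $(-\infty,-q\kappa)$, $(-q\kappa,-q)$, $(-q,0)$, and only $\psi$ itself on $(0,\infty)$. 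Pairing these four real-axis crossings with the six local rays at $\psi$ yields exactly three simple closed curves $\mathcal{L}_1,\mathcal{L}_2,\mathcal{L}_3$, and the location of the three left-hand real crossings forces both the nesting in (4) and the interior contents in (5). Parts (7) and (8) follow: the alternating sign of $\Re(G-G(\psi))$ established near $\psi$ in paragraph~1 propagates through each annular region between consecutive $\mathcal{L}_j$ by connectedness and the absence of further zeros, and one pins the sign down by checking a single real-axis test point in each region.

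Finally, star-shapedness (3) amounts to showing that the radial direction is never tangent to $\mathcal{L}_j$, equivalently $\Re\bigl(z\,G'(z)\bigr)\ne 0$ along each $\mathcal{L}_j$; with the closed-form factorization of $G'$ in \eqref{derivativegv}, this is a direct sign verification. I expect the main obstacle to be the globalization step in paragraph~2: one must rule out exotic self-intersecting components or additional closed loops, and correctly pair the six local rays through the real axis. This pairing is forced by the alternating-sign structure at $\psi$ together with the nesting imposed by the four real-axis crossings, but it requires the asymptotic estimates at the poles and at infinity to be combined carefully, and it is precisely where the specific angle geometry $\pm\pi/6,\pm\pi/2,\pm 5\pi/6$ becomes essential.
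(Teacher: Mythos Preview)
Your overall strategy matches the paper's approach. Note that the paper does not prove Proposition~\ref{linesgv} directly but cites \cite{SSVM}; however, it proves the analogous Proposition~\ref{linesga} (the ASEP case) in Section~\ref{PropositionLinesProcessRight}, and that proof follows essentially the same outline you give: local expansion at $\psi$, boundedness of the level set via the large-$|z|$ asymptotic $\Re G(z)\sim (m_\eta+\eta-1)\log|z|$, the implicit function theorem away from singularities, the maximum principle to force each closed component to enclose a logarithmic singularity, and a real-axis count to identify the three curves and their nesting. Your monotonicity analysis on the four real intervals is correct and matches the role of the paper's Lemma~\ref{sixzgazgapsi}.

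The one place where your route diverges, and where I think you have a gap, is star-shapedness. You propose to verify $\Re\bigl(z\,G'(z)\bigr)\ne 0$ along each $\mathcal{L}_j$ and call this ``a direct sign verification.'' But the $\mathcal{L}_j$ are transcendental curves defined only implicitly by $\Re G(z)=G(\psi)$, so there is no evident way to check the sign of $\Re\bigl(z\,G'(z)\bigr)$ on them from the factorization \eqref{derivativegv} alone. Moreover, even if non-tangency held, it gives transversality of each ray crossing but does not by itself bound the number of crossings to one per curve. The paper's argument (see the proof of property~\ref{anglev} in Proposition~\ref{linesga}, via Lemma~\ref{sixzgazgapsi}) is both simpler and complete: for any line $\ell$ through the origin, write $z=\omega z_0$ and observe that $\partial_\omega \Re G(\omega z_0)$ is a rational function of $\omega$ whose numerator has degree four; together with the single real singularity at $\omega=0$ this yields at most five sign changes and hence at most six real solutions of $\Re G(\omega z_0)=G(\psi)$. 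Since each of the three closed curves encloses $0$, each must meet $\ell$ at least twice, hence exactly twice, and star-shapedness follows. I would replace your final paragraph with this counting argument; the rest of your plan goes through.
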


\begin{figure}

\begin{minipage}{0.45\linewidth}

\centering

\begin{tikzpicture}[
      >=stealth,
      auto,
      style={
        scale = 1
      }
			]

			\draw[<->, black	] (0, -3.5) -- (0, 3.5) node[black, above = 0] {$\Im z$};
			\draw[<->, black] (-4, 0) -- (2, 0) node[black, right = 0] {$\Re z$};
			
			\draw[->,black, thick] (1.1, 0) -- (1.3, .34641);
			\draw[-,black, thick] (1.1, 0) -- (1.3, -.34641);
			\draw[black, thick] (1.3, .34641) arc (15.4:180:1.34536) node [black, above = 42, right = 10 	] {$\mathcal{C}_V$}; 
			\draw[black, thick] (1.3, -.34641) arc (-15.4:-180:1.34536);
			
			\draw[->, black, thick] (1.02, 0) -- (.82, .34641); 
			\draw[-, black, thick] (1.02, 0) -- (.82, -.34641); 
			\draw[black, thick] (.82, .34641) arc (21.67:180:.9)  node [black, above = 12, right = 7 	] {$\Gamma_V$}; 
			\draw[black, thick] (.82, -.34641) arc (-21.67:-180:.9);

			\path[draw, dashed] (1.1, 0) -- (1.08, .02) -- (1.06, .03) -- (1.03, .04) -- (1, .05) -- (.98, .06) -- (.95, .08) -- (.93, .09) -- (.89, .1) -- (.86, .12) -- (.83, .13) -- (.8, .14) -- (.75, .16) -- (.71, .17) -- (.65, .19) -- (.61, .2) -- (.56, .21) -- (.5, .22) -- (.47, .23) -- (.43, .24) -- (.38, .24) -- (.35, .24) -- (.3, .24) -- (.27, .24) -- (.22, .24) -- (.18, .23) -- (.14, .22) -- (.08, .21) -- (.06, .19) -- (.04, .18) -- (0, .17) -- (-.02, .15) -- (-.04, .13) -- (-.06, .12) -- (-.07, .1) -- (-.08, .09) -- (-.08, .08) -- (-.09, .07) -- (-.1, .04) -- (-.11, .03) -- (-.11, 0);
			
			\path[draw, dashed] (1.1, 0) -- (1.08, -.02) -- (1.06, -.03) -- (1.03, -.04) -- (1, -.05) -- (.98, -.06) -- (.95, -.08) -- (.93, -.09) -- (.89, -.1) -- (.86, -.12) -- (.83, -.13) -- (.8, -.14) -- (.75, -.16) -- (.71, -.17) -- (.65, -.19) -- (.61, -.2) -- (.56, -.21) -- (.5, -.22) -- (.47, -.23) -- (.43, -.24) -- (.38, -.24) -- (.35, -.24) -- (.3, -.24) -- (.27, -.24) -- (.22, -.24) -- (.18, -.23) -- (.14, -.22) -- (.08, -.21) -- (.06, -.19) -- (.04, -.18) -- (0, -.17) -- (-.02, -.15) -- (-.04, -.13) -- (-.06, -.12) -- (-.07, -.1) -- (-.08, -.09) -- (-.08, -.08) -- (-.09, -.07) -- (-.1, -.04) -- (-.11, -.03) -- (-.11, 0);
			
			\draw[black, dashed] (0, 0) circle [radius=1.1];
			
			\path[draw, dashed] (1.1, 0) -- (1.23, .1) -- (1.4, .3) -- (1.57, .71) -- (1.6, 1) -- (1.57, 1.3) -- (1.4, 1.76) -- (1.23, 2.02) -- (1.07, 2.22) -- (.9, 2.37) -- (.74, 2.49) -- (.57, 2.58) -- (.4, 2.68) -- (.23, 2.75) -- (.07, 2.8) -- (-.1, 2.85) -- (-.27, 2.87) -- (-.43, 2.88) -- (-.6, 2.88) -- (-.77, 2.87) -- (-.94, 2.86) -- (-1.1, 2.83) -- (-1.27, 2.78) -- (-1.44, 2.74) -- (-1.6, 2.67) -- (-1.77, 2.59) -- (-1.94, 2.51) -- (-2.1, 2.39) -- (-2.27, 2.26) -- (-2.44, 2.12) -- (-2.6, 1.94) -- (-2.77, 1.7) -- (-2.94, 1.47) -- (-3.1, 1.13) -- (-3.27, .57) -- (-3.32, 0); 
			
			\path[draw, dashed] (1.1, 0) -- (1.23, -.1) -- (1.4, -.3) -- (1.57, -.71) -- (1.6, -1) -- (1.57, -1.3) -- (1.4, -1.76) -- (1.23, -2.02) -- (1.07, -2.22) -- (.9, -2.37) -- (.74, -2.49) -- (.57, -2.58) -- (.4, -2.68) -- (.23, -2.75) -- (.07, -2.8) -- (-.1, -2.85) -- (-.27, -2.87) -- (-.43, -2.88) -- (-.6, -2.88) -- (-.77, -2.87) -- (-.94, -2.86) -- (-1.1, -2.83) -- (-1.27, -2.78) -- (-1.44, -2.74) -- (-1.6, -2.67) -- (-1.77, -2.59) -- (-1.94, -2.51) -- (-2.1, -2.39) -- (-2.27, -2.26) -- (-2.44, -2.12) -- (-2.6, -1.94) -- (-2.77, -1.7) -- (-2.94, -1.47) -- (-3.1, -1.13) -- (-3.27, -.57) -- (-3.32, 0); 
			
			\filldraw[fill=black, draw=black] (-.5, 0) circle [radius=.03] node [black, below = 0] {$-q$};
			
			\filldraw[fill=black, draw=black] (-2, 0) circle [radius=.03] node [black, below = 0] {$-q \kappa$};
			
			\filldraw[fill=black, draw=black] (1.6, 0) circle [radius=.03];
			
			\filldraw[fill=black, draw=black] (1.7, 0) circle [radius=.03] node [black, right = 4, below = 0] {$q \beta_j$};
			
			\filldraw[fill=black, draw=black] (1.8, 0) circle [radius=.03];

\end{tikzpicture}

\end{minipage}
\qquad
\begin{minipage}{0.45\linewidth}

\centering

\begin{tikzpicture}[
      >=stealth,
			scale=.4
			]
			
			\draw[<->] (-4, 5) -- (4, 5);
			\draw[<->] (0, 1) -- (0, 9);
			
			\draw[->,black,very thick] (2, 1.535) -- (1, 3.268);
			\draw[-,black,very thick] (1, 3.268) -- (0, 5);
			\draw[-,black,very thick]  (1, 6.732) -- (2, 8.465) node [black, above = 0] {$\mathfrak{W}_{0, \infty}$};
			\draw[->,black,very thick] (0, 5) -- (1, 6.732);
			
			\draw[->,black,very thick] (-3, 1.535) -- (-2, 3.268);
			\draw[-,black,very thick] (-1, 5) -- (-2, 3.268);
			\draw[-,black,very thick] (-2, 6.732) -- (-3, 8.465) node [black, above = 0] {$\mathfrak{V}_{-1, \infty}$};
			\draw[->,black,very thick] (-1, 5) -- (-2, 6.732);
\end{tikzpicture}

\end{minipage}

\caption{\label{l1l2l3} Above and to the left, the three level lines $\mathcal{L}_1$, $\mathcal{L}_2$, and $\mathcal{L}_3$ are depicted as dashed curves; the contours $\Gamma_V$ and $\mathcal{C}_V$ are depicted as solid curves and are labeled. Above and to the right are the two contours $\mathfrak{W}_{0, \infty}$ and $\mathfrak{V}_{-1, \infty}$. }
\end{figure}
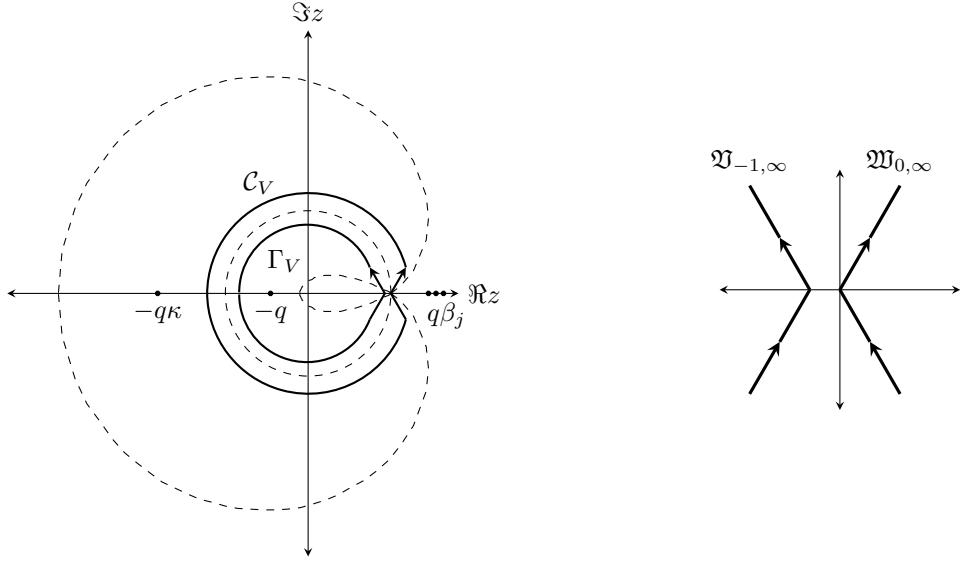

Now, let us explain how to select the contours $\mathcal{C}$ and $\Gamma$. They will each be the union of two contours, a ``small piecewise linear part'' near $\psi$, and a ``large curved part'' that closely follows the level line $\mathcal{L}_2$. 

Let us be more specific. The contours given by the following definition will be the linear parts of $\mathcal{C}$ and $\Gamma$.

\begin{definition}

\label{crgammar}

For a real number $r \in \mathbb{R}$ and a positive real number $\varepsilon > 0$ (possibly infinite), let $\mathfrak{W}_{r, \varepsilon}$ denote the piecewise linear curve in the complex plane that connects $r + \varepsilon e^{- \pi \textbf{i} / 3} $ to $r$ to $r + \varepsilon e^{\pi \textbf{i} / 3}$. Similarly, let $\mathfrak{V}_{r, \varepsilon}$ denote the piecewise linear curve in the complex plane that connects $r + \varepsilon e^{- 2 \pi \textbf{i} / 3}$ to $r$ to $r + \varepsilon^{2 \pi \textbf{i} / 3}$. 

\end{definition} 

\noindent Examples of these contours are given in Figure \ref{l1l2l3}. Now, let us make the following definitions; Definition \ref{linearvertexright} and Definition \ref{curvedvertexright} define the piecewise linear and curved parts of the contours $\mathcal{C}$ and $\Gamma$, respectively. Then, Definition \ref{vertexrightcontours} defines the contours $\mathcal{C}$ and $\Gamma$. 

\begin{definition}

\label{linearvertexright} 

Let $\mathcal{C}^{(1)} = \mathfrak{W}_{\psi, \varepsilon}$ and $\Gamma^{(1)} = \mathfrak{V}_{\psi - \psi f_{\eta}^{-1} T^{-1 / 3}, \varepsilon}$, where $\varepsilon$ is chosen to be sufficiently small (independent of $T$) so that the following four properties hold.

\begin{itemize}
\item{For all sufficiently large $T$, the two conjugate endpoints of $\mathcal{C}^{(1)}$ are strictly between the level lines $\mathcal{L}_2$ and $\mathcal{L}_3$, so that their distance from $\mathcal{L}_2$ and $\mathcal{L}_3$ is bounded away from $0$, independent of $T$.}

\item{For all sufficiently large $T$, the two conjugate endpoints of $\Gamma^{(1)}$ are strictly between $\mathcal{L}_1$ and $\mathcal{L}_2$, so that their distance from $\mathcal{L}_1$ and $\mathcal{L}_2$ is bounded away from $0$, independent of $T$.}

\item{We have that $|R\big( \psi^{-1} f_{\eta} (z - \psi) \big)| < |f_{\eta} (z - \psi) / 2 \psi|^3$, for all $z \in \mathcal{C}^{(1)} \cup \Gamma^{(1)}$, where $R$ was given in \eqref{r13}.}

\item{We have that $|v / w| \in (q^{1 / 2}, 1)$ for all $v \in \Gamma^{(1)}$ and $w \in \mathcal{C}^{(1)}$.}

\end{itemize}

\noindent Such a positive real number $\varepsilon$ is guaranteed to exist by part \ref{psianglesv} of Proposition \ref{linesgv} and the estimate \eqref{gzpsi}. 

\end{definition}

\begin{definition}

\label{curvedvertexright} 

Let $\mathcal{C}^{(2)}$ denote a positively oriented contour from the top endpoint $\psi + \varepsilon e^{\pi \textbf{i} / 3}$ of $\mathcal{C}^{(1)}$ to the bottom endpoint $\psi + \varepsilon e^{- \pi \textbf{i} / 3}$ of $\mathcal{C}^{(1)}$, and let $\Gamma^{(2)}$ denote a positively oriented contour from the top endpoint $\psi - \psi f_{\eta}^{-1} T^{-1 / 3} + \varepsilon e^{2 \pi \textbf{i} / 3}$ of $\Gamma^{(1)}$ to the bottom endpoint $\psi + \psi f_{\eta}^{-1} T^{-1 / 3} + \varepsilon e^{-2 \pi \textbf{i} / 3}$ of $\Gamma^{(1)}$, satisfying the following five properties. 

\begin{itemize} 

\item{The contour $\mathcal{C}^{(2)}$ remains strictly between the level lines $\mathcal{L}_2$ and $\mathcal{L}_3$, so that the distance from $\mathcal{C}^{(2)}$ to $\mathcal{L}_2$ and $\mathcal{L}_3$ remains bounded away from $0$, independent of $T$.}

\item{The contour $\Gamma^{(2)}$ remains strictly between the level lines $\mathcal{L}_1$ and $\mathcal{L}_2$, so that the distance from $\mathcal{C}^{(2)}$ to $\mathcal{L}_1$ and $\mathcal{L}_2$ remains bounded away from $0$, independent of $T$.}

\item{ The contour $\mathcal{C}^{(1)} \cup \mathcal{C}^{(2)}$ is star-shaped.}

\item{The contour $\Gamma^{(1)} \cup \Gamma^{(2)}$ is star-shaped and does not contain $-q \kappa$.}

\item{The contours $\mathcal{C}^{(2)}$ and $\Gamma^{(2)}$ are both sufficiently close to $\mathcal{L}_2$ so that the interior of $\Gamma^{(1)} \cup \Gamma^{(2)}$ contains the image of $\mathcal{C}^{(1)} \cup \mathcal{C}^{(2)}$ under multiplication by $q^{1 / 2}$. }

\end{itemize}

\noindent Such contours $\mathcal{C}^{(2)}$ and $\Gamma^{(2)}$ are guaranteed to exist by	 part \ref{anglev} and part \ref{qkappaq0v} of Proposition \ref{linesgv}. 

\end{definition} 

\begin{definition}

\label{vertexrightcontours}

Set $\mathcal{C} = \mathcal{C}^{(1)} \cup \mathcal{C}^{(2)}$ and $\Gamma = \Gamma^{(1)} \cup \Gamma^{(2)}$. 

\end{definition}

 Examples of the contours $\mathcal{C}$ and $\Gamma$ are depicted in Figure \ref{l1l2l3}. The following lemma states that $\mathcal{C}$ and $\Gamma$ satisfy their required conditions. 

\begin{lem}

\label{rightcvgammav}

The contour $\Gamma$ is positively oriented and star-shaped; it contains $0$, but leaves outside $-q \kappa$ and $q \beta_j$ for each $j \in [1, m]$. Furthermore, $\mathcal{C}$ is a positively oriented, star-shaped contour that is contained inside $q^{-1} \Gamma$; that contains $0$, $-q$ and $\Gamma$; but that leaves outside $q \beta_j$ for each $j \in [1, m]$. 
\end{lem}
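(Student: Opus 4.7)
The plan is to verify each assertion of Lemma \ref{rightcvgammav} directly from the definitions of $\mathcal{C}$ and $\Gamma$, using the geometric properties of the level lines collected in Proposition \ref{linesgv}. Positive orientation and star-shapedness of both $\mathcal{C}$ and $\Gamma$ are imposed explicitly in Definitions \ref{linearvertexright} and \ref{curvedvertexright} (the latter builds each contour as the star-shaped union of a piecewise linear ``wedge'' near $\psi$ with a curved piece hugging the level line $\mathcal{L}_2$), so the real content of the lemma is the pointwise inclusion/exclusion statements and the nesting $\mathcal{C} \subset q^{-1}\Gamma$.

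For the pointwise claims concerning $0$, $-q$, and $-q\kappa$, I would invoke the nesting $\mathcal{L}_1 \subset \mathcal{L}_2 \subset \mathcal{L}_3$ from part \ref{containmentv} of Proposition \ref{linesgv}, together with the catalog of which of these three points lies inside each $\mathcal{L}_i$ from part \ref{qkappaq0v}. Since $\Gamma^{(2)}$ lies strictly between $\mathcal{L}_1$ and $\mathcal{L}_2$ and $\Gamma^{(1)}$ is a small leftward-opening wedge at $\psi - \psi f_\eta^{-1} T^{-1/3}$ whose endpoints are also between those level lines, the interior of $\Gamma$ contains the interior of $\mathcal{L}_1$ (up to a small indentation near $\psi$), and in particular contains $0$; likewise $\Gamma$ lies within the interior of $\mathcal{L}_2$, so $-q\kappa$, which is outside $\mathcal{L}_2$, is outside $\Gamma$. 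The analogous argument for $\mathcal{C}$, whose pieces sit between $\mathcal{L}_2$ and $\mathcal{L}_3$, shows that its interior contains the interior of $\mathcal{L}_2$, hence $0$ and $-q$, and also contains $\Gamma$ itself.

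The exclusion of each $q\beta_j$ uses star-shapedness to reduce the question to where the contours cross the positive real axis: by construction the unique such crossings are at $\psi - \psi f_\eta^{-1} T^{-1/3}$ for $\Gamma$ and at $\psi$ for $\mathcal{C}$. It therefore suffices to show $q\beta_j > \psi$ for all large $T$, which in the present Tracy--Widom regime $\eta \in (\theta, \kappa)$ with $\theta = \kappa^{-1}\Lambda^2$ and $\Lambda = b + \kappa(1-b)$ follows from a short computation: substituting $\eta = \theta$ into \eqref{derivativegv} gives $\psi_V = q\beta$, and $\psi_V$ is strictly decreasing in $\eta$ on $(\kappa^{-1}, \kappa)$, so $\eta > \theta$ forces $\psi_V < q\beta$, and the hypothesis $b_{j,T} \to b$ then yields $q\beta_{j,T} > \psi_V$ for $T$ large. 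Finally, the nesting $\mathcal{C} \subset q^{-1}\Gamma$, that is, $q\mathcal{C} \subset \mathrm{int}(\Gamma)$, is immediate from the last bullet of Definition \ref{curvedvertexright}: that condition states $q^{1/2}\mathcal{C} \subset \mathrm{int}(\Gamma)$, and since $\Gamma$ is star-shaped around the origin with $q^{1/2} < 1$, scaling that inclusion by $q^{1/2}$ gives $q\mathcal{C} \subset \mathrm{int}(q^{1/2}\Gamma) \subset \mathrm{int}(\Gamma)$. The only step with any real content is the inequality $\psi_V < q\beta$ used to place $q\beta_j$ outside the contours; the remainder is topological bookkeeping on top of Proposition \ref{linesgv} and the compatibility conditions already built into Definition \ref{curvedvertexright}.
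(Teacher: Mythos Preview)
Your proposal is correct and matches the paper's approach: the paper omits the proof, noting only that it follows from the definitions of $\mathcal{C}$ and $\Gamma$ together with the inequality $q\beta_j > \psi$ (deduced from $\eta > \theta$ via \eqref{derivativegv}), and you have supplied precisely these details. Your verification that $\psi_V$ is strictly decreasing in $\eta$ with $\psi_V|_{\eta=\theta} = q\beta$, and your reduction of $\mathcal{C} \subset q^{-1}\Gamma$ to the $q^{1/2}$-nesting condition in Definition~\ref{curvedvertexright} via star-shapedness, are both sound.
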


We omit the proof of this lemma, since it can be quickly deduced from the definitions of $\mathcal{C}$ and $\Gamma$ and the fact that $q \beta_j > \psi$; the latter fact can be verified using the definition \eqref{derivativegv} of $\psi_K$ and the fact that $\eta > \theta$ (recall that $\theta$ was defined by \eqref{processlocationtransition}). 

In the next section we choose the contours $\Gamma_K$ and $\mathcal{C}_K$ in the case $K = A$.

\subsubsection{Choice of Contours \texorpdfstring{$\mathcal{C}_A$}{} and \texorpdfstring{$\Gamma_A$}{}}

\label{ContoursProcessRight}

Similar to in Section \ref{ContoursVertexRight}, our goal in this section is to choose contours $\mathcal{C}_A$ and $\Gamma_A$ satisfying the conditions of Proposition \ref{asymptoticheightvertex} such that $\Re \big( G_A (w) - G_A (v) \big) < 0$ for $w \in \mathcal{C}_A$ and $v \in \Gamma_A$ both away from $\psi_A$. In what follows, we omit the subscript $A$ to simplify notation. 

As in Section \ref{ContoursVertexRight}, we will take the contours $\Gamma$ and $\mathcal{C}$ to follow level lines of the equation $\Re G(z) = G(\psi)$. Again, we require some properties of these level lines that are summarized in the following proposition. Unlike Proposition \ref{linesgv}, this proposition does not seem to have appeared previously; we will establish it later, in Section \ref{PropositionLinesProcessRight}. 

\begin{prop}

\label{linesga}

There exist three simple, closed curves, $\mathcal{L}_1 = \mathcal{L}_{1; A} $, $\mathcal{L}_2 = \mathcal{L}_{2; A} $, and $\mathcal{L}_3 = \mathcal{L}_{3; A}$, that all pass through $\psi$ and satisfy properties \ref{vzpsij}, \ref{anglev}, \ref{containmentv}, \ref{psianglesv}, \ref{positiverealv}, and \ref{negativerealv} of Proposition \ref{linesgv}, with properties \ref{vzpsiall} and \ref{qkappaq0v} respectively replaced by the following. 

\begin{enumerate}

\item[2.]{ \label{azpsiall} Any complex number $z \in \mathcal{L}_1 \cup \mathcal{L}_2 \cup \mathcal{L}_3$, not equal to $q$, satisfies $\Re G(z) = G(\psi)$. }

\item[5.]{ \label{qkappaq0a} The interior of $\mathcal{L}_1$ contains $0$, but not $-q$; the interior of $\mathcal{L}_2$ contains $0$; and the interior of $\mathcal{L}_3$ contains $0$ and $-q$. Furthermore, $-q$ lies on the curve $\mathcal{L}_2$. }

\end{enumerate} 

\end{prop}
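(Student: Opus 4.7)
The plan is to mimic Section 5.1 of \cite{SSVM} (which establishes Proposition \ref{linesgv}), with the modifications needed to accommodate the simple pole of $G_A$ at $z = -q$, which is absent in $G_V$. First, I would do the local analysis at $\psi$: the expansion \eqref{gzpsi} shows that $\Re \big( G_A(z) - G_A(\psi) \big)$ vanishes near $\psi$ along six smooth arcs with tangent angles $\pm \pi/6$, $\pm \pi/2$, $\pm 5\pi/6$ to the real axis, and the identity $G_A(\bar z) = \overline{G_A(z)}$ forces the full level set $\{\Re G_A = G_A(\psi)\}$ to be symmetric under complex conjugation. These six arcs are the germs at $\psi$ of (at most) three simple closed level curves, and conjugation symmetry determines the pairings to be $(\pm \pi/6)$, $(\pm \pi/2)$, $(\pm 5\pi/6)$.

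Next I would study the global behavior of $\Re G_A$, which is a well-defined single-valued harmonic function on $\mathbb{C} \setminus \{0, -q\}$ (since $\Re \log z = \log|z|$). The relevant asymptotics are: $\Re G_A(z) \to +\infty$ as $|z| \to \infty$ (because $m_{\eta;A} > 0$); $\Re G_A(z) \to -\infty$ as $z \to 0$; and setting $z = -q + re^{\mathbf{i} \theta}$ yields $\Re G_A(z) = (q \cos \theta)/r + \eta \log r + O(1)$ as $r \to 0$. Consequently the full level set is bounded, does not touch $0$, and any branch of it limiting to $-q$ must satisfy $\cos \theta = O(r |\log r|)$, so it enters a punctured neighborhood of $-q$ tangent to the vertical line through $-q$ and from the half-plane $\{\Re z > -q\}$ (the side that contains $\psi$). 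Since the only finite critical point of $G_A$ in its domain of holomorphy is $\psi$ (a double root of the numerator of $G_A'$ in \eqref{derivativega}), every arc of the level set emanating from $\psi$ must either return to $\psi$ or close up at $-q$.

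Combining these constraints with conjugation symmetry, the angular ordering at $\psi$, and the requirement that the three resulting curves be mutually disjoint away from $\psi$, one obtains: the arcs at $\pm 5 \pi/6$ form the innermost loop $\mathcal{L}_1$ around $0$; the arcs at $\pm \pi/2$ bend outward and close up at $-q$, forming $\mathcal{L}_2$; and the arcs at $\pm \pi/6$ form the outermost loop $\mathcal{L}_3$, enclosing both $0$ and $-q$. Star-shapedness, the nesting $\mathcal{L}_1 \subset \mathcal{L}_2 \subset \mathcal{L}_3$, and the containments of $0$ and $-q$ among the interiors follow as in \cite{SSVM} from the sign structure of $\Re (G_A(z) - G_A(\psi))$ along radial rays, using the explicit factorization of $G_A'$. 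The sign statements corresponding to properties \ref{positiverealv} and \ref{negativerealv} then follow from the harmonic maximum principle on each annular region bounded by two consecutive level curves, verified at a single convenient sample point on the positive real axis per region.

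The main obstacle will be the careful justification of the behavior at $-q$: one must show that precisely the middle pair of arcs (tangent directions $\pm \pi/2$) closes up at $-q$, rather than returning to $\psi$, and that the closure happens smoothly with vertical tangent. This amounts to a continuity-and-bootstrap argument for the level set in a punctured neighborhood of $-q$: the dominant balance $(q \cos \theta)/r \approx - \eta \log r$ cuts out a unique smooth analytic arc into each of the upper and lower half-planes that limits to $-q$, and one must match these two local arcs to a specific pair of arcs emanating from $\psi$. Matching to either $(\pm \pi/6)$ or $(\pm 5 \pi/6)$ is excluded by the nesting order at $\psi$, which would be violated if $\mathcal{L}_1$ or $\mathcal{L}_3$ were forced through $-q$; this leaves the middle pair $(\pm \pi/2)$ as the only consistent choice.
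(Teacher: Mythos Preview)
Your outline follows essentially the same route as the paper: local analysis at $\psi$ via the cubic expansion \eqref{gzpsi}, local analysis at $-q$ via the pole of $q/(z+q)$, boundedness of the level set, conjugation symmetry, the maximum principle, and a radial-ray argument for star-shapedness. Two points need correction.

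First, a minor one: the leading coefficient of $\Re G_A(z)$ as $|z|\to\infty$ is $\eta + m_{\eta;A} = \big(\tfrac{1+\eta}{2}\big)^2$, not $m_{\eta;A}$ alone, so your stated reason for boundedness is misidentified (the conclusion is still correct). Relatedly, the dominant balance near $-q$ gives $\cos\theta \approx -(\eta/q)\,r\log r$, whose sign is $\operatorname{sign}(\eta)$; since $\eta \in (1-2b,1)$ may be negative, the level arcs need not approach $-q$ from the half-plane $\{\Re z > -q\}$. This side-of-approach claim is inessential, so you can simply drop it.

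Second, and more substantively: your exclusion argument for which pair of arcs terminates at $-q$ is not yet a proof. You appeal to ``the nesting order at $\psi$'', but the global nesting $\mathcal{L}_1\subset\mathcal{L}_2\subset\mathcal{L}_3$ is precisely what is to be established; locally at $\psi$ the six arcs carry only their tangent angles. The paper pins down the combinatorics by first proving (its Lemma \ref{sixzgazgapsi}) that the level set meets the real axis at exactly three points, namely $\psi$, some $z_1\in(-q,0)$, and some $z_2\in(-\infty,-q)$. The components through $z_1$ and $z_2$ are then forced to be closed loops through $\psi$, so by elimination the remaining pair of arcs at $\psi$ must be the one terminating at $-q$. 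Since you already plan the radial-ray analysis of $G_A'$ for star-shapedness, you should promote its real-axis instance earlier to determine the global structure. Alternatively, your exclusion argument can be made rigorous, but the correct mechanism is not ``violated nesting'': it is that if either the $(\pm\pi/6)$ or the $(\pm 5\pi/6)$ pair went to $-q$, one would obtain two nested closed level curves bounding an annulus that contains no singularity of $\Re G_A$, forcing $\Re G_A$ to be constant there by the maximum principle, a contradiction. Either fix works; the paper's is shorter.
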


Thus, these contours are very similar to those depicted in Figure \ref{l1l2l3}, except that now $-q$ lies on $\mathcal{L}_2$ instead of in its interior (and $-q \kappa$ does not exist). 

Given Proposition \ref{linesga}, the selection of the contours $\mathcal{C}$ and $\Gamma$ will proceed as in Section \ref{ContoursVertexRight}. Again, these contours will each be be the union of two contours, a small piecewise linear part near $\psi$ and a large curved part that closely follows the level line $\mathcal{L}_2$. 

Specifically, define $\mathcal{C}^{(1)}$ and $\Gamma^{(1)}$ as in Definition \ref{linearvertexright}, where now $\psi_V$ is replaced with $\psi_A$, $f_{\eta; V}$ is replaced with $f_{\eta; A}$, $R_V$ is replaced by $R_A$, and $\mathcal{L}_{j; V}$ is replaced with $\mathcal{L}_{j; A}$ for each $j \in \{ 1, 2, 3 \}$. Similarly, define $\mathcal{C}^{(2)}$ and $\Gamma^{(2)}$, under the same replacements (and ignoring the requirement that $\Gamma^{(1)} \cup \Gamma^{(2)}$ must avoid $-q \kappa$, since the parameter $\kappa$ does not exist for the ASEP). As in Section \ref{ContoursVertexRight}, the existence of these contours is guaranteed by Proposition \ref{linesga} and the estimate \eqref{gzpsi}.  

Then, as in Definition \ref{vertexrightcontours}, define $\mathcal{C} = \mathcal{C}^{(1)} \cup \mathcal{C}^{(2)}$ and $\Gamma = \Gamma^{(1)} \cup \Gamma^{(2)}$.

The following lemma states that $\mathcal{C}$ and $\Gamma$ satisfy their required conditions; we omit its proof. 

\begin{lem}

\label{rightcagammaa}

The contour $\Gamma$ is positively oriented and star-shaped; it contains $0$, but leaves outside $-q$ and $q \beta_j$ for each $j \in [1, m]$. Furthermore, $\mathcal{C}$ is a positively oriented, star-shaped contour that is contained inside $q^{-1} \Gamma$; that contains $0$, $-q$ and $\Gamma$; but that leaves outside $q \beta_j$ for each $j \in [1, m]$. 
\end{lem}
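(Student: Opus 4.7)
The plan is to mirror the (omitted) proof of Lemma \ref{rightcvgammav}, using Proposition \ref{linesga} in place of Proposition \ref{linesgv}. Since the construction of $\mathcal{C}_A$ and $\Gamma_A$ in Section \ref{ContoursProcessRight} is literally identical to that of $\mathcal{C}_V$ and $\Gamma_V$ in Section \ref{ContoursVertexRight}, only with each level line $\mathcal{L}_{j;V}$ replaced by its analogue $\mathcal{L}_{j;A}$, the properties of being positively oriented and star-shaped are built into the construction (third and fourth bullets of the adapted Definition \ref{curvedvertexright}). Thus the task reduces to verifying, for large enough $T$, the correct containment or exclusion of the special points $0$, $-q$, and each $q\beta_{j,T}$ relative to $\Gamma$ and to $\mathcal{C}$.

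For $\Gamma = \Gamma^{(1)}\cup\Gamma^{(2)}$: by the adapted Definition \ref{curvedvertexright}, it sits strictly between $\mathcal{L}_{1;A}$ and $\mathcal{L}_{2;A}$, with a uniform gap independent of $T$. Since both level lines enclose $0$, the point $0$ lies inside $\Gamma$. The point $-q$ lies \emph{on} $\mathcal{L}_{2;A}$ by the modified property 5 of Proposition \ref{linesga}, so the strict inner approximation by $\Gamma$ places $-q$ outside $\Gamma$. For each $q\beta_{j,T}$ I would first verify the inequality $\psi_A < q\beta_{j,T}$ on the real axis; rearranging $q(1-\eta)/(1+\eta) < qb_{j,T}/(1-b_{j,T})$ gives $\eta > 1 - 2b_{j,T}$, which holds for all $T$ large enough because $\eta > \theta = 1 - 2b$ and $b_{j,T}\to b$. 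Combined with property \ref{psianglesv} of Proposition \ref{linesga} (vertical tangency of $\mathcal{L}_{2;A}$ at $\psi_A$, so $\psi_A$ is the rightmost real point of $\mathcal{L}_{2;A}$), the point $q\beta_{j,T}$ lies strictly outside $\mathcal{L}_{2;A}$, hence strictly outside $\Gamma$.

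For $\mathcal{C} = \mathcal{C}^{(1)}\cup\mathcal{C}^{(2)}$ the argument is symmetric: $\mathcal{C}$ lies strictly between $\mathcal{L}_{2;A}$ and $\mathcal{L}_{3;A}$, so it encloses $0$ (inside $\mathcal{L}_{2;A}$) and also $-q$, which lies on $\mathcal{L}_{2;A}$ and therefore strictly inside $\mathcal{C}$; the points $q\beta_{j,T}$ are ruled out by the same rightmost-point argument. The nesting $\Gamma\subset\mathrm{int}(\mathcal{C})$ is immediate from $\Gamma$ being inside $\mathcal{L}_{2;A}$ while $\mathcal{C}$ is outside it. Finally, $\mathcal{C}\subset\mathrm{int}(q^{-1}\Gamma)$, equivalent to $q\mathcal{C}\subset\mathrm{int}(\Gamma)$, follows from the stronger inclusion $q^{1/2}\mathcal{C}\subset\mathrm{int}(\Gamma)$ already imposed in the fifth bullet of the adapted Definition \ref{curvedvertexright}, together with the star-shapedness of both contours (which makes $q\mathcal{C}$ a strict ``shrinking'' of $q^{1/2}\mathcal{C}$ through the origin).

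The only point requiring genuine care is the $T$-dependence: the apex of $\Gamma^{(1)}$ is shifted by $-\psi_A f_{\eta;A}^{-1}T^{-1/3}$, and the parameters $b_{j,T}$ converge to $b$ only as $T\to\infty$. The relevant gaps, however, all remain bounded below by positive constants: $\psi_A < q\beta_{j,T}$ with slack of order $1$, the distances from $\Gamma^{(2)}$ and $\mathcal{C}^{(2)}$ to the level lines $\mathcal{L}_{j;A}$ are positive and $T$-independent, and the perturbation is of order $T^{-1/3}$. Consequently all required containments and exclusions survive for $T$ sufficiently large, which is the main (though essentially bookkeeping) obstacle.
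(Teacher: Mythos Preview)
Your proposal is correct and follows exactly the approach the paper intends: the paper omits the proof of both this lemma and its $K=V$ analogue (Lemma \ref{rightcvgammav}), noting only that they follow from the definitions of $\mathcal{C}$ and $\Gamma$ together with the key inequality $q\beta_j > \psi$, which is precisely what you verify in detail. One small imprecision worth tightening: your ``rightmost-point'' argument for excluding $q\beta_{j,T}$ from $\mathcal{C}$ should appeal directly to the fact that $\mathcal{C}$ is star-shaped and meets the positive real axis at $\psi$ (the apex of $\mathcal{C}^{(1)}=\mathfrak{W}_{\psi,\varepsilon}$), rather than routing through $\mathcal{L}_{2;A}$, since $\mathcal{C}$ lies \emph{outside} $\mathcal{L}_{2;A}$.
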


\subsubsection{Proof of Proposition \ref{linesga}}

\label{PropositionLinesProcessRight}

The goal of this section is to establish Proposition \ref{linesga}, whose proof will be similar to that of Proposition \ref{linesgv} given in Section 5 of \cite{SSVM}. To do this, we require the following two lemmas, which provide some preliminary properties of the level lines of $\Re G_A (z) = G_A (\psi_A)$. As in Section \ref{ContoursProcessRight}, we assume that $K = A$ throughout and thus omit the subscript $A$ from notation. 

\begin{lem}

\label{realgarightbounded} 

The set of $z \in \mathbb{C}$ for which $\Re G(z) = G (\psi)$ is bounded.

\end{lem}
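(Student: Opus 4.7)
The plan is to show that $\Re G_A(z) \to +\infty$ as $|z| \to \infty$, which immediately forces the level set $\{z : \Re G_A(z) = G_A(\psi)\}$ to be contained in some large disk. Since
\begin{flalign*}
\Re G_A(z) = \Re\!\left(\frac{q}{z+q}\right) + \eta \log|z+q| + m_{\eta;A} \log|z|,
\end{flalign*}
the first summand is $O(|z|^{-1})$ as $|z|\to\infty$, and $\log|z+q| = \log|z| + O(|z|^{-1})$. Therefore
\begin{flalign*}
\Re G_A(z) = \bigl(\eta + m_{\eta;A}\bigr)\log|z| + O(|z|^{-1}), \qquad |z|\to\infty.
\end{flalign*}

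The key algebraic observation is that with $m_{\eta;A} = (1-\eta)^2/4$, one has
\begin{flalign*}
\eta + m_{\eta;A} = \frac{4\eta + (1-\eta)^2}{4} = \frac{(\eta+1)^2}{4}.
\end{flalign*}
Since $\eta \in (\theta,1)$ and $\theta = 1 - 2b > -1$ (because $b \in (0,1)$), we have $\eta > -1$, so the coefficient $(\eta+1)^2/4$ is strictly positive. Hence $\Re G_A(z) \to +\infty$ as $|z|\to\infty$, and one can choose $M>0$ large enough so that $\Re G_A(z) > G_A(\psi)$ whenever $|z| > M$. The level set therefore lies inside the disk of radius $M$, proving boundedness.

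I do not foresee a serious obstacle here: the argument is just a two-term asymptotic expansion plus the observation that $\eta + m_{\eta;A}$ is a perfect square in $(\eta+1)$, so positivity is automatic from the constraint $\eta > -1$ (which is implied by $\eta > \theta$). The only mild subtlety is writing $\log$ in terms of $\log|\cdot|$ to correctly extract the real part, and ignoring the singularities at $z=0$ and $z=-q$ (which do not affect behavior at infinity).
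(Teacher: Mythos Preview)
Your proof is correct and essentially identical to the paper's own argument: both expand $\Re G_A(z)$ as $\left(\frac{1+\eta}{2}\right)^2 \log|z| + \mathcal{O}(|z|^{-1})$ and conclude boundedness from the divergence of this expression as $|z|\to\infty$. You spell out the algebra $\eta + m_{\eta;A} = (\eta+1)^2/4$ and the positivity check $\eta > -1$ a bit more explicitly than the paper does, but there is no substantive difference.
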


\begin{proof}
Observe that 
\begin{flalign}
\begin{aligned}
\label{largezgaright}
\Re G(z) & = \Re \left( \displaystyle\frac{q}{z + q} \right) + \eta \log |z + q| + m_{\eta} \log |z| = \left( \displaystyle\frac{1 + \eta}{2} \right)^2 \log |z| + \mathcal{O} \big( |z|^{-1} \big), 
\end{aligned}
\end{flalign}

\noindent where we have used the definition of $m_{\eta} = m_{\eta; A}$ given in \eqref{m13} to establish the estimate above. This implies that $\big| \Re G(z) \big|$ tends to $\infty$ as $|z|$ tends to $\infty$, so the set of $z$ satisfying $\Re G(z) = G (\psi)$ is bounded. 
\end{proof}

\begin{lem}

\label{sixzgazgapsi}

Let $\ell \subset \mathbb{C}$ be any line through $0$. There exist at most $6$ complex numbers $z \in \ell$ such that $\Re G(z) = G(\psi)$. Moreover, if $\ell$ is the real axis, then there exist only three such $z$; one is $z = \psi$, one is in the interval $(-q, 0)$, and one is in the interval $(-\infty, -q)$. 

\end{lem}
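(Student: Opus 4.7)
The plan is to reduce the statement to counting real zeros of a single real-variable function. Parametrize an arbitrary line $\ell$ through the origin as $\{ te^{\mathbf{i}\alpha} : t \in \mathbb{R} \}$ for some fixed $\alpha \in [0,\pi)$, and set
\begin{equation*}
h(t) = \Re G\bigl(te^{\mathbf{i}\alpha}\bigr) - G(\psi).
\end{equation*}
Then the quantities to bound are the real zeros of $h$. The proof splits into an algebraic step (identifying a polynomial controlling $h'$) and an application of Rolle's theorem.

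First I would compute $h'(t) = \Re[e^{\mathbf{i}\alpha} G'(te^{\mathbf{i}\alpha})]$, substitute formula \eqref{derivativega}, and clear conjugates in the denominator to obtain
\begin{equation*}
h'(t) = \left(\frac{\eta+1}{2}\right)^{2} \frac{P(t)}{t\,|te^{\mathbf{i}\alpha}+q|^{4}}, \qquad P(t) = \Re\bigl[(te^{\mathbf{i}\alpha}-\psi)^2(te^{-\mathbf{i}\alpha}+q)^2\bigr].
\end{equation*}
Using $A^2B^2 = (AB)^2$ with $AB = t^{2} + t(qe^{\mathbf{i}\alpha} - \psi e^{-\mathbf{i}\alpha}) - \psi q$, an expansion identifies $P(t)$ as a real polynomial in $t$ of degree exactly $4$ with leading term $t^{4}$. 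Thus $h'$ has at most four real zeros away from its singularities.

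Next I would note that the singular set of $h$ on $\mathbb{R}$ consists of $t=0$ (always) and of $t = -qe^{-\mathbf{i}\alpha}$ (which is real only when $\alpha \in \{0,\pi\}$, i.e.\ when $\ell$ is the real axis). For $\ell$ not the real axis, $\mathbb{R}$ is partitioned into two open intervals of smoothness, and Rolle's theorem gives at most $1 + (\text{zeros of }h'\text{ in the interval})$ zeros of $h$ per interval. Summing yields at most $2 + 4 = 6$ zeros of $h$, giving the first assertion.

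For the case $\alpha=0$, the identity $P(t) = (t-\psi)^{2}(t+q)^{2}$ shows that on each of the three smooth intervals $(-\infty,-q)$, $(-q,0)$, $(0,\infty)$, the sign of $h'(t) = \bigl(\tfrac{\eta+1}{2}\bigr)^{2}(t-\psi)^2/[t(t+q)^2]$ is constant: positive on $(0,\infty)$ and negative on the other two (the only interior zero $\psi$ of $h'$ in $(0,\infty)$ is a point of horizontal inflection, so $h$ remains strictly monotonic there). From \eqref{ga13} one checks the boundary behavior
\begin{equation*}
\lim_{t\to 0^{\pm}} h(t) = -\infty, \quad \lim_{t\to -q^{+}} h(t) = +\infty, \quad \lim_{t\to -q^{-}} h(t) = -\infty, \quad \lim_{|t|\to\infty} h(t) = +\infty,
\end{equation*}
where the last follows from $m_{\eta}+\eta = \bigl(\tfrac{1+\eta}{2}\bigr)^{2}$ dominating the logarithmic asymptotics. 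Strict monotonicity plus the intermediate value theorem yields exactly one zero of $h$ in each of the three intervals, with the zero on $(0,\infty)$ being $\psi$ itself (since $G(\psi)$ is real, so $h(\psi)=0$).

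The only nontrivial input is the algebraic identification of $P(t)$ as a polynomial of degree $4$; the remainder is a direct Rolle/monotonicity count and presents no real obstacle.
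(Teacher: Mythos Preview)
Your proof is correct and follows essentially the same approach as the paper: parametrize $\ell$ by a real variable, express the derivative of $\Re G$ along $\ell$ as a rational function whose numerator is a real degree-four polynomial, and then count zeros via Rolle's theorem (the paper phrases this as counting sign changes of the derivative, which is equivalent). The real-axis case is also handled identically, by monotonicity of $G$ on each of the three intervals combined with the boundary behavior at $0$, $-q$, and $\pm\infty$.
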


\begin{proof}

Let us establish the second statement first, so suppose that $\ell$ is the real axis. Then, from \eqref{derivativega}, we deduce that $G' (z) > 0$ for $z > 0$, and that $G' (z) < 0$ for $z < 0$. Due to the singularity of $G (z)$ at $z = - q$, this implies that there are at most three real numbers $z$ for which $\Re G(z) = G (\psi)$, and that at most one of them is positive; the one positive value of $z$ is $z = \psi$. 

Furthermore, observe that $G(z)$ tends to $\infty$ when $z$ tends to $-\infty$ (see \eqref{largezgaright}) or when $z$ tends to $-q$ from the right. Moreover, $G(z)$ tends to $-\infty$ when $z$ tends to $0$ or when $z$ tends to $-q$ from the left. By the continuity of $G$ on $(-\infty, -q) \cup (-q, 0)$, it follows that there exists one value of $z = z_1 \in (- q, 0)$ and one value of $z = z_2 \in (- \infty, - q)$ such that $\Re G(z_1) = G(\psi) = \Re G (z_2)$. This establishes the lemma in the case when $\ell = \mathbb{R}$.

Now assume that $\ell \ne \mathbb{R}$, and let $z_0 \in \ell$ be a non-zero complex number on $\ell$. It suffices to show that there exist at most six real numbers $\omega \in \mathbb{R}$ such that $\Re G(\omega z_0) = G (\psi)$. To that end, we differentiate $\Re G(\omega z_0)$ with respect to $\omega$ to find that 
\begin{flalign}
\begin{aligned}
\label{gaderivativeomegaright}
\displaystyle\frac{\partial}{\partial \omega} \big( \Re G (\omega z_0) - G (\psi) \big) & = \Re \Bigg( \displaystyle\frac{\partial}{\partial \omega} \bigg( \displaystyle\frac{q}{\omega z_0 + q} + \eta \log (\omega z_0 + q) + m_{\eta} \log (\omega z_0) \bigg) \Bigg)  \\
&= \left( \displaystyle\frac{(\eta + 1)^2}{4 \omega |\omega + q z_0^{-1}|^4} \right) \Re (\omega - z_0^{-1} \psi)^2 (\omega + q \overline{z_0^{-1}})^2. 
\end{aligned}
\end{flalign}

\noindent Fixing $z_0 \in \ell$, the right side of \eqref{gaderivativeomegaright} is a rational function in $\omega$ whose numerator is a polynomial of degree four; in particular, it can be zero for at most four distinct values of $\omega$. 

Furthermore, \eqref{gaderivativeomegaright} shows that the only real value of $\omega$ for which the derivative of $ \Re \big( G (\omega z_0) - G (\psi) \big)$ is singular is $\omega = 0$; this is due to the fact that $\omega + q z_0^{-1} \ne 0$, which holds since $\ell \ne \mathbb{R}$. Thus there are at most five real numbers $\omega$, at which the derivative $ \partial \big( \Re \big( G (\omega z_0) - G (\psi) \big) \big) / \partial \omega$ can change sign. It follows that there exist at most six real numbers $\omega$ for which $\Re G (\omega z_0) = G (\psi)$. 
\end{proof}

\noindent Now we can establish Proposition \ref{linesga}.

\begin{proof}[Proof of Proposition \ref{linesga}] 

Let $S = \mathbb{C} \setminus \{ \psi, -q \}$. Then $\Re G(z)$ and $\Re \big( G' (z) \big)$ are smooth on $S \setminus \{ 0 \}$. Furthermore, from the explicit form \eqref{derivativega} of $G' (z)$, we deduce that $G' (z)$ is non-zero on $S \setminus \{ 0 \}$, which implies that $\Re G (z)$ has no critical points on $S \setminus \{ 0 \}$. Thus, the implicit function theorem shows that the set of $z \in S$ satisfying $\Re G(z) = G (\psi)$ is a one-dimensional submanifold of $S$. In particular, it is a disjoint union of connected components that are each either homeomorphic to a line or a circle. 

By Lemma \ref{realgarightbounded}, all of these connected components are bounded. Therefore, the closure $\overline{\mathcal{M}}$ (in $\mathbb{C}$) of any such component $\mathcal{M}$ is either homeomorphic to a circle or has endpoints $-q$ and $\psi$ and is homeomorphic to a compact interval. If $\overline{\mathcal{M}}$ is homeomorphic to a circle, then the maximum principle for harmonic functions shows that one of the singularities $0$ or $-q$ of $\Re G(z)$ must lie on or in the interior of $\overline{\mathcal{M}}$ (since $\Re G(z)$ is harmonic for $z \in \mathbb{C} \setminus \{ -q, 0 \}$). 

Now, due to \eqref{gzpsi}, six (not necessarily distinct) connected components exit from $\psi$, at angles $\pi / 6$, $\pi / 2$, $5 \pi / 6$, $- 5 \pi / 6$, $- \pi / 2$, and $- \pi / 6$. Furthermore, due to the explicit form \eqref{ga13} of $G$, we deduce that two components also exit from $-q$ at angles $\pi / 2$ and $- \pi / 2$. Lemma \ref{sixzgazgapsi} also shows that there exist $z_1 \in (-q, 0)$ and $z_2 \in (-\infty, -q)$ such that $\Re G(z_1) = G(\psi) = \Re G(z_2)$; this implies that one of the components passes through $z_1$ and one of the components passes through $z_2$. By the second part of Lemma \ref{sixzgazgapsi}, we also find that there are no other real numbers $z \notin \{ z_1, z_2, \psi \}$ such that $\Re G(z) = G(\psi)$. 

These facts imply that there are four disjoint connected components $\mathcal{M}_1, \mathcal{M}_2, \mathcal{M}_2', \mathcal{M}_3 \subset S$ (see Figure \ref{m1m2m3}) such that the set of $z \in S$ satisfying $\Re G(z) = G (\psi)$ is equal to the union $\mathcal{M}_1 \cup \mathcal{M}_2 \cup \mathcal{M}_2' \cup \mathcal{M}_3$; these components can be described as follows. First, we have that $\psi \in \overline{\mathcal{M}_1}, \overline{\mathcal{M}_2}, \overline{\mathcal{M}_2'}, \overline{\mathcal{M}_3}$. Furthermore, $z_1 \in \mathcal{M}_1$, and $\overline{\mathcal{M}_1}$ is homeomorphic to a circle; similarly, $z_2 \in \mathcal{M}_3$, and $\overline{\mathcal{M}_3}$ is homeomorphic to a circle. 

Moreover, $\overline{\mathcal{M}_2}$ and $\overline{\mathcal{M}_2'}$ are both homeomorphic to compact intervals; we also have that $-q \in \overline{\mathcal{M}_2} \cap \overline{\mathcal{M}_2'}$, that $\mathcal{M}_2$ is in the upper half plane, and that $\mathcal{M}_2'$ is in the lower half plane. We can describe the intersections $\overline{\mathcal{M}_1} \cap \mathbb{R} = \{ z_1, \psi \}$, $\overline{\mathcal{M}_2} \cap \mathbb{R} = \{ -q, \psi \} = \overline{\mathcal{M}_2'} \cap \mathbb{R}$, and $\overline{\mathcal{M}_3} \cap \mathbb{R} = \{ z_2, \psi \}$. 

Now set $\mathcal{L}_1 = \overline{\mathcal{M}_1}$, $\mathcal{L}_2 = \overline{\mathcal{M}_2} \cup \overline{\mathcal{M}_2'}$, and $\mathcal{L}_3 = \overline{\mathcal{M}_3}$. Then, $\mathcal{L}_1$, $\mathcal{L}_2$, and $\mathcal{L}_3$ are all simple, closed curves passing through $\psi$. Parts \ref{vzpsij} and \ref{vzpsiall} of Proposition \ref{linesga} are satisfied by these curves since $\mathcal{M}_1 \cup \mathcal{M}_2 \cup \mathcal{M}_2' \cup \mathcal{M}_3 \cup \{ \psi \} = \mathcal{L}_1 \cup \mathcal{L}_2 \cup \mathcal{L}_3$. 

Furthermore, only one positive real number and only one negative real number lie on each of the closures $\overline{\mathcal{M}_1}$, $\overline{\mathcal{M}_2}$, $\overline{\mathcal{M}_2'}$, and $\overline{\mathcal{M}_3'}$. This implies that $0$ is in the interior of $\mathcal{L}_1$, $\mathcal{L}_2$, and $\mathcal{L}_3$. 

Now, property \ref{anglev} holds from Lemma \ref{sixzgazgapsi}; indeed, this lemma implies that any line $\ell \subset \mathbb{C}$ can only intersect $\mathcal{L}_1 \cup \mathcal{L}_2 \cup \mathcal{L}_3$ in at most six places. Since each $\mathcal{L}_j$ is a closed curve containing $0$ in its interior, $\ell$ intersects each $\mathcal{L}_j$ at least, and thus only, twice; furthermore, $0$ is between these two intersection points. This implies that for each angle $a \in \mathbb{R}$ and each index $j \in \{ 1, 2, 3 \}$, there is only one complex number $z_a \in \mathcal{L}_j$ such that $z_a / |z_a| = e^{ \textbf{i} a}$. This verifies property \ref{anglev}. 

To see property \ref{containmentv}, observe that $\mathcal{L}_i$ and $\mathcal{L}_j$ can only intersect at critical points of $G(z)$, for any distinct $i, j \in \{1, 2, 3 \}$. These critical points are $\psi$ and $-q$. Furthermore, only $\mathcal{L}_2$ contains $-q$; this implies that $\mathcal{L}_i \cap \mathcal{L}_j = \{ \psi \}$, for any $i, j \in \{ 1, 2, 3 \}$. Now, the containment follows from the fact that $\mathcal{L}_1$ intersects the negative real axis only once at $z_1$, that $\mathcal{L}_2$ intersects the negative real axis only once at $-q$, that $\mathcal{L}_3$ intersects the negative real axis only once at $z_2$, and that $z_2 < -q < z_1$. 

Property \ref{qkappaq0v} follows from the fact that $z_2 < -q < z_1$, and property \ref{psianglesv} follows from the containment property \ref{containmentv} and the estimate \eqref{gzpsi}. 

Furthermore, observe that $\Re G(z)$ can only change sign across some $\mathcal{L}_j$; thus, property \ref{positiverealv} and property \ref{negativerealv} follow from the facts that $\lim_{\varepsilon \rightarrow 0^+} \Re G \big( \varepsilon - q \big) = \infty$ and $\lim_{\varepsilon \rightarrow 0^-} \Re G \big( \varepsilon - q \big) = - \infty$, respectively.  
\end{proof}

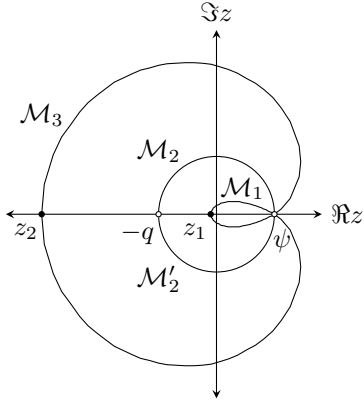
\begin{figure}

\begin{center}

\begin{tikzpicture}[
      >=stealth,
      auto,
      style={
        scale = .7 
      }
			]

			\draw[<->, black	] (9, -3.5) -- (9, 3.5) node[black, above = 0] {$\Im z$};
			\draw[<->, black] (5, 0) -- (11, 0) node[black, right = 0] {$\Re z$};
			
			\path[draw] (10.1, 0) -- (10.08, .02) -- (10.06, .03) -- (10.03, .04) -- (10, .05) -- (9.98, .06) -- (9.95, .08) -- (9.93, .09) -- (9.89, .1) -- (9.86, .12) -- (9.83, .13) -- (9.8, .14) -- (9.75, .16) -- (9.71, .17) -- (9.65, .19) -- (9.61, .2) -- (9.56, .21) -- (9.5, .22) -- (9.47, .23) -- (9.43, .24) -- (9.38, .24) -- (9.35, .24) -- (9.3, .24) -- (9.27, .24) -- (9.22, .24) -- (9.18, .23) -- (9.14, .22) -- (9.08, .21) -- (9.06, .19) -- (9.04, .18) -- (9, .17) -- (8.98, .15) -- (8.96, .13) -- (8.94, .12) -- (8.93, .1) -- (8.93, .09) -- (8.92, .08) -- (8.91, .07) -- (8.9, .04) -- (8.89, .03) -- (8.89, 0) node[black, right = 12, above = 2] {$\mathcal{M}_1$};
			
			\path[draw] (10.06, -.03) -- (10.03, -.04) -- (10, -.05) -- (9.98, -.06) -- (9.95, -.08) -- (9.93, -.09) -- (9.89, -.1) -- (9.86, -.12) -- (9.83, -.13) -- (9.8, -.14) -- (9.75, -.16) -- (9.71, -.17) -- (9.65, -.19) -- (9.61, -.2) -- (9.56, -.21) -- (9.5, -.22) -- (9.47, -.23) -- (9.43, -.24) -- (9.38, -.24) -- (9.35, -.24) -- (9.3, -.24) -- (9.27, -.24) -- (9.22, -.24) -- (9.18, -.23) -- (9.14, -.22) -- (9.08, -.21) -- (9.06, -.19) -- (9.04, -.18) -- (9, -.17) -- (8.98, -.15) -- (8.96, -.13) -- (8.94, -.12) -- (8.93, -.1) -- (8.93, -.09) -- (8.92, -.08) -- (8.91, -.07) -- (8.9, -.04) -- (8.89, -.03) -- (8.89, 0);

			\path[draw] (10.1, 0) -- (10.23, .1) -- (10.4, .3) -- (10.57, .71) -- (10.6, 1) -- (10.57, 1.3) -- (10.4, 1.76) -- (10.23, 2.02) -- (10.07, 2.22) -- (9.9, 2.37) -- (9.74, 2.49) -- (9.57, 2.58) -- (9.4, 2.68) -- (9.23, 2.75) -- (9.07, 2.8) -- (8.9, 2.85) -- (8.73, 2.87) -- (8.57, 2.88) -- (8.4, 2.88) -- (8.23, 2.87) -- (8.06, 2.86) -- (7.9, 2.83) -- (7.73, 2.78) -- (7.56, 2.74) -- (7.4, 2.67) -- (7.23, 2.59) -- (7.06, 2.51) -- (6.9, 2.39) -- (6.73, 2.26) -- (6.56, 2.12) -- (6.4, 1.94) -- (6.23, 1.7) -- (6.06, 1.47) -- (5.9, 1.13) -- (5.73, .57) -- (5.68, 0) node [black, above = 30] {$\mathcal{M}_3$}; 
			
			\path[draw] (10.1, 0)  -- (10.23, -.1) -- (10.4, -.3) -- (10.57, -.71) -- (10.6, -1) -- (10.57, -1.3) -- (10.4, -1.76) -- (10.23, -2.02) -- (10.07, -2.22) -- (9.9, -2.37) -- (9.74, -2.49) -- (9.57, -2.58) -- (9.4, -2.68) -- (9.23, -2.75) -- (9.07, -2.8) -- (8.9, -2.85) -- (8.73, -2.87) -- (8.57, -2.88) -- (8.4, -2.88) -- (8.23, -2.87) -- (8.06, -2.86) -- (7.9, -2.83) -- (7.73, -2.78) -- (7.56, -2.74) -- (7.4, -2.67) -- (7.23, -2.59) -- (7.06, -2.51) -- (6.9, -2.39) -- (6.73, -2.26) -- (6.56, -2.12) -- (6.4, -1.94) -- (6.23, -1.7) -- (6.06, -1.47) -- (5.9, -1.13) -- (5.73, -.57) -- (5.68, 0);

			\draw[black] (10.1, 0) arc (.1:177:1.1)  node [black, above = 15] {$\mathcal{M}_2$};

			\draw[black] (10.1, 0) arc (-.1:-177:1.1)  node [black, below = 15] {$\mathcal{M}_2'$};

			\filldraw[fill = white, draw = black] (7.9, 0) circle [radius=.05] node [black, left = 8, below = 0] {$-q$};
			
			\filldraw[fill = white, draw = black] (10.1, 0) circle [radius=.05] node [black, right = 3, below = 2] {$\psi$};
			
			\filldraw[fill = black, draw = black] (8.89, 0) circle [radius=.05] node [black, left = 6, below = 0] {$z_1$};
			
			\filldraw[fill = black, draw = black] (5.68, 0) circle [radius=.05] node [black, left = 6, below = 0] {$z_2$};

\end{tikzpicture}

\end{center}

\caption{\label{m1m2m3} Shown above are the curves $\mathcal{M}_1$, $\mathcal{M}_2$, $\mathcal{M}_2'$, and $\mathcal{M}_3$ from the proof of Proposition \ref{linesga}. }

\end{figure}

\subsection{Proof of Part 1 of Proposition \ref{processdeterminant} and Proposition \ref{modeldeterminant}} 

\label{VertexRight}

The goal of this section is to establish the first parts of Proposition \ref{processdeterminant} and Proposition \ref{modeldeterminant}. Since both proofs are similar, we will do them simultaneously. Thus, recall the notation $K \in \{ V, A \}$ that corresponds to the stochastic six-vertex model when $K = V$ and the ASEP when $K = A$.

If accordance with the statements of Proposition \ref{processdeterminant} and Proposition \ref{modeldeterminant}, we assume that $\eta \in (\theta, 1)$ when $K = A$ and that $\eta \in (\theta, \kappa)$ when $K = V$. In either case, set $x = x(T) = \lfloor \eta T \rfloor + 1$, and let $p_T = f_{\eta} s T^{1 / 3} - m_{\eta} T$, where we recall that $m_{\eta} = m_{\eta; K}$ is defined through \eqref{m13} and $f_{\eta} = f_{\eta; K}$ are defined through \eqref{ga13} and \eqref{gv13} in the cases $K = A$ and $K = V$, respectively. 

As in Section \ref{KExponentialRight}, we replace $x(T)$ with $\eta T + 1$ for notational convenience; this will not affect the asymptotics. We also simplify notation by omitting the subscript $K$.

Now, in order to establish Proposition \ref{processdeterminant} and Proposition \ref{modeldeterminant}, we must asymptotically analyze the kernel $K^{(p_T)}$ given by \eqref{vptright}. To that end, we will follow a Laplace method similar to what was outlined in Section 5 of \cite{SSVM}. Specifically, in Section \ref{VertexRightPsi}, we will analyze the contribution to the integral on the right side of \eqref{vptright} when $v$ is integrated in a small neighborhood of $\psi$; this will yield a kernel resembling the Airy kernel, which will lead to the $F_{\TW}$ asymptotics. Then, in Section \ref{ExponentialVertexRight}, we will show that the contribution to the integral decays exponentially when $v$ is integrated outside a neighborhood of $\psi$; we will then be able to ignore this contribution, which will lead to the proofs of the first parts of Proposition \ref{processdeterminant} and Proposition \ref{modeldeterminant}.

\subsubsection{Contribution Near \texorpdfstring{$\psi$}{}} 

\label{VertexRightPsi}

In this section we analyze the contribution to the right side of \eqref{vptright} when $w \in \mathcal{C}^{(1)}$ and $v \in \Gamma^{(1)}$, that is, when both $w$ and $v$ are near $\psi$. To that end, define 
\begin{flalign}
\begin{aligned}
\label{vzetatilderight}
\widetilde{K} (w, w') = \displaystyle\frac{1}{2 \textbf{i} \log q} \displaystyle\sum_{j = -\infty}^{\infty} & \displaystyle\int_{\Gamma^{(1)}} \displaystyle\frac{ \exp \Big( T \big( G(w) - G(v) \big) \Big)}{\sin \big( \frac{\pi}{\log q} (2 \pi \textbf{i} j + \log v - \log w) \big)} \displaystyle\prod_{k = 1}^m \displaystyle\frac{(q^{-1} \beta_k^{-1} v; q)_{\infty}}{(q^{-1} \beta_k^{-1} w; q)_{\infty}}\\
& \times \left( \displaystyle\frac{v}{w} \right)^{s f_{\eta} T^{1 / 3}} \displaystyle\frac{dv}{v (w' - v)}, 
\end{aligned}
\end{flalign}

\noindent for any $w, w' \in \mathcal{C}$. Observe that the integral in \eqref{vzetatilderight} is similar to the integral in \eqref{vptright}; the difference is that the former integral is along $\Gamma^{(1)}$, and the latter integral is along $\Gamma$.  

Now let us change variables, a procedure that will in effect ``zoom into'' the region around $\psi$. Denote $\sigma = \psi f_{\eta}^{-1} T^{-1 / 3}$, and set 
\begin{flalign}
\label{wwvvertexright}
w = \psi + \sigma \widehat{w}; \qquad w' = \psi + \sigma \widehat{w'}; \qquad v = \psi + \sigma \widehat{v}; \qquad \widehat{K} (\widehat{w}, \widehat{w'}) = \sigma \widetilde{K} (w, w').
\end{flalign} 

\noindent Also, for any contour $\mathcal{D}$, set $\widehat{\mathcal{D}} = \sigma^{-1} \big( \mathcal{D} - \psi \big)$, where $\sigma^{-1} (\mathcal{D} - \psi)$ denotes all numbers of the form $\sigma^{-1} (z - \psi)$ with $z \in \mathcal{D}$. In particular, from Definition \ref{crgammar} and Definition \ref{linearvertexright}, we find that $\mathcal{C}^{(1)} = \mathfrak{W}_{0, \varepsilon / \sigma}$ and $\Gamma^{(1)} = \mathfrak{V}_{-1, \varepsilon / \sigma}$. 

Our goal in this section is to establish the following lemma. 

\begin{lem}

\label{rightvgamma1tildev}

There exist constants $c, C > 0$ such that 
\begin{flalign}
\label{vright71} 
\left| \widehat{K} (\widehat{w}, \widehat{w'})  \right| \le \displaystyle\frac{C}{1 + |\widehat{w'}|} \exp \big( - c |\widehat{w}|^3 \big), 
\end{flalign} 

\noindent for each $\widehat{w} \in \widehat{\mathcal{C}}^{(1)}$ and $\widehat{w'} \in \widehat{\mathcal{C}}$. 

Furthermore, if we define the kernel $L_s (\widehat{w}, \widehat{w'})$ by 
\begin{flalign}
\label{definitionls}
L_s \big( \widehat{w}, \widehat{w'} \big) = \displaystyle\frac{1}{2 \pi \textbf{\emph{i}}} \displaystyle\int_{\mathfrak{V}_{-1, \infty}} \displaystyle\frac{1 }{\big( \widehat{v} - \widehat{w} \big) \big( \widehat{w'} - \widehat{v}  \big)} \exp \left( \displaystyle\frac{\widehat{w}^3}{3} - \displaystyle\frac{\widehat{v}^3}{3} + s \big( \widehat{v} - \widehat{w} \big) \right) d \widehat{v}, 
\end{flalign}

\noindent then we have that
\begin{flalign}
\label{vright1}
\displaystyle\lim_{T \rightarrow \infty} \widehat{K} \big( \widehat{w}, \widehat{w'} \big) = L_s \big( \widehat{w}, \widehat{w'} \big), 
\end{flalign} 

\noindent for each fixed $\widehat{w}, \widehat{w'} \in \mathfrak{W}_{0, \infty}$. 
\end{lem}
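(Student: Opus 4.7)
The plan is to Taylor-expand the integrand of $\widetilde{K}$ around the double critical point $\psi$ of $G$ after the rescaling $w = \psi+\sigma\widehat{w}$, $v = \psi+\sigma\widehat{v}$, $w' = \psi+\sigma\widehat{w'}$, where $\sigma = \psi f_\eta^{-1}T^{-1/3}$. The cubic expansion \eqref{gzpsi}, together with the cubic remainder bound from property three of Definition \ref{linearvertexright}, will simultaneously yield the pointwise convergence \eqref{vright1} and the exponential decay \eqref{vright71}. The prefactor $\sigma$ in $\widehat{K} = \sigma\widetilde{K}$ is precisely what cancels the $\sigma^{-1}$ produced by $\sin(x) \approx x$ at $x = O(\sigma)$, leaving a finite limit.

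\textbf{Pointwise convergence.} For the limit I will show each factor of the integrand converges separately. By \eqref{gzpsi} one has $T(G(w) - G(v)) \to \widehat{w}^{\,3}/3 - \widehat{v}^{\,3}/3$; since $\log(v/w) = \sigma(\widehat{v} - \widehat{w})/\psi + O(\sigma^2)$, the factor $(v/w)^{s f_\eta T^{1/3}}$ tends to $\exp\bigl(s(\widehat{v}-\widehat{w})\bigr)$; the Pochhammer ratio tends to $1$ by continuity, valid because $\eta > \theta$ forces $\psi < q\beta_k$ for each $k$; and $dv/(v(w'-v))$ becomes $\psi^{-1}\, d\widehat{v}/(\widehat{w'}-\widehat{v}) + O(\sigma)$. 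For the $j=0$ summand of the sine series, $\sin(x) \approx x$ at $x = (\pi/\log q)\sigma(\widehat{v}-\widehat{w})/\psi + O(\sigma^2)$ yields $\sigma^{-1}\psi\log q/\bigl(\pi(\widehat{v}-\widehat{w})\bigr) + O(1)$. Multiplying by the prefactor $\sigma/(2\textbf{i}\log q)$ and using that $\widehat{\Gamma}^{(1)} = \mathfrak{V}_{-1,\varepsilon/\sigma}$ exhausts $\mathfrak{V}_{-1,\infty}$ as $T\to\infty$, the cancellations reproduce the integrand of $L_s(\widehat{w},\widehat{w'})$ in \eqref{definitionls}.

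\textbf{Exponential decay bound.} On $\widehat{\mathcal{C}}^{(1)} = \mathfrak{W}_{0,\varepsilon/\sigma}$ the ray angles $\pm\pi/3$ give $\Re\widehat{w}^{\,3} = -|\widehat{w}|^3$, so combined with $T\bigl|R(T^{-1/3}\widehat{w})\bigr| \le |\widehat{w}|^3/8$ from property three of Definition \ref{linearvertexright}, I obtain $\Re\,T\bigl(G(w)-G(\psi)\bigr) \le -\frac{5}{24}|\widehat{w}|^3$. On $\widehat{\Gamma}^{(1)} = \mathfrak{V}_{-1,\varepsilon/\sigma}$, writing $\widehat{v} = -1 + re^{\pm 2\pi\textbf{i}/3}$ and again using the cubic remainder estimate, a direct computation yields $-\Re\,T\bigl(G(v)-G(\psi)\bigr) \le C_0 - c\,|\widehat{v}|^3$ for uniform $C_0, c > 0$ (the shift by $-1$ contributes only a bounded additive term). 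Contour separation built into Definition \ref{linearvertexright} and avoidance of $v=0$ give uniform lower bounds on $|\widehat{v}-\widehat{w}|$, $|\psi+\sigma\widehat{v}|$, and $|\widehat{w'}-\widehat{v}|$, with the last being $\gtrsim 1+|\widehat{w'}|$ once $|\widehat{w'}|$ is large (the effective $\widehat{v}$ stays in a bounded region because of cubic decay). Property four of Definition \ref{linearvertexright} makes $|(v/w)^{sf_\eta T^{1/3}}|$ uniformly bounded. Combining these estimates yields \eqref{vright71}.

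\textbf{Main obstacle.} The most delicate point is the sum over $j\ne 0$ in \eqref{vzetatilderight}. The key inequality I will establish is
\begin{equation*}
\left|\sin\!\left(\frac{\pi}{\log q}\bigl(2\pi\textbf{i} j + \log v - \log w\bigr)\right)\right| \gtrsim \exp\!\left(\frac{2\pi^2 |j|}{|\log q|}\right),
\end{equation*}
uniformly in $w \in \mathcal{C}^{(1)}$, $v \in \Gamma^{(1)}$ for $j\ne 0$, using that the imaginary part of the argument is $2\pi^2 j/\log q + O(\sigma)$. The resulting geometric suppression dominates every other factor and makes each $|j|\ge 1$ contribution $O(\sigma)$, so they drop out in the limit and remain harmlessly bounded for the decay estimate. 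With only $j=0$ to handle, the pointwise convergence follows by dominated convergence, with the bound of paragraph three as the majorant, and \eqref{vright71} follows by direct assembly. I expect the bookkeeping around the $j$-sum and the extraction of sharp constants in the cubic estimates to be the main technical obstacles; the remainder is a standard saddle-point analysis in the spirit of Section 5 of \cite{SSVM}.
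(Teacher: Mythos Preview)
Your approach is essentially the same as the paper's: rewrite $\widehat{K}$ as an integral of a function $I(\widehat{w},\widehat{w'};\widehat{v})$, bound each factor of $I$ separately (the six estimates \eqref{termsintegrand13} in the paper), use the cubic remainder control from Definition~\ref{linearvertexright} for the exponential term, show the $j\ne 0$ summands are $O(\sigma)$ by the exponential growth of $\sin$ along the imaginary axis, and conclude by dominated convergence.

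One correction: property four of Definition~\ref{linearvertexright} does \emph{not} make $|(v/w)^{sf_\eta T^{1/3}}|$ uniformly bounded. For $s<0$ the bound $|v/w|>q^{1/2}$ only gives $|(v/w)^{sf_\eta T^{1/3}}|\le q^{-|s|f_\eta T^{1/3}/2}$, which blows up. The correct argument (which the paper uses) is that $\log(v/w)=\psi^{-1}\sigma(\widehat{v}-\widehat{w})+O(\sigma^2(|\widehat{v}|^2+|\widehat{w}|^2))$ on $\mathcal{C}^{(1)}\times\Gamma^{(1)}$, so $|(v/w)^{sf_\eta T^{1/3}}|\le C\exp\bigl(C(|\widehat{v}|+|\widehat{w}|)\bigr)$; this linear-exponential growth is then absorbed by the cubic decay. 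Property four is used elsewhere, to keep the argument of the sine away from integer multiples of $\pi$.
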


To establish this lemma, we first rewrite the kernel $\widehat{K}$. By \eqref{wwvvertexright} and the fact that $\sigma = \psi f_{\eta}^{-1} T^{-1 / 3}$, we deduce that 
\begin{flalign}
\label{vright111}
 \widehat{K} (\widehat{w}, \widehat{w'}) & = \displaystyle\frac{1}{2 \pi \textbf{i}}\displaystyle\int_{\widehat{\Gamma}^{(1)}} I \big( \widehat{w}, \widehat{w'}; \widehat{v} \big) d \widehat{v}, 
\end{flalign}

\noindent where 
\begin{flalign}
\begin{aligned}
\label{vright112}
I \big( \widehat{w}, \widehat{w'}; \widehat{v} \big) & = \displaystyle\frac{1}{\big( 1 + \psi^{-1} \sigma \widehat{v} \big) \big( \widehat{w'} - \widehat{v} \big)  } \exp \left( \displaystyle\frac{\widehat{w}^3 - \widehat{v}^3}{3} + T \big( R (T^{-1 / 3} \widehat{w}) - R (T^{-1 / 3} \widehat{v}) \big)  \right) \\
& \qquad \times \left( \displaystyle\frac{1 + \psi^{-1} \sigma \widehat{v}}{1 + \psi^{-1} \sigma \widehat{w}} \right)^{\psi \sigma^{-1} s } \displaystyle\prod_{k = 1}^m \displaystyle\frac{\big(q^{-1} \beta_k^{-1} ( \psi + \sigma \widehat{v}  ); q \big)_{\infty}}{\big( q^{-1} \beta_k^{-1} (\psi + \sigma \widehat{w} ); q \big)_{\infty}} \\
& \qquad \times \displaystyle\frac{\pi \psi^{-1} \sigma}{\log q} \displaystyle\sum_{j = -\infty}^{\infty}  \displaystyle\frac{1}{\sin \Big( \frac{\pi}{\log q} \big( 2 \pi \textbf{i} j + \log (1 + \psi^{-1} \sigma \widehat{v} )  - \log (1 + \psi^{-1} \sigma \widehat{w} ) \big) \Big)}. 
\end{aligned}
\end{flalign}

Lemma \ref{rightvgamma1tildev} will follow from the large $T$ asymptotics of (and a uniform estimate on) $I$, given by the following lemma. 

\begin{lem}

\label{uniformlimit13integrand}

There exist constants $c, C > 0$ such that
\begin{flalign}
\label{uniform13integrand}
\Big| I \big( \widehat{w}, \widehat{w'}; \widehat{v} \big) \Big| \le \displaystyle\frac{C}{1 + |\widehat{w'}|} \exp \big( - c (|\widehat{w}|^3 + |\widehat{v}|^3 ) \big), 
\end{flalign}

\noindent for all $\widehat{w} \in \mathfrak{W}_{0, \varepsilon / \sigma}$, $\widehat{w'} \in \widehat{\mathcal{C}}$, and $\widehat{v} \in \mathfrak{V}_{-1, \varepsilon / \sigma}$. 

Furthermore, 
\begin{flalign}
\label{limit13integrand}
\displaystyle\lim_{T \rightarrow \infty} I \big( \widehat{w}, \widehat{w'}; \widehat{v} \big) = \displaystyle\frac{1}{\big( \widehat{v} - \widehat{w} \big) \big( \widehat{w'} - \widehat{v} \big)} \exp \left( \displaystyle\frac{\widehat{w}^3}{3} - \displaystyle\frac{\widehat{v}^3}{3} + s \big( \widehat{w} - \widehat{v} \big) \right), 
\end{flalign}

\noindent for each fixed $\widehat{w}, \widehat{w'}, \widehat{v} \in \mathbb{C}$.  
\end{lem}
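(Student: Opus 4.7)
The plan is to analyze each of the four factors in the integrand $I$ of \eqref{vright112} separately, extracting the cubic exponential decay $\exp\bigl((\widehat{w}^3 - \widehat{v}^3)/3\bigr)$ as the dominant feature on the rescaled contours $\widehat{\mathcal{C}}^{(1)} = \mathfrak{W}_{0, \varepsilon/\sigma}$ and $\widehat{\Gamma}^{(1)} = \mathfrak{V}_{-1, \varepsilon/\sigma}$. On the former, $\widehat{w} = r e^{\pm \pi \textbf{i}/3}$ gives $\Re(\widehat{w}^3 / 3) = -|\widehat{w}|^3/3$, and on the latter, a direct expansion of $\widehat{v}^3 = (-1 + r e^{\pm 2\pi \textbf{i}/3})^3$ yields $\Re(-\widehat{v}^3/3) \leq -c|\widehat{v}|^3$ for some $c>0$, producing the requisite cube decay in both variables.

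The error $T(R(T^{-1/3}\widehat{w}) - R(T^{-1/3}\widehat{v}))$ appearing in the exponent of $I$ is absorbed using the third bullet of Definition \ref{linearvertexright}, which gives $T |R(T^{-1/3}\widehat{z})| < |\widehat{z}|^3/8$ for $\widehat{z} \in \widehat{\mathcal{C}}^{(1)} \cup \widehat{\Gamma}^{(1)}$; by taking $\varepsilon$ small this is dominated by $\tfrac{1}{3}(|\widehat{w}|^3 + |\widehat{v}|^3)$ with constant to spare. The power factor $\bigl((1+\psi^{-1}\sigma \widehat{v})/(1+\psi^{-1}\sigma \widehat{w})\bigr)^{\psi \sigma^{-1}s}$ is estimated by taking its logarithm, Taylor-expanding $\log(1+x)$, and using the identity $\psi \sigma^{-1} = f_\eta T^{1/3}$; its modulus is uniformly $\exp(C(|\widehat{w}|+|\widehat{v}|))$ on the contours, while the leading term of the expansion gives the pointwise limit $\exp(s(\widehat{v}-\widehat{w}))$. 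The $q$-Pochhammer ratio is analytic in $v, w$ on a fixed compact neighborhood of $\psi$ (independent of $T$), hence bounded uniformly and converging to $1$ as $v, w \to \psi$.

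The sine sum requires the most care. One isolates the $j=0$ term, whose denominator satisfies $\sin\bigl(\tfrac{\pi}{\log q}(\log v - \log w)\bigr) \sim \tfrac{\pi}{\log q} \psi^{-1} \sigma (\widehat{v}-\widehat{w})$ by Taylor expansion; this expansion is valid and the denominator is nonvanishing because $\Re(\widehat{w}-\widehat{v}) \geq 1$ on the two contours. Combined with the overall prefactor $\pi \psi^{-1} \sigma / \log q$ and the identity $\psi^{-1} \sigma f_\eta T^{1/3} = 1$, this contribution converges to $1/(\widehat{v}-\widehat{w})$. For $j \neq 0$ the shift $2 \pi^2 \textbf{i} j / \log q$ is pure imaginary of linearly growing modulus, so $|\sin(\cdot)|$ grows like $\sinh(2 \pi^2 |j|/|\log q|)$, making $\sum_{j\neq 0}$ absolutely convergent with $O(1)$ sum; multiplied by the $\sigma = O(T^{-1/3})$ prefactor, the entire $j\neq 0$ tail vanishes in the limit and contributes a uniformly bounded quantity. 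The $(1+|\widehat{w'}|)^{-1}$ decay in the uniform bound comes from the remaining factor $|\widehat{w'}-\widehat{v}|^{-1}$: splitting on $|\widehat{v}| \leq |\widehat{w'}|/2$ (where $|\widehat{w'}-\widehat{v}| \geq |\widehat{w'}|/2$) versus $|\widehat{v}| > |\widehat{w'}|/2$ (where the cube decay $e^{-c|\widehat{v}|^3}$ swallows a factor of $(1+|\widehat{w'}|)$ after slightly shrinking $c$) gives the claim.

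The main obstacle is bookkeeping. All of the subleading terms—the quadratic Taylor correction to the power factor, the $O(\sigma)$ error in the $q$-Pochhammer ratio, the $O(\sigma^2)$ correction to $\log(v/w)$ inside the sine, and the $j\neq 0$ tail of the sine sum—must be shown to remain dominated by the cube decay as $|\widehat{w}|$ and $|\widehat{v}|$ range over their full admissible size $\varepsilon/\sigma = \Theta(T^{1/3})$. The essential point is that all such errors are polynomial of order at most three in $|\widehat{w}|, |\widehat{v}|$ with prefactors that are powers of the small constant $\varepsilon$, so they are controlled by the cubic exponential once $\varepsilon$ is small, precisely as arranged by Definition \ref{linearvertexright}.
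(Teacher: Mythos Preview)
Your proposal is correct and follows essentially the same approach as the paper: a factor-by-factor analysis of $I$, using the cubic Taylor expansion together with the error control $|R| < |\cdot|^3/8$ from Definition \ref{linearvertexright}, bounding the power factor and the $q$-Pochhammer ratio by $\exp(C(|\widehat w|+|\widehat v|))$, and handling the sine sum by separating the $j=0$ term (Taylor expand to get $1/(\widehat v - \widehat w)$) from the exponentially decaying $j\neq 0$ tail (which carries an extra $\sigma = O(T^{-1/3})$).

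The one place you differ from the paper is the source of the $(1+|\widehat{w'}|)^{-1}$ decay. The paper simply asserts the pointwise bound $|\widehat{w'}-\widehat v|^{-1} \le C_1/(1+|\widehat{w'}|)$ from the geometric fact $|\widehat{w'}-\widehat v|\ge 1$; this is a bit terse, since $|\widehat{w'}-\widehat v|\ge 1$ alone does not give that inequality (one really needs $|\widehat{w'}-\widehat v|\gtrsim 1+|\widehat{w'}|$, which does hold on $\mathfrak{W}_0\times\mathfrak{V}_{-1}$ because $\Re(\widehat{w'}-\widehat v)\ge 1+|\widehat{w'}|/2$ there, but takes a line to check). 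Your case split on $|\widehat v|\lessgtr |\widehat{w'}|/2$, borrowing a sliver of the $e^{-c|\widehat v|^3}$ decay in the second case, is a clean and self-contained alternative that avoids that geometric verification, at the cost of slightly shrinking the final constant $c$.
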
 

\begin{proof}

Let us first establish the uniform estimate \eqref{uniform13integrand}. To that end, observe that there exists a constant $C_1 > 0$ such that the six inequalities 
\begin{flalign}
\begin{aligned} 
\label{termsintegrand13}
& \left| \displaystyle\frac{1}{1 + \psi^{-1} \sigma \widehat{v}} \right| < C_1; \quad \displaystyle\frac{1}{\widehat{v} - \widehat{w'}} < \displaystyle\frac{C_1}{1 + |\widehat{w'}|} ; \quad \left| \displaystyle\frac{1 + \psi^{-1} \sigma \widehat{v}}{1 + \psi^{-1} \sigma \widehat{w}} \right|^{\psi \sigma^{-1} s} \le C_1 \exp \Big( C_1 \big( |\widehat{w}| + |\widehat{v}| \big) \Big) ;  \\
& \qquad \qquad \displaystyle\frac{\pi \psi^{-1} \sigma}{\big| \log q \big|} \displaystyle\sum_{j \ne 0} \left| \displaystyle\frac{1}{\sin \Big( \frac{\pi}{\log q} \big( 2 \pi \textbf{i} j + \log (1 + \psi^{-1} \sigma \widehat{v} )  - \log (1 + \psi^{-1} \sigma \widehat{w} ) \big) \Big)} \right| \le C_1 T^{-1 / 3} ; \\
& \qquad \left| \left( \displaystyle\frac{\pi \psi^{-1} \sigma}{\log q} \right) \displaystyle\frac{1}{\sin \Big( \frac{\pi}{\log q} \big( \log (1 + \psi^{-1} \sigma \widehat{v} )  - \log (1 + \psi^{-1} \sigma \widehat{w} ) \big) \Big)} \right| < C_1;  \\
& \qquad \qquad \qquad \left| \displaystyle\prod_{k = 1}^m \displaystyle\frac{\big(q^{-1} \beta_k^{-1} ( \psi + \sigma \widehat{v}  ); q \big)_{\infty}}{\big( q^{-1} \beta_k^{-1} (\psi + \sigma \widehat{w} ); q \big)_{\infty}} \right| < C_1 \exp \Big( C_1 \big( |\widehat{w}| + |\widehat{v}| \big) \Big), 	
\end{aligned}
\end{flalign}

\noindent all hold for each  $\widehat{w} \in \mathfrak{W}_{0, \varepsilon / \sigma}$, $\widehat{w'} \in \widehat{\mathcal{C}}$, and $\widehat{v} \in \mathfrak{V}_{-1, \varepsilon / \sigma}$. 

Indeed, the first inequality holds since $v = \psi (1 + \psi^{-1} \sigma \widehat{v})$ is bounded away from $0$ for $v \in \Gamma$. The second inequality holds since $\big| \widehat{w'} - \widehat{v} \big| \ge 1$ for $\widehat{w'} \in \widehat{\mathcal{C}}$ and $\widehat{v} \in \mathfrak{V}_{-1, \varepsilon / \sigma}$. The third inequality holds since $1 + \psi^{-1} \sigma \widehat{w}$ is bounded away from $0$ for $\widehat{w} \in \widehat{\mathcal{C}}$.

The fourth inequality holds since $\sigma = \mathcal{O} \big( T^{-1 / 3} \big)$, since $\sin \big( \frac{\pi}{\log q} \big( 2 \pi \textbf{i} j + \log (1 + \psi^{-1} \sigma \widehat{v} )  - \log (1 + \psi^{-1} \sigma \widehat{w} ) \big) \big)$ increases exponentially in $|j|$, and since that term is also bounded away from $0$. The latter statement is true since $j$ is non-zero. 

The fifth inequality follows from a Taylor expansion, the fact that $v / w$ is always bounded away from any integral power of $q$, and the fact that $\big| \widehat{v} - \widehat{w} \big|^{-1} < C_1$ for sufficiently large $C_1 > 0$. The sixth inequality is true since its left side grows polynomially in $|\widehat{w}|$ and $|\widehat{v}|$, while its right side grows exponentially in these two quantities. 

The estimates \eqref{termsintegrand13} address all terms on the right side of \eqref{vright112}, except for the exponential term. To analyze this term, first recall $\big| R(z) \big| < |z|^3 / 8$, for all $z \in \widehat{\mathcal{C}}^{(1)} \cup \widehat{\Gamma}^{(1)}$; this was stipulated as the third part of Definition \ref{linearvertexright}. Thus, increasing $C_1$ if necessary, we find that 
\begin{flalign}
\begin{aligned}
\label{vright10}
\Bigg| \exp \left( \displaystyle\frac{\widehat{w}^3}{3} - \displaystyle\frac{\widehat{v}^3}{3} + T \big( R (T^{- 1 / 3} \widehat{w}) - R (T^{- 1 / 3} \widehat{v}) \big) \right) \Bigg| & = \exp \left( \displaystyle\frac{\widehat{w}^3}{3} - \displaystyle\frac{\widehat{v}^3}{3} + \displaystyle\frac{\big| \widehat{v} \big|^3}{8} + \displaystyle\frac{\big| \widehat{w} \big|^3}{8} \right) \\
& < C_1 \exp \left( - \displaystyle\frac{1}{5} \Big( \big| \widehat{w} \big|^3 + \big| \widehat{v} \big|^3 \Big) \right) . 
\end{aligned} 
\end{flalign}

\noindent In \eqref{vright10}, the last estimate follows from the fact that $\widehat{w}^3 - \widehat{v}^3 < 0$, for sufficiently large $\widehat{w} \in \widehat{\mathcal{C}}^{(1)}$ and $\widehat{v} \in \widehat{\Gamma}^{(1)}$, and that it decreases cubically in $|\widehat{w}|$ and $|\widehat{v}|$ as they tend to $\infty$. 

Now, the estimate \eqref{uniform13integrand} follows from the definition \eqref{vright112} of $I$, the six estimates \eqref{termsintegrand13}, and the exponential estimate \eqref{vright10}. 

In order to establish the limiting statement \eqref{limit13integrand}, we observe that 
\begin{flalign}
\begin{aligned} 
\label{termsintegrandlimit13}
& \displaystyle\lim_{T \rightarrow \infty} \displaystyle\frac{1}{1 + \psi^{-1} \sigma \widehat{v}} = 1; \qquad \displaystyle\lim_{T \rightarrow \infty} \left( \displaystyle\frac{1 + \psi^{-1} \sigma \widehat{v}}{1 + \psi^{-1} \sigma \widehat{w}} \right)^{\psi \sigma^{-1} s} = \exp \big( s (\widehat{v} - \widehat{w} ) \big); \\
& \qquad \qquad \displaystyle\lim_{T \rightarrow \infty} \exp \left( \displaystyle\frac{\widehat{w}^3 - \widehat{v}^3}{3} + T \big( R (T^{-1 / 3} \widehat{w}) - R (T^{-1 / 3} \widehat{v}) \big)  \right) = \exp \left( \displaystyle\frac{\widehat{w}^3}{3} - \displaystyle\frac{\widehat{v}^3}{3} \right); \\
& \qquad \qquad \displaystyle\lim_{T \rightarrow \infty} \displaystyle\frac{\pi \psi^{-1} \sigma}{\log q} \displaystyle\sum_{j \ne 0}  \displaystyle\frac{1}{\sin \Big( \frac{\pi}{\log q} \big( 2 \pi \textbf{i} j + \log (1 + \psi^{-1} \sigma \widehat{v} )  - \log (1 + \psi^{-1} \sigma \widehat{w} ) \big) \Big)} = 0; \\
& \qquad \displaystyle\lim_{T \rightarrow \infty} \left( \displaystyle\frac{\pi \psi^{-1} \sigma}{\log q} \right) \displaystyle\frac{1}{\sin \Big( \frac{\pi}{\log q} \big( \log (1 + \psi^{-1} \sigma \widehat{v} )  - \log (1 + \psi^{-1} \sigma \widehat{w} ) \big) \Big)} = \displaystyle\frac{1}{\widehat{v} - \widehat{w}} ; 
\end{aligned}
\end{flalign}

\noindent and 
\begin{flalign}
\label{productqv}  
\displaystyle\lim_{T \rightarrow \infty}  \displaystyle\prod_{k = 1}^m \displaystyle\frac{\big(q^{-1} \beta_k^{-1} ( \psi + \sigma \widehat{v}  ); q \big)_{\infty}}{\big( q^{-1} \beta_k^{-1} (\psi + \sigma \widehat{w} ); q \big)_{\infty}}  = 1, 
\end{flalign}

\noindent for each $\widehat{w}, \widehat{w'}, \widehat{v} \in \mathbb{C}$. Indeed, the first limit in \eqref{termsintegrandlimit13} and the limit \eqref{productqv} follow from the fact that $\sigma$ tends to $0$ as $T$ tends to $\infty$ and, in the latter limit, the fact that $q^{-1} \beta_k^{-1} \psi$ is uniformly bounded away from any non-positive power of $q$. The second and fifth limits in \eqref{termsintegrandlimit13} follow from a Taylor expansion and the fact that $\sigma$ tends to $0$ as $T$ tends to $\infty$. The fourth limit in \eqref{termsintegrandlimit13} follows from the fourth estimate in \eqref{termsintegrand13}. Furthermore, the third limit in \eqref{termsintegrandlimit13} follows from the fact that $\big| R(z) \big| = \mathcal{O} (|z|^4)$; see \eqref{gzpsi}. Thus, \eqref{limit13integrand} follows from multiplying the results of \eqref{termsintegrandlimit13} and \eqref{productqv}. 
\end{proof}

\noindent Now we can establish Lemma \ref{rightvgamma1tildev}. 

\begin{proof}[Proof of Lemma \ref{rightvgamma1tildev}]

The uniform estimate \eqref{vright71} follows from integrating the estimate \eqref{uniform13integrand} over $\widehat{v} \in \mathfrak{V}_{-1, \varepsilon / \sigma}$. By \eqref{vright111}, \eqref{uniform13integrand}, \eqref{limit13integrand} the dominated convergence theorem, and the fact that $\widehat{\Gamma}^{(1)} = \mathfrak{V}_{-1, \varepsilon / \sigma}$, we deduce that 
\begin{flalign}
\label{vrightlimit11}
\displaystyle\lim_{T \rightarrow \infty} \widehat{K} \big( \widehat{w}, \widehat{w'} \big) = \displaystyle\lim_{T \rightarrow \infty} \displaystyle\frac{1}{2 \pi \textbf{i}} \displaystyle\int_{\mathfrak{V}_{-1, \varepsilon / \sigma}} \displaystyle\frac{1}{\big( \widehat{v} - \widehat{w} \big) \big( \widehat{w'} - \widehat{v} \big)} \exp \left( \displaystyle\frac{\widehat{w}^3}{3} - \displaystyle\frac{\widehat{v}^3}{3} + s \big( \widehat{v} - \widehat{w} \big) \right) d \widehat{v}, 
\end{flalign} 

\noindent for each fixed $\widehat{w} \in \mathfrak{W}_{0, \infty}$ and $\widehat{w'} \in \mathfrak{W}_{0, \infty}$. 

Hence, the limit \eqref{vright1} follows from \eqref{vrightlimit11}, the fact that $\sigma$ tends to $0$ as $T$ tends to $\infty$, and the exponential decay in $\big| \widehat{v} \big|^3$ of the integrand defining the kernel $L_s$ (see \eqref{definitionls}). 
\end{proof}

\subsubsection{Exponential Decay of \texorpdfstring{$K$}{} on \texorpdfstring{$\mathcal{C}^{(2)}$}{} and \texorpdfstring{$\Gamma^{(2)}$}{}}

\label{ExponentialVertexRight}

In this section we analyze the integral \eqref{vptright} defining $K (w, w')$ when either $w$ or $v$ is not close to $\psi$, that is, when either $w \in \mathcal{C}^{(2)}$ or $v \in \Gamma^{(2)}$. In this case, will show that the integral decays exponentially in $T$, which will allow us to ignore this contribution and apply the results of the previous section. 

We begin with the following lemma, which estimates the exponential term in the integrand on the right side of \eqref{vptright}. 

\begin{lem}

\label{rightcv2gammav2exponential}

There exists some positive real number $c_1 > 0$, independent of $T$, such that 
\begin{flalign*}
\max \left\{ \sup_{\substack{w \in \mathcal{C} \\ v \in \Gamma^{(2)}}} \Re \big( G(w) - G(v) \big), \sup_{\substack{w \in \mathcal{C}^{(2)} \\ v \in \Gamma}} \Re \big( G(w) - G(v) \big) \right\} < - c_1.
\end{flalign*}
\end{lem}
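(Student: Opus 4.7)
The plan is to split each supremum according to whether $w,v$ lie on the piecewise-linear pieces $\mathcal{C}^{(1)}, \Gamma^{(1)}$ near $\psi$ or on the curved pieces $\mathcal{C}^{(2)}, \Gamma^{(2)}$, treating the two regimes separately with different tools.

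First, for the curved pieces I would invoke the global level-line structure of Proposition \ref{linesgv} (respectively Proposition \ref{linesga}). Parts \ref{positiverealv} and \ref{negativerealv} of that proposition state that $\Re G - G(\psi)$ is strictly positive between $\mathcal{L}_1$ and $\mathcal{L}_2$ and strictly negative between $\mathcal{L}_2$ and $\mathcal{L}_3$. Since, by Definition \ref{curvedvertexright}, the curve $\mathcal{C}^{(2)}$ is a compact subset of the latter annular region lying at positive distance from $\mathcal{L}_2 \cup \mathcal{L}_3$ (and $\Gamma^{(2)}$ is analogously situated in the former annular region), continuity of $\Re G$ produces constants $c_2, c_3 > 0$, independent of $T$, with
\begin{align*}
\sup_{w \in \mathcal{C}^{(2)}} \Re G(w) \le G(\psi) - c_2,
\qquad
\inf_{v \in \Gamma^{(2)}} \Re G(v) \ge G(\psi) + c_3.
\end{align*}

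For the linear pieces I would use the cubic Taylor expansion \eqref{gzpsi} together with the remainder bound $|R(f_\eta(z-\psi)/\psi)| \le |f_\eta(z-\psi)/(2\psi)|^3$ supplied by the third item of Definition \ref{linearvertexright}. On $\mathcal{C}^{(1)} = \mathfrak{W}_{\psi,\varepsilon}$, parametrizing $w = \psi + t e^{\pm \pi \textbf{i}/3}$ gives $(f_\eta(w-\psi)/\psi)^3 = -(f_\eta t/\psi)^3 \le 0$, from which
\begin{align*}
\Re G(w) - G(\psi) \le -\tfrac{1}{3}(f_\eta t/\psi)^3 + \tfrac{1}{8}(f_\eta t/\psi)^3 \le 0.
\end{align*}
On $\Gamma^{(1)} = \mathfrak{V}_{\psi - \psi f_\eta^{-1} T^{-1/3},\varepsilon}$, parametrizing $v = \psi - \psi f_\eta^{-1} T^{-1/3} + t e^{\pm 2\pi \textbf{i}/3}$ and setting $\tau = T^{-1/3}$, $u = f_\eta t/\psi$, a direct computation yields
\begin{align*}
\Re\bigl(-\tau + u e^{\pm 2\pi \textbf{i}/3}\bigr)^3
= (u-\tau)\bigl(u^2 + \tfrac{5}{2}\tau u + \tau^2\bigr),
\end{align*}
whose infimum over $u \ge 0$ is of order $-\tau^3$, attained near $u \sim \tau$. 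The remainder bound $\tfrac{1}{8}(\tau^2 + \tau u + u^2)^{3/2}$ is of order $\tau^3$ in the same region and is dominated by the cubic in $u$ once $u \gg \tau$, so that overall one obtains $\Re G(v) - G(\psi) \ge -C T^{-1}$ uniformly for $v \in \Gamma^{(1)}$, where $C$ is independent of $T$.

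Combining these four estimates proves the lemma. For $w \in \mathcal{C}$ and $v \in \Gamma^{(2)}$ one has $\Re(G(w)-G(v)) \le 0 - c_3 = -c_3$, using that $\Re G(w) \le G(\psi)$ holds on all of $\mathcal{C}$. For $w \in \mathcal{C}^{(2)}$ and $v \in \Gamma$ one has $\Re(G(w)-G(v)) \le -c_2 + C T^{-1} \le -c_2/2$ once $T$ is large. Hence $c_1 := \min(c_3,\, c_2/2)$ works. The main obstacle is the analysis on $\Gamma^{(1)}$: because the apex of $\Gamma^{(1)}$ is shifted to $\psi - \psi f_\eta^{-1} T^{-1/3}$ rather than placed exactly at $\psi$, the naive inequality $\Re G(v) \ge G(\psi)$ fails by a term of size $O(T^{-1})$, and the key point is to verify that this controlled violation is absorbed by the strictly negative gap $-c_2$ coming from $\mathcal{C}^{(2)}$.
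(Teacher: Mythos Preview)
Your proof is correct and follows essentially the same strategy as the paper --- parts \ref{positiverealv} and \ref{negativerealv} of Proposition \ref{linesgv} (or \ref{linesga}) plus compactness for the curved pieces $\mathcal{C}^{(2)}, \Gamma^{(2)}$ --- but you are considerably more careful about the linear pieces. The paper's proof is two lines: it simply asserts that parts \ref{positiverealv} and \ref{negativerealv} give $\Re(G(w)-G(v))<0$ for all $w\in\mathcal{C},\, v\in\Gamma^{(2)}$ and all $w\in\mathcal{C}^{(2)},\, v\in\Gamma$ ``(uniformly in $T$)'', and then invokes compactness. You, by contrast, explicitly verify $\Re G(w)\le G(\psi)$ on $\mathcal{C}^{(1)}$ via the cubic Taylor expansion, and more importantly you isolate the genuine subtlety that the apex of $\Gamma^{(1)}$ sits at $\psi - \psi f_\eta^{-1}T^{-1/3}$, which lies \emph{inside} $\mathcal{L}_1$ and hence has $\Re G$ slightly below $G(\psi)$ (by $O(T^{-1})$, as your computation shows). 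The paper's parenthetical ``(uniformly in $T$)'' sweeps this under the rug; your argument that the resulting $O(T^{-1})$ deficit is absorbed by the fixed gap $-c_2$ from $\mathcal{C}^{(2)}$ is exactly what is needed to make that parenthetical rigorous. So your route is the same in spirit but fills a gap the paper leaves to the reader.
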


\begin{proof}
From part \ref{positiverealv} and part \ref{negativerealv} of Proposition \ref{linesgv}, we find that $\Re \big( G(w) - G(v) \big) < 0$ for all $w \in \mathcal{C}$ and $v \in \Gamma^{(2)}$, and that $\Re \big( G(w) - G(v) \big) < 0$ for all $w \in \mathcal{C}^{(2)}$ and $v \in \Gamma$ (uniformly in $T$). Thus, the existence of the claimed constant $c_1$ follows from the compactness of the contours $\mathcal{C}$ and $\Gamma$. 
\end{proof}

\noindent Now, recall the definitions of $\sigma = \psi f_{\eta}^{-1} T^{-1 / 3}$; $\widehat{w}$, $\widehat{w'}$, and $\widehat{v}$ from \eqref{wwvvertexright}; and $I$ from \eqref{vright112}. Define 
\begin{flalign}
\label{kbar13}
\overline{K} \big( \widehat{w}, \widehat{w'} \big) = \sigma K (w, w') = \displaystyle\frac{1}{2 \pi \textbf{i}} \displaystyle\int_{\widehat{\Gamma}} I \big( \widehat{w}, \widehat{w'}; \widehat{v} \big) d \widehat{v},
\end{flalign}  	

\noindent for each $\widehat{w}, \widehat{w'} \in \widehat{\mathcal{C}}$. From the change of variables \eqref{wwvvertexright}, we have that 
\begin{flalign}
\label{rightvdeterminantkernelbar13}
\det \big( \Id + K \big)_{L^2 (\mathcal{C})} = \det \big( \Id + \overline{K} \big)_{L^2 (\widehat{\mathcal{C}})}. 
\end{flalign}

\noindent Lemma \ref{rightcv2gammav2exponential} implies the following result. 

\begin{cor}

\label{kclosekbarc2small}

There exist constants $c, C > 0$ such that 
\begin{flalign}
\label{kclosekbar}
\Big| \overline{K} \big( \widehat{w}, \widehat{w'} \big) - \widehat{K} \big( \widehat{w}, \widehat{w'} \big) \Big| < C \exp \Big( - c \big( T + |\widehat{w}|^3 \big) \Big), 
\end{flalign}

\noindent for all $\widehat{w} \in \widehat{\mathcal{C}}^{(1)}$ and $\widehat{w'} \in \widehat{\mathcal{C}} \cup \mathfrak{W}_{0, \infty}$, and such that 
\begin{flalign}
\label{kbarsmallc2}
\Big| \overline{K} \big( \widehat{w}, \widehat{w'} \big) \Big| < C \exp \Big( - c \big( T + |\widehat{w}|^3 \big) \Big), 
\end{flalign}

\noindent for all $\widehat{w} \in \widehat{\mathcal{C}}^{(2)}$ and $\widehat{w'} \in \widehat{\mathcal{C}} \cup \mathfrak{W}_{0, \infty}$. 
\end{cor}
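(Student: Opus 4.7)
The plan is to reduce both estimates to pointwise bounds on the integrand $I(\widehat{w},\widehat{w'};\widehat{v})$ from \eqref{vright112} and to exploit the exponential decay established in Lemma~\ref{rightcv2gammav2exponential}. Since $\widehat{\Gamma} = \widehat{\Gamma}^{(1)} \cup \widehat{\Gamma}^{(2)}$, comparing definitions gives
\begin{flalign*}
\overline{K}(\widehat{w},\widehat{w'}) - \widehat{K}(\widehat{w},\widehat{w'}) = \frac{1}{2\pi \textbf{i}} \int_{\widehat{\Gamma}^{(2)}} I(\widehat{w},\widehat{w'};\widehat{v})\, d\widehat{v},
\end{flalign*}
so the first bound concerns integration over $\widehat{\Gamma}^{(2)}$, while the second amounts to bounding the full integral over $\widehat{\Gamma}$.

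For the first estimate, I would decompose $T\bigl(G(w)-G(v)\bigr) = \bigl[\widehat{w}^3/3 + T R(T^{-1/3}\widehat{w})\bigr] - T\bigl(G(v)-G(\psi)\bigr)$. On $\mathcal{C}^{(1)} = \mathfrak{W}_{\psi,\varepsilon}$ the rays meet at angles $\pm\pi/3$ from $\psi$, so $\Re \widehat{w}^3 = -|\widehat{w}|^3$; combined with the third clause of Definition~\ref{linearvertexright} (which gives $T|R(T^{-1/3}\widehat{w})| \le |\widehat{w}|^3/8$), this yields $\Re\bigl[\widehat{w}^3/3 + T R(T^{-1/3}\widehat{w})\bigr] \le -5|\widehat{w}|^3/24$. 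On $\Gamma^{(2)}$, which is bounded away from both $\mathcal{L}_1$ and $\mathcal{L}_2$, part~\ref{positiverealv} of Proposition~\ref{linesgv} (or Proposition~\ref{linesga}) combined with compactness gives a uniform constant $c_2 > 0$ with $\Re\bigl(G(v)-G(\psi)\bigr) > c_2$. Hence $\Re T\bigl(G(w)-G(v)\bigr) \le -5|\widehat{w}|^3/24 - c_2 T$ throughout the region of integration.

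The remaining factors in $I$ are controlled exactly as in the proof of Lemma~\ref{uniformlimit13integrand}: the factors $1/(1+\psi^{-1}\sigma\widehat{v})$, the $q$-Pochhammer ratio, and the sine-series sum are uniformly bounded (the last by $O(T^{-1/3})$); the factor $\bigl(v/w\bigr)^{sf_\eta T^{1/3}}$ grows at most like $e^{C T^{1/3}}$ since $|v/w|$ is bounded above and below on the contours; and $1/|\widehat{w'}-\widehat{v}|$ contributes the factor $C/(1+|\widehat{w'}|)$ since $\widehat{\mathcal{C}} \cup \mathfrak{W}_{0,\infty}$ and $\widehat{\Gamma}^{(2)}$ remain a positive distance apart. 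Since $\widehat{\Gamma}^{(2)}$ has length $O(T^{1/3})$, integration produces
\begin{flalign*}
|\overline{K} - \widehat{K}| \le \frac{C\, T^{1/3} e^{C T^{1/3}}}{1+|\widehat{w'}|} \exp\!\left(-\frac{5|\widehat{w}|^3}{24} - c_2 T\right),
\end{flalign*}
and absorbing the subexponential prefactor $T^{1/3}e^{C T^{1/3}}$ into the exponential at the cost of slightly shrinking $c_2$ gives \eqref{kclosekbar}.

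For \eqref{kbarsmallc2}, note that $\widehat{w}\in \widehat{\mathcal{C}}^{(2)}$ forces $w \in \mathcal{C}^{(2)}$, a contour bounded away from $\psi$, so $|\widehat{w}| = |w-\psi|/\sigma = O(T^{1/3})$ and thus $|\widehat{w}|^3 \le C_0 T$ for a constant $C_0$ depending only on the fixed contour. Lemma~\ref{rightcv2gammav2exponential} then gives $|\exp(T(G(w)-G(v)))| \le e^{-c_1 T}$ uniformly for $v \in \Gamma$, and the auxiliary factors are bounded exactly as above. Integrating $I$ over all of $\widehat{\Gamma}$ (length $O(T^{1/3})$) yields $|\overline{K}(\widehat{w},\widehat{w'})| \le C T^{1/3} e^{CT^{1/3} - c_1 T}/(1+|\widehat{w'}|) \le C' e^{-c_1 T/2}/(1+|\widehat{w'}|)$ for $T$ large; since $T + |\widehat{w}|^3 \le (1+C_0) T$ on $\widehat{\mathcal{C}}^{(2)}$, choosing any $c \le c_1/(2(1+C_0))$ recovers \eqref{kbarsmallc2}. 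The only mild subtlety in the whole argument is verifying that each auxiliary factor in $I$ really is subexponential in $T$ uniformly across the full (unbounded, after rescaling) curved contours — but each such check is a compactness-plus-Taylor argument already carried out in \eqref{termsintegrand13}, so no new ideas are needed.
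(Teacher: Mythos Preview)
Your proof is correct and follows the same overall strategy as the paper's: isolate the exponential factor $\exp\bigl(T(G(w)-G(v))\bigr)$, show the remaining pieces are subexponential in $T$, and then integrate over the bounded (after rescaling, length $O(T^{1/3})$) contour.

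There is one genuine, if minor, difference in how you handle \eqref{kclosekbar}. The paper does not split $G(w)-G(v)$ at $G(\psi)$; instead it invokes Lemma~\ref{rightcv2gammav2exponential} directly to get $\Re\bigl(G(w)-G(v)\bigr)<-c_1$ uniformly for $w\in\mathcal{C}$ and $v\in\Gamma^{(2)}$, yielding the bound $O(e^{-c_1 T})$ in one step, and only afterward observes that $|\widehat{w}|^3=O(T)$ on $\widehat{\mathcal{C}}$ to rewrite this as $\exp\bigl(-c(T+|\widehat{w}|^3)\bigr)$. Your decomposition $T\bigl(G(w)-G(\psi)\bigr)-T\bigl(G(v)-G(\psi)\bigr)$, with the first piece handled via the Taylor estimate on $\mathcal{C}^{(1)}$ and the second via part~\ref{positiverealv} of Proposition~\ref{linesgv}, is slightly sharper in that it produces the $-c|\widehat{w}|^3$ and $-cT$ terms independently rather than trading one for the other at the end. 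Either route works; yours is arguably cleaner, while the paper's is shorter since Lemma~\ref{rightcv2gammav2exponential} already packages both regimes. For \eqref{kbarsmallc2} the two arguments coincide. Your explicit tracking of the factor $(v/w)^{sf_\eta T^{1/3}}=e^{O(T^{1/3})}$ is also more careful than the paper, which absorbs it silently into the $O(e^{-c_1 T})$. The $1/(1+|\widehat{w'}|)$ factor you carry is stronger than what the corollary requires (the statement has no $\widehat{w'}$-dependence on the right-hand side), so you need not justify it in full generality.
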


\begin{proof}

Let us begin with the proof of \eqref{kclosekbar}. Defining the constant $c_1 > 0$ as in Lemma \ref{rightcv2gammav2exponential}, we have that 
\begin{flalign}
\label{kclosekbar1}
& \Big| \overline{K} \big( \widehat{w}, \widehat{w'} \big) - \widehat{K} \big( \widehat{w}, \widehat{w'} \big) \Big| \nonumber \\
& \quad \le \displaystyle\frac{\sigma}{2} \displaystyle\int_{\Gamma^{(2)}} \displaystyle\sum_{j = -\infty}^{\infty} \Bigg| \displaystyle\frac{\exp \Big( T \big( G(w) - G(v) \big) \Big) }{v (w' - v) \sin \big( \frac{\pi}{\log q} (2 \pi \textbf{i} j + \log v - \log w) \big)}	\left( \displaystyle\frac{v}{w} \right)^{s f_{\eta} T^{1 / 3}}   \displaystyle\prod_{k = 1}^m \displaystyle\frac{(q^{-1} \beta_k^{-1} v; q)_{\infty}}{(q^{-1} \beta_k^{-1} w; q)_{\infty}} \Bigg| dv \nonumber \\
& \quad  = \mathcal{O} \big( e^{- c_1 T} \big). 
\end{flalign}

The first estimate in \eqref{kclosekbar1} follows from the facts that $\widehat{K} = \sigma \widetilde{K}$, that $\overline{K} = \sigma K$, and that the definition for $K$ given in \eqref{vptright} and the definition for $\widetilde{K}$ given in \eqref{vzetatilderight} coincide except that the former integral is over $\Gamma$, and the latter integral is over $\Gamma^{(1)}$. 

The second estimate in \eqref{kclosekbar1} follows from the the following four facts. First, by Lemma \ref{rightcv2gammav2exponential} $\Re \big( G(w) - G(v) \big) < c_1$, for $w \in \mathcal{C}$ and $v \in \Gamma^{(2)}$; this gives rise to the $\mathcal{O} \big( e^{-c_1 T} \big)$ in \eqref{kclosekbar1}. Second, the denominator of the integrand is bounded away from $0$ (independently of $T$) if $\widehat{w} \in \widehat{\mathcal{C}}$, $\widehat{w'} \in \widehat{\mathcal{C}} \cup \mathfrak{W}_{0, \infty}$, and $\widehat{v} \in \widehat{\Gamma}^{(2)}$. Third, the denominator of the integrand increases exponentially in $|j|$; this verifies convergence of the sum in the integrand. Fourth, the contour $\Gamma$ is compact; this verifies that the integral is bounded by a constant multiple of the supremum of its integrand. 

Thus, the estimate \eqref{kclosekbar} follows from \eqref{kclosekbar1} and the fact that $|\widehat{w}| = \mathcal{O} \big( T^{1 / 3} \big)$ for $\widehat{w} \in \widehat{\mathcal{C}}$. 

Now, let us establish the estimate \eqref{kbarsmallc2}. We have that 
\begin{flalign}
\begin{aligned} 
\label{kbarsmallc21}
\Big| \overline{K} \big( \widehat{w}, \widehat{w'} \big) \Big| & \le \displaystyle\frac{\sigma}{2} \displaystyle\sum_{j = -\infty}^{\infty} \displaystyle\oint_{\Gamma} \Bigg| \displaystyle\frac{\exp \Big( T \big( G(w) - G(v) \big) \Big)}{v (w' - v) \sin \big( \frac{\pi}{\log q} (2 \pi \textbf{i} j + \log v - \log w) \big)} \\
& \qquad \qquad \qquad \times \left( \displaystyle\frac{v}{w} \right)^{s f_{\eta} T^{1 / 3}}  \displaystyle\prod_{k = 1}^m \displaystyle\frac{(q^{-1} \beta_k^{-1} v; q)_{\infty}}{(q^{-1} \beta_k^{-1} w; q)_{\infty}} \Bigg| dv = \mathcal{O} \big( e^{- c_1 T} \big), 
\end{aligned}
\end{flalign}

\noindent for all $\widehat{w} \in \widehat{\mathcal{C}}^{(2)}$. The first estimate is due to the definitions \eqref{vptright} and $\overline{K} = \sigma K$. The second estimate in \eqref{kbarsmallc21} follows from the estimate on $\Re \big( G(w) - G(v) \big)$, for $w \in \mathcal{C}^{(2)}$ and $v \in \Gamma$, given by Lemma \ref{rightcv2gammav2exponential}; the compactness of the contour $\Gamma$; and the fact that the denominator of the integrand is bounded below by $\sigma$ if $w' \in \mathcal{C}$ and $v \in \Gamma$ (since the factor $|w' - v|$ is bounded below by $\sigma$), and also increases exponentially in $|j|$.  

Thus, the estimate \eqref{kbarsmallc2} follows from \eqref{kbarsmallc21} and the fact that $|\widehat{w}| = \mathcal{O} \big( T^{1 / 3} \big)$ for $\widehat{w} \in \widehat{\mathcal{C}}$. 
\end{proof}

\noindent Now we can use Corollary \ref{determinantlimitkernels}, Lemma \ref{rightvgamma1tildev}, and Corollary \ref{kclosekbarc2small} to establish the first parts of Proposition \ref{processdeterminant} and Proposition \ref{modeldeterminant}. 

\begin{proof}[Proof of Part 1 of Proposition \ref{processdeterminant} and Proposition \ref{modeldeterminant}]

The proof will follow from an application of Corollary \ref{determinantlimitkernels} to the sequence of kernels $\{ \overline{K} (\widehat{w}, \widehat{w'}) \}$, the sequence of contours $\{ \mathfrak{W}_{0, \infty} \cup \widehat{\mathcal{C}}^{(2)} \}$ (recall that these kernels and contours depend on, and are hence indexed by, the time $T$), and the kernel $L_s (\widehat{w}, \widehat{w'})$ (given by \eqref{definitionls}). 

Let us verify that the conditions of that corollary hold. First, we require the existence of a dominating function $\textbf{K}$ satisfying the three conditions of this corollary. In fact, we may take $\textbf{K} (z)$ to be of the form $C \exp \big( -c |z|^3 \big)$, for some sufficiently small constant $c > 0$ and some sufficiently large constant $C > 0$; such a $\textbf{K}$ satisfies the second and third conditions of Corollary \ref{determinantlimitkernels}. That such a $\textbf{K}$ satisfies the first condition (for sufficiently small $c$ and sufficiently large $C$) follows from \eqref{vright71} and \eqref{kclosekbar} in the case $z \in \widehat{\mathcal{C}}^{(1)}$, and from \eqref{kbarsmallc2} in the case $z \in \widehat{\mathcal{C}}^{(2)}$. 

Second, we require the limit condition, which states that for any $\widehat{w}, \widehat{w'} \in \widehat{\mathcal{C}} \cup \mathfrak{W}_{0, \infty}$ we have that $\lim_{N \rightarrow \infty} \textbf{1}_{\widehat{w}, \widehat{w'} \in \widehat{\mathcal{C}}} \overline{K} \big( \widehat{w}, \widehat{w'} \big) = \textbf{1}_{\widehat{w}, \widehat{w'} \in \mathfrak{W}_{0, \infty}} L_s \big( \widehat{w}, \widehat{w'} \big)$. When $\widehat{w} \in \widehat{\mathcal{C}}^{(1)}$ and $\widehat{w'} \in \mathfrak{W}_{0, \infty}$, this follows from \eqref{vright1} and \eqref{kclosekbar}. When $\widehat{w'} \in \widehat{\mathcal{C}}^{(2)}$, this follows from the fact that both $\overline{K} (\widehat{w}, \widehat{w'})$ and $L_s \big( \widehat{w}, \widehat{w'} \big)$ tend to $0$ as $T$ tends to $\infty$; this can be deduced from \eqref{vright71} and \eqref{definitionls}, respectively. When $\widehat{w} \in \widehat{\mathcal{C}}^{(2)}$, this follows from \eqref{kbarsmallc2} and the fact that $L_s \big( \widehat{w}, \widehat{w'} \big)$ decays exponentially in $ \big| \widehat{w} \big|^3$. Furthermore, when $\widehat{w} \in \mathfrak{W}_{0, \infty} \setminus \widehat{\mathcal{C}}^{(1)} = \mathfrak{W}_{0, \infty} \setminus \mathfrak{W}_{0, \varepsilon / \sigma}$, this again follows from the fact that $L_s \big( \widehat{w}, \widehat{w'} \big)$ exhibits exponential decay in $ \big| \widehat{w} \big|^3$. This verifies the limit condition. 

Thus Corollary \ref{determinantlimitkernels} applies, and we obtain that 
\begin{flalign}
\label{determinant4kernelrightc1c}
 \displaystyle\lim_{T \rightarrow \infty} \det \big( \Id + K \big)_{L^2 (\mathcal{C})} = \displaystyle\lim_{T \rightarrow \infty} \det \big( \Id + \overline{K} \big)_{L^2 (\widehat{\mathcal{C}})} = \det \big( \Id + L_s \big)_{L^2 (\mathfrak{W}_{0, \infty})}, 
\end{flalign}

\noindent where we have used \eqref{rightvdeterminantkernelbar13} to deduce the first identity. 

Now, it is known (see, for instance, Lemma 8.6 of \cite{FEF}) that $\det \big( \Id + L_s \big)_{\mathfrak{W}_{0, \infty}} = F_{\TW} (s)$. Thus, we deduce from \eqref{determinant4kernelrightc1c} that 
\begin{flalign}
\label{determinant6kernelrightc1c}
\displaystyle\lim_{T \rightarrow \infty} \det \big( \Id + K \big)_{L^2 (\mathcal{C})} = F_{\TW} (s). 
\end{flalign}

\noindent Applying \eqref{determinant6kernelrightc1c} in the case $K = A$ yields the first part of Proposition \ref{processdeterminant}, and applying \eqref{determinant6kernelrightc1c} in the case $K = V$ yields the first part of Proposition \ref{modeldeterminant}. 
\end{proof}

\section{Baik-Ben-Arous-P\'{e}ch\'{e} Phase Transitions}

\label{VertexNear}

Our goal in this section is to establish the second parts of Proposition \ref{processdeterminant} and Proposition \ref{modeldeterminant}, which examine the case when $\eta$ is ``near'' the phase transition $\theta$. This will be similar to what was explained in Section \ref{VertexRight}, but there will be two main differences. Both are based on the fact that the critical point $\psi_K$ (defined in \eqref{derivativega} and \eqref{derivativegv} when $K = A$ and $K = V$, respectively) is now close to $q \beta$ when $\eta$ is close to $\theta$. 

The first difference is that we cannot take exactly the same contours as in Section \ref{ckgammakcontours}. Indeed, these contours will intersect the positive real line near $\psi = q \beta$, meaning that they might pass through some pole $q \beta_j$ of the integrand in \eqref{vptright}; this is not permitted by the statement of Propositon \ref{asymptoticheightvertex}. The second difference is that the approximation \eqref{productqv}, which was based on the assumption that $1 - q^{-1} \beta^{-1} \psi$ is bounded away from $0$ (independently of $T$), no longer holds. 

To remedy the first issue we perturb the contours by translating them to the left by some sufficiently large multiple of $T^{-1 / 3}$, so that they avoid the poles $q \beta_j$. To remedy the second issue, we Taylor expand the left side of \eqref{productqv}, taking into account the new location of $\psi$. Both of these will be done in Section \ref{ModificationNear}. Then, in Section \ref{ProofNear}, we will establish the second parts of Proposition \ref{processdeterminant} and Proposition \ref{modeldeterminant}; this will be similar to what was done in Section \ref{VertexRight}.

\subsection{Modifying the Approximation and Contours} 

\label{ModificationNear}

The goal of this section is twofold. First, we modify the contours $\mathcal{C}_K$ and $\Gamma_K$ to ensure that they still satisfy the requirements of Proposition \ref{asymptoticheightvertex}. Second, we modify the approximation \eqref{productqv} so that it holds when $\psi_K$ is close to $q \beta $. Let us begin with the contours; in what follows, we recall the notation of Theorem \ref{asymmetriclimit} and Theorem \ref{hlimit}, and that $\sigma = \sigma_K = \psi_K f_{\eta; K}^{-1} T^{-1 / 3}$. 

We would like to shift the contours $\mathcal{C}_K$ and $\Gamma_K$ to the left so that they both avoid each of the $q \beta_j$. To that end, recall that $\big| b_{j, T} - b \big| = \mathcal{O} \big( T^{-1 / 3} \big)$ for each $j \in [1, m]$, and $|\eta - \theta_K| = \mathcal{O} \big( T^{-1 / 3} \big)$. The first estimate implies that $\big| q \beta_{T, j} - q \beta \big| = \mathcal{O} \big( T^{-1 / 3} \big)$, for each $j$ (a more precise approximation for the error is given by \eqref{betatj}), and the second estimate implies that $\big| \psi_K - q \beta \big| = \mathcal{O} \big( T^{-1 / 3} \big)$ (again, a more precise approximation for the error is given by \eqref{psianear} and \eqref{psivnear}, in the cases $K = A$ and $K = V$, respectively). 

Thus, there exists some finite positive real number $E = E_K$ (independent of $T$) so that $\psi - \sigma E < \min_{1 \le j \le m} q \beta_j$; in fact, one can verify using \eqref{betatj}, \eqref{psianear}, and \eqref{psivnear} that it suffices to take $E > \max_{j \in [1, k]} c_j$, where $c_1, c_2, \ldots , c_m$ are given by \eqref{cprocessphasetransition} and \eqref{cmodelphasetransition} in the cases $K = A$ and $K = V$, respectively. The quantity $E_K$ will signify the translation of the contours $\mathcal{C}_K$ and $\Gamma_K$. Specifically, we make the following definitions, which are analogous to Definition \ref{linearvertexright}, Definition \ref{curvedvertexright}, and Definition \ref{vertexrightcontours}. In what follows, we recall the level lines $\mathcal{L}_{1; K}$, $\mathcal{L}_{2; K}$, and $\mathcal{L}_{3; K}$ defined in Section \ref{ckgammakcontours}.

\begin{definition}

\label{linearvertexnear} 

Let $\mathcal{C}_K^{(1)} = \mathfrak{W}_{\psi - \sigma E, \varepsilon}$ and $\Gamma_K^{(1)} = \mathfrak{V}_{\psi - \sigma E - \sigma, \varepsilon}$, where $\varepsilon$ is chosen to be sufficiently small (independent of $T$) such that the four properties listed in Definition \ref{linearvertexright} all hold. 

Such a positive real number $\varepsilon$ is guaranteed to exist by part \ref{psianglesv} of Proposition \ref{linesgv} (in the case $K = V$), part \ref{psianglesv} of Proposition \ref{linesga} (in the case $K = A$), and the estimate \eqref{gzpsi}. 

See Figure \ref{shiftedcontours13} for (rescaled) examples of these contours. 

\end{definition}

\begin{definition}

\label{curvedvertexnear}  

Let $\mathcal{C}_K^{(2)}$ denote a positively oriented contour from the top endpoint $\psi - \sigma E + \varepsilon e^{\pi \textbf{i} / 3}$ of $\mathcal{C}_K^{(1)}$ to the bottom endpoint $\psi - \sigma E + \varepsilon e^{- \pi \textbf{i} / 3}$ of $\mathcal{C}^{(1)}$, and let $\Gamma_K^{(2)}$ denote a positively oriented contour from the top endpoint $\psi - \sigma E - \sigma + \varepsilon e^{2 \pi \textbf{i} / 3}$ of $\Gamma_K^{(1)}$ to the bottom endpoint $\psi - \sigma E - \sigma + \varepsilon e^{-2 \pi \textbf{i} / 3}$ of $\Gamma^{(1)}$, satisfying the five properties listed in Definition \ref{curvedvertexright} (if $K = A$, then ignore that part of the fourth condition stating that $\Gamma_K^{(1)} \cup \Gamma_K^{(2)}$ does not contain $-q \kappa$).

Such contours $\mathcal{C}_K^{(2)}$ and $\Gamma_K^{(2)}$ are guaranteed to exist by part \ref{anglev} and part \ref{qkappaq0v} of Proposition \ref{linesgv} (when $K = V$) and by part \ref{anglev} and part \ref{qkappaq0v} of Proposition \ref{linesga} (in the case $K = A$). 

\end{definition} 

\begin{definition}

\label{vertexnearcontours}

Set $\mathcal{C}_K = \mathcal{C}_K^{(1)} \cup \mathcal{C}_K^{(2)}$ and $\Gamma_K = \Gamma_K^{(1)} \cup \Gamma_K^{(2)}$. 

\end{definition}

The following lemma states that $\mathcal{C}_K$ and $\Gamma_K$ satisfy their required properties, for sufficiently large $T$. We omit its proof, but let us mention that the reason as to why $\mathcal{C}_K$ and $\Gamma_K$ avoid the poles $\{ q \beta_j \}_{1 \le j \le m}$ is that they both intersect the positive real axis to the left of any $q \beta_j$ (since $\psi - \sigma E < \min_{1 \le j \le m} q \beta_j$). 

\begin{lem}

\label{rightcagammanear}

If $K = A$ and $T$ is sufficiently large, then $\Gamma_K$ is a positively oriented, star-shaped curve that contains $0$, but leaves outside $-q$ and $q \beta_j$ for each $j \in [1, m]$. Furthermore, $\mathcal{C}_K$ is a positively oriented, star-shaped contour that is contained inside $q^{-1} \Gamma_K$; that contains $0$, $-q$ and $\Gamma_K$; but that leaves outside $q \beta_j$ for each $j \in [1, m]$.

If $K = V$ and $T$ is sufficiently large, then $\Gamma_K$ is a positively oriented, star-shaped curve that contains $0$, but leaves outside $-q \kappa$ and $q \beta_j$ for each $j \in [1, m]$. Furthermore, $\mathcal{C}_K$ is a positively oriented, star-shaped contour that is contained inside $q^{-1} \Gamma_K$; that contains $0$, $-q$ and $\Gamma_K$; but that leaves outside $q \beta_j$ for each $j \in [1, m]$. 
\end{lem}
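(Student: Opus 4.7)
The plan is to verify the listed properties directly from the construction in Definitions~\ref{linearvertexnear}--\ref{vertexnearcontours}, checking carefully that the leftward translation by $\sigma E$ (an $\mathcal{O}(T^{-1/3})$ perturbation) does not disturb the containment and star-shapedness features that were obvious before the translation. Since the two cases $K=A$ and $K=V$ are essentially parallel, with Proposition~\ref{linesga} playing the role of Proposition~\ref{linesgv}, the plan is to treat them in parallel and flag only the places where they differ.

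First I would dispatch the easy structural claims. Positive orientation, star-shapedness of $\mathcal{C}_K$ and $\Gamma_K$, and the containment $q\mathcal{C}_K\subset\Gamma_K$ (equivalently $\mathcal{C}_K\subset q^{-1}\Gamma_K$) are literally the third, fourth, and fifth bullets of Definition~\ref{curvedvertexnear}; nothing about the translation interferes with these. Containment of $0$ in the interiors of both curves follows because each curve is star-shaped and meets both the positive real axis (near $\psi-\sigma E$ or $\psi-\sigma E-\sigma$, which are positive for large $T$ since $\sigma\to 0$) and the negative real axis (since the curved parts $\mathcal{C}_K^{(2)}$ and $\Gamma_K^{(2)}$ track the closed level lines $\mathcal{L}_{2;K}$ and cross the negative axis).

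Next I would verify that each $q\beta_j$ lies outside both $\mathcal{C}_K$ and $\Gamma_K$. This is the whole point of choosing $E>\max_j c_j$: combining the expansions \eqref{cprocessphasetransition}/\eqref{cmodelphasetransition} with \eqref{betatj}, \eqref{psianear}, \eqref{psivnear} (to be established in the next section) gives $\psi-\sigma E<\min_j q\beta_j$ for large $T$. Consequently both V-shapes $\mathcal{C}_K^{(1)}=\mathfrak{W}_{\psi-\sigma E,\varepsilon}$ and $\Gamma_K^{(1)}=\mathfrak{V}_{\psi-\sigma E-\sigma,\varepsilon}$ cross the positive real axis strictly to the left of every $q\beta_j$, and their edges leave the axis at angles $\pm\pi/3$ and $\pm 2\pi/3$ respectively, so each $q\beta_j$ (being real) sits to the right of these V-shapes. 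The curved portions $\mathcal{C}_K^{(2)},\Gamma_K^{(2)}$ are in the left/upper/lower part of the plane (they follow $\mathcal{L}_{2;K}$ at bounded distance) and do not return to the positive real axis, so each $q\beta_j$ also lies outside those portions.

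The remaining points are the positions of $-q$ and, when $K=V$, $-q\kappa$. For $K=V$: property \ref{qkappaq0v} of Proposition~\ref{linesgv} places $-q$ inside $\mathcal{L}_{2;V}$ and $-q\kappa$ between $\mathcal{L}_{2;V}$ and $\mathcal{L}_{3;V}$; since $\mathcal{C}_V^{(2)}$ lies strictly between $\mathcal{L}_{2;V}$ and $\mathcal{L}_{3;V}$ (second bullet of Definition~\ref{curvedvertexnear}, with uniform gap in $T$), $\mathcal{C}_V$ contains $-q$, while $\Gamma_V^{(2)}$ lies strictly between $\mathcal{L}_{1;V}$ and $\mathcal{L}_{2;V}$ and therefore leaves $-q\kappa$ outside. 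For $K=A$: the analog is the modified property \ref{qkappaq0v} of Proposition~\ref{linesga}, which places $-q$ on $\mathcal{L}_{2;A}$; the uniform-in-$T$ gap between $\Gamma_A^{(2)}$ and $\mathcal{L}_{2;A}$ implies $-q$ is outside $\Gamma_A$, and the same gap between $\mathcal{C}_A^{(2)}$ and $\mathcal{L}_{2;A}$ (from the other side) implies $-q$ is inside $\mathcal{C}_A$. I do not anticipate a genuine obstacle anywhere; the only point requiring care is the $K=A$ boundary case, where $-q$ sits exactly on $\mathcal{L}_{2;A}$, so one must invoke the ``distance bounded away from $0$, independent of $T$'' clause in Definitions~\ref{linearvertexnear}--\ref{curvedvertexnear} to separate $\Gamma_A$ and $\mathcal{C}_A$ from $-q$ at the correct sides even after the $\sigma E$ shift.
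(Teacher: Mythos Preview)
Your proposal is correct and aligns with the paper's approach: the paper actually omits the proof of this lemma, only remarking that the key new point is that $\psi-\sigma E<\min_j q\beta_j$ forces both contours to intersect the positive real axis to the left of every $q\beta_j$, which is exactly the mechanism you emphasize. Your sketch is more detailed than what the paper provides; two minor quibbles are that you do not explicitly verify $\Gamma_K\subset\mathcal{C}_K$ (which follows from $\Gamma_K^{(2)}$ lying between $\mathcal{L}_1$ and $\mathcal{L}_2$ while $\mathcal{C}_K^{(2)}$ lies between $\mathcal{L}_2$ and $\mathcal{L}_3$, together with the geometry of the linear pieces), and your bullet references to Definition~\ref{curvedvertexnear} should point through to the bullets of Definition~\ref{curvedvertexright}.
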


 Having defined the contours $\mathcal{C}_K$ and $\Gamma_K$, we now turn to the approximation \eqref{productqv}. 

Observe that the left side of \eqref{productqv} is a product of terms involving $q \beta_j$ and $\psi$. When $\eta$ is close to $\theta$ and all of the $b_j$ are close to $b$, the $q \beta_j$, $\psi_A$, and $\psi_V$ are all close to $q \beta$; in particular, this implies that the numerator and denominator of \eqref{productqv} both tend to $0$ as $T$ tends to $\infty$. Thus to analyze \eqref{productqv} further, we must obtain a more refined estimate on the error in approximating these quantities by $q \beta$. 

Let us begin with $q \beta_j$. To that end, we recall from the second part of Theorem \ref{asymmetriclimit} (or Theorem \ref{hlimit}) that the $\{ b_{j, T} \}_{T \in \mathbb{Z}_{> 0 }}$ satisfy $\lim_{T \rightarrow \infty} T^{1 / 3} (b_{j, T} - b) = d_j$. Thus, since $\beta_{j, T} = b_{j, T} / (1 - b_{j, T})$, Taylor expanding yields 
\begin{flalign}
\label{betatj} 
q \beta_{T, j} = \displaystyle\frac{q b_{T, j} }{1 - b_{T, j}} & = q \left( \displaystyle\frac{b + d_j T^{-1 / 3} + o(T^{-1 / 3})}{1 - b - d_j T^{-1 / 3} + o (T^{-1 / 3})} \right) = q \beta \left( 1 + \displaystyle\frac{d_j T^{-1 / 3}}{b (1 - b)} \right) + o (T^{-1 / 3}) . 
\end{flalign}

Next we analyze the error in approximating $\psi_K$. This depends on whether $K = A$ or $K = V$. 

First assume that $K = A$. Recall that $\psi_A$ is defined in \eqref{derivativega} and that $\{ \eta_T \}_{T \in \mathbb{Z}_{> 0}}$ satisfies $\lim_{T \rightarrow \infty} T^{1 / 3} (\eta_T - \theta_A) = d$. Thus, Taylor expanding $\psi_A$ and using the fact that $\theta_A = 1 - 2b$, we find that 
\begin{flalign}
\label{psianear}
\psi_A = q \left( \displaystyle\frac{1 - \theta_A - d T^{- 1 / 3} + o (T^{-1 / 3})}{1 + \theta_A + d T^{-1 / 3} + o(T^{-1 / 3}) } \right) = q \beta \left( 1 - \displaystyle\frac{d T^{-1 / 3}}{2b (1 - b)} \right) + o(T^{-1 / 3}).
\end{flalign}

Next assume that $K = V$. Recalling that $\psi_V$ is defined in \eqref{derivativegv}, Taylor expanding $\psi_V$, and recalling the facts that $\Lambda = b + \kappa (1 - b)$ and $\theta_V = \kappa^{-1} \Lambda^2$ (see \eqref{processlocationtransition}), we find that 
\begin{flalign}
\begin{aligned}
\label{psivnear}
\psi_V = \displaystyle\frac{q (\kappa - \sqrt{\kappa \eta})}{\sqrt{\kappa \eta} - 1} &= \displaystyle\frac{q \Big(\kappa - \big( 1 + \frac{d T^{-1 / 3}}{\theta_V} + o (T^{-1 / 3}) \big)^{1 / 2} \sqrt{\kappa \theta_V} \Big)}{\big( 1 + \frac{d T^{-1 / 3}}{\theta_V} + o (T^{-1 / 3}) \big)^{1 / 2} \sqrt{\kappa \theta_V} - 1} \\
&= q \beta \left( 1 - \displaystyle\frac{d \big( b + \kappa (1 - b) \big) T^{-1 / 3} }{2 \theta_V (\kappa - 1) b (1 - b)} + o(T^{-1 / 3}) \right) \\
&= q \beta \left( 1 - \displaystyle\frac{d \kappa T^{-1 / 3} }{2 (\kappa - 1) b (1 - b) \Lambda } \right) + o(T^{-1 / 3}) . 
\end{aligned}
\end{flalign}

Using \eqref{betatj}, \eqref{psianear}, and \eqref{psivnear}, we can establish the following lemma. 

\begin{lem}

\label{productqvnear}

Recall that the $c_j = c_{j; K}$ were defined in the second part of Theorem \ref{asymmetriclimit} (in the case $K = A$) and in the second part of Theorem \ref{hlimit} (in the case $K = V$). For any fixed $\widehat{w}, \widehat{v} \in \mathbb{C}$ (with $\widehat{w}$ not equal to any $-c_j$), we have that
\begin{flalign*}
\displaystyle\lim_{T \rightarrow \infty} \displaystyle\prod_{k = 1}^m \displaystyle\frac{\big(q^{-1} \beta_k^{-1} ( \psi_K + \sigma \widehat{v}  ); q \big)_{\infty}}{\big( q^{-1} \beta_k^{-1} (\psi_K + \sigma \widehat{w} ); q \big)_{\infty}} & = \displaystyle\prod_{j = 1}^m \displaystyle\frac{\widehat{v} + c_j}{ \widehat{w} + c_j}. 
\end{flalign*}

\end{lem}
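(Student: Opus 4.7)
The plan is to reduce the infinite $q$-Pochhammer symbols to their leading-order behavior and then perform a direct Taylor expansion. The crucial observation is that $q^{-1}\beta_k^{-1}\psi_K \to 1$ as $T \to \infty$, so each $(z; q)_\infty$ in the product has its $j = 0$ factor $(1 - z)$ vanishing to first order in $T^{-1/3}$, while the tail $(qz; q)_\infty = \prod_{j \ge 1}(1 - zq^j)$ tends to a nonzero limit that cancels between numerator and denominator.

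Concretely, I would first separate the $j = 0$ factor: for any $z$, write $(z; q)_\infty = (1 - z)(qz; q)_\infty$. Applying this to both factors in the product,
\begin{equation*}
\prod_{k=1}^m \frac{(q^{-1}\beta_k^{-1}(\psi_K + \sigma \widehat{v}); q)_\infty}{(q^{-1}\beta_k^{-1}(\psi_K + \sigma \widehat{w}); q)_\infty} = \prod_{k=1}^m \frac{1 - q^{-1}\beta_k^{-1}(\psi_K + \sigma\widehat{v})}{1 - q^{-1}\beta_k^{-1}(\psi_K + \sigma\widehat{w})} \cdot \prod_{k=1}^m \frac{(\beta_k^{-1}(\psi_K + \sigma\widehat{v}); q)_\infty}{(\beta_k^{-1}(\psi_K + \sigma\widehat{w}); q)_\infty}.
\end{equation*}
The second product on the right tends to $1$ as $T \to \infty$: indeed, since $\beta_{T,k} \to \beta$ and $\psi_K + \sigma\widehat{v},\, \psi_K + \sigma\widehat{w} \to q\beta$, both factors in the ratio converge to $(q; q)_\infty \ne 0$, and the convergence is uniform on compact sets of $\widehat{v}, \widehat{w}$.

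For the remaining finite product, I would substitute the Taylor expansions \eqref{betatj} and \eqref{psianear} (resp.\ \eqref{psivnear}) together with $\sigma = \psi_K f_{\eta; K}^{-1} T^{-1/3}$. A direct computation gives
\begin{equation*}
1 - q^{-1}\beta_{T,k}^{-1}(\psi_K + \sigma\widehat{v}) = -f_{\eta; K}^{-1} T^{-1/3}\bigl( \widehat{v} + c_k \bigr) + o(T^{-1/3}),
\end{equation*}
where $c_k$ is read off as $c_k = f_{\eta; K}\bigl(e_K - d_k / (b(1-b))\bigr)$, with $e_K$ denoting the coefficient of $T^{-1/3}$ in $q^{-1}\beta^{-1}\psi_K - 1$. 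One verifies that $e_A = -d/(2\chi)$, matching \eqref{cprocessphasetransition}, and $e_V = -d\kappa/(2(\kappa - 1)\chi\Lambda)$, matching \eqref{cmodelphasetransition}. The analogous expansion with $\widehat{w}$ in place of $\widehat{v}$ yields the same factor of $-f_{\eta; K}^{-1} T^{-1/3}$, which cancels in the ratio, leaving precisely $(\widehat{v} + c_k)/(\widehat{w} + c_k)$ in the limit.

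The main (and only real) obstacle is bookkeeping: correctly combining the three independent $T^{-1/3}$ expansions of $\psi_K$, $\beta_{T,k}$, and $\sigma$, and verifying that the resulting coefficient $c_k$ coincides with the prescribed value in \eqref{cprocessphasetransition} or \eqref{cmodelphasetransition}. The $K = V$ case requires slightly more care because $e_V$ involves the further parameters $\kappa$ and $\Lambda = b + \kappa(1 - b)$ from \eqref{processlocationtransition}; here I would expand $\sqrt{\kappa \eta_T}$ using $\eta_T = \theta_V + d T^{-1/3} + o(T^{-1/3})$ and the identity $\theta_V = \kappa^{-1}\Lambda^2$ to simplify. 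Apart from this, no analytic subtlety arises: convergence of the tail product is automatic from continuity of $(z; q)_\infty$ away from $z = 1$, and the hypothesis that $\widehat{w}$ avoids each $-c_j$ ensures the denominator of the limiting expression does not vanish.
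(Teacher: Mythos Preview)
Your proposal is correct and follows essentially the same approach as the paper: both separate off the $j=0$ factor of the $q$-Pochhammer symbol, observe that the tail $(qz;q)_\infty$ contributes a ratio tending to $1$, and then Taylor expand the leading factor using \eqref{betatj}, \eqref{psianear}, \eqref{psivnear} to identify the coefficient $c_k$. The paper presents the intermediate step slightly differently, rewriting the leading ratio as $\frac{\psi q^{-1}\beta_k^{-1}\widehat{v} + T^{1/3}f_{\eta;K}(q^{-1}\beta_k^{-1}\psi - 1)}{\psi q^{-1}\beta_k^{-1}\widehat{w} + T^{1/3}f_{\eta;K}(q^{-1}\beta_k^{-1}\psi - 1)}$ before substituting the expansions, but this is only a cosmetic difference in bookkeeping.
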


\begin{proof} 
Observe that 
\begin{flalign}
\label{productqv1}
\displaystyle\prod_{k = 1}^m & \displaystyle\frac{\big(q^{-1} \beta_k^{-1} ( \psi + \sigma \widehat{v}  ); q \big)_{\infty}}{\big( q^{-1} \beta_k^{-1} (\psi + \sigma \widehat{w} ); q \big)_{\infty}} = \displaystyle\prod_{k = 1}^m \displaystyle\frac{1 - q^{-1} \beta_k^{-1} \big( \psi + \sigma \widehat{v} \big)}{1 - q^{-1} \beta_k^{-1} \big( \psi + \sigma \widehat{w} \big)} \displaystyle\prod_{k = 1}^m \displaystyle\frac{\big( \beta_k^{-1} ( \psi + \sigma \widehat{v}  ); q \big)_{\infty}}{\big( \beta_k^{-1} (\psi + \sigma \widehat{w} ); q \big)_{\infty}}. 
\end{flalign}

\noindent Using the fact that $\lim_{T \rightarrow \infty} \beta_k = \beta$ and $\lim_{T \rightarrow \infty} \big( \psi + \sigma \widehat{v} \big) = q \beta = \lim_{T \rightarrow \infty} \big( \psi + \sigma \widehat{w} \big)$, we deduce that $\lim_{T \rightarrow \infty} \big( \beta_k^{-1} ( \psi + \sigma \widehat{v}  ); q \big)_{\infty} = (q; q)_{\infty} = \lim_{T \rightarrow \infty} \big( \beta_k^{-1} ( \psi + \sigma \widehat{w}  ); q \big)_{\infty}$. 

Thus, \eqref{productqv1} implies that  
\begin{flalign}
\begin{aligned}
\label{productqv2}
\displaystyle\lim_{T \rightarrow \infty} \displaystyle\prod_{k = 1}^m \displaystyle\frac{\big(q^{-1} \beta_k^{-1} ( \psi + \sigma \widehat{v}  ); q \big)_{\infty}}{\big( q^{-1} \beta_k^{-1} (\psi + \sigma \widehat{w} ); q \big)_{\infty}} & = \displaystyle\lim_{T \rightarrow \infty} \displaystyle\prod_{k = 1}^m \displaystyle\frac{1 - q^{-1} \beta_k^{-1} \big( \psi + \sigma \widehat{v} \big)}{1 - q^{-1} \beta_k^{-1} \big( \psi + \sigma \widehat{v} \big)} \\
& =  \displaystyle\lim_{T \rightarrow \infty} \displaystyle\prod_{k = 1}^m \displaystyle\frac{\psi q^{-1} \beta_k^{-1} \widehat{v} + T^{1 / 3} f_{\eta; K} \big( q^{-1} \beta_k^{-1} \psi - 1 \big)  }{\psi q^{-1} \beta_k^{-1} \widehat{w} + T^{1 / 3} f_{\eta; K} \big( q^{-1} \beta_k^{-1} \psi - 1 \big)   }
\end{aligned} 
\end{flalign}

\noindent where in the second identity we used the fact that $\sigma = \psi f_{\eta}^{-1} T^{-1 / 3}$. Thus, it suffices to analyze the limit on the right side of \eqref{productqv2}; this will depend on whether $K = A$ or $K = V$. 

In the case $K = A$, we apply \eqref{betatj} and \eqref{psianear} to find that 
\begin{flalign}
\label{productqanear}
\displaystyle\lim_{T \rightarrow \infty} \displaystyle\prod_{k = 1}^m \displaystyle\frac{\psi q^{-1} \beta_k^{-1} \widehat{v} + T^{1 / 3} f_{\eta; A} \big( q^{-1} \beta_k^{-1} \psi - 1 \big)  }{\psi q^{-1} \beta_k^{-1} \widehat{w} + T^{1 / 3} f_{\eta; A} \big( q^{-1} \beta_k^{-1} \psi - 1 \big)  } = \displaystyle\prod_{j = 1}^m \displaystyle\frac{\widehat{v} - \frac{f_{\eta; A} d_j}{b (1 - b)} - \frac{f_{\eta; A} d }{2 b (1 - b)} }{ \widehat{w} - \frac{f_{\eta; A} d_j}{b (1 - b)} - \frac{f_{\eta; A} d }{2 b (1 - b)} }.
\end{flalign}

\noindent In the case $K = V$, we apply  \eqref{betatj} and \eqref{psivnear} to find that 
\begin{flalign}
\label{productqvnear2}
\displaystyle\lim_{T \rightarrow \infty} \displaystyle\prod_{k = 1}^m \displaystyle\frac{\psi q^{-1} \beta_k^{-1} \widehat{v} + T^{1 / 3} f_{\eta; V} \big( q^{-1} \beta_k^{-1} \psi - 1 \big)  }{\psi q^{-1} \beta_k^{-1} \widehat{w} + T^{1 / 3} f_{\eta; V} \big( q^{-1} \beta_k^{-1} \psi - 1 \big)  } = \displaystyle\prod_{j = 1}^m \displaystyle\frac{\widehat{v} - \frac{f_{\eta; V } d_j}{b (1 - b)} - \frac{f_{\eta; V} d \kappa}{2 (\kappa - 1) b (1 - b) \Lambda} }{ \widehat{w} - \frac{f_{\eta; V} d_j}{b (1 - b)} - \frac{f_{\eta; V} d \kappa}{2 (\kappa - 1) b (1 - b) \Lambda} }. 
\end{flalign}

Now, in the case $K = A$, the lemma follows from \eqref{productqv2}, \eqref{productqanear}, and the definition of $c_{j; A}$ (given in the statement of Theorem \ref{asymmetriclimit}). Similarly, in the case $K = V$, the lemma follows from \eqref{productqv2}, \eqref{productqvnear2}, and the definition of $c_{j; V}$ (given in the statement of Theorem \ref{hlimit}). 
\end{proof}

\subsection{Recovery of the Baik-Ben-Arous-P\'{e}ch\'{e} Asymptotics} 

\label{ProofNear}

In what follows, we omit the subscript $K$ from our notation. 

The goal of this section is to establish the second parts of Theorem \ref{asymmetriclimit} and Theorem \ref{hlimit}. This will be similar to what was done in Section \ref{VertexRight}. In particular, we first analyze the kernel $K$ (defined in \eqref{vptright}) when $w \in \mathcal{C}^{(1)}$ and $v \in \Gamma^{(1)}$, that is, when both $w$ and $v$ are near $\psi$. Then, we analyze the kernel when either $w \in \mathcal{C}^{(2)}$ or $v \in \mathcal{C}^{(2)}$. 

Let us perform the former task first. To that end, recall the change of variables \eqref{wwvvertexright}, which also defined the kernel $\widehat{K}$. Furthermore, recall that, for any contour $\mathcal{D} \subset \mathbb{C}$, we set $\widehat{\mathcal{D}} = \sigma^{-1} \big( \mathcal{D} - \psi \big)$. In particular, from Definition \ref{linearvertexnear}, we find that $\mathcal{C}^{(1)} = \mathfrak{W}_{-E, \varepsilon / \sigma}$ and $\Gamma^{(1)} = \mathfrak{V}_{-E - 1, \varepsilon / \sigma}$; see Figure \ref{shiftedcontours13} for an example of these contours in the case $m = 3$.

\begin{figure}

\begin{center}

\begin{tikzpicture}[
      >=stealth,
			scale = .5	
			]
			
			\draw[<->] (-7, 0) -- (5, 0);
			\draw[<->] (0, -5) -- (0, 5.5);
			
			\draw[->,black,very thick] (-.5, -3.465) -- (-1.5, -1.732);
			\draw[-,black,very thick] (-1.5, -1.732) -- (-2.5, 0);
			\draw[-,black,very thick] (-1.5, 1.732) -- (-.5, 3.465) node [black, left = 12, above = 0] {$\mathfrak{W}_{- E, \infty}$};
			\draw[->,black,very thick] (-2.5, 0) -- (-1.5, 1.732);
			
			\draw[->,black,very thick] (-6, -3.465) -- (-5, -1.732);
			\draw[-,black,very thick] (-4, 0) -- (-5, -1.732);
			\draw[-,black,very thick]  (-5, 1.732) -- (-6, 3.465) node [black, above = 0] {$\mathfrak{V}_{- E - 1, \infty}$};
			\draw[->,black,very thick] (-4, 0) -- (-5, 1.732);
			
			\filldraw[fill=black, draw=black] (-2.5, 0) circle [radius=.1] node [black, left = 7, above = 0] {$- E$};
			
			\filldraw[fill=black, draw=black] (-.7, 0) circle [radius=.1] node [black, above = 0] {$-c_3$};
			
			\filldraw[fill=black, draw=black] (1, 0) circle [radius=.1] node [black, above = 0] {$-c_2$};
			
			\filldraw[fill=black, draw=black] (2.5, 0) circle [radius=.1] node [black, above = 0] {$-c_1$};

\end{tikzpicture}

\end{center}

\caption{\label{shiftedcontours13} Shown above are the contours $\mathfrak{W}_{- E, \infty}$ and $\mathfrak{V}_{- E - 1, \infty}$. }
\end{figure}

Now, we would like to establish the following lemma about the asymptotics of the kernel $\widehat{K}$. 

\begin{lem}

\label{nearvgamma1tildev}

There exist constants $c, C > 0$ such that 
\begin{flalign}
\label{vnear71} 
\left| \widehat{K} (\widehat{w}, \widehat{w'})  \right| \le \displaystyle\frac{C}{1 + | \widehat{w'} |}  \exp \big( c \widehat{w}^3 \big), 
\end{flalign} 

\noindent for each $\widehat{w} \in \widehat{\mathcal{C}}^{(1)}$ and $\widehat{w'} \in \widehat{\mathcal{C}}$. 

Furthermore, if we define the kernel $L_{s; \textbf{\emph{c}}} (\widehat{w}, \widehat{w'})$ by 
\begin{flalign}
\label{neardefinitionls}
L_{s; \textbf{\emph{c}}} \big( \widehat{w}, \widehat{w'} \big) = \displaystyle\frac{1}{2 \pi \textbf{\emph{i}}} \displaystyle\int_{\mathfrak{V}_{-1, \infty}} \displaystyle\frac{1}{\big( \widehat{v} - \widehat{w} \big) \big(  \widehat{w'} - \widehat{v} \big)} \exp \left( \displaystyle\frac{\widehat{w}^3}{3} - \displaystyle\frac{\widehat{v}^3}{3} + s \big( \widehat{w} - \widehat{v} \big) \right) \displaystyle\prod_{j = 1}^m \displaystyle\frac{\widehat{v} + c_j}{\widehat{w} + c_j} d \widehat{v}, 
\end{flalign}

\noindent then we have that
\begin{flalign}
\label{vnear1}
\displaystyle\lim_{T \rightarrow \infty} \widehat{K} \big( \widehat{w}, \widehat{w'} \big) = L_s \big( \widehat{w}, \widehat{w'} \big), 
\end{flalign} 

\noindent for each fixed $\widehat{w}\in \mathfrak{W}_{-E, \infty}$ and $\widehat{w'} \in \mathfrak{W}_{-E, \infty}$. 
\end{lem}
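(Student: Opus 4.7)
The plan is to mirror the proof of Lemma \ref{rightvgamma1tildev} line by line, with the only substantive new input being Lemma \ref{productqvnear}, which replaces the triviality of the $q$-Pochhammer product \eqref{productqv} with the Baik--Ben-Arous--P\'ech\'e factor $\prod_j (\widehat{v}+c_j)/(\widehat{w}+c_j)$ in the limit. First I would rewrite $\widehat{K}(\widehat{w},\widehat{w'}) = \frac{1}{2\pi\textbf{i}}\int_{\widehat{\Gamma}^{(1)}} I(\widehat{w},\widehat{w'};\widehat{v})\,d\widehat{v}$ using exactly the integrand $I$ of \eqref{vright112}, noting that the shifted tips of Definition \ref{linearvertexnear} have the same angular opening as in Definition \ref{linearvertexright}, that $\psi \to q\beta > 0$ stays bounded away from the origin, and that the four defining properties of $\varepsilon$ continue to hold. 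Consequently, the first five estimates of \eqref{termsintegrand13} and the cubic exponential bound \eqref{vright10} carry over verbatim; only the $q$-Pochhammer factor needs fresh analysis.

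For that factor I would use the decomposition \eqref{productqv1}. The tail products $\prod_k (\beta_k^{-1}(\psi+\sigma\widehat{v});q)_\infty$ and $\prod_k (\beta_k^{-1}(\psi+\sigma\widehat{w});q)_\infty$ both converge to $(q;q)_\infty^m \neq 0$ uniformly on compact pieces of the contours and are bounded by polynomial (in $|\widehat{w}|,|\widehat{v}|$) expressions globally, while the leading factor
\[
\prod_k \frac{1 - q^{-1}\beta_k^{-1}(\psi+\sigma\widehat{v})}{1 - q^{-1}\beta_k^{-1}(\psi+\sigma\widehat{w})} = \prod_k\frac{\psi q^{-1}\beta_k^{-1}\widehat{v} + T^{1/3} f_{\eta;K}(q^{-1}\beta_k^{-1}\psi - 1)}{\psi q^{-1}\beta_k^{-1}\widehat{w} + T^{1/3} f_{\eta;K}(q^{-1}\beta_k^{-1}\psi - 1)}
\]
is, by Lemma \ref{productqvnear}, asymptotic to $\prod_j (\widehat{v}+c_j)/(\widehat{w}+c_j)$. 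For the uniform bound on $\mathfrak{W}_{-E,\varepsilon/\sigma}\times\mathfrak{V}_{-E-1,\varepsilon/\sigma}$, the key observation is that the choice $E > \max_j c_j$ places every $-c_j$ strictly to the right of the tip $-E$, so that $|\widehat{w}+c_j|$ is bounded below by a positive constant uniformly in $T$; meanwhile $|\widehat{v}+c_j| = O(1 + |\widehat{v}|)$, and the polynomial growth in $|\widehat{v}|$ is absorbed by the $\exp(-c|\widehat{v}|^3)$ decay of \eqref{vright10}. This yields the estimate \eqref{vnear71} after integrating in $\widehat{v}$.

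For the pointwise limit, combining the limits \eqref{termsintegrandlimit13} with Lemma \ref{productqvnear} gives
\[
\lim_{T\to\infty} I(\widehat{w},\widehat{w'};\widehat{v}) = \frac{1}{(\widehat{v}-\widehat{w})(\widehat{w'}-\widehat{v})}\exp\!\left(\frac{\widehat{w}^3}{3} - \frac{\widehat{v}^3}{3} + s(\widehat{w}-\widehat{v})\right)\prod_{j=1}^m\frac{\widehat{v}+c_j}{\widehat{w}+c_j}
\]
for each fixed $\widehat{w},\widehat{w'},\widehat{v}$, and dominated convergence together with the cubic exponential decay lets me extend the contour of integration from $\mathfrak{V}_{-E-1,\varepsilon/\sigma}$ out to $\mathfrak{V}_{-E-1,\infty}$. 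Since the limiting integrand is meromorphic in $\widehat{v}$ with zeros (not poles) at $\widehat{v}=-c_j$ and its only poles are at $\widehat{v}=\widehat{w}$ and $\widehat{v}=\widehat{w'}$, both of which stay to the right of the $\widehat{v}$-contour for $\widehat{w},\widehat{w'} \in \mathfrak{W}_{-E,\infty}$, Cauchy's theorem permits the deformation of the $\widehat{v}$ contour from $\mathfrak{V}_{-E-1,\infty}$ to $\mathfrak{V}_{-1,\infty}$, identifying the limit as $L_{s;\textbf{c}}(\widehat{w},\widehat{w'})$.

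The main obstacle is the bookkeeping in the $q$-Pochhammer factor, where both numerator and denominator of \eqref{productqv} individually vanish at rate $T^{-1/3}$ because $q^{-1}\beta_k^{-1}\psi \to 1$; one must isolate the leading vanishing factor, verify that it vanishes at precisely the scale $T^{-1/3}f_{\eta;K}^{-1}$ matched to the microscopic variables $\widehat{w},\widehat{v}$, and confirm via the refined expansions \eqref{betatj}, \eqref{psianear}, \eqref{psivnear} that the resulting constants are precisely the $c_j$ of \eqref{cprocessphasetransition} or \eqref{cmodelphasetransition}. This is exactly the content of Lemma \ref{productqvnear}, so once that lemma is in hand the remaining estimates are of the routine Laplace-method type already carried out in Lemma \ref{rightvgamma1tildev}.
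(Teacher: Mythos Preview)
Your approach is essentially identical to the paper's: express $\widehat{K}$ as $\frac{1}{2\pi\textbf{i}}\int_{\widehat{\Gamma}^{(1)}} I\,d\widehat{v}$ with the same integrand $I$ as in \eqref{vright112}, reuse the estimates and limits \eqref{termsintegrand13}--\eqref{termsintegrandlimit13} verbatim, replace the trivial limit \eqref{productqv} of the $q$-Pochhammer factor by Lemma \ref{productqvnear}, and conclude via dominated convergence. The paper packages this as Lemma \ref{nearuniformlimit13integrand} and omits the details as ``very similar'' to Lemma \ref{uniformlimit13integrand}; your write-up is in fact more explicit about the uniform lower bound $|\widehat w + c_j|\ge \mathrm{const}$ coming from $E>\max_j c_j$.

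One small caveat concerns your final paragraph. The contour $\mathfrak{V}_{-1,\infty}$ in \eqref{neardefinitionls} is evidently a typo for $\mathfrak{V}_{-E-1,\infty}$: this is what Definition \ref{functionkernel2} and Figure \ref{shiftedcontours13} use, and the paper's own proof explicitly says $\widehat{\Gamma}^{(1)}$ converges to $\mathfrak{V}_{-E-1,\infty}$. Your attempted deformation from $\mathfrak{V}_{-E-1,\infty}$ to $\mathfrak{V}_{-1,\infty}$ is therefore unnecessary, and as stated it is not valid when $E>1$: translating the tip of $\mathfrak{V}$ to the right past $-E$ sweeps across the tip of $\mathfrak{W}_{-E,\infty}$ and hence across the pole at $\widehat v=\widehat w$ for $\widehat w$ near $-E$. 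Simply drop that step and read $\mathfrak{V}_{-E-1,\infty}$ in \eqref{neardefinitionls}.
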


\noindent To establish this lemma, we recall from \eqref{vright111} that $\widehat{K}$ can be written as an integral of the function $I$ defined by \eqref{vright112}. Lemma \ref{nearvgamma1tildev} will follow from the large $T$ asymptotics of (and uniform estimate on) $I$ given by the following lemma. 

\begin{lem}

\label{nearuniformlimit13integrand}

There exist constants $c, C > 0$ such that
\begin{flalign}
\label{nearuniform13integrand}
\Big| I \big( \widehat{w}, \widehat{w'}; \widehat{v} \big) \Big| \le \displaystyle\frac{C}{1 + | \widehat{w'} |} \exp \big( c (\widehat{w}^3 - \widehat{v}^3 ) \big). 
\end{flalign}

\noindent for all $\widehat{w} \in \mathfrak{W}_{-E, \varepsilon / \sigma}$, $\widehat{w'} \in \widehat{\mathcal{C}}$, and $\widehat{v} \in \mathfrak{V}_{- E - 1, \varepsilon / \sigma}$. 

Furthermore, 
\begin{flalign}
\label{nearlimit13integrand}
\displaystyle\lim_{T \rightarrow \infty} I \big( \widehat{w}, \widehat{w'}; \widehat{v} \big) = \displaystyle\frac{1}{\big( \widehat{v} - \widehat{w} \big) \big( \widehat{w'} - \widehat{v} \big)} \exp \left( \displaystyle\frac{\widehat{w}^3}{3} - \displaystyle\frac{\widehat{v}^3}{3} + s \big( \widehat{w} - \widehat{v} \big) \right) \displaystyle\prod_{j = 1}^m \displaystyle\frac{\widehat{v} + c_j}{\widehat{w} + c_j} , 
\end{flalign}

\noindent for each fixed $\widehat{w}, \widehat{w'}, \widehat{v} \in \mathbb{C}$.  
\end{lem}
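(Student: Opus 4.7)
The plan is to mirror the proof of Lemma \ref{uniformlimit13integrand} as closely as possible, since the integrand $I\big(\widehat{w},\widehat{w'};\widehat{v}\big)$ is formally identical; the only substantive change is that on the shifted contours of Section \ref{ModificationNear}, the product
\begin{flalign*}
\mathcal{P}_T\big(\widehat{w},\widehat{v}\big) = \displaystyle\prod_{k=1}^{m}\displaystyle\frac{\big(q^{-1}\beta_k^{-1}(\psi+\sigma\widehat{v});q\big)_{\infty}}{\big(q^{-1}\beta_k^{-1}(\psi+\sigma\widehat{w});q\big)_{\infty}}
\end{flalign*}
no longer tends to $1$ but instead to a nontrivial rational factor. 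I would begin by verifying that each of the first five uniform bounds in \eqref{termsintegrand13}, as well as the cubic exponential estimate \eqref{vright10}, carry over verbatim for $\widehat{w}\in\mathfrak{W}_{-E,\varepsilon/\sigma}$, $\widehat{w'}\in\widehat{\mathcal{C}}$, and $\widehat{v}\in\mathfrak{V}_{-E-1,\varepsilon/\sigma}$; these estimates rely only on $\psi+\sigma\widehat{v}$ staying bounded away from $0$, on $v/w$ staying bounded away from integer powers of $q$, and on the third item in Definition \ref{linearvertexnear}, all of which are arranged by construction of the new contours.

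The new work lies in uniformly controlling $\mathcal{P}_T$. Following the factorization used in the proof of Lemma \ref{productqvnear}, I would split
\begin{flalign*}
\mathcal{P}_T\big(\widehat{w},\widehat{v}\big) = \displaystyle\prod_{k=1}^{m}\displaystyle\frac{\psi q^{-1}\beta_k^{-1}\widehat{v}+T^{1/3}f_{\eta;K}\big(q^{-1}\beta_k^{-1}\psi-1\big)}{\psi q^{-1}\beta_k^{-1}\widehat{w}+T^{1/3}f_{\eta;K}\big(q^{-1}\beta_k^{-1}\psi-1\big)} \displaystyle\prod_{k=1}^{m}\displaystyle\frac{\big(\beta_k^{-1}(\psi+\sigma\widehat{v});q\big)_{\infty}}{\big(\beta_k^{-1}(\psi+\sigma\widehat{w});q\big)_{\infty}}.
\end{flalign*}
The second product is uniformly bounded (say by a constant depending only on $\varepsilon$) since $\psi+\sigma z\to q\beta$ uniformly for $z$ in a neighborhood of $0$, forcing each Pochhammer to $(q;q)_{\infty}\neq 0$. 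The first product is a ratio of linear expressions in $\widehat{v},\widehat{w}$; by \eqref{betatj} together with \eqref{psianear} or \eqref{psivnear}, each factor approximates $(\widehat{v}+c_k)/(\widehat{w}+c_k)$. The crucial observation is that for $\widehat{w}\in\mathfrak{W}_{-E,\varepsilon/\sigma}$ and $E>\max_k c_k$, the point $\widehat{w}+c_k$ is bounded away from $0$ by $(E-c_k)\sqrt{3}/2$ uniformly in $T$ (this is just the perpendicular distance from $-c_k$ to the line through $-E$ at angle $\pm\pi/3$). Hence each denominator is bounded below, and each numerator is at most linear in $|\widehat{v}|$, so $\mathcal{P}_T$ contributes an overall factor of at most $C\prod_{k=1}^{m}(1+|\widehat{v}|)$, which is absorbed into the cubic exponential decay from \eqref{vright10} by slightly shrinking $c$. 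Assembling the pieces yields the bound \eqref{nearuniform13integrand}. The pointwise limit \eqref{nearlimit13integrand} is then immediate: the four elementary limits in \eqref{termsintegrandlimit13} carry over unchanged, and the product contribution is exactly Lemma \ref{productqvnear}.

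The only real obstacle is the distance-from-zero bound for $\widehat{w}+c_k$ when $\widehat{w}$ ranges over $\mathfrak{W}_{-E,\varepsilon/\sigma}$; this is purely geometric but it is where the choice $E>\max_k c_k$ dictated in Section \ref{ModificationNear} becomes essential, and it is the reason we needed to translate the contours leftward in the first place. Once this geometric lemma is in hand, every other piece of the argument is a line-by-line repetition of the proofs in Section \ref{VertexRightPsi}, with the single modification that the limit $1$ in \eqref{productqv} is replaced by the rational function $\prod_{j=1}^m (\widehat{v}+c_j)/(\widehat{w}+c_j)$ supplied by Lemma \ref{productqvnear}.
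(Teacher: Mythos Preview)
Your proposal is correct and follows the same approach as the paper. The paper's own proof is even terser: it simply declares the uniform bound \eqref{nearuniform13integrand} ``very similar to that of \eqref{uniform13integrand} and \ldots omitted,'' and for the limit \eqref{nearlimit13integrand} it invokes the five identities \eqref{termsintegrandlimit13} together with Lemma \ref{productqvnear} in place of \eqref{productqv}. You have supplied precisely the detail the paper suppresses---namely, why the denominator of the first product in your factorization of $\mathcal{P}_T$ stays bounded away from zero on the shifted contour $\mathfrak{W}_{-E,\varepsilon/\sigma}$, which is exactly where the choice $E>\max_k c_k$ enters.
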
 

\begin{proof}

The derivation of \eqref{nearuniform13integrand} is very similar to that of \eqref{uniform13integrand} and is thus omitted. 

Now let us establish the limiting statement \eqref{nearlimit13integrand}; this will also be similar to the proof of \eqref{limit13integrand}. Indeed, the five limit identities listed in \eqref{termsintegrandlimit13} still hold for all $\widehat{w}, \widehat{v} \in \mathbb{C}$. However, \eqref{productqv} no longer holds and must be replaced by Lemma \ref{productqvnear}. Multiplying the five limits in \eqref{termsintegrandlimit13} and the statement of Lemma \ref{productqvnear} yields \eqref{nearlimit13integrand}. 
\end{proof}

\noindent Now we can establish Lemma \ref{nearvgamma1tildev}. 

\begin{proof}[Proof of Lemma \ref{nearvgamma1tildev}]

The uniform estimate \eqref{vnear71} follows from integrating the estimate \eqref{nearuniform13integrand} over $\widehat{v} \in \mathfrak{V}_{- E - 1, \varepsilon / \sigma}$. Then, the limit \eqref{vnear1} follows from \eqref{nearlimit13integrand}, \eqref{nearuniform13integrand}, the fact that $\widehat{\Gamma}^{(1)}$ is contained in and converges to $\mathfrak{V}_{- E - 1, \infty}$ as $T$ tends to $\infty$, the exponential decay of the kernel $L_{s; \textbf{c}} \big( \widehat{w}, \widehat{w'} \big)$ in $\big| \widehat{w} \big|^3$, and the dominated convergence theorem. 
\end{proof}

Next, we analyze the integral \eqref{vptright} defining $K (w, w')$ when either $w$ or $v$ is not close to $\psi$, that is, when either $w \in \mathcal{C}^{(2)}$ or $v \in \Gamma^{(2)}$. 

Recalling the rescaled kernel $\overline{K}$ defined in \eqref{kbar13}, we obtain the following result, which is the analog of Corollary \ref{kclosekbarc2small}. 

\begin{cor}

\label{nearkclosekbarc2small}

There exist constants $c, C > 0$ such that 
\begin{flalign}
\label{nearkclosekbar}
\Big| \overline{K} \big( \widehat{w}, \widehat{w'} \big) - \widehat{K} \big( \widehat{w}, \widehat{w'} \big) \Big| < C \exp \big( c \big| \widehat{w} \big|^3 - cT \big), 
\end{flalign}

\noindent for all $\widehat{w} \in \widehat{\mathcal{C}}^{(1)}$ and $\widehat{w'} \in \widehat{\mathcal{C}} \cup \mathfrak{W}_{-E, \infty}$, and such that 
\begin{flalign}
\label{nearkbarsmallc2}
\Big| \overline{K} \big( \widehat{w}, \widehat{w'} \big) \Big| < C \exp \big( c \big| \widehat{w} \big| ^3 - cT \big), 
\end{flalign}

\noindent for all $\widehat{w} \in \widehat{\mathcal{C}}^{(2)}$ and $\widehat{w'} \in \widehat{\mathcal{C}} \cup \mathfrak{W}_{-E, \infty}$. 
\end{cor}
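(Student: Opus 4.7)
The plan is to repeat the proof of Corollary \ref{kclosekbarc2small} essentially verbatim, modifying it in two places to account for (i) the translation of the contours by $\sigma E = O(T^{-1/3})$ imposed in Definitions \ref{linearvertexnear}--\ref{vertexnearcontours}, and (ii) the fact that the critical point $\psi$ is now only $O(T^{-1/3})$-close to the poles $q\beta_j$ of the integrand. Neither of these modifications is substantive, so the bulk of the argument is mechanical.

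First, I would establish a near-regime analog of Lemma \ref{rightcv2gammav2exponential}: for $T$ sufficiently large there exists $c_1 > 0$, independent of $T$, such that
\[
\max\Bigl\{ \sup_{w \in \mathcal{C},\, v \in \Gamma^{(2)}} \Re\bigl( G(w) - G(v) \bigr),\ \sup_{w \in \mathcal{C}^{(2)},\, v \in \Gamma} \Re\bigl( G(w) - G(v) \bigr) \Bigr\} < -c_1.
\]
Definitions \ref{linearvertexnear} through \ref{vertexnearcontours} differ from Definitions \ref{linearvertexright} through \ref{vertexrightcontours} only by the translation $\sigma E$, which tends to $0$ as $T \to \infty$. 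Since $\mathcal{C}^{(2)}$ is still required to lie strictly between the level lines $\mathcal{L}_2$ and $\mathcal{L}_3$, and $\Gamma^{(2)}$ strictly between $\mathcal{L}_1$ and $\mathcal{L}_2$, with their distances from these level lines bounded away from $0$ uniformly in $T$, parts \ref{positiverealv} and \ref{negativerealv} of Proposition \ref{linesgv} (or Proposition \ref{linesga}) together with the compactness of the contours yield the bound.

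Second, I would prove a uniform bound on the $q$-Pochhammer ratio: there exists $C > 0$ such that for every $w \in \mathcal{C}$ and $v \in \Gamma$,
\[
\left| \prod_{k=1}^m \frac{(q^{-1}\beta_k^{-1} v;\,q)_{\infty}}{(q^{-1}\beta_k^{-1} w;\,q)_{\infty}} \right| \leq C.
\]
The numerator is bounded above by compactness of $\Gamma$; the denominator is bounded away from $0$ because, by the very choice of $E$, one has $\psi - \sigma E < \min_j q\beta_j$, so $q^{-1}\beta_k^{-1} w$ is uniformly bounded away from $1$ (and, by a Taylor estimate of the type used in \eqref{termsintegrand13}, away from any other pole $q^{-n}$).

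With these two ingredients, the template of \eqref{kclosekbar1} and \eqref{kbarsmallc21} from the proof of Corollary \ref{kclosekbarc2small} applies. For \eqref{nearkclosekbar}, the difference $\overline{K}(\widehat{w},\widehat{w}') - \widehat{K}(\widehat{w},\widehat{w}')$ is (up to rescaling) the integral of the integrand on the right side of \eqref{vptright} over $v \in \Gamma^{(2)}$. The exponential factor contributes $e^{-c_1 T}$ by Step 1; the $q$-Pochhammer factor contributes at most $C$ by Step 2; the sine sum is handled as in the fourth and fifth lines of \eqref{termsintegrand13}; the factor $(v/w)^{sf_\eta T^{1/3}}$ is polynomial in $T$; and compactness of $\Gamma^{(2)}$ controls the $v$-integration. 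Since $|\widehat{w}| = O(T^{1/3})$ for $\widehat{w} \in \widehat{\mathcal{C}}$, the conclusion is naturally packaged as $C\exp(c|\widehat{w}|^3 - cT)$ with $c$ small enough that $c|\widehat{w}|^3 \ll c_1 T$. The bound \eqref{nearkbarsmallc2} follows in the same way, now integrating $v$ over the full contour $\Gamma$ and restricting $w$ to $\mathcal{C}^{(2)}$; the bound $|w'-v| \geq \sigma$ on the denominator, which contributes a harmless $\sigma^{-1} = O(T^{1/3})$ factor, is still available as in \eqref{kbarsmallc21}.

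The main obstacle, though minor, is verifying that the $\sigma E$-translation of the contours does not interfere with the uniform gap from the level lines invoked in Step 1. This is handled by the observation that $\mathcal{C}^{(2)}$ and $\Gamma^{(2)}$ are separated from $\psi$ by the fixed distance $\varepsilon > 0$ (independent of $T$), whereas the translation is $O(T^{-1/3})$; thus, for $T$ sufficiently large, the shift is well within the uniform buffer from the level lines guaranteed by Definition \ref{curvedvertexnear}, and Step 1 is unaffected.
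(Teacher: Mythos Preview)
Your approach is correct and matches the paper's own treatment, which simply says the proof ``is very similar to that of Corollary~\ref{kclosekbarc2small} and is thus omitted.'' The modular breakdown into (i) a near-regime analogue of Lemma~\ref{rightcv2gammav2exponential} and (ii) control of the $q$-Pochhammer ratio is exactly right.

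There is, however, one slip in Step~2. You claim that the ratio $\prod_{k=1}^m (q^{-1}\beta_k^{-1}v;q)_\infty / (q^{-1}\beta_k^{-1}w;q)_\infty$ is bounded by a constant $C$ independent of $T$ for all $w\in\mathcal{C}$, because ``$q^{-1}\beta_k^{-1}w$ is uniformly bounded away from $1$.'' This is not true when $w\in\mathcal{C}^{(1)}$: the poles $q\beta_k$ lie at distance $O(\sigma)$ from $\psi$, and the ray $\mathfrak{W}_{\psi-\sigma E,\varepsilon}$ passes within distance of order $\sigma(E-c_k)$ of $q\beta_k$. Hence $|1-q^{-1}\beta_k^{-1}w|$ can be as small as a constant multiple of $\sigma=O(T^{-1/3})$, and the ratio can be as large as $O(T^{m/3})$.

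The fix is trivial and does not affect the conclusion: replace your uniform bound $C$ by a polynomial bound $CT^{m/3}$ in the case $w\in\mathcal{C}^{(1)}$ (relevant for \eqref{nearkclosekbar}), and observe that this polynomial factor is absorbed by the exponential $e^{-c_1 T}$ from Step~1. For \eqref{nearkbarsmallc2} your bound is actually fine as stated, since $w\in\mathcal{C}^{(2)}$ is bounded away from $\psi$ (hence from every $q\beta_k$) by the fixed distance $\varepsilon$, so the denominator is genuinely bounded away from zero uniformly in $T$.
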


The proof of this corollary is very similar to that of Corollary \ref{kclosekbarc2small} and is thus omitted. 

Now we can use Corollary \ref{determinantlimitkernels}, Lemma \ref{nearvgamma1tildev}, and Corollary \ref{nearkclosekbarc2small} to establish the second parts of Proposition \ref{processdeterminant} and Proposition \ref{modeldeterminant}. 

\begin{proof}[Proof of Part 2 of Proposition \ref{processdeterminant} and Proposition \ref{modeldeterminant}]

This will be similar to the proof of the first parts of Proposition \ref{processdeterminant} and Proposition \ref{modeldeterminant}, given in Section \ref{ExponentialVertexRight}. In particular, it will follow from an application of Corollary \ref{determinantlimitkernels} to the sequence of kernels $\{ \overline{K} (\widehat{w}, \widehat{w'}) \}$, the sequence of contours $\{ \mathfrak{W}_{-E, \infty} \cup \widehat{\mathcal{C}}^{(2)} \}$, and the kernel $L_{s; \textbf{c}} (\widehat{w}, \widehat{w'})$ (given by \eqref{definitionls}).

The verification of the conditions of that corollary in this setting is similar to the verification provided in the proof of the first parts of Proposition \ref{processdeterminant} and Proposition \ref{modeldeterminant} in Section \ref{ExponentialVertexRight}. Indeed, the existence of the dominating function $\textbf{K}$ (satisfying the thre conditions of Corollary \ref{determinantlimitkernels}) follows from the uniform estimates \eqref{vnear71}, \eqref{nearkclosekbar}, and \eqref{nearkbarsmallc2}. 

Furthermore, the convergence $\lim_{N \rightarrow \infty} \textbf{1}_{\widehat{w}, \widehat{w'} \in \widehat{\mathcal{C}}} \overline{K} \big( \widehat{w}, \widehat{w'} \big) = \textbf{1}_{\widehat{w}, \widehat{w'} \in \mathfrak{W}_{-E, \infty}} L_{s; \textbf{c}} \big( \widehat{w}, \widehat{w'} \big)$, for each $\widehat{w}, \widehat{w'} \in \widehat{\mathcal{C}} \cup \mathfrak{W}_{-E, \infty}$ follows from \eqref{vright1} and \eqref{kclosekbar} (when $\widehat{w} \in \widehat{\mathcal{C}}^{(1)}$), and from \eqref{kbarsmallc2} and the exponential decay of $L_{s; \textbf{c}} \big( \widehat{w}, \widehat{w'} \big)$ in $\big| \widehat{w} \big|^3$ (when $\widehat{w} \in \widehat{\mathcal{C}}^{(2)}$ or $\widehat{w} \in \mathfrak{W}_{-E, \infty} \setminus\widehat{\mathcal{C}}^{(1)})$.

Now, applying Corollary \ref{determinantlimitkernels} yields that 
\begin{flalign}
\label{determinant4kernelnearc1c}
 \displaystyle\lim_{T \rightarrow \infty} \det \big( \Id + K \big)_{L^2 (\mathcal{C})} = \displaystyle\lim_{T \rightarrow \infty} \det \big( \Id + \overline{K} \big)_{L^2 (\widehat{\mathcal{C}})} = \det \big( \Id + L_{s; \textbf{c}} \big)_{L^2 (\mathfrak{W}_{-E, \infty})}, 
\end{flalign}

\noindent where we have used \eqref{rightvdeterminantkernelbar13} to deduce the first identity. 

Now, it is known (see, for instance, Lemma 8.7 of \cite{FEF}) that $\det \big( \Id + L_{s; \textbf{c}} \big)_{\mathfrak{W}_{-E, \infty}} = F_{\BBP; \textbf{c}} (s)$. Thus, we deduce from \eqref{determinant4kernelnearc1c} that 
\begin{flalign}
\label{determinant6kernelnearc1c}
\displaystyle\lim_{T \rightarrow \infty} \det \big( \Id + K \big)_{L^2 (\mathcal{C})} = F_{\BBP; \textbf{c}} (s). 
\end{flalign}

\noindent Applying \eqref{determinant6kernelnearc1c} in the case $K = A$ yields the second part of Proposition \ref{processdeterminant} and applying \eqref{determinant6kernelnearc1c} in the case $K = V$ yields the second part of Proposition \ref{modeldeterminant}. 
\end{proof}

\section{Gaussian Fluctuations} 

\label{Fluctuations12}

We now turn to the third parts of Proposition \ref{processdeterminant} and Proposition \ref{modeldeterminant}, whose proofs will be partly similar to those of the first two parts discussed previously. In Section \ref{VertexLeft}, we select contours $\mathcal{C}_K$ and $\Gamma_K$ that will be suitable for saddle point analysis. In Section \ref{KernelLeft}, we use these contours to implement this analysis, which will conclude the proofs of Proposition \ref{processdeterminant} and Proposition \ref{modeldeterminant}.

\subsection{The Contours \texorpdfstring{$\mathcal{C}_K$}{} and \texorpdfstring{$\Gamma_K$}{} for \texorpdfstring{$T^{1 / 2}$}{} Fluctuations} 

\label{VertexLeft}

Observe that $m_{\eta; K}$ and $f_{\eta; K}$ from the first and second parts of Proposition \ref{processdeterminant} and Proposition \ref{modeldeterminant} have now changed to $m_{\eta; K}'$ and $f_{\eta; K}'$, respectively. Thus, the function $G_K (z)$ (defined by \eqref{ga13} and \eqref{gv13} in the cases $K = A$ and $K = V$, respectively) changes, so we will change the contours $\mathcal{C}_K$ and $\Gamma_K$ as well. 

The goal of this section is to explain how to do this, which we will discuss further in Section \ref{ContoursVertexLeft}. However, before describing these contours, let us see how the identities in Section \ref{KExponentialRight} change with our new $m_{\eta; K}'$ and $f_{\eta; K}'$.

\subsubsection{A New Exponential Form for the Kernel \texorpdfstring{$K_{\zeta}^{(p_T)}$}{}}

\label{KExponentialLeft}

As in Section \ref{ckgammakcontours}, we rewrite the kernel $K (w, w') = K^{(p_T)} (w, w')$ given in \eqref{kvw1} through the identity 
\begin{flalign}
\begin{aligned}
\label{vptleft} 
K (w, w') = \displaystyle\frac{1}{2 \textbf{i}\log q} \displaystyle\sum_{j \in \mathbb{Z}} & \displaystyle\oint_{\Gamma_V} \displaystyle\frac{ \exp \left( T \big( G_K (w) - G_K (v) \big) \right) }{\sin \big( \pi (\log q)^{-1} (2 \pi \textbf{i}j + \log v - \log w) \big)} \displaystyle\frac{(q^{-1} \beta^{-1} v; q)_{\infty}^m}{(q^{-1} \beta^{-1} w; q)_{\infty}^m} \\
& \times \left( \displaystyle\frac{v}{w} \right)^{s f_{\eta; K}' T^{1 / 2}} \displaystyle\frac{dv}{v (w' - v)} ,
\end{aligned} 
\end{flalign}

\noindent where we have used the fact that all $b_j$ are equal to $b$ in the third parts of Proposition \ref{processdeterminant} and Proposition \ref{modeldeterminant}. In \eqref{vptleft}, $G_K (z)$ and $f_{\eta; K}'$ depend on $K$. Explicitly,  
\begin{flalign}
\label{ga12}
G_A (z) = \displaystyle\frac{q}{z + q} + \eta \log (z + q) + m_{\eta; A}' \log z; \qquad f_{\eta; A}' = \chi^{1 / 2} (\theta - \eta)^{1 / 2};
\end{flalign}
 
\begin{flalign}
\label{gv12}
G_V (z) = \eta \log (\kappa^{-1} z + q) - \log (z + q) + m_{\eta; V}' \log z; \qquad f_{\eta; V}' = \chi^{1 / 2} \left( 1 - \theta^{-1} \eta \right)^{1 / 2}. 
\end{flalign}

\noindent In \eqref{ga12} and \eqref{gv12}, 
\begin{flalign}
\label{m12}
m_{\eta; A}' = b(1 - b) - b \eta = \chi - b \eta; \qquad m_{\eta; V}' = b - \displaystyle\frac{b \eta}{b + \kappa (1 - b)} = b - b \Lambda^{-1} \eta, 
\end{flalign} 

\noindent where we recall the definition of $\Lambda$ from \eqref{processlocationtransition}. As in Section \ref{KExponentialRight}, we require the critical points of $G_K$. We find that 
\begin{flalign}
\label{derivativegaleft} 
G_A' (z) & = \left( \displaystyle\frac{(1 - b) (b + \eta) }{z (z + q)^2} \right) (z - \psi) ( z - \vartheta_A ); 
\end{flalign}

\begin{flalign}
\label{derivativegvleft} 
G_V' (z) & = \displaystyle\frac{1 - b}{z (z + q) (z + q \kappa )} \left( \Lambda^{-1} \kappa \eta - 1 \right) (z - \psi) ( z - \vartheta_V ), 
\end{flalign} 

\noindent where $\psi = \psi_A = \psi_V = q \beta$ and 
\begin{flalign}
\label{thetava}
\vartheta_A = q \left( \displaystyle\frac{1 - b - \eta}{b + \eta} \right); \qquad \vartheta_V = q \kappa \left( \displaystyle\frac{\Lambda - \eta}{\kappa \eta - \Lambda} \right). 
\end{flalign} 

In both cases, $\psi = q \beta$ is a critical point of $G_K$. Furthermore, observe that 
\begin{flalign*}
G_A'' (\psi) & = \displaystyle\frac{(1 - b)^3 (2b - 1 + \eta) }{q^2 b} = - \displaystyle\frac{f_{\eta; A}'^2}{\psi^2}; \qquad G_V'' (\psi) = \displaystyle\frac{(1 - b)^3}{q^2 b \Lambda^2} \left( \kappa \eta - \Lambda^2 \right) = - \displaystyle\frac{f_{\eta; V}'^2}{\psi^2}. 
\end{flalign*}

\noindent Thus, $G_K'' (\psi) = - \psi^{-2} f_{\eta; K}^2$ for $K \in \{ V, A \}$, which implies by a Taylor expansion that
\begin{flalign}
\begin{aligned}
\label{gzpsileft}
G_K (z) - G_K (\psi) & = - \displaystyle\frac{1}{2} \left( \displaystyle\frac{f_{\eta; K} (z - \psi)}{\psi} \right)^2 + R_K (z) \\
& = - \displaystyle\frac{1}{2} \left( \displaystyle\frac{f_{\eta; K} (z - \psi)}{\psi} \right)^2 + \mathcal{O} \big( |z - \psi|^3 \big), \qquad \qquad \text{as} \quad |z - \psi| \rightarrow 0. 
\end{aligned}
\end{flalign}

\noindent Here, $R_K (z) = G_K (z) - G_K (\psi) + \big( \psi^{-1} f_{\eta; K} (z - \psi) \big)^2 / 2$.

\subsubsection{New Choice of Contours \texorpdfstring{$\mathcal{C}_K$}{} and \texorpdfstring{$\Gamma_K$}{}} 

\label{ContoursVertexLeft}

The goal of this section is to exhibit contours $\mathcal{C}_K$ and $\Gamma_K$ satisfying the conditions of Proposition \ref{asymptoticheightvertex} and such that $\Re \big( G_K (w) - G_K (v) \big) < 0$ for $w \in \mathcal{C}_K$ and $v \in \Gamma_K$ both away from $\psi$. When $K = A$, we will only be able to do this when $\eta \ge -b$; when $K = V$, we will only be able to do this when $\eta \ge \kappa^{-1} \Lambda = \Lambda^{-1} \theta$. 

Throughout, we denote 
\begin{flalign}
\label{thetainfiniteva}
\alpha_A = -b; \qquad \alpha_V = \kappa^{-1} b + 1 - b = \kappa^{-1} \Lambda, 
\end{flalign}

\noindent as the values of $\eta$ at which $\vartheta_A$ and $\vartheta_V$ are infinite. According to our restriction on $\eta$, we have that $\eta > \alpha_K$; this implies that $\vartheta_K > q \beta$, for each $K \in \{ V, A \}$. In what follows, we will simplify notation by omitting the subscript $K$. 

As in Section \ref{ckgammakcontours}, we will take $\mathcal{C}$ and $\Gamma$ to follow level lines of the equation $\Re G(z) = G(\psi)$. However, since the function $G$ is different from the one used in Section \ref{ckgammakcontours}, we must perform a new analysis of these level lines; see Figure \ref{l1l2kleft}. 

Specifically, we require the following proposition, which we will establish in Section \ref{ContoursProcessLeft}. 

\begin{prop}

\label{l1l2left}

Assume that $\eta > \alpha$. Then, there exist three simple, closed curves, $\mathcal{L}_1 = \mathcal{L}_{1; K} $, $\mathcal{L}_2 = \mathcal{L}_{2; K} $, and $\mathcal{L}_3 = \mathcal{L}_{3; K}$ satisfying properties \ref{vzpsij}, \ref{vzpsiall}, \ref{anglev}, \ref{qkappaq0v}, \ref{positiverealv}, and \ref{negativerealv} of Proposition \ref{linesgv} (in the case $K = V$) and Proposition \ref{linesga} (in the case $K = A$), with properties \ref{containmentv} and \ref{psianglesv} respectively replaced  by the following. 

\begin{enumerate}

\item[4.]{ \label{containmentvleft} We have that $\mathcal{L}_1 \cap \mathcal{L}_2 = \{ \psi \}$, that $\mathcal{L}_2$ and $\mathcal{L}_3$ are disjoint, and that $\mathcal{L}_1$ and $\mathcal{L}_3$ are disjoint. Furthermore, $\mathcal{L}_1 \setminus \{ \psi \}$ is contained in the interior of $\mathcal{L}_2$, which is contained in the interior of $\mathcal{L}_3$. }

\item[6.]{ \label{psianglesvleft} The level line $\mathcal{L}_1$ meets the positive real axis (at $\psi$) at angles $3 \pi / 4$ and $- 3 \pi / 4$, and the level line $\mathcal{L}_2$ meets the positive real axis (at $\psi$) at angles $\pi / 4$ and $- \pi / 4$. }

\end{enumerate}

\end{prop}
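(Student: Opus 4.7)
The plan is to follow the framework developed for Proposition \ref{linesga} in Section \ref{PropositionLinesProcessRight}, but to account for the fact that $\psi$ is now a non-degenerate critical point of $G_K$ (with $G_K''(\psi) \ne 0$), and that there is a second real critical point $\vartheta_K$ on the positive real axis. I will first establish three preliminary facts, analogous to Lemma \ref{realgarightbounded} and Lemma \ref{sixzgazgapsi}. To see that the level set $\mathcal{S} = \{z \in \mathbb{C} : \Re G_K(z) = G_K(\psi)\}$ is bounded, I will examine the leading behavior of $\Re G_K(z)$ as $|z| \to \infty$: using \eqref{m12}, the coefficient of $\log|z|$ equals $(1-b)(\eta + b)$ when $K = A$ and $(1 - b\Lambda^{-1})\eta - (1-b)$ when $K = V$, both of which are strictly positive precisely because $\eta > \alpha_K$ (see \eqref{thetainfiniteva}). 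The line-intersection bound proceeds as in Lemma \ref{sixzgazgapsi} by computing $\partial_\omega \Re G_K(\omega z_0)$, whose numerator is now a rational function with a degree-four polynomial numerator coming from $(z-\psi)(z-\vartheta_K)$ and the denominator factors in \eqref{derivativegaleft}--\eqref{derivativegvleft}; this yields at most six real solutions $\omega$ to $\Re G_K(\omega z_0) = G_K(\psi)$, which is the key input for the star-shaped property.

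Next I will locate the singular structure of $\mathcal{S}$. The function $\Re G_K$ is harmonic on $\mathbb{C}$ minus $\{0, -q, \psi, \vartheta_K\}$ (and also $-q\kappa$ when $K = V$), and by the implicit function theorem $\mathcal{S}$ is a smooth one-manifold away from these points. The Taylor expansion \eqref{gzpsileft} shows that exactly four branches exit $\psi$ at angles $\pm \pi/4$ and $\pm 3\pi/4$, alternating in sign of $\Re(G_K - G_K(\psi))$. At the singularity $-q$, the term $q/(z+q)$ in $G_A$ (respectively $\log(z+q)$ in $G_V$) forces two branches to exit at angles $\pm \pi/2$. The other critical point $\vartheta_K$ satisfies $G_K(\vartheta_K) < G_K(\psi)$: from \eqref{derivativegaleft}--\eqref{derivativegvleft} one sees $G_K$ is strictly decreasing on $(\psi, \vartheta_K)$, so $\vartheta_K \notin \mathcal{S}$. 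On the positive real axis, $G_K$ rises from $-\infty$ at $0$ to its maximum at $\psi$, decreases to $\vartheta_K$, and then tends to $+\infty$; thus there is a unique point $z_4 > \vartheta_K$ with $G_K(z_4) = G_K(\psi)$. On the negative real axis, the derivative analysis shows $G_K$ is monotonic on $(-q,0)$ and on $(-\infty,-q)$, yielding unique crossings $z_1 \in (-q,0)$ and $z_2 \in (-\infty, -q)$ (with an analogous additional crossing in the case $K = V$, accommodating $-q\kappa$).

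With this local picture in hand, I will assemble the curves $\mathcal{L}_1, \mathcal{L}_2, \mathcal{L}_3$. By complex conjugate symmetry of $G_K$, the four branches at $\psi$ group into conjugate pairs: the $\pm 3\pi/4$ branches, following the left-going directions, must close through $z_1$ (the only available crossing on $(-q,0)$) to form the innermost simple closed curve $\mathcal{L}_1$; the $\pm \pi/4$ branches close through the arcs connecting to the two branches emanating from $-q$ (in the $K = A$ case, passing through $-q$ itself; in the $K = V$ case, looping back before reaching $-q\kappa$), forming $\mathcal{L}_2$. The remaining component of $\mathcal{S}$, which contains $z_2$ and $z_4$ and is disjoint from $\psi$, forms the outermost closed curve $\mathcal{L}_3$. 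The containment relations in property 4$'$ of the statement follow from the positions $z_2 < -q < z_1 < 0 < \psi < z_4$ on the real axis combined with the line-intersection bound (every ray from the origin meets each $\mathcal{L}_j$ at most twice, and hence exactly twice). The new angle assignments in property 6$'$ are a direct readout of the Taylor expansion \eqref{gzpsileft}, and the sign properties inherited from Proposition \ref{linesgv} (or \ref{linesga}) follow from the maximum principle applied to $\Re G_K$ on the open regions bounded by consecutive $\mathcal{L}_j$ together with the behavior $\Re G_K \to \pm \infty$ near $-q$.

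The main obstacle will be verifying that the four branches at $\psi$ pair up into exactly the claimed two closed curves, rather than, say, the $\pi/4$ branch connecting to the $3\pi/4$ branch, and that $\mathcal{L}_3$ forms a single simple closed curve rather than several components. For the former, conjugate symmetry forces the $\pi/4$ branch to pair with its reflection $-\pi/4$ (and similarly $\pm 3\pi/4$), since any connection between $\pi/4$ and $3\pi/4$ would produce a second component violating this symmetry. For the latter, the line-intersection bound prohibits more than six intersections of $\mathcal{S}$ with any line, which combined with the enumeration of branches at $\psi$ and $-q$ leaves no room for additional components beyond $\mathcal{L}_1$, $\mathcal{L}_2$, and $\mathcal{L}_3$. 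The case $K = V$ requires additionally tracking that the level curve near $-q\kappa$ behaves analogously to that near $-q$, so that $\mathcal{L}_3$ encloses both singularities as required by property \ref{qkappaq0v}.
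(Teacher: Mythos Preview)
Your approach is essentially the paper's: boundedness via the leading $\log|z|$ coefficient, a six-intersection lemma for lines through the origin, and assembly of the components using the real-axis crossings together with the local expansion \eqref{gzpsileft} at $\psi$. Two points need correction, however.

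First, your claim that ``the term $\log(z+q)$ in $G_V$ forces two branches to exit at angles $\pm\pi/2$'' from $-q$ is wrong for $K=V$. There the singular part is $-\log(z+q)$, so $\Re G_V(z)\to +\infty$ as $z\to -q$ from every direction; no level-curve branches emanate from $-q$. In the $K=V$ case the curve $\mathcal{L}_2$ is a smooth simple closed curve through $\psi$ and a point $z_3\in(-q\kappa,-q)$, with $-q$ in its interior (this is exactly property \ref{qkappaq0v} of Proposition \ref{linesgv}). Your parenthetical ``looping back before reaching $-q\kappa$'' is closer to the truth, but it is inconsistent with your earlier branch count at $-q$. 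For $K=A$ your description is correct: the essential-type singularity $q/(z+q)$ does produce two branches at $\pm\pi/2$, and $-q$ lies on $\mathcal{L}_2$.

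Second, your pairing argument (``any connection between $\pi/4$ and $3\pi/4$ would produce a second component violating this symmetry'') is insufficient: if the $\pi/4$ and $3\pi/4$ branches joined in the upper half-plane, conjugate symmetry would simply produce a reflected loop below, and nothing is violated. What actually rules this out is the maximum principle: such a loop would bound a region in the upper half-plane containing no singularity of $\Re G_K$ (all of $0,-q,-q\kappa$ are real), contradicting harmonicity. The paper instead argues by enumerating the real-axis crossings (five for $K=V$, four for $K=A$) and observing that each closed component, being conjugate-symmetric and containing $0$, must account for exactly two of them; the combinatorics then force the pairing.
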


\begin{rem}

When $\eta = \alpha$ or $\eta$ is slightly smaller than $\alpha$, the level line $\mathcal{L}_3$ from Proposition \ref{l1l2left} does not exist; this alone does not pose an issue for asymptotic analysis, since we will chose the contours $\mathcal{C}$ and $\Gamma$ to be very close to $\mathcal{L}_2$, instead of $\mathcal{L}_3$ (see Figure \ref{l1l2kleft}). More troublesome for us is that the contour $\mathcal{L}_2$ does not remain star-shaped, and also that it can contain $-q \kappa$ (in the case $K = V$), for $\eta$ sufficiently small. Then, we cannot choose $\mathcal{C}$ and $\Gamma$ to closely follow $\mathcal{L}_2$ without violating the restrictions imposed by Proposition \ref{asymptoticheightvertex}. 
\end{rem}

\begin{figure}[t] 
 \begin{minipage}{0.5\linewidth} 
  \centering

\begin{tikzpicture}[
      >=stealth,
      auto,
      style={
        scale = .8
      }
			]

			\draw[<->, black	] (0, -3.5) -- (0, 3.5) node[black, above = 0] {$\Im z$};
			\draw[<->, black] (-4.5, 0) -- (3.3, 0) node[black, right = 0] {$\Re z$};
			
			\draw[->,black, thick] (.8, -.6) -- (.8, .6);
			\draw[black, thick] (.8, .6) arc (35:180:1.1) node [black, above = 14, right = 10 	] {$\Gamma_V$}; 
			\draw[black, thick] (.8, -.6) arc (-35:-180:1.1);
			
			\draw[->, black, thick] (.9, 0) -- (1.48847, .11754); 
			\draw[->, black, thick] (.9, 0) -- (1.48847, -.11754); 
			\draw[black, thick] (1.48847, .11754) arc (4:180:1.5931)  node [black, above = 40, right = 10 	] {$\mathcal{C}_V$}; 
			\draw[black, thick] (1.48847, -.11754) arc (-4:-180:1.5931);

			\path[draw, dashed] (1, 0) -- (.96, .04) -- (.91, .08) -- (.85, .12) -- (.78, .16) -- (.69, .2) -- (.6, .23) -- (.51, .25) -- (.42, .27) -- (.33, .28) -- (.24, .27) -- (.15, .26) -- (.07, .23) -- (0, .2) -- (-.04, .16) -- (-.08, .14) -- (-.11, .07) -- (-.12, .02) -- (-.13, 0);
			
			\path[draw, dashed] (1, 0) -- (.96, -.04) -- (.91, -.08) -- (.85, -.12) -- (.78, -.16) -- (.69, -.2) -- (.6, -.23) -- (.51, -.25) -- (.42, -.27) -- (.33, -.28) -- (.24, -.27) -- (.15, -.26) -- (.07, -.23) -- (0, -.2) -- (-.04, -.16) -- (-.08, -.14) -- (-.11, -.07) -- (-.12, -.02) -- (-.13, 0);
			
			\path[draw, dashed] (1, 0) -- (1.05, .05) -- (1.07, .1) -- (1.11, .15) -- (1.14, .2) -- (1.15, .25) -- (1.17, .3) -- (1.18, .35) -- (1.18, .4) -- (1.18, .45) -- (1.13, .7) -- (1.11, .75) -- (1.08, .8) -- (1.05, .85) -- (1, .93) -- (.95, .98) -- (.91, 1.03) -- (.84, 1.09) -- (.76, 1.15) -- (.69, 1.2) -- (.6, 1.26) -- (.54, 1.29) -- (.46, 1.32) -- (.37, 1.35) -- (.29, 1.37) -- (.22, 1.39) -- (.12, 1.4) -- (.04, 1.41) -- (0, 1.41) -- (-.11, 1.4) -- (-.2, 1.39) -- (-.31, 1.37) -- (-.39, 1.35) -- (-.49, 1.32) -- (-.58, 1.29) -- (-.66, 1.25) -- (-.73, 1.22) -- (-.79, 1.18) -- (-.85, 1.13) -- (-.92, 1.08) -- (-.96, 1.03) -- (-1.03, .99) -- (-1.07, .92) -- (-1.12, .87) -- (-1.17, .8) -- (-1.21, .74) -- (-1.25, .66) -- (-1.29, .58) -- (-1.33, .5) -- (-1.35, .43) -- (-1.37, .36) -- (-1.39, .27) -- (-1.4, .2) -- (-1.41, .13) -- (-1.41, .06) -- (-1.42, 0);
			
			\path[draw, dashed] (1, 0) -- (1.05, -.05) -- (1.07, -.1) -- (1.11, -.15) -- (1.14, -.2) -- (1.15, -.25) -- (1.17, -.3) -- (1.18, -.35) -- (1.18, -.4) -- (1.18, -.45) -- (1.13, -.7) -- (1.11, -.75) -- (1.08, -.8) -- (1.05, -.85) -- (1, -.93) -- (.95, -.98) -- (.91, -1.03) -- (.84, -1.09) -- (.76, -1.15) -- (.69, -1.2) -- (.6, -1.26) -- (.54, -1.29) -- (.46, -1.32) -- (.37, -1.35) -- (.29, -1.37) -- (.22, -1.39) -- (.12, -1.4) -- (.04, -1.41) -- (0, -1.41) -- (-.11, -1.4) -- (-.2, -1.39) -- (-.31, -1.37) -- (-.39, -1.35) -- (-.49, -1.32) -- (-.58, -1.29) -- (-.66, -1.25) -- (-.73, -1.22) -- (-.79, -1.18) -- (-.85, -1.13) -- (-.92, -1.08) -- (-.96, -1.03) -- (-1.03, -.99) -- (-1.07, -.92) -- (-1.12, -.87) -- (-1.17, -.8) -- (-1.21, -.74) -- (-1.25, -.66) -- (-1.29, -.58) -- (-1.33, -.5) -- (-1.35, -.43) -- (-1.37, -.36) -- (-1.39, -.27) -- (-1.4, -.2) -- (-1.41, -.13) -- (-1.41, -.06) -- (-1.42, 0);

			\path[draw, dashed] (2.61, 0) -- (2.66, .15) -- (2.75, .3) -- (2.83, .45) -- (2.88, .6) -- (2.92, .75) -- (2.95, .9) -- (2.94, 1.05) -- (2.92, 1.2) -- (2.9, 1.35) -- (2.84, 1.5) -- (2.77, 1.65) -- (2.68, 1.8) -- (2.59, 1.95) -- (2.44, 2.1) -- (2.27, 2.25) -- (2.07, 2.4) -- (1.84, 2.55) -- (1.49, 2.7) -- (1.06, 2.85) -- (.2, 3) -- (-.07, 3.01) -- (-.32, 3) -- (-.5, 2.985) -- (-.7, 2.97) -- (-.9, 2.94) -- (-1.1, 2.91) -- (-1.3, 2.85) -- (-1.5, 2.79) -- (-1.7, 2.73) -- (-1.76, 2.7) -- (- 2.12, 2.55) -- (-2.42, 2.4) -- (-2.64, 2.25) -- (-2.85, 2.1) -- (-3.03, 1.95) -- (-3.18, 1.8) -- (-3.29, 1.65) -- (-3.43, 1.5) -- (-3.53, 1.35) -- (-3.63, 1.2) -- (-3.7, 1.05) -- (-3.77, .9) -- (-3.83, .75) -- (-3.87, .6) -- (-3.9, .45) -- (-3.93, .3) -- (-3.94, .15) -- (-3.95, 0);
			
			\path[draw, dashed] (2.61, 0) -- (2.66, -.15) -- (2.75, -.3) -- (2.83, -.45) -- (2.88, -.6) -- (2.92, -.75) -- (2.95, -.9) -- (2.94, -1.05) -- (2.92, -1.2) -- (2.9, -1.35) -- (2.84, -1.5) -- (2.77, -1.65) -- (2.68, -1.8) -- (2.59, -1.95) -- (2.44, -2.1) -- (2.27, -2.25) -- (2.07, -2.4) -- (1.84, -2.55) -- (1.49, -2.7) -- (1.06, -2.85) -- (.2, -3) -- (-.07, -3.01) -- (-.32, -3) -- (-.5, -2.985) -- (-.7, -2.97) -- (-.9, -2.94) -- (-1.1, -2.91) -- (-1.3, -2.85) -- (-1.5, -2.79) -- (-1.7, -2.73) -- (-1.76, -2.7) -- (- 2.12, -2.55) -- (-2.42, -2.4) -- (-2.64, -2.25) -- (-2.85, -2.1) -- (-3.03, -1.95) -- (-3.18, -1.8) -- (-3.29, -1.65) -- (-3.43, -1.5) -- (-3.53, -1.35) -- (-3.63, -1.2) -- (-3.7, -1.05) -- (-3.77, -.9) -- (-3.83, -.75) -- (-3.87, -.6) -- (-3.9, -.45) -- (-3.93, -.3) -- (-3.94, -.15) -- (-3.95, 0);

			\filldraw[fill=black, draw=black] (-.8, 0) circle [radius=.03] node [black, below = 0] {$-q$};
			
			\filldraw[fill=black, draw=black] (-2.2, 0) circle [radius=.03] node [black, below = 0] {$-q \kappa$};

\end{tikzpicture}

 \end{minipage}%
 \begin{minipage}{0.5\linewidth} 
  \centering 
	
	\begin{tikzpicture}[
      >=stealth,
			scale=.4
			]
			
			\draw[<->] (-4, 0) -- (4, 0);
			\draw[<->] (0, -4) -- (0, 4);
			
			\draw[-,black,very thick] (1, .8283) -- (3, 1.656) node[black, above = 0] {$\mathfrak{U}_{-1, \infty}$};
			\draw[->,black,very thick] (-1, 0) -- (1, .8283);
			\draw[-,black,very thick] (3, -1.656) -- (1, -.8283);
			\draw[->,black,very thick] (-1, 0) -- (1, -.8283);

			\draw[->,black,very thick] (-3, -2.5) -- (-3, -1.25);
			\draw[->,black,very thick] (-3, -1.25) -- (-3, 1.25);
			\draw[-,black,very thick] (-3, 1.25) -- (-3, 2.5) node[black, above = 0] {$\mathfrak{X}_{-2, \infty}$};
\end{tikzpicture}

 \end{minipage} 

\caption{\label{l1l2kleft} Above and to the left, the three level lines $\mathcal{L}_1$, $\mathcal{L}_2$, and $\mathcal{L}_3$ are depicted as dashed curves; the contours $\mathcal{C}$ and $\Gamma$ are depicted as solid curves and are labeled. Shown above and to the right are the contours $\mathfrak{U}_{-1, \infty}$ and $\mathfrak{X}_{-2, \infty}$. }
\end{figure}
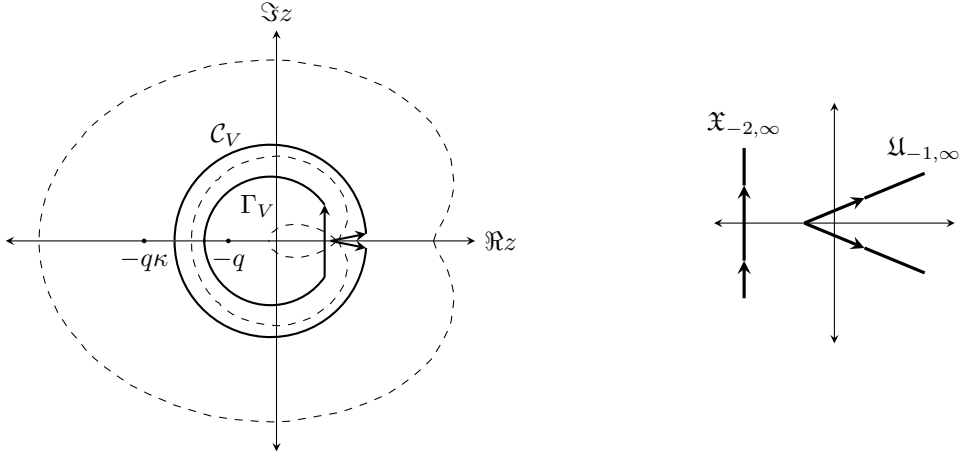
 
Assuming this proposition, let us define the contours $\mathcal{C}$ and $\Gamma$. As in Section \ref{ckgammakcontours}, they will each be the union of two contours, a small piecewise linear part near $\psi$ and a large curved part that closely follows the level line $\mathcal{L}_2$. 

Let us be more specific. In view of the fact that the level lines $\mathcal{L}_j$ now pass through $q \beta$ at angles $\pi / 4$, $3 \pi / 4$, $-3 \pi / 4$, and $- \pi / 4$, we must select different linear contours from those given by Definition \ref{crgammar}. Instead, we use the following contours; see Figure \ref{l1l2kleft}. 

\begin{definition}

\label{crgammarleft}

For a real number $r \in \mathbb{R}$ and a positive real number $\varepsilon > 0$, let $\mathfrak{U}_{r, \varepsilon}$ denote the piecewise linear curve in the complex plane that connects $r + \varepsilon e^{- \pi \textbf{i}/ 8}$ to $r$ to $r + \varepsilon e^{\pi \textbf{i}/ 8}$. Similarly, let $\mathfrak{X}_{r, \varepsilon}$ denote the linear curve in the complex plane that connects $r - i \varepsilon$ to $r + i \varepsilon$. 

\end{definition} 

Now, let us make the following definitions; Definition \ref{linearvertexright} and Definition \ref{curvedvertexright} define the piecewise linear and curved parts of the contours $\mathcal{C}$ and $\Gamma$, respectively. Then, Definition \ref{vertexrightcontours} defines the contours $\mathcal{C}$ and $\Gamma$. 

\begin{definition}

\label{linearvertexleft} 

Let $\mathcal{C}^{(1)} = \mathfrak{U}_{\psi - \psi f_{\eta}'^{-1} T^{-1 / 2}, \varepsilon}$ and $\Gamma^{(1)} = \mathfrak{X}_{\psi - 2 \psi f_{\eta}'^{-1} T^{-1 / 2}, \varepsilon}$, where $\varepsilon$ is chosen to be sufficiently small (independent of $T$) so that the first, second, and fourth properties listed in Definition \ref{linearvertexright} all hold for sufficiently large $T$. The third is replaced by the following. 

\begin{itemize}

\item{We have that $|R\big( \psi^{-1} f_{\eta} (z - \psi) \big)| < |f_{\eta} (z - \psi) / 3 \psi|^2$, for all $z \in \mathcal{C}^{(1)} \cup \Gamma^{(1)}$, where we recall the definition of $R$ from \eqref{gzpsileft}.}

\end{itemize}

Such a positive real number $\varepsilon$ is guaranteed to exist by part \ref{psianglesv} of Proposition \ref{l1l2left}, and also by the estimate \eqref{gzpsileft}. 

\end{definition}

\begin{definition}

\label{curvedvertexleft} 

Let $\mathcal{C}^{(2)}$ denote a positively oriented contour starting from the top endpoint $\psi - \psi f_{\eta}'^{-1} T^{-1 / 2} + \varepsilon e^{\pi \textbf{i}/ 8}$ of $\mathcal{C}^{(1)}$ to the bottom endpoint $\psi - \psi f_{\eta}'^{-1} T^{-1 / 2} + \varepsilon e^{- \pi \textbf{i}/ 8}$ of $\mathcal{C}^{(1)}$, and let $\Gamma^{(2)}$ denote a positively oriented contour starting from the top endpoint $\psi - 2 \psi f_{\eta}'^{-1} T^{-1 / 2} + i \varepsilon$ of $\Gamma^{(1)}$ to the bottom endpoint $\psi - 2 \psi f_{\eta}'^{-1} T^{-1 / 2} - i \varepsilon$ of $\Gamma^{(1)}$, satisfying the five properties listed in Definition \ref{curvedvertexright}. 

Such contours $\mathcal{C}^{(2)}$ and $\Gamma^{(2)}$ are guaranteed to exist by part \ref{anglev} and part \ref{qkappaq0v} of Proposition \ref{l1l2left}. 

\end{definition} 

\begin{definition}

\label{vertexleftcontours}

Set $\mathcal{C} = \mathcal{C}^{(1)} \cup \mathcal{C}^{(2)}$ and $\Gamma = \Gamma^{(1)} \cup \Gamma^{(2)}$. 

\end{definition}

Examples of the contours $\mathcal{C}$ and $\Gamma$ are depicted in Figure \ref{l1l2kleft}. It can be quickly verified that Lemma \ref{rightcagammanear} again holds with the contours $\mathcal{C}$ and $\Gamma$ defined above, meaning that we can use them for the proof of Proposition \ref{processdeterminant} and Proposition \ref{modeldeterminant} in Section \ref{KernelLeft}.

\subsubsection{Proof of Proposition \ref{l1l2left}} 

\label{ContoursProcessLeft}

The goal of this section is to establish Proposition \ref{l1l2left}. The proof of this proposition will be similar to that of Proposition \ref{linesga}. 

To that end, we have the following two lemmas, which are the analogs of Lemma \ref{realgarightbounded} and Lemma \ref{sixzgazgapsi}. In what follows, we omit the subscript $K$ from notation.

\begin{lem}

\label{realgaleftbounded} 

For all $\eta \in (\alpha, \theta)$, we have that $\lim_{z \rightarrow \infty} \big| \Re G(z) \big| = \infty$. In particular, the set of $z \in \mathbb{C}$ for which $\Re G(z) = G (\psi)$ is bounded.

\end{lem}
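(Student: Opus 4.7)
The plan is to reduce the statement to a leading-order asymptotic computation: I will show that in both cases $K\in\{V,A\}$, one has $\Re G_K(z)=c_K\log|z|+O(|z|^{-1})$ as $|z|\to\infty$, where $c_K=c_K(\eta)>0$ whenever $\eta>\alpha_K$. Granting this, the second claim is immediate, since if $|\Re G(z)|\to\infty$ then the level set $\{z:\Re G(z)=G(\psi)\}$, being the preimage of the single real number $G(\psi)$, cannot escape to infinity and is therefore contained in some disk around the origin.

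First I would handle the case $K=A$. From the explicit form \eqref{ga12} we have
\begin{flalign*}
\Re G_A(z)=\Re\!\left(\frac{q}{z+q}\right)+\eta\log|z+q|+m'_{\eta;A}\log|z|=\big(\eta+m'_{\eta;A}\big)\log|z|+O(|z|^{-1}),
\end{flalign*}
using $\log|z+q|=\log|z|+O(|z|^{-1})$ and $\Re(q/(z+q))=O(|z|^{-1})$. Substituting $m'_{\eta;A}=b(1-b)-b\eta$ from \eqref{m12} collapses the coefficient to $\eta+b(1-b)-b\eta=(1-b)(\eta+b)$. Since $b\in(0,1)$ and $\eta>\alpha_A=-b$ by \eqref{thetainfiniteva}, this coefficient is strictly positive, so $\Re G_A(z)\to+\infty$ as $|z|\to\infty$.

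For $K=V$, the analogous expansion of \eqref{gv12} gives
\begin{flalign*}
\Re G_V(z)=\eta\log|\kappa^{-1}z+q|-\log|z+q|+m'_{\eta;V}\log|z|=\big(\eta-1+m'_{\eta;V}\big)\log|z|+O(|z|^{-1}).
\end{flalign*}
Using $m'_{\eta;V}=b-b\Lambda^{-1}\eta$ and $\Lambda-b=\kappa(1-b)$, the coefficient simplifies to
\begin{flalign*}
\eta-1+b-b\Lambda^{-1}\eta=\eta\cdot\frac{\Lambda-b}{\Lambda}-(1-b)=\frac{(1-b)}{\Lambda}\big(\kappa\eta-\Lambda\big).
\end{flalign*}
Since $\Lambda>0$ and $\eta>\alpha_V=\kappa^{-1}\Lambda$ (again by \eqref{thetainfiniteva}), we have $\kappa\eta-\Lambda>0$, and so this coefficient is strictly positive; hence $\Re G_V(z)\to+\infty$ as $|z|\to\infty$ as well.

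The only step requiring any care is making sure the logarithm terms involving $\log|\kappa^{-1}z+q|$ and $\log|z+q|$ can be consolidated into $\log|z|$ up to $O(|z|^{-1})$ corrections uniformly for large $|z|$; this is elementary. The rest is just an algebraic identification of the leading coefficient and a check that the condition $\eta>\alpha_K$ from the hypothesis is exactly what makes that coefficient strictly positive—which is no coincidence, given the definition of $\alpha_K$ in \eqref{thetainfiniteva} as the value of $\eta$ at which the second critical point $\vartheta_K$ from \eqref{thetava} escapes to infinity. Combining the two cases with the observation at the start of the proof then yields both assertions of the lemma.
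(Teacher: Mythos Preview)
Your proof is correct and follows essentially the same approach as the paper: expand $\Re G_K(z)$ as a constant times $\log|z|$ plus lower-order terms, and conclude that $|\Re G_K(z)|\to\infty$. You are in fact more thorough than the paper's own argument, which writes down the coefficient $\eta+m'_{\eta}-1$ but does not explicitly verify its sign; your algebraic reduction showing that the hypothesis $\eta>\alpha_K$ is exactly what forces the leading coefficient to be positive fills that small gap.
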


\begin{proof}

To establish this lemma, one Taylor expands $\Re G_K (z)$ in each case $K = V$ and $K = A$. Here, we assume that $K = V$, for the case $K = A$ is entirely analogous. Then, we obtain  
\begin{flalign*}
\Re G (z) = \eta \log |z + q \kappa| - \log |z + q| + m_{\eta}' \log |z| = (\eta + m_{\eta}' - 1) \log |z| +\mathcal{O} \left( |z|^{-1} \right). 
\end{flalign*}

\noindent Thus, $\big| \Re G (z) \big|$ tends to $\infty$ as $|z|$ tends to $\infty$; this implies the first, and hence also the second, statement of the lemma. 
\end{proof}

\begin{lem}

\label{sixzgazgapsileft}

Let $\ell \subset \mathbb{C}$ be any line through $0$. There exist at most $6$ complex numbers $z \in \ell$ such that $\Re G(z) = G(\psi)$. 

Moreover, if $K = A$ and $\ell$ is the real axis, then there exist only four such $z$; one is $z = \psi$, one is in $(\vartheta, \infty)$, one is in $(-q, 0)$, and one is in $(-\infty, -q)$. 

Similarly, if $K = V$ and $\ell$ is the real axis, then there exist only five such $z$; one is $z = \psi$, one is in $(\vartheta, \infty)$, one is in $(-q, 0)$, one is in $(-q \kappa, -q)$, and one is in $(-\infty, -q \kappa)$. 

\end{lem}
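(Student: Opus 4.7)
The plan is to follow the argument of Lemma \ref{sixzgazgapsi} \emph{mutatis mutandis}, now using the factorizations of $G_K'$ given by \eqref{derivativegaleft} and \eqref{derivativegvleft}. For the first assertion, I fix a nonzero $z_0 \in \ell$ and view $\omega \mapsto \Re G_K(\omega z_0)$ as a real-valued function of $\omega \in \mathbb{R}$. I write $z_0 G_K'(\omega z_0) = N(\omega) / \bigl(\omega \widetilde D_K(\omega)\bigr)$, where $N(\omega) = C_K(\omega z_0 - \psi)(\omega z_0 - \vartheta_K)$ has degree $2$ in $\omega$, and $\widetilde D_K(\omega)$ equals $(\omega z_0 + q)^2$ for $K = A$ or $(\omega z_0 + q)(\omega z_0 + q\kappa)$ for $K = V$, also of degree $2$. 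For real $\omega$,
\[
\frac{\partial}{\partial\omega}\Re G_K(\omega z_0) \;=\; \Re\bigl(z_0 G_K'(\omega z_0)\bigr) \;=\; \frac{\Re\!\bigl(N(\omega)\,\overline{\widetilde D_K(\omega)}\bigr)}{\omega\,\bigl|\widetilde D_K(\omega)\bigr|^2},
\]
whose numerator is a real polynomial in $\omega$ of degree at most $4$. When $\ell \neq \mathbb{R}$, the denominator vanishes on $\mathbb{R}$ only at $\omega = 0$, so the derivative changes sign at most five times on $\mathbb{R}$, which implies that $\Re G_K(\omega z_0) - G_K(\psi)$ has at most six real zeros.

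For the real-axis assertions, I would analyze $\Re G_K$ directly on each open interval into which the singularities of $G_K$ partition $\mathbb{R}$: namely $(-\infty,-q)$, $(-q,0)$, $(0,\infty)$ when $K=A$, and $(-\infty,-q\kappa)$, $(-q\kappa,-q)$, $(-q,0)$, $(0,\infty)$ when $K=V$. The hypothesis $\eta \in (\alpha_K,\theta_K)$ ensures $C_K > 0$ and that both critical points $\psi$ and $\vartheta_K$ are positive with $\psi < \vartheta_K$; this makes $G_K$ restricted to $(0,\infty)$ have a local maximum at $\psi$ and a local minimum at $\vartheta_K$. Combined with the limits $\Re G_K(z) \to -\infty$ as $z \to 0^+$ and $\Re G_K(z) \to +\infty$ as $z \to \infty$ (the leading logarithmic coefficient being $(1-b)(b+\eta)$ for $K=A$ and $(1-b)(\kappa\eta - \Lambda)/\Lambda$ for $K=V$, both positive under our hypotheses), this yields exactly two solutions on $(0,\infty)$, namely $\psi$ itself and a second point in $(\vartheta_K,\infty)$.

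On each negative subinterval, the factored expression for $G_K'$ has constant sign, since the factors $(z-\psi)$ and $(z-\vartheta_K)$ are both negative while the signs of the denominator factors are determined by the interval. Hence $\Re G_K$ is strictly monotone there, and the boundary limits computed from \eqref{ga12} and \eqref{gv12} show that it ranges from $-\infty$ to $+\infty$ (or vice versa), yielding exactly one solution per interval. The only mildly delicate point is the behavior at $z = -q^{\pm}$ when $K = A$: there, the simple pole $q/(z+q)$ dominates the weaker singularity $\eta\log|z+q|$, giving $\Re G_A(z) \to +\infty$ as $z \to -q^+$ and $\Re G_A(z) \to -\infty$ as $z \to -q^-$; all other boundary limits (in particular those governing $K = V$, which come purely from logarithmic terms) are straightforward. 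Tallying solutions yields the four-solution count for $K = A$ and the five-solution count for $K = V$ claimed in the lemma, and I foresee no obstacle beyond the case-by-case sign bookkeeping sketched above.
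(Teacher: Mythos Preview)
Your proposal is correct and follows essentially the same approach that the paper intends (the paper omits the proof, saying it is very similar to Lemma~\ref{sixzgazgapsi}). Your derivative computation for general $\ell$ matches the method of Lemma~\ref{sixzgazgapsi}, and your interval-by-interval monotonicity analysis on the real axis, together with the boundary limits you identify (including the correct observation that the simple pole $q/(z+q)$ dominates the logarithm for $K=A$ at $z\to -q^\pm$), yields exactly the claimed counts.
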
 

The proof of this lemma is very similar to that of Lemma \ref{sixzgazgapsileft} and is thus omitted. 

Now we can establish Proposition \ref{l1l2left}. 

\begin{proof}[Proof of Proposition \ref{l1l2left}] 

We only consider the case $K = V$, since the proof is entirely analogous in the case $K = A$ (see also the proof of Proposition \ref{linesga} in Section \ref{PropositionLinesProcessRight} for the types of modifications necessary for the case $K = A$). In what follows, we omit the subscript $V$ to simplify notation. 

Let $S = \mathbb{C} \setminus  \{ \psi \}$. Then $\Re G(z)$ and $\Re \big( G' (z) \big)$ are smooth on $S \setminus  \{ -q \kappa, -q, 0, \vartheta  \}$. Furthermore, from \eqref{derivativegaleft}, we deduce that $G' (z)$ is non-zero on $S \setminus  \{ -q \kappa, -q, 0, \vartheta  \}$, which implies that $\Re G (z)$ has no critical points on $S \setminus  \{ -q \kappa, -q, 0, \vartheta  \}$. Thus, the implicit function theorem shows that the set of $z \in S$ satisfying $\Re G(z) = G (\psi)$ is a one-dimensional submanifold of $S$; here, we used the fact that $\Re G(z) \ne \Re G(\psi)$ for any $z \in \{ -q \kappa, -q, 0, \vartheta \}$ (where the fact that $G(\vartheta) \ne G(\psi)$ follows from the fact that $G'(z) < 0$ for all $z \in (\psi, \vartheta)$). In particular, the set of such $z$ is a union of connected components, each of which is homeomorphic to a line or circle.

By Lemma \ref{realgaleftbounded}, all of these connected components are bounded. Therefore, the closure $\overline{\mathcal{M}}$ (in $\mathbb{C}$) of any such component $\mathcal{M}$ is homeomorphic to a circle. In particular, by the maximum principle for harmonic functions, one of the singularities $-q \kappa$, $- q$, or $0$ of $\Re G(z)$ must lie on or in the interior of any closure $\overline{\mathcal{M}}$ (since $\Re G(z)$ harmonic for $z \in \mathbb{C} \setminus  \{ -q \kappa, -q, 0 \}$). 

Now, due to \eqref{gzpsileft}, four (not necessarily distinct) components exit from $\psi$, at angles $\pi / 4$, $3 \pi / 4$, $-3 \pi / 4$, and $- \pi / 4$. Lemma \ref{sixzgazgapsileft} also shows that there exist $z_1 \in (\vartheta, \infty)$, $z_2 \in (-q, 0)$, $z_3 \in (-q \kappa, -q)$, and $z_4 \in (-\infty, -q \kappa)$ such that $\Re G(z_1) = \Re G(z_2) = \Re G(z_3) = \Re G(z_4) = G(\psi)$; this implies that one of the components passes through each of the points $z_1$, $z_2$, $z_3$, and $z_4$. The second part of Lemma \ref{sixzgazgapsileft} states that there are no other real numbers $z \notin \{ z_1, z_2, z_3, z_4, \psi \}$ such that $\Re G(z) = G(\psi)$. 

These facts imply that there are three one-dimensional components $\mathcal{M}_1, \mathcal{M}_2, \mathcal{M}_3 \subset S$ such that the set of $z \in S$ satisfying $\Re G(z) = G (\psi)$ is equal to the union $\mathcal{M}_1 \cup \mathcal{M}_2 \cup \mathcal{M}_3$; these components can be described as follows. First, we have that $\psi \in \overline{\mathcal{M}_1}, \overline{\mathcal{M}_2}$ and $z_1 \in \mathcal{M}_3$. Furthermore, $z_2 \in \mathcal{M}_1$, $z_3 \in \mathcal{M}_2$, and $z_4 \in \mathcal{M}_3$. We can describe the intersections $\overline{\mathcal{M}_1} \cap \mathbb{R} = \{ z_2, \psi \}$, $\overline{\mathcal{M}_2} \cap \mathbb{R} = \{ z_3, \psi \}$, and $\mathcal{M}_3 \cap \mathbb{R} = \{ z_1, z_4 \}$. 

Now set $\mathcal{L}_1 = \overline{\mathcal{M}_1}$, $\mathcal{L}_2 = \overline{\mathcal{M}_2} $, and $\mathcal{L}_3 = \mathcal{M}_3$. Then, $\mathcal{L}_1$, $\mathcal{L}_2$, and $\mathcal{L}_3$ are all simple, closed curves. Parts \ref{vzpsij} of Proposition \ref{vzpsiall} are satisfied by these curves since $\mathcal{M}_1 \cup \mathcal{M}_2 \cup \mathcal{M}_3 \cup \{ \psi \} = \mathcal{L}_1 \cup \mathcal{L}_2 \cup \mathcal{L}_3$. 

Furthermore, only one positive real number and only one negative real number lie on each of the curves $\overline{\mathcal{M}_1}$, $\overline{\mathcal{M}_2}$, and $\mathcal{M}_3$. This implies that $0$ is in the interior of $\mathcal{L}_1$, $\mathcal{L}_2$, and $\mathcal{L}_3$. 

Now, property \ref{anglev} holds from Lemma \ref{sixzgazgapsileft}; indeed, this lemma implies that any line $\ell \subset \mathbb{C}$ can only intersect $\mathcal{L}_1 \cup \mathcal{L}_2 \cup \mathcal{L}_3$ in at most six places. Since each $\mathcal{L}_j$ is a closed curve containing $0$ in its interior, $\ell$ intersects each $\mathcal{L}_j$ at least, and thus only, twice; furthermore, $0$ is between these two intersection points. This implies that for each angle $a \in \mathbb{R}$ and each index $j \in \{ 1, 2, 3 \}$, there is only one complex number $z_a \in \mathcal{L}_j$ such that $z_a / |z_a| = e^{\textbf{i} a}$. This verifies property \ref{anglev}. 

To see property \ref{containmentv}, observe that $\mathcal{L}_i$ and $\mathcal{L}_j$ can only intersect at critical points of $G(z)$, for any distinct $i, j \in \{1, 2, 3 \}$. These critical points are $\psi$, $\vartheta$, $0$, $-q$, and $-q \kappa$, but none of the $\mathcal{L}_j$ pass through $\vartheta$, $0$, $-q$, or $-q \kappa$. Now, the containment follows from the fact that $\mathcal{L}_1$ intersects the negative real axis only once at $z_2$, that $\mathcal{L}_2$ intersects the negative real axis only once at $z_3$, that $\mathcal{L}_3$ intersects the negative real axis only once at $z_4$, and that $z_4 < z_3 < z_2$. 

Property \ref{qkappaq0v} follows from the fact that $z_4 < - \kappa q < z_3 < -q < z_2$, and property \ref{psianglesv} follows from the containment property \ref{containmentv} and the estimate \eqref{gzpsileft}. 

Furthermore, observe that $\Re G(z)$ can only change sign across some $\mathcal{L}_j$; thus, property \ref{positiverealv} and property \ref{negativerealv} follow since $\lim_{z \rightarrow -q} \Re G \big( z \big) = \infty$ and $\lim_{z \rightarrow 0} \Re G \big( z \big) = - \infty$. 
\end{proof}

\subsection{Proof of Part 3 of Proposition \ref{processdeterminant} and Proposition \ref{modeldeterminant}} 

\label{KernelLeft}

The goal of this section is to establish the third parts of Proposition \ref{processdeterminant} and Proposition \ref{modeldeterminant}, which exhibit Gaussian-type asymptotic fluctuations in the case $\eta \in (\alpha, \theta)$. As in Section \ref{VertexRight} and Section \ref{ProofNear}, the proofs of both statements are similar, so we will do them simultaneously. Thus, recall the notation $K \in \{ V, A \}$ that corresponds to the stochastic six-vertex model when $K = V$, and to the ASEP when $K = A$. In what follows, we simplify notation by omitting the subscript $K$, and we always assume that $\eta \in (\alpha, \theta)$. 

In either case $K \in \{ V, A \}$, let $x = \eta T + 1$; as in Section \ref{VertexRight}, the replacement of $\lfloor \eta T \rfloor + 1$ (which was in the statement of Proposition \ref{processdeterminant} and Proposition \ref{modeldeterminant}) by $\eta T + 1$ will not affect the asymptotics. Furthermore, let $p_T = f_{\eta}' s T^{1 / 2} - m_{\eta}' T$, where we recall that $m_{\eta}'$ is defined through \eqref{m12} and $f_{\eta}'$ are defined through \eqref{ga12} and \eqref{gv12} in the cases $K = A$ and $K = V$, respectively. 

The proofs of the third parts of Proposition \ref{processdeterminant} and Proposition \ref{modeldeterminant} will be very similar the proofs of the first parts given in Section \ref{VertexRight} in the case $\eta > \theta$. Specifically, we will analyze the contribution to the integral on the right side of \eqref{vptleft} when $v$ is integrated in a small neighborhood of $\psi$; this will yield a Gaussian-type kernel. Then, in Section \ref{ExponentialVertexLeft}, we will show that the contribution to the integral decays exponentially when $v$ is integrated outside a neighborhood of $\psi$; we will then be able to ignore this contribution, which will lead to the proofs of the third parts of Proposition \ref{processdeterminant} and Proposition \ref{modeldeterminant}.

\subsubsection{Contribution Near \texorpdfstring{$\psi$}{}} 

\label{VertexLeftPsi}

In this section, we analyze the contribution to the kernel $K (w, w')$, when $w \in \mathcal{C}^{(1)}$ and when $v$ is integrated along $\Gamma^{(1)}$. This will be similar to what was done in Section \ref{VertexRightPsi}. 

Similarly to \eqref{vzetatilderight}, define 
\begin{flalign}
\begin{aligned}
\label{vzetatildeleft}
\widetilde{K} (w, w') = \displaystyle\frac{1}{2 \textbf{i} \log q} \displaystyle\sum_{j = -\infty}^{\infty} & \displaystyle\int_{\Gamma^{(1)}} \displaystyle\frac{ \exp \left( T \big( G(w) - G(v) \big) \right)} {\sin \big( \frac{\pi}{\log q} (2 \pi \textbf{i} j + \log v - \log w) \big)} \displaystyle\frac{(q^{-1} \beta^{-1} v; q)_{\infty}^m}{(q^{-1} \beta^{-1} w; q)_{\infty}^m} \\
& \times \left( \displaystyle\frac{v}{w} \right)^{s f_{\eta}' T^{1 / 2}} \displaystyle\frac{dv}{v (w' - v)}, 
\end{aligned}
\end{flalign}

\noindent for any $w, w' \in \mathcal{C}$. 

Now, let us change variables. Denote $\sigma = \psi f_{\eta}'^{-1} T^{-1 / 2}$, and set 
\begin{flalign}
\label{wwvvertexleft}
w = \psi + \sigma \widehat{w}; \qquad w' = \psi + \sigma \widehat{w'}; \qquad v = \psi + \sigma \widehat{v}; \qquad \widehat{K} (\widehat{w}, \widehat{w'}) = \sigma \widetilde{K} (w, w').
\end{flalign} 

\noindent Also, for any contour $\mathcal{D}$, set $\widehat{\mathcal{D}} = \sigma^{-1} \big( \mathcal{D} - \psi \big)$. 

The following lemma provides the asymptotics for the kernel $\widehat{K}$. 

\begin{lem}

\label{leftvgamma1tildev}

There exist constants $c, C > 0$ such that 
\begin{flalign}
\label{vleft71} 
\left| \widehat{K} (\widehat{w}, \widehat{w'})  \right| \le \displaystyle\frac{C}{1 + |\widehat{w'}|} \exp \big( - c |\widehat{w}|^2 \big), 
\end{flalign} 

\noindent for each $\widehat{w} \in \widehat{\mathcal{C}}^{(1)}$ and $\widehat{w'} \in \widehat{\mathcal{C}}$. 

Furthermore, if we define the kernel $L_{s; m} (\widehat{w}, \widehat{w'})$ by 
\begin{flalign}
\label{leftdefinitionls}
L_{s; m} \big( \widehat{w}, \widehat{w'} \big) = \displaystyle\frac{1}{2 \pi \textbf{\emph{i}}} \displaystyle\int_{\mathfrak{X}_{-2, \infty}} \displaystyle\frac{1}{\big( \widehat{v} - \widehat{w} \big) \big( \widehat{w'} - \widehat{v} \big)} \exp \left( \displaystyle\frac{\widehat{v}^2}{2} - \displaystyle\frac{\widehat{w}^2}{2} + s \big( \widehat{v} - \widehat{w} \big) \right) \left( \displaystyle\frac{\widehat{v}}{\widehat{w}} \right)^m d \widehat{v}, 
\end{flalign}

\noindent then we have that
\begin{flalign}
\label{vleft1}
\displaystyle\lim_{T \rightarrow \infty} \widehat{K} \big( \widehat{w}, \widehat{w'} \big) = L_{s; m} \big( \widehat{w}, \widehat{w'} \big), 
\end{flalign} 

\noindent for each fixed $\widehat{w}, \widehat{w'} \in \mathfrak{U}_{-1, \infty}$. 
\end{lem}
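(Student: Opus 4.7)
The plan is to follow closely the template set by Lemma~\ref{rightvgamma1tildev} in Section~\ref{VertexRightPsi}, modifying it to reflect the change from a cubic to a quadratic critical point and the new behavior of the spin factor. In analogy with \eqref{vright111}--\eqref{vright112}, I first change variables according to \eqref{wwvvertexleft} and express $\widehat{K}(\hat w,\hat w')$ as $(2\pi\textbf{i})^{-1}\int_{\widehat{\Gamma}^{(1)}}I(\hat w,\hat w';\hat v)\,d\hat v$, where $\widehat{\Gamma}^{(1)} = \mathfrak X_{-2,\varepsilon/\sigma}$. Using the Taylor expansion \eqref{gzpsileft},
\[T\bigl(G(w)-G(v)\bigr) \;=\; \tfrac{1}{2}\bigl(\hat v^{\,2}-\hat w^{\,2}\bigr) \;+\; T\bigl(R(T^{-1/2}\hat w) - R(T^{-1/2}\hat v)\bigr),\]
and since $R(z)=O(|z|^3)$, the remainder is $O(T^{-1/2}(|\hat w|^3+|\hat v|^3))$ and vanishes on compacta. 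Similarly, $(v/w)^{sf_\eta'T^{1/2}} \to \exp\bigl(s(\hat v-\hat w)\bigr)$; the $j=0$ summand of the sine factor contributes $(\hat v-\hat w)^{-1}$ after factoring out $\pi\psi^{-1}\sigma/\log q$; and the sum over $j\ne 0$ is $O(T^{-1/2})$, as $\sigma = O(T^{-1/2})$ here in place of $O(T^{-1/3})$.

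The delicate new feature is the spin factor $\bigl((q^{-1}\beta^{-1}v;q)_\infty/(q^{-1}\beta^{-1}w;q)_\infty\bigr)^m$, which at the right edge was a bounded analytic factor tending trivially to~$1$ (see \eqref{productqv}) because $\psi$ was bounded away from $q\beta$. Here $\psi = q\beta$ exactly, so both the numerator and denominator of that ratio vanish to first order as $T\to\infty$. Splitting
\[\frac{(q^{-1}\beta^{-1}v;q)_\infty}{(q^{-1}\beta^{-1}w;q)_\infty} \;=\; \frac{1 - q^{-1}\beta^{-1}v}{1 - q^{-1}\beta^{-1}w}\cdot \frac{(\beta^{-1}v;q)_\infty}{(\beta^{-1}w;q)_\infty},\]
the first factor equals exactly $\hat v/\hat w$ (since $1-q^{-1}\beta^{-1}(\psi+\sigma\hat z) = -q^{-1}\beta^{-1}\sigma\hat z$), while the second factor is continuous in $(v,w)$ and tends to $(q;q)_\infty/(q;q)_\infty = 1$ as $v,w\to q\beta$. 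Raising to the $m$-th power produces the $(\hat v/\hat w)^m$ factor appearing in \eqref{leftdefinitionls}. The choices $\widehat{\mathcal C}^{(1)} = \mathfrak U_{-1,\varepsilon/\sigma}$ and $\widehat{\Gamma}^{(1)}= \mathfrak X_{-2,\varepsilon/\sigma}$, which pass through $-1$ and $-2$ respectively, guarantee $|\hat w|\ge 1$ and $|\hat v|\ge 2$, so $(\hat v/\hat w)^m$ is well-defined and polynomially bounded throughout.

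Having assembled the pointwise limit \eqref{nearlimit13integrand}-style identity for $I$, I then establish the uniform estimate
\[|I(\hat w,\hat w';\hat v)| \;\le\; \frac{C}{1+|\hat w'|}\exp\bigl(-c(|\hat w|^{2}+|\hat v|^{2})\bigr)\]
for all $\hat w \in \widehat{\mathcal C}^{(1)}$, $\hat w' \in \widehat{\mathcal C}$, $\hat v \in \widehat\Gamma^{(1)}$, and all sufficiently large $T$. The Gaussian decay comes from the exponential factor: parametrizing $\hat w = -1 + te^{\pm \pi\textbf{i}/8}$ gives $\Re \hat w^{\,2} = 1 - 2t\cos(\pi/8) + t^2\cos(\pi/4)$, so $-\Re\hat w^{\,2}/2$ is bounded above on the contour and asymptotic to $-(\cos(\pi/4)/2)|\hat w|^{2}$ for large $t$, while on $\mathfrak X_{-2,\varepsilon/\sigma}$ we have $\Re\hat v^{\,2} = 4 - t^2$. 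The cubic remainder $T\bigl(R(T^{-1/2}\hat w)-R(T^{-1/2}\hat v)\bigr)$ is absorbed using the bound $|R(\psi^{-1}f_\eta'(z-\psi))| \le |f_\eta'(z-\psi)/(3\psi)|^2$ from Definition~\ref{linearvertexleft}, which on the rescaled contour translates to $O(T^{-1/2}(|\hat w|^{3}+|\hat v|^{3}))$ and can be dominated by a fraction of the Gaussian for $T$ large. The remaining factors, namely $(v/w)^{sf_\eta'T^{1/2}}$, the sine sum, and the polynomial spin factor $(\hat v/\hat w)^m$, are handled exactly as the six bullet estimates in \eqref{termsintegrand13}. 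Integrating this uniform estimate over $\hat v \in \mathfrak X_{-2,\varepsilon/\sigma}$ yields \eqref{vleft71}; combining it with the pointwise limit via dominated convergence yields \eqref{vleft1}, since $\widehat\Gamma^{(1)}$ exhausts $\mathfrak X_{-2,\infty}$ as $T\to\infty$ and the Gaussian tail permits extension of the integration to infinity. The principal obstacle is entirely the spin-factor analysis described above, since the rest of the argument is structurally identical to the right-edge case.
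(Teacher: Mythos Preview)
Your proof is correct and follows the same route as the paper: rewrite $\widehat K$ as $(2\pi\textbf{i})^{-1}\int_{\widehat\Gamma^{(1)}} I\,d\hat v$, prove a uniform Gaussian bound and a pointwise limit for $I$ (packaged in the paper as Lemma~\ref{leftuniformlimit13integrand}, with the spin-factor limit \eqref{leftproductqv} obtained exactly by the splitting you describe), and conclude by dominated convergence.

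Two small slips to correct. First, the bound from Definition~\ref{linearvertexleft} is quadratic, and on the rescaled contour it reads $T|R(T^{-1/2}\hat z)| \le |\hat z|^2/9$, which is $T$-independent; it does not translate to $O(T^{-1/2}|\hat z|^3)$. This is what the paper uses in \eqref{vleft10}, and it is absorbed directly by the Gaussian since the leading coefficients $\cos(\pi/4)/2$ (for $\hat w$) and $1/2$ (for $\hat v$) both exceed $1/9$. Second, on $\mathfrak U_{-1,\cdot}$ one has $\min|\hat w| = \sin(\pi/8)$, not $1$; this is still bounded away from $0$, so your conclusion about $(\hat v/\hat w)^m$ is unaffected.
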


\noindent To establish this lemma, we first rewrite the kernel $\widehat{K}$. Analogously to \eqref{vright111} and \eqref{vright112}, we have that 
\begin{flalign}
\label{vleft111}
 \widehat{K} (\widehat{w}, \widehat{w'}) & = \displaystyle\frac{1}{2 \pi \textbf{i}}\displaystyle\int_{\widehat{\Gamma}^{(1)}} I \big( \widehat{w}, \widehat{w'}; \widehat{v} \big) d \widehat{v}, 
\end{flalign}

\noindent where 
\begin{flalign}
\begin{aligned}
\label{vleft112}
I \big( \widehat{w}, \widehat{w'}; \widehat{v} \big) & = \displaystyle\frac{1}{\big( 1 + \psi^{-1} \sigma \widehat{v} \big) \big(  \widehat{w'} - \widehat{v} \big)  } \exp \left( \displaystyle\frac{\widehat{v}^2 - \widehat{w}^2}{2} + T \big( R (T^{-1 / 2} \widehat{w}) - R (T^{-1 / 2} \widehat{v}) \big)  \right)   \\
& \qquad \times \left( \displaystyle\frac{1 + \psi^{-1} \sigma \widehat{v}}{1 + \psi^{-1} \sigma \widehat{w}} \right)^{\psi \sigma^{-1} s } \displaystyle\frac{\big(q^{-1} \beta^{-1} ( \psi + \sigma \widehat{v}  ); q \big)_{\infty}^m }{\big( q^{-1} \beta^{-1} (\psi + \sigma \widehat{w} ); q \big)_{\infty}^m } \\
& \qquad \times \displaystyle\frac{\pi \psi^{-1} \sigma}{\log q} \displaystyle\sum_{j = -\infty}^{\infty}  \displaystyle\frac{1}{\sin \Big( \frac{\pi}{\log q} \big( 2 \pi \textbf{i} j + \log (1 + \psi^{-1} \sigma \widehat{v} )  - \log (1 + \psi^{-1} \sigma \widehat{w} ) \big) \Big)}. 
\end{aligned} 
\end{flalign}

\noindent Lemma \ref{leftvgamma1tildev} will follow from the large $T$ asymptotics of (and uniform estimate on) $I$, given by the following lemma. 

\begin{lem}

\label{leftuniformlimit13integrand}

There exist constants $c, C > 0$ such that
\begin{flalign}
\label{leftuniform13integrand}
\Big| I \big( \widehat{w}, \widehat{w'}; \widehat{v} \big) \Big| \le \displaystyle\frac{C}{1 + |\widehat{w'}|} \exp \big( - c (|\widehat{v}|^2 + |\widehat{w}|^2 ) \big). 
\end{flalign}

\noindent for all $\widehat{w} \in \mathfrak{U}_{-1, \varepsilon / \sigma}$, $\widehat{w'} \in \widehat{\mathcal{C}} \cup \mathfrak{U}_{-1, \infty}$, and $\widehat{v} \in \mathfrak{X}_{-2, \varepsilon / \sigma}$. 

Furthermore, 
\begin{flalign}
\label{leftlimit13integrand}
\displaystyle\lim_{T \rightarrow \infty} I \big( \widehat{w}, \widehat{w'}; \widehat{v} \big) = \displaystyle\frac{1}{\big( \widehat{v} - \widehat{w} \big) \big( \widehat{w'} - \widehat{v}  \big)} \exp \left( \displaystyle\frac{\widehat{v}^2}{2} - \displaystyle\frac{\widehat{w}^2}{2} + s \big( \widehat{w} - \widehat{v} \big) \right) \left( \displaystyle\frac{\widehat{v}}{\widehat{w}} \right)^m, 
\end{flalign}

\noindent for each fixed $\widehat{w}, \widehat{w'}, \widehat{v} \in \mathbb{C}$.  
\end{lem}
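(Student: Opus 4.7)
The plan is to follow the proof of Lemma \ref{uniformlimit13integrand} nearly verbatim, since the factored form of the integrand $I(\widehat{w},\widehat{w}';\widehat{v})$ in \eqref{vleft112} is structurally identical to the one in \eqref{vright112}. The distinctions are cosmetic aside from three points: (i) the quadratic form $\widehat{v}^2/2 - \widehat{w}^2/2$ replaces the cubic $\widehat{w}^3/3 - \widehat{v}^3/3$; (ii) the new scaling $\sigma = \psi f_\eta'^{-1} T^{-1/2}$ and the contours $\mathfrak{U}_{-1,\varepsilon/\sigma}$, $\mathfrak{X}_{-2,\varepsilon/\sigma}$ replace $\mathfrak{W}_{0,\varepsilon/\sigma}$, $\mathfrak{V}_{-1,\varepsilon/\sigma}$; and (iii) all $\beta_j$ coincide with $\beta$, so the $q$-Pochhammer product becomes a genuine $m$-th power.

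For the uniform estimate \eqref{leftuniform13integrand}, I would reproduce the six factor-by-factor bounds of \eqref{termsintegrand13} with only notational changes. The factor $(1 + \psi^{-1}\sigma\widehat{v})^{-1}$ stays bounded since $v$ remains bounded away from zero on $\Gamma^{(1)}$; the factor $(\widehat{w}' - \widehat{v})^{-1}$ is bounded by $C/(1+|\widehat{w}'|)$ because $\widehat{\mathcal{C}}$ lies to the right of $\mathfrak{X}_{-2,\varepsilon/\sigma}$; the expression $((1+\psi^{-1}\sigma\widehat{v})/(1+\psi^{-1}\sigma\widehat{w}))^{\psi\sigma^{-1}s}$ is bounded by $C\exp(C(|\widehat{v}|+|\widehat{w}|))$ via a Taylor expansion; the sine-sum over $j \ne 0$ is $\mathcal{O}(T^{-1/2})$, while the $j=0$ term contributes $1/(\widehat{v}-\widehat{w})$ up to a bounded factor; and the $q$-Pochhammer ratio grows only polynomially on the linear contours, so it is absorbed by the exponential. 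The Gaussian decay itself follows from the third clause of Definition \ref{linearvertexleft}, which yields $|T R(T^{-1/2}\widehat{z})| \le |\widehat{z}|^2/9$ for $\widehat{z} \in \{\widehat{w},\widehat{v}\}$; combined with $\Re \widehat{w}^2 \ge c|\widehat{w}|^2$ for large $|\widehat{w}|$ on the cone $\mathfrak{U}_{-1,\varepsilon/\sigma}$ (whose outgoing rays have argument $\pm\pi/8$, so $\arg \widehat{w}^2 \to \pm\pi/4$, keeping $\Re\widehat{w}^2$ positive) and $\Re \widehat{v}^2 = 4 - (\Im\widehat{v})^2 \le -c|\widehat{v}|^2$ for $|\Im\widehat{v}|$ large on $\mathfrak{X}_{-2,\varepsilon/\sigma}$, this produces the claimed exponential bound.

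The only genuinely new computation is the pointwise limit \eqref{leftlimit13integrand}, and in particular the emergence of the factor $(\widehat{v}/\widehat{w})^m$. Since $\psi = q\beta$, the zeroth factors of both the numerator and denominator $q$-Pochhammer symbols vanish as $T \to \infty$, so I would peel them off first. Writing
\begin{align*}
\frac{(q^{-1}\beta^{-1}(\psi+\sigma\widehat{v});q)_\infty^m}{(q^{-1}\beta^{-1}(\psi+\sigma\widehat{w});q)_\infty^m} = \left( \frac{1 - q^{-1}\beta^{-1}(\psi+\sigma\widehat{v})}{1 - q^{-1}\beta^{-1}(\psi+\sigma\widehat{w})} \right)^m \cdot \frac{(\beta^{-1}(\psi+\sigma\widehat{v});q)_\infty^m}{(\beta^{-1}(\psi+\sigma\widehat{w});q)_\infty^m},
\end{align*}
the relation $q^{-1}\beta^{-1}\psi = 1$ simplifies the first ratio exactly to $(\widehat{v}/\widehat{w})^m$, while the second tends to $(q;q)_\infty^m/(q;q)_\infty^m = 1$ as $\sigma \to 0$. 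The remaining five factors converge exactly as in \eqref{termsintegrandlimit13}, and combining all six limits yields \eqref{leftlimit13integrand}. This cancellation identity is the only substantive obstacle; the rest of the argument transcribes directly from the proof of Lemma \ref{uniformlimit13integrand}.
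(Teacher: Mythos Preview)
Your proposal is correct and follows essentially the same approach as the paper's proof: reproduce the six factor-by-factor estimates from the proof of Lemma \ref{uniformlimit13integrand}, handle the exponential term via the bound $|R| < |z|^2/9$ from Definition \ref{linearvertexleft} together with the sign of $\Re\widehat{w}^2$ and $\Re\widehat{v}^2$ on the new contours, and then for the pointwise limit replace \eqref{productqv} by the computation peeling off the vanishing leading factor $(1 - q^{-1}\beta^{-1}(\psi+\sigma\widehat{z}))$ using $\psi = q\beta$. Your treatment of the $q$-Pochhammer ratio is in fact more explicit than the paper's, which simply records the limit \eqref{leftproductqv} and attributes it to $\psi = q\beta$ and $\sigma \to 0$.
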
 

\begin{proof}

The proof of this lemma will be similar to that of Lemma \ref{uniformlimit13integrand}. 

Let us first establish the uniform estimate \eqref{uniform13integrand}. To that end, observe that there exists a constant $C_1 > 0$ such that the inequalities 
\begin{flalign}
\begin{aligned}
\label{lefttermsintegrand13}
& \left| \displaystyle\frac{1}{1 + \psi^{-1} \sigma \widehat{v}} \right| < C_1; \qquad \displaystyle\frac{1}{\widehat{v} - \widehat{w'}} < \displaystyle\frac{C_1}{1 + |\widehat{w}'|}; \qquad \left| \displaystyle\frac{1 + \psi^{-1} \sigma \widehat{v}}{1 + \psi^{-1} \sigma \widehat{w}} \right|^{\psi \sigma^{-1} s} \le C_1 \exp \Big( C_1 \big( |\widehat{w}| + |\widehat{v}| \big) \Big) ;  \\
& \qquad \qquad \displaystyle\frac{\pi \psi^{-1} \sigma}{\big| \log q \big|} \displaystyle\sum_{j \ne 0} \left| \displaystyle\frac{1}{\sin \Big( \frac{\pi}{\log q} \big( 2 \pi \textbf{i} j + \log (1 + \psi^{-1} \sigma \widehat{v} )  - \log (1 + \psi^{-1} \sigma \widehat{w} ) \big) \Big)} \right| \le C_1 T^{-1 / 2} ;  \\
& \qquad \left| \left( \displaystyle\frac{\pi \psi^{-1} \sigma}{\log q} \right) \displaystyle\frac{1}{\sin \Big( \frac{\pi}{\log q} \big( \log (1 + \psi^{-1} \sigma \widehat{v} )  - \log (1 + \psi^{-1} \sigma \widehat{w} ) \big) \Big)} \right| < C_1 \exp \big( C_1 (|\widehat{w}| + \widehat{v}|) \big);  \\
& \qquad \qquad \qquad \left| \displaystyle\frac{\big(q^{-1} \beta^{-1} ( \psi + \sigma \widehat{v}  ); q \big)_{\infty}^m}{\big( q^{-1} \beta^{-1} (\psi + \sigma \widehat{w} ); q \big)_{\infty}^m} \right| < C_1 \exp \Big( C_1 \big( |\widehat{w}| + |\widehat{v}| \big) \Big), 
\end{aligned} 	
\end{flalign}

\noindent all hold for each  $\widehat{w} \in \mathfrak{U}_{-1, \varepsilon / \sigma}$, $\widehat{w'} \in \widehat{\mathcal{C}}$, and $\widehat{v} \in \mathfrak{X}_{-2, \varepsilon / \sigma}$. The verification of these estimates is entirely analogous to the verification of \eqref{termsintegrand13} and is thus omitted. 

The estimates \eqref{lefttermsintegrand13} address all terms on the right side of \eqref{vleft112}, except for the exponential term. To analyze this term, first recall $\big| R(z) \big| < |z|^2 / 9$, for all $z \in \widehat{\mathcal{C}}^{(1)} \cup \widehat{\Gamma}^{(1)}$; this was stipulated in Definition \ref{linearvertexleft}. Thus, increasing $C_1$ if necessary, we obtain that 
\begin{flalign}
\begin{aligned}
\label{vleft10}
\Bigg| \exp \left( \displaystyle\frac{\widehat{v}^2}{2} - \displaystyle\frac{\widehat{w}^2}{2} + T \big( R (T^{- 1 / 2} \widehat{w}) - R (T^{- 1 / 2} \widehat{v}) \big) \right) \Bigg| & = \exp \left( \displaystyle\frac{\widehat{v}^2}{2} - \displaystyle\frac{\widehat{w}^2}{2} + \displaystyle\frac{|\widehat{w}|^2}{9} + \displaystyle\frac{|\widehat{v}|^2}{9} \right)  \\
& < C_1 \exp \left( \displaystyle\frac{|\widehat{v}|^2}{9} - \displaystyle\frac{|\widehat{w}|^2}{9}\right) . 
\end{aligned} 
\end{flalign}

\noindent In \eqref{vleft10}, the last estimate follows from the facts that there exist constants $C_2$ and $C_3$ such that $\widehat{v}^2 < C_2 - \big| \widehat{v} \big|^2 / 4$ and $\widehat{w}^2 > C_2 + \big| \widehat{w} \big|^2 / 4$, for all $\widehat{w} \in \widehat{\mathcal{C}}^{(1)}$ and $\widehat{v} \in \widehat{\Gamma}^{(1)}$. 

Now, the estimate \eqref{leftuniform13integrand} follows from the definition \eqref{vleft112} of $I$, the six estimates \eqref{lefttermsintegrand13}, and the exponential estimate \eqref{vleft10}. 

In order to establish the limiting statement \eqref{leftlimit13integrand}, we observe that 
\begin{flalign}
\begin{aligned}
\label{lefttermsintegrandlimit13}
& \displaystyle\lim_{T \rightarrow \infty} \displaystyle\frac{1}{1 + \psi^{-1} \sigma \widehat{v}} = 1; \qquad \displaystyle\lim_{T \rightarrow \infty} \left( \displaystyle\frac{1 + \psi^{-1} \sigma \widehat{v}}{1 + \psi^{-1} \sigma \widehat{w}} \right)^{\psi \sigma^{-1} s} = \exp \big( s (\widehat{v} - \widehat{w} ) \big);  \\
& \qquad \qquad  \displaystyle\lim_{T \rightarrow \infty} \exp \left( \displaystyle\frac{\widehat{v}^2 - \widehat{w}^2}{2} + T \big( R (T^{-1 / 2} \widehat{w}) - R (T^{-1 / 2} \widehat{v}) \big)  \right) = \exp \left( \displaystyle\frac{\widehat{v}^2}{2} - \displaystyle\frac{\widehat{w}^2}{2} \right);  \\
& \qquad \quad \displaystyle\lim_{T \rightarrow \infty} \displaystyle\frac{\pi \psi^{-1} \sigma}{\log q} \displaystyle\sum_{j \ne 0}  \displaystyle\frac{1}{\sin \Big( \frac{\pi}{\log q} \big( 2 \pi \textbf{i} j + \log (1 + \psi^{-1} \sigma \widehat{v} )  - \log (1 + \psi^{-1} \sigma \widehat{w} ) \big) \Big)} = 0;  \\
& \qquad \displaystyle\lim_{T \rightarrow \infty} \left( \displaystyle\frac{\pi \psi^{-1} \sigma}{\log q} \right) \displaystyle\frac{1}{\sin \Big( \frac{\pi}{\log q} \big( \log (1 + \psi^{-1} \sigma \widehat{v} )  - \log (1 + \psi^{-1} \sigma \widehat{w} ) \big) \Big)} = \displaystyle\frac{1}{\widehat{v} - \widehat{w}} ; 
\end{aligned} 
\end{flalign}

\noindent and 
\begin{flalign}
\label{leftproductqv}  
\displaystyle\lim_{T \rightarrow \infty}  \displaystyle\frac{\big(q^{-1} \beta^{-1} ( \psi + \sigma \widehat{v}  ); q \big)_{\infty}^m}{\big( q^{-1} \beta^{-1} (\psi + \sigma \widehat{w} ); q \big)_{\infty}^m}  = \left( \displaystyle\frac{\widehat{v}}{\widehat{w}} \right)^m, 
\end{flalign}

\noindent for each $\widehat{w}, \widehat{w'}, \widehat{v} \in \mathbb{C}$. The verification of the five identities \eqref{lefttermsintegrandlimit13} is very similar to that of \eqref{termsintegrandlimit13} and is thus omitted. The limit \eqref{leftproductqv} follows from the fact that $\psi = q \beta$ and that $\sigma$ tends to $0$ as $T$ tends to $\infty$. 

Now, \eqref{leftlimit13integrand} follows from multiplying the results of \eqref{lefttermsintegrandlimit13} and \eqref{leftproductqv}. 
\end{proof}

\noindent We can now establish Lemma \ref{leftvgamma1tildev}. 

\begin{proof}[Proof of Lemma \ref{leftvgamma1tildev}]

The uniform estimate \eqref{vleft71} follows from integrating the estimate \eqref{leftuniform13integrand} over $\widehat{v} \in \mathfrak{X}_{-2, \varepsilon / \sigma}$. Then, the limit \eqref{vleft1} follows from \eqref{leftlimit13integrand}, \eqref{leftuniform13integrand}, the fact that $\widehat{\Gamma}^{(1)}$ is contained in and converges to $\mathfrak{X}_{-2, \infty}$ as $T$ tends to $\infty$, the exponential decay of the kernel $L_{s; m}$ in $\big| \widehat{v} \big|^2$, and the dominated convergence theorem. 
\end{proof}

\subsubsection{Exponential Decay of \texorpdfstring{$K$}{} on \texorpdfstring{$\mathcal{C}^{(2)}$}{} and \texorpdfstring{$\Gamma^{(2)}$}{}}

\label{ExponentialVertexLeft}

In this section we analyze the integral \eqref{vptleft} defining $K (w, w')$ when either $w \in \mathcal{C}^{(2)}$ or $v \in \Gamma^{(2)}$. In this case, we show that the integral decays exponentially in $T$, which will allow us to ignore this contribution and apply the results of Section \ref{VertexLeftPsi}. 

We begin with the following lemma, which is the analog of Lemma \ref{rightcv2gammav2exponential} and estimates the exponential term in the integrand on the right side of \eqref{vptleft}. 

\begin{lem}

\label{leftcv2gammav2exponential}

There exists some positive real number $c_1 > 0$, independent of $T$, such that 
\begin{flalign*}
\max \left\{ \sup_{\substack{w \in \mathcal{C} \\ v \in \Gamma^{(2)}}} \Re \big( G(w) - G(v) \big), \sup_{\substack{w \in \mathcal{C}^{(2)} \\ v \in \Gamma}} \Re \big( G(w) - G(v) \big) \right\} < - c_1.
\end{flalign*}
\end{lem}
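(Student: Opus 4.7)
The plan is to adapt the short compactness argument of Lemma \ref{rightcv2gammav2exponential}. The new subtlety is that, because $\psi$ is now a quadratic (rather than cubic) saddle of $G$, the piecewise-linear pieces $\mathcal{C}^{(1)}$ (which leaves its shifted anchor at angles $\pm \pi/8$) and $\Gamma^{(1)}$ (the vertical segment through $\psi - 2\psi(f_\eta')^{-1}T^{-1/2}$) do not lie entirely in the signed regions provided by Proposition \ref{l1l2left}; each briefly crosses into the "wrong" sign region near its midpoint, since the tangent directions of $\mathcal{L}_1, \mathcal{L}_2$ at $\psi$ are $\pm 3\pi/4$ and $\pm \pi/4$ rather than matching $\pi/8$ or $\pi/2$. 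The main obstacle will be showing that these excursions—each of extent only $O(T^{-1/2})$—produce deviations $|\Re G(z) - G(\psi)|$ of magnitude only $O(T^{-1})$, which can be absorbed by the uniform $O(1)$ bounds coming from the curved pieces.

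For the curved pieces $\mathcal{C}^{(2)}$ and $\Gamma^{(2)}$, by properties \ref{positiverealv} and \ref{negativerealv} of Proposition \ref{l1l2left}, $\Re G(z) > G(\psi)$ strictly between $\mathcal{L}_1$ and $\mathcal{L}_2$, while $\Re G(z) < G(\psi)$ strictly between $\mathcal{L}_2$ and $\mathcal{L}_3$. Since Definition \ref{curvedvertexleft} keeps these contours inside the respective strips at distance bounded below uniformly in $T$, and since the contours are compact, there exist constants $c_2, c_3 > 0$ (independent of $T$) such that $\Re G(v) \ge G(\psi) + c_2$ for all $v \in \Gamma^{(2)}$ and $\Re G(w) \le G(\psi) - c_3$ for all $w \in \mathcal{C}^{(2)}$.

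For the linear pieces, I would parametrize them explicitly (as $w = \psi - \delta + s e^{\pm \pi \textbf{i}/8}$ on $\mathcal{C}^{(1)}$ and $v = \psi - 2\delta + \textbf{i} t$ on $\Gamma^{(1)}$, with $\delta = \psi(f_\eta')^{-1} T^{-1/2}$) and combine the resulting explicit expressions $\Re((w-\psi)^2) = \delta^2 - 2\delta s\cos(\pi/8) + s^2\cos(\pi/4)$ and $\Re((v-\psi)^2) = 4\delta^2 - t^2$ with the Taylor expansion \eqref{gzpsileft} together with the remainder bound $|R(z)| < |f_\eta'(z-\psi)/(3\psi)|^2$ stipulated in Definition \ref{linearvertexleft}. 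The key point is that the portion of the contour on which the wrong sign occurs has $|z-\psi| = O(\delta)$, so both the main quadratic term and the remainder contribute only $O(\delta^2) = O(T^{-1})$; this yields $\Re G(w) - G(\psi) \le C T^{-1}$ on $\mathcal{C}^{(1)}$ and $\Re G(v) - G(\psi) \ge -C T^{-1}$ on $\Gamma^{(1)}$, with $C > 0$ independent of $T$. Combined with the curved-piece estimates, for $w \in \mathcal{C}$ and $v \in \Gamma^{(2)}$ we obtain $\Re(G(w) - G(v)) \le CT^{-1} - c_2 \le -c_2/2$ for $T$ large, and for $w \in \mathcal{C}^{(2)}$ and $v \in \Gamma$ we obtain $\Re(G(w) - G(v)) \le -c_3 + CT^{-1} \le -c_3/2$ for $T$ large. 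Taking $c_1 = \tfrac{1}{2}\min(c_2, c_3)$ will complete the proof.
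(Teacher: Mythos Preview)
Your proof is correct and actually more complete than the paper's. The paper's own proof is just two sentences: it asserts that Proposition~\ref{l1l2left} gives $\Re(G(w)-G(v))<0$ for all $w\in\mathcal{C}$, $v\in\Gamma^{(2)}$ and for all $w\in\mathcal{C}^{(2)}$, $v\in\Gamma$ (uniformly in $T$), and then invokes compactness.

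You have correctly identified a subtlety the paper glosses over: because the anchors of $\mathcal{C}^{(1)}$ and $\Gamma^{(1)}$ are shifted by $O(T^{-1/2})$ from $\psi$, and because the angles $\pm\pi/8$ and $\pm\pi/2$ do not match the tangent angles $\pm\pi/4$, $\pm 3\pi/4$ of $\mathcal{L}_1,\mathcal{L}_2$ at $\psi$, each linear piece briefly enters the opposite-sign region on a set of diameter $O(T^{-1/2})$. Your explicit computation that $\Re G(w)-G(\psi)\le CT^{-1}$ on $\mathcal{C}^{(1)}$ and $\Re G(v)-G(\psi)\ge -CT^{-1}$ on $\Gamma^{(1)}$, combined with the uniform gaps $c_2,c_3$ on the curved pieces, is exactly what is needed to make the paper's compactness argument rigorous. (The same issue is already present, and similarly glossed over, in the paper's proof of Lemma~\ref{rightcv2gammav2exponential} for the shifted contour $\Gamma^{(1)}=\mathfrak{V}_{\psi-\psi f_\eta^{-1}T^{-1/3},\varepsilon}$.)

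One small imprecision: you say the wrong-sign excursion occurs ``near its midpoint''; in fact for $\mathcal{C}^{(1)}$ it occurs near the vertex (for $s$ between the two $O(\delta)$ roots of $\delta^2-2\delta s\cos(\pi/8)+s^2\cos(\pi/4)$, both of which are positive and $O(\delta)$), and for $\Gamma^{(1)}$ it occurs for $|t|<2\delta$. This does not affect the argument.
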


\begin{proof}
From Proposition \ref{l1l2left}, we find that $\Re \big( G(w) - G(v) \big) < 0$ for all $w \in \mathcal{C}$ and $v \in \Gamma^{(2)}$ and that $\Re \big( G(w) - G(v) \big) < 0$ for all $w \in \mathcal{C}^{(2)}$ and $v \in \Gamma$, where both estimates are uniform in $T$. Thus, the existence of the claimed constant $c_1$ follows from the compactness of the contours $\mathcal{C}$ and $\Gamma$. 
\end{proof}

\noindent Now, recall the definitions of $\sigma = \psi f_{\eta}'^{-1} T^{-1 / 2}$; $\widehat{w}$, $\widehat{w'}$, and $\widehat{v}$ from \eqref{wwvvertexleft}; and $I$ from \eqref{vleft112}. Define 
\begin{flalign}
\label{leftkbar13}
\overline{K} \big( \widehat{w}, \widehat{w'} \big) = \sigma K (w, w') = \displaystyle\frac{1}{2 \pi \textbf{i}} \displaystyle\int_{\widehat{\Gamma}} I \big( \widehat{w}, \widehat{w'}; \widehat{v} \big) d \widehat{v},
\end{flalign}  	

\noindent for each $\widehat{w}, \widehat{w'} \in \mathcal{C}$. From the change of variables \eqref{wwvvertexleft}, we have that 
\begin{flalign}
\label{leftvdeterminantkernelbar13}
\det \big( \Id + K \big)_{L^2 (\mathcal{C})} = \det \big( \Id + \overline{K} \big)_{L^2 (\widehat{\mathcal{C}})}. 
\end{flalign}

\noindent Given Lemma \ref{leftcv2gammav2exponential}, the proof of the following result is entirely analogous to that of Corollary \ref{kclosekbarc2small} and Corollary \ref{nearkclosekbarc2small} and is thus omitted.

\begin{cor}

\label{leftkclosekbarc2small}

There exist constants $c, C > 0$ such that 
\begin{flalign}
\label{leftkclosekbar}
\Big| \overline{K} \big( \widehat{w}, \widehat{w'} \big) - \widehat{K} \big( \widehat{w}, \widehat{w'} \big) \Big| < C \exp \big( - c (|T + |\widehat{w}|^2) \big), 
\end{flalign}

\noindent for all $\widehat{w} \in \widehat{\mathcal{C}}^{(1)}$ and $\widehat{w'} \in \widehat{\mathcal{C}}$, and such that 
\begin{flalign}
\label{leftkbarsmallc2}
\Big| \overline{K} \big( \widehat{w}, \widehat{w'} \big) \Big| < C \exp \big( - c (T + |\widehat{w}|^2) \big), 
\end{flalign}

\noindent for all $\widehat{w} \in \widehat{\mathcal{C}}^{(2)}$ and $\widehat{w'} \in \widehat{\mathcal{C}}$. 
\end{cor}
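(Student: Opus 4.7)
The plan is to mirror the proofs of Corollary~\ref{kclosekbarc2small} and Corollary~\ref{nearkclosekbarc2small}, replacing the cubic exponent by a quadratic one to reflect the new scaling $\sigma = \psi f'_\eta T^{-1/2}$. The underlying mechanism is the same: the exponential factor $\exp\bigl(T(G(w)-G(v))\bigr)$ is controlled via Lemma~\ref{leftcv2gammav2exponential}, which supplies a uniform bound of the form $\exp(-c_1 T)$ whenever $(w,v)$ lies in the ``bad'' region $\mathcal{C}\times\Gamma^{(2)}$ or $\mathcal{C}^{(2)}\times\Gamma$. All remaining factors in the integrand on the right side of \eqref{vptleft} contribute only polynomial-in-$T$ or at worst $\exp(C T^{1/2})$ growth (from the ratio $(v/w)^{s f'_\eta T^{1/2}}$ on a compact contour where $|v/w|$ is bounded away from $0$ and $\infty$), and this is swamped by the $\exp(-c_1 T)$ gain.

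For the first estimate, I would begin by observing that the kernels $\overline{K}$ and $\widehat{K}$ are both defined through integrals (see \eqref{vleft111} and \eqref{leftkbar13}) that differ only in whether the $v$-contour is $\Gamma$ or $\Gamma^{(1)}$. Hence their difference is a $\sigma$-multiple of the integral \eqref{vptleft} restricted to $v\in\Gamma^{(2)}$. Bounding the integrand term-by-term exactly as in the derivation of \eqref{kclosekbar1}: the denominator $v(w'-v)$ is bounded below using convexity and the fact that $\widehat{w}\in\widehat{\mathcal{C}}^{(1)}$, $\widehat{w'}\in\widehat{\mathcal{C}}$ keeps $w'$ and $v$ away from $0$; the infinite sum over $j$ converges because $\sin\bigl(\frac{\pi}{\log q}(2\pi \mathbf{i}j+\log v - \log w)\bigr)$ grows exponentially in $|j|$; the ratio of $q$-Pochhammer symbols is uniformly bounded on compact contours; and $|(v/w)^{s f'_\eta T^{1/2}}|\leq \exp(C T^{1/2})$. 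Combining with Lemma~\ref{leftcv2gammav2exponential} and the compactness of $\Gamma^{(2)}$ yields $|\overline{K}(\widehat{w},\widehat{w'})-\widehat{K}(\widehat{w},\widehat{w'})|=\mathcal{O}(e^{-c_1 T/2})$ for sufficiently large $T$.

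For the second estimate I would argue identically, starting from \eqref{leftkbar13} and integrating $v$ over all of $\Gamma$ while restricting $\widehat{w}\in\widehat{\mathcal{C}}^{(2)}$; Lemma~\ref{leftcv2gammav2exponential} again produces the crucial $\exp(-c_1 T)$ factor. The one item requiring a little care is the factor $|w'-v|^{-1}$ when $\widehat{w'}\in\widehat{\mathcal{C}}$ lies near $\widehat{w}$; here the bound $|w'-v|\geq c\sigma$ gives only a $T^{1/2}$ loss, which is again harmless. Finally, to convert each estimate to the form stated in \eqref{leftkclosekbar} and \eqref{leftkbarsmallc2}, I use that $|\widehat{w}|=\mathcal{O}(T^{1/2})$ on $\widehat{\mathcal{C}}$, so $c|\widehat{w}|^2\leq cT\cdot\text{const}$ and, choosing $c>0$ small enough relative to $c_1$, the desired inequality $c|\widehat{w}|^2 - c_1 T \leq -c T$ holds. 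The main (and only) subtlety is thus the bookkeeping of the auxiliary $\exp(CT^{1/2})$ growth from the $(v/w)^{sf'_\eta T^{1/2}}$ factor, which must be defeated by the $\exp(-c_1 T)$ gain before converting back to $\widehat{w}$-coordinates; everything else is routine and directly parallels the proofs in Section~\ref{ExponentialVertexRight} and Section~\ref{ProofNear}.
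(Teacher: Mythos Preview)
Your proposal is correct and follows exactly the approach the paper intends: the paper omits the proof entirely, stating that ``given Lemma~\ref{leftcv2gammav2exponential}, the proof \ldots\ is entirely analogous to that of Corollary~\ref{kclosekbarc2small} and Corollary~\ref{nearkclosekbarc2small},'' and your write-up carries out precisely that analogy with the appropriate replacement of cubic by quadratic exponents and $T^{1/3}$ by $T^{1/2}$. Your explicit tracking of the $(v/w)^{s f'_\eta T^{1/2}}$ factor and the $|w'-v|^{-1}\le C\sigma^{-1}$ loss is in fact slightly more careful than the paper's own treatment in the proof of Corollary~\ref{kclosekbarc2small}.
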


\noindent Now we can use Corollary \ref{determinantlimitkernels}, Lemma \ref{leftvgamma1tildev}, and Corollary \ref{leftkclosekbarc2small} to establish the third parts of Proposition \ref{processdeterminant} and Proposition \ref{modeldeterminant}. 

\begin{proof}[Proof of Part 3 of Proposition \ref{processdeterminant} and Proposition \ref{modeldeterminant}]

This will be very similar to the proof of the first (and second) parts of Proposition \ref{processdeterminant} and Proposition \ref{modeldeterminant}, given in Section \ref{ExponentialVertexRight} (and Section \ref{ProofNear}). In particular, it will follow from an application of Corollary \ref{determinantlimitkernels} to the sequence of kernels $\{ \overline{K} (\widehat{w}, \widehat{w'}) \}$, the sequence of contours $\{ \mathfrak{U}_{-1, \infty} \cup \widehat{\mathcal{C}}^{(2)} \}$, and the kernel $L_{s; m} (\widehat{w}, \widehat{w'})$ (given by \eqref{leftdefinitionls}).

The verification of the conditions of that corollary in this setting is very similar to the verification provided in the proofs of the first (in Section \ref{ExponentialVertexRight}) and second (in Section \ref{ProofNear}) parts of Proposition \ref{processdeterminant} and Proposition \ref{modeldeterminant}, and is thus omitted. Applying Corollary \ref{determinantlimitkernels}, we obtain that 
\begin{flalign}
\label{determinant4kernelleftc1c}
 \displaystyle\lim_{T \rightarrow \infty} \det \big( \Id + K \big)_{L^2 (\mathcal{C})} = \displaystyle\lim_{T \rightarrow \infty} \det \big( \Id + \overline{K} \big)_{L^2 (\widehat{\mathcal{C}})} = \det \big( \Id + L_{s; m} \big)_{L^2 (\mathfrak{U}_{-1, \infty})}, 
\end{flalign}

\noindent where we have used \eqref{leftvdeterminantkernelbar13} to deduce the first identity. 

Now, it is known (see, for instance, Proposition 5 of \cite{PTQT}) that $\det \big( \Id + L_{s; m} \big)_{\mathfrak{U}_{-1, \infty}} = G_m (s)$. Thus, we deduce from \eqref{determinant4kernelrightc1c} that 
\begin{flalign}
\label{determinant6kernelleftc1c}
\displaystyle\lim_{T \rightarrow \infty} \det \big( \Id + K \big)_{L^2 (\mathcal{C})} = G_m (s). 
\end{flalign}

\noindent Applying \eqref{determinant6kernelrightc1c} in the case $K = A$ yields the third part of Proposition \ref{processdeterminant} and applying \eqref{determinant6kernelrightc1c} in the case $K = V$ yields the third part of Proposition \ref{modeldeterminant}. 
\end{proof}

\appendix

\section{Fredholm Determinants}

\label{Determinants1} 

In this section, we provide the definition and some properties of Fredholm determinants in the form that is convenient for us. 

\begin{definition}

\label{definitiondeterminant}

Fix a contour $\mathcal{C} \subset \mathbb{C}$ in the complex plane, and let $K: \mathcal{C} \times \mathcal{C} \rightarrow \mathbb{C}$ be a meromorphic function with no poles on $\mathcal{C} \times \mathcal{C}$. We define the \emph{Fredholm determinant}
\begin{flalign}
\label{determinantsum}
\det \big( \Id + K \big)_{L^2 (\mathcal{C})} = 1 + \displaystyle\sum_{k = 1}^{\infty} \displaystyle\frac{1}{ (2 \pi \textbf{i} )^k k!} \displaystyle\int_{\mathcal{C}} \cdots \displaystyle\int_{\mathcal{C}} \det \big[ K(x_i, x_j) \big]_{i, j = 1}^k \displaystyle\prod_{j = 1}^k d x_j. 
\end{flalign}

\end{definition}

\begin{rem}

\label{sumconvergesdeterminant}

Generally, the definition of the Fredholm determinant requires that $K$ give rise to a trace-class operator on $L^2 (\mathcal{C})$. We will not use that convention here; instead, we only require that the sum on the right side \eqref{determinantsum} be convergent. 

\end{rem}

\begin{rem}

\label{determinantcontourdeform}

Suppose that the contour $\mathcal{C}$ can be continuously deformed to another contour $\mathcal{C}' \subset \mathbb{C}$ without crossing any poles of the kernel $K$. Then, $\det \big( \Id + K \big)_{L^2 (\mathcal{C})} = \det \big( \Id + K \big)_{L^2 (\mathcal{C}')}$, assuming that the right side of \eqref{determinantsum} remains uniformly convergent throughout the deformation process. Indeed, this can be seen directly from the definition \eqref{determinantsum}, since each summand on the right side of this identity remains the same after deforming $\mathcal{C}$ to $\mathcal{C}'$. Alternatively, see Proposition 1 of \cite{AASIC}. 

\end{rem}

The Fredholm determinant satisfies several stability properties that are useful for asymptotic analysis. We record some of these properties here. 

The lemma below quantifies the change in a Fredholm determinant after perturbing its kernel; this specific estimate will also be useful in the parallel work \cite{CFCLEP}.

\begin{lem}

\label{determinantclosekernels}

Adopt the notation of Definition \ref{definitiondeterminant}, and let $K_1 (z, z'): \mathcal{C} \times \mathcal{C} \rightarrow \mathbb{C}$ be another meromorphic function with no poles on $\mathcal{C} \times \mathcal{C}$. Suppose that there exists a function $\textbf{\emph{K}} : \mathcal{C} \rightarrow \mathbb{R}_{\ge 0}$ such that $\sup_{z' \in \mathcal{C}} \big| K (z, z') \big| \le \textbf{\emph{K}} (z)$ and $\sup_{z' \in \mathcal{C}} \big| K_1 (z, z') \big| \le \textbf{\emph{K}} (z)$ for all $z \in \mathcal{C}$, and also such that $\displaystyle\int_{\mathcal{C}} \big| \textbf{\emph{K}} (z) \big| dz < \infty$. 

Then, the Fredholm determinants $ \det \big( \Id + K \big)_{L^2 (\mathcal{C})} $ and $ \det \big( \Id + K_1 \big)_{L^2 (\mathcal{C})}$ converge absolutely (as series given by the right side of \eqref{determinantsum}). 

Furthermore, denote $K' (z, z') = K_1 (z, z') - K (z, z')$, and define the constant 
\begin{flalign}
\begin{aligned}
\label{cdeterminant2}
C = \displaystyle\sum_{k = 1}^{\infty} \displaystyle\frac{2^k k^{k / 2}}{(k - 1)!} \displaystyle\int_{\mathcal{C}} \cdots & \displaystyle\int_{\mathcal{C}} \displaystyle\prod_{i = 2}^k  \left| \displaystyle\frac{1}{k} \displaystyle\sum_{j = 1}^k \Big( \big| K (x_i, x_j) \big|^2 + \big| K' (x_i, x_j) \big|^2  \Big)  \right|^{1 / 2}  \\
& \qquad \times \left| \displaystyle\frac{1}{k} \displaystyle\sum_{j = 1}^k \big| K' (x_1, x_j) \big|^2 \right|^{1 / 2}  \displaystyle\prod_{i = 1}^k dx_i.
\end{aligned}
\end{flalign}

\noindent Then,
\begin{flalign*}
\Big| \det \big( \Id + K_1 \big)_{L^2 (\mathcal{C})} - \det \big( \Id + K \big)_{L^2 (\mathcal{C})}  \Big| < C.
\end{flalign*} 

\end{lem}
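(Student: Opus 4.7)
The plan is to expand both Fredholm determinants via the defining series \eqref{determinantsum} and to compare the matrix determinants appearing inside each integrand by means of a telescoping identity combined with Hadamard's inequality. I would first establish absolute convergence of both series: Hadamard's inequality applied to the $k \times k$ matrix $[K(x_i,x_j)]_{i,j=1}^k$ gives the row-norm bound $\bigl| \det[K(x_i,x_j)] \bigr| \le \prod_{i=1}^k \bigl( \sum_{j=1}^k |K(x_i,x_j)|^2 \bigr)^{1/2}$, and the hypothesis $|K(z,z')| \le \textbf{K}(z)$ bounds the latter by $k^{k/2} \prod_{i=1}^k \textbf{K}(x_i)$; the resulting series $\sum_k \frac{k^{k/2}}{(2\pi)^k k!} \bigl( \int_{\mathcal{C}} \textbf{K} \bigr)^k$ converges absolutely by Stirling's formula. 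The identical estimate with $K_1$ in place of $K$ yields convergence of the second series.

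For the comparison, I would introduce the telescoping matrices $N^{(i)}$, for $i \in [1,k]$, whose first $i-1$ rows are given by $K_1(x_\ell, \cdot)$, whose $i$-th row is $K'(x_i,\cdot) = K_1(x_i,\cdot) - K(x_i,\cdot)$, and whose last $k-i$ rows are given by $K(x_\ell,\cdot)$. Multilinearity of the determinant in its rows then yields the identity
\begin{flalign*}
\det \bigl[ K_1(x_i, x_j) \bigr]_{i, j = 1}^k - \det \bigl[ K(x_i, x_j) \bigr]_{i, j = 1}^k = \sum_{i = 1}^k \det N^{(i)}.
\end{flalign*}
Applying Hadamard's inequality to each summand, expressing each row norm in the form $\sqrt{k} \cdot \bigl( \frac{1}{k} \sum_j |\cdot|^2 \bigr)^{1/2}$ to isolate the desired prefactor of $k^{k/2}$, and using the elementary estimates $|K_1|^2 \le 2(|K|^2 + |K'|^2)$ on the first $i - 1$ rows and $|K|^2 \le |K|^2 + |K'|^2$ on the last $k-i$ rows, gives the uniform bound
\begin{flalign*}
|\det N^{(i)}| \le 2^{(i - 1)/2} k^{k/2} \left( \frac{1}{k} \sum_{j=1}^k |K'(x_i, x_j)|^2 \right)^{1/2} \prod_{\substack{\ell = 1 \\ \ell \ne i}}^k \left( \frac{1}{k} \sum_{j=1}^k \bigl( |K(x_\ell, x_j)|^2 + |K'(x_\ell, x_j)|^2 \bigr) \right)^{1/2}.
\end{flalign*}

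To finish, I would integrate this estimate over $\mathcal{C}^k$, multiply by the prefactor $\bigl| (2\pi i)^{-k} (k!)^{-1} \bigr| = (2\pi)^{-k} (k!)^{-1}$, and sum over $i \in [1,k]$. Since the integrand on the right-hand side is symmetric in the variables $x_\ell$ with $\ell \ne i$, relabeling dummy variables ensures that each of the $k$ resulting integrals equals the $i=1$ integral appearing in \eqref{cdeterminant2}. One factor of $k$ cancels with $k!$ to produce the $(k-1)!$ in the definition of $C$, while the geometric sum $\sum_{i=1}^k 2^{(i-1)/2}$ together with the remaining $(2\pi)^{-k}$ is absorbed into the (slightly crude) constant $2^k$. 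The main obstacle is arranging the Hadamard bound on $|\det N^{(i)}|$ in the symmetric form above, since the telescoping naturally treats the rows below and above index $i$ asymmetrically; once the bound is uniform in the rows $\ell \ne i$, the relabeling trick and the remaining bookkeeping are routine.
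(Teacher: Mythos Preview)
Your argument is correct, but the decomposition differs from the paper's. You telescope $\det[K_1]-\det[K]$ into $k$ determinants $\det N^{(i)}$ whose rows come from $K_1$, $K'$, and $K$; the paper instead expands $\det[K+K']$ by full multilinearity as $\sum_{S\subseteq[1,k]}\det\textbf{K}^{(S)}$, where row $i$ of $\textbf{K}^{(S)}$ is taken from $K'$ if $i\in S$ and from $K$ otherwise, and subtracts the $S=\emptyset$ term. Your telescoping yields only $k$ summands but forces the estimate $|K_1|^2\le 2(|K|^2+|K'|^2)$ on the rows coming from $K_1$, which is the source of your asymmetric factor $2^{(i-1)/2}$. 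The paper's expansion produces $2^k-1$ summands, but every row of $\textbf{K}^{(S)}$ is already drawn from $K$ or $K'$, so the Hadamard bound is immediately symmetric in rows $i\ge 2$; the paper then overcounts via $\sum_{|S|\ge 1}\le\sum_{j}\sum_{S\ni j}$ and uses exactly your relabeling trick to reduce to the case where the distinguished row is row $1$, with the $2^k$ in $C$ arising as the number of subsets containing $1$ (together with the $k^{k/2}$ from Hadamard). One small point: your last two sentences conflate extracting a factor of $k$ with retaining the geometric sum $\sum_{i=1}^k 2^{(i-1)/2}$, which is inconsistent as written; the cleanest fix is to bound $\sum_{i=1}^k 2^{(i-1)/2}\le k\,2^{(k-1)/2}$, let the $k$ cancel against $k!$, and absorb $2^{(k-1)/2}(2\pi)^{-k}$ into $2^k$.
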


\begin{proof}

Let us begin with the first statement. Denote $\int_{\mathcal{C}} \big| \textbf{K} (z) \big| dz = B$. Applying Definition \ref{definitiondeterminant}, we obtain that
\begin{flalign}
\begin{aligned}
\label{kdeterminantfinite}
\Big| \det \big( \Id + K \big)_{L^2 (\mathcal{C})} \Big| & \le \displaystyle\sum_{k = 1}^{\infty} \displaystyle\frac{1}{k!} \displaystyle\int_{\mathcal{C}} \cdots \displaystyle\int_{\mathcal{C}} \left| \det \big[ K (x_i, x_j) \big]_{i, j = 1}^k \right| \displaystyle\prod_{j = 1}^k d x_j \\
& \le \displaystyle\sum_{k = 1}^{\infty} \displaystyle\frac{1}{k!} \displaystyle\int_{\mathcal{C}} \cdots \displaystyle\int_{\mathcal{C}} \displaystyle\prod_{i = 1}^k \left| \displaystyle\sum_{j = 1}^k \big| K (x_i, x_j) \big|^2 \right|^{1 / 2} \displaystyle\prod_{j = 1}^k d x_j \\
& \le \displaystyle\sum_{k = 1}^{\infty} \displaystyle\frac{1}{k!} \displaystyle\int_{\mathcal{C}} \cdots \displaystyle\int_{\mathcal{C}} \displaystyle\prod_{i = 1}^k \big( k^{1 / 2} \textbf{K} (x_i) \big) \displaystyle\prod_{j = 1}^k d x_j \le \displaystyle\sum_{k = 1}^{\infty} \displaystyle\frac{B^k k^{k / 2}}{k!},
\end{aligned}
\end{flalign}

\noindent which is finite. To deduce the second estimate in \eqref{kdeterminantfinite}, we applied Hadamard's identity. This establishes the absolute convergence of $\det \big( \Id + K \big)_{L^2 (\mathcal{C})}$; the proof for $\det \big( \Id + K_1 \big)_{L^2 (\mathcal{C})}$ is similar. 

Next, let us establish the second statement of the lemma. Applying Definition \ref{definitiondeterminant} and the absolute and uniform convergence of all sums and integrands (guaranteed by \eqref{kdeterminantfinite}) to commute integration with summation, we deduce that 
\begin{flalign}
\begin{aligned}
\label{determinantk1k2}
\Big| \det \big( \Id & + K_1 \big)_{L^2 (\mathcal{C})} - \det \big( \Id + K \big)_{L^2 (\mathcal{C})} \Big|  \\ 
& \le  \displaystyle\sum_{k = 1}^{\infty} \displaystyle\frac{1}{k!} \displaystyle\int_{\mathcal{C}} \cdots \displaystyle\int_{\mathcal{C}} \left| \det \big[ K_1 (x_i, x_j) \big]_{i, j = 1}^k - \det \big[ K (x_i, x_j) \big]_{i, j = 1}^k \right| \displaystyle\prod_{j = 1}^k d x_j.
\end{aligned}
\end{flalign}

\noindent The $k$-fold integral on the right side of \eqref{determinantk1k2} can be written as 
\begin{flalign}
\begin{aligned}
\label{determinantsmallk1} 
& \displaystyle\int_{\mathcal{C}} \cdots \displaystyle\int_{\mathcal{C}} \left| \det \big[ K (x_i, x_j) + K' (x_i, x_j) \big]_{i, j = 1}^k - \det \big[ K (x_i, x_j) \big]_{i, j = 1}^k \right| \displaystyle\prod_{i = 1}^k d x_i \\
& \qquad  = \displaystyle\int_{\mathcal{C}} \cdots \displaystyle\int_{\mathcal{C}} \bigg| \displaystyle\sum_{\substack{S \subseteq \{ 1, 2, \ldots , k \} \\ |S| \ge 1}} \det \textbf{K}^{(S)} \bigg| \displaystyle\prod_{i = 1}^k d x_i \le \displaystyle\int_{\mathcal{C}} \cdots \displaystyle\int_{\mathcal{C}} \displaystyle\sum_{j = 1}^k \displaystyle\sum_{\substack{S \subseteq \{ 1, 2, \ldots , k \} \\ j \in S}} \big| \det \textbf{K}^{(S)} \big| \displaystyle\prod_{i = 1}^k d x_i \\
& \qquad \qquad \qquad \qquad \qquad \qquad \qquad \qquad \qquad \quad = k \displaystyle\int_{\mathcal{C}} \cdots \displaystyle\int_{\mathcal{C}} \displaystyle\sum_{\substack{S \subseteq \{ 1, 2, \ldots , k \} \\ 1 \in S }} \big| \det \textbf{K}^{(S)} \big| \displaystyle\prod_{i = 1}^k d x_i , 
\end{aligned} 
\end{flalign}

\noindent where $\textbf{K}^{(S)}$ is the $k \times k$ matrix whose $(i, j)$-entry is $K (x_i, x_j)$ if $i \notin S$ and is $K' (x_i, x_j)$ if $i \in S$. The last identity was deduced by symmetry. 

From Hadamard's inequality, we have that 
\begin{flalign}
\label{determinantsmallk12}
\big| \det \textbf{K}^{(S)} \big| \le  \left| \displaystyle\frac{1}{k} \displaystyle\sum_{j = 1}^k \big| K' (x_1, x_j) \big|^2 \right|^{1 / 2} \displaystyle\prod_{i = 2}^k \left| \displaystyle\frac{1}{k} \displaystyle\sum_{j = 1}^k \Big( \big| K (x_i, x_j) \big|^2 + K' (x_i, x_j) \big|^2 \Big) \right|^{1 / 2} ,
\end{flalign} 

\noindent if $1 \in S$. Now the lemma follows from substituting \eqref{determinantsmallk12} into \eqref{determinantsmallk1}, observing that there are at most $2^k$ possible choices for $S$ in \eqref{determinantsmallk1}, and substituting the result into \eqref{determinantk1k2}. 	
\end{proof}

\noindent Using Lemma \ref{determinantclosekernels}, we can establish the following result for convergence of Fredholm determinants.

\begin{cor}

\label{determinantlimitkernels}

Adopt the notation of Definition \ref{definitiondeterminant}. Let $\{ \mathcal{C}_T^{(2)} \}_{T \in \mathbb{R}_{> 0}}$ be a family of contours in the complex plane, and denote $\mathcal{C}_T = \mathcal{C} \cup \mathcal{C}_T^{(2)}$ for each positive real number $T$. For each $T > 0$, let $K_T: \mathcal{C}_T \times \mathcal{C}_T \rightarrow \mathbb{C}$ be a meromorphic function with no poles on $\mathcal{C}_T \times \mathcal{C}_T$.  

Assume that there exists a function $\textbf{\emph{K}} : \mathbb{C} \rightarrow \mathbb{R}_{\ge 0}$ such that the following three properties hold. 

\begin{itemize}

\item{For each $T > 0$ and $z \in \mathcal{C}_T$, we have that $\sup_{z' \in \mathcal{C}_T} \big| K_T (z, z') \big| < \textbf{\emph{K}} (z)$.}

\item{ There exists a constant $B > 0$ such that $\displaystyle\int_{\mathcal{C}_T} \big| \textbf{\emph{K}} (z) \big| dz < B$, for each $T > 0$.}

\item{ For each positive real number $\varepsilon > 0$, there exists some real number $M = M_{\varepsilon}$ such that $\displaystyle\int_{\mathcal{C}_T^{(2)}} \big| \textbf{\emph{K}} (z) \big| dz < \varepsilon$, for all $T > M$. }

\end{itemize} 

\noindent If $\lim_{T \rightarrow \infty} \textbf{\emph{1}}_{z, z' \in \mathcal{C}_T} K_T (z, z') = \textbf{\emph{1}}_{z, z' \in \mathcal{C}} K (z, z')$ for each fixed $z, z' \in \mathbb{C}$, then 
\begin{flalign*}
\displaystyle\lim_{T \rightarrow \infty} \Big( \det \big( \Id + K \big)_{L^2 (\mathcal{C})} - \det \big( \Id + K_T \big)_{L^2 (\mathcal{C}_T)} \Big) = 0.
\end{flalign*} 
\end{cor}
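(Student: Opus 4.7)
The plan is to prove this by expanding the Fredholm determinants into their series forms, showing that the series converge uniformly in $T$, and then analyzing each term via a splitting argument based on the decomposition $\mathcal{C}_T = \mathcal{C} \cup \mathcal{C}_T^{(2)}$.

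First, I would establish uniform convergence of the Fredholm series. For any $k \ge 1$ and any $T > 0$, Hadamard's inequality combined with the bound $\sup_{z'} |K_T(z,z')| < \textbf{K}(z)$ yields
\[
\bigl| \det[K_T(x_i, x_j)]_{i,j=1}^k \bigr| \le k^{k/2} \prod_{i=1}^k \textbf{K}(x_i),
\]
so the $k$-th term of $\det(\Id + K_T)_{L^2(\mathcal{C}_T)}$ is bounded in absolute value by $k^{k/2} B^k / ((2\pi)^k k!)$ using the uniform integrability assumption. Since $k^{k/2}/k! \to 0$ super-exponentially, Weierstrass' M-test shows that the Fredholm series for $\det(\Id + K_T)_{L^2(\mathcal{C}_T)}$ converges absolutely and uniformly in $T$. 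The same estimate works for $\det(\Id + K)_{L^2(\mathcal{C})}$. Hence it suffices to prove that, for each fixed $k$,
\[
\int_{\mathcal{C}_T^k} \det[K_T(x_i, x_j)]_{i,j=1}^k \, dx_1 \cdots dx_k \;\longrightarrow\; \int_{\mathcal{C}^k} \det[K(x_i, x_j)]_{i,j=1}^k \, dx_1 \cdots dx_k.
\]

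Next, I would perform the splitting $\mathcal{C}_T^k = \bigsqcup_{S \subseteq \{1,\ldots,k\}} A_S$, where $A_S$ consists of tuples with $x_i \in \mathcal{C}_T^{(2)}$ exactly when $i \in S$. For any nonempty $S$, Hadamard's bound applied to the appropriate product gives
\[
\int_{A_S} \bigl| \det[K_T(x_i,x_j)] \bigr| \, dx \le k^{k/2} \biggl(\int_{\mathcal{C}_T^{(2)}} \textbf{K}(z)\,dz\biggr)^{|S|} B^{k-|S|} \le k^{k/2} \varepsilon^{|S|} B^{k-|S|}
\]
for $T > M_\varepsilon$. Summing over all nonempty $S$ gives $k^{k/2}\bigl((B+\varepsilon)^k - B^k\bigr)$, which can be made arbitrarily small by choosing $\varepsilon$ small. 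Hence the contribution from all $S \ne \emptyset$ vanishes as $T \to \infty$ (after first sending $\varepsilon \to 0$).

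Finally, for the remaining term with $S = \emptyset$, I would apply the dominated convergence theorem on the fixed contour $\mathcal{C}^k$: the integrand $\det[K_T(x_i, x_j)]_{i,j=1}^k$ converges pointwise to $\det[K(x_i, x_j)]_{i,j=1}^k$ by the hypothesis $K_T \to K$ (valid since $x_1, \ldots, x_k \in \mathcal{C}$, so the indicator $\mathbf{1}_{x_i, x_j \in \mathcal{C}_T}$ collapses to $1$), and is dominated uniformly by the integrable envelope $k^{k/2} \prod_i \textbf{K}(x_i)$. Combining the vanishing of the $S \ne \emptyset$ pieces with the limit of the $S = \emptyset$ piece finishes the proof of the convergence of each term, and hence of the Fredholm determinants themselves. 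There is no real obstacle here; the only subtlety worth flagging is that the dominating function $\textbf{K}$ controls the kernel only in its first argument, but this is precisely what is needed for Hadamard's bound, so the estimates go through without modification.
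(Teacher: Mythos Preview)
Your proposal is correct and follows essentially the same approach as the paper: both arguments use Hadamard's inequality to get the dominating envelope $k^{k/2}\prod_i \textbf{K}(x_i)$, split each $k$-fold integral over $\mathcal{C}_T^k$ according to which coordinates lie in $\mathcal{C}$ versus $\mathcal{C}_T^{(2)}$, bound the pieces with at least one coordinate in $\mathcal{C}_T^{(2)}$ via the smallness of $\int_{\mathcal{C}_T^{(2)}}\textbf{K}$, and handle the all-$\mathcal{C}$ piece by dominated convergence. The only cosmetic difference is that the paper packages the comparison of the two determinants through Lemma~\ref{determinantclosekernels} (extending both kernels by zero to the common contour $\mathcal{C}_T$), whereas you first reduce to term-by-term convergence via the M-test and then compare the integrals directly; the underlying estimates are identical.
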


\begin{proof}

In what follows, we extend $K$ and each $K_T$ by zero to $\mathbb{C} \times \mathbb{C}$. Applying Lemma \ref{determinantclosekernels} with the kernels $K_T (z, z')$ and $K (z, z')$, and the contour $\mathcal{C}_T$, we obtain that 
\begin{flalign}
\begin{aligned}
\label{exponentialestimatekdeterminantkernellimit1}
& \Big| \det \big( \Id + K_T \big)_{L^2 (\mathcal{C}_T)} - \det \big( \Id + K \big)_{L^2 (\mathcal{C})}  \Big| \\
& \quad \le \displaystyle\sum_{k = 1}^{\infty} \displaystyle\frac{2^k k^{k / 2}}{(k - 1)!} \displaystyle\int_{\mathcal{C} \cup \mathcal{C}_T^{(2)}} \cdots \displaystyle\int_{\mathcal{C} \cup \mathcal{C}_T^{(2)}} \left| \displaystyle\frac{1}{k} \displaystyle\sum_{j = 1}^k \big| K (x_1, x_j) -  K_T (x_1, x_j) \big|^2 \right|^{1 / 2} \\
& \quad \qquad \qquad \qquad \times \displaystyle\prod_{i = 2}^k  \left| \displaystyle\frac{1}{k} \displaystyle\sum_{j = 1}^k \Big( \big| K (x_i, x_j) \big|^2 + \big| K (x_i, x_j) - K_T (x_i, x_j) \big|^2 \Big) \right|^{1 / 2} \displaystyle\prod_{i = 1}^k d x_i \\
& \quad \le \displaystyle\sum_{k = 1}^{\infty} \displaystyle\frac{2^k k^{k / 2}}{(k - 1)!} \displaystyle\int_{\mathcal{C} \cup \mathcal{C}_T^{(2)}} \cdots \displaystyle\int_{\mathcal{C} \cup \mathcal{C}_T^{(2)}} \left| \displaystyle\frac{1}{k} \displaystyle\sum_{j = 1}^k \big| K (x_1, x_j) - K_T (x_1, x_j) \big|^2 \right|^{1 / 2}  \\
& \quad \qquad \qquad \qquad \qquad \qquad \qquad \qquad \times  \displaystyle\prod_{i = 2}^k  \left| 5 \big| \textbf{K} (x_i) \big|^2 \right|^{1 / 2} \displaystyle\prod_{i = 1}^k d x_i, 
\end{aligned}
\end{flalign}

\noindent where in the last estimate we used the fact that $\big| K (x_i, x_j) \big|^2 + \big| K (x_i, x_j) - K_T (x_i, x_j) \big|^2 \le 5 \big| \textbf{K} (x_i) \big|^2$. 

For each positive integer $k$, the $k$-fold integral on the right side of \eqref{exponentialestimatekdeterminantkernellimit1} can be written as the sum of $2^k$ integrals, in which each $x_i$ is either integrated along $\mathcal{C}$ or $\mathcal{C}_T^{(2)}$. Let us first estimate one of these integrals, in which all of the $x_i$ are integrated along $\mathcal{C}$. 

To that end, we have that 
\begin{flalign}
\label{sumdeterminantterm1}
\displaystyle\lim_{T \rightarrow \infty} \displaystyle\int_{\mathcal{C}}  \cdots \displaystyle\int_{\mathcal{C}} \left| \displaystyle\frac{1}{k} \displaystyle\sum_{j = 1}^k \big| K (x_1, x_j) - K_T (x_1, x_j) \big|^2 \right|^{1 / 2}  \displaystyle\prod_{i = 2}^k  \left| 5 \big| \textbf{K} (x_i) \big|^2 \right|^{1 / 2} \displaystyle\prod_{i = 1}^k d x_i = 0,
\end{flalign}

\noindent where we have used the pointwise convergence of $K_T$ to $K$, the integrability of $\textbf{K}$ along the contour $\mathcal{C}$, and the dominated convergence theorem. 

Next, to estimate each of the other $2^k - 1$ integrals, we observe that 
\begin{flalign}
\label{sumdeterminantterm2}
\left| \displaystyle\frac{1}{k} \displaystyle\sum_{j = 1}^k \big| K (x_1, x_j) - K_T (x_1, x_j) \big|^2 \right|^{1 / 2}  \displaystyle\prod_{i = 2}^k  \left| 5 \big| \textbf{K} (x_i) \big|^2 \right|^{1 / 2} \le 5^{k / 2} \displaystyle\prod_{i = 1}^k \big| \textbf{K} (x_i) \big|.
\end{flalign}

\noindent Thus, from the third condition on $\textbf{K}$ (given in the statement of the corollary) and \eqref{sumdeterminantterm2}, we obtain
\begin{flalign}
\label{sumdeterminantterm3}
\displaystyle\int  \cdots \displaystyle\int \left| \displaystyle\frac{1}{k} \displaystyle\sum_{j = 1}^k \big| K (x_1, x_j) - K_T (x_1, x_j) \big|^2 \right|^{1 / 2}  \displaystyle\prod_{i = 2}^k  \left| 5 \big| \textbf{K} (x_i) \big|^2 \right|^{1 / 2} \displaystyle\prod_{i = 1}^k d x_i < 5^{k / 2} B^{k - 1} \varepsilon,
\end{flalign}

\noindent if $T > M_{\varepsilon}$ and at least one $x_j$ is integrated along $\mathcal{C}_T^{(2)}$. 

Now the lemma follows from substituting \eqref{sumdeterminantterm1} and \eqref{sumdeterminantterm3} into \eqref{exponentialestimatekdeterminantkernellimit1}, letting $T$ tend to $\infty$, applying the dominated convergence theorem, and letting $\varepsilon$ tend to $0$.
\end{proof}

\section{Asymptotics Through Schur Measures}

\label{Asymptotics2}

In this section we outline an alternative route to establishing Theorem \ref{hlimit}, which is based on a comparison between the stochastic higher spin vertex models and Schur measures. This type of comparison was first observed in \cite{SHSVMMM} to provide a new proof of the GUE Tracy-Widom current fluctuations in the stochastic six-vertex model with step initial data. Here, we outline how to modify that proof so that it applies to the stochastic six-vertex model with generalized step-Bernoulli initial data. Since our work here will be similar to what was done in \cite{SHSVMMM}, we will not go through all details carefully. 

First, we require some notation on symmetric functions; see Chapter 1 of Macdonald's book \cite{SFP} for a more thorough introduction. Denote the set of all partitions by $\mathbb{Y}$, and fix an infinite set of variables $X = (x_1, x_2, \ldots )$. Let $\Sym = \Sym_X$ denote the algebra of symmetric functions in $X$. For any $\lambda \in \mathbb{Y}$, let $h_{\lambda} (X) \in \Sym$ and $s_{\lambda} (X) \in \Sym$ denote the \emph{complete symmetric function} and the \emph{Schur function} associated with $\lambda$, respectively. A \emph{specialization} of $\Sym$ is a homomorphism $\rho: \Sym \rightarrow \mathbb{C}$. We call $\rho$ \emph{Schur non-negative} if $s_{\lambda} (\rho) = \rho \big( s_{\lambda} (X) \big) \ge 0$ for each $\lambda \in \mathbb{Y}$. 

Since the complete symmetric functions $h_1 (X), h_2 (X), \ldots $ generate the algebra $\Sym$, any specialization $\rho$ is determined by its action on the formal generating series 
\begin{flalign*}
F_X (z) = \displaystyle\sum_{n = 0}^{\infty} h_n (X) z^n = \displaystyle\prod_{j = 1}^{\infty} \displaystyle\frac{1}{1 - x_i z}.
\end{flalign*}

In this section we focus on specializations that are unions of those known as \emph{$\alpha$-specializations} and \emph{$\beta$-specializations}. Specifically, fix (possibly infinite) sets of non-negative parameters $\mathfrak{A} = (\mathfrak{a}_1, \mathfrak{a}_2, \ldots )$ and $\mathfrak{B} = (\mathfrak{b}_1, \mathfrak{b}_2, \ldots )$ such that $\sum_{j = 1}^{\infty} \big( \mathfrak{a}_j + \mathfrak{b}_j \big) < \infty$.\footnote{This is non-standard notation; these sets are typically denoted $\alpha = (\alpha_1, \alpha_2, \ldots )$ and $\beta = (\beta_1, \beta_2, \ldots )$. However, that would conflict with the notation for the parameters $\beta_j = b_j / (1 - b_j)$, given by \eqref{stochasticparameters}.} Define the specialization $\rho = \rho_{\mathfrak{A}; \mathfrak{B}}$ by setting 
\begin{flalign*}
F_{\rho} (z) = \rho \big( F_X (z) \big) = \displaystyle\prod_{j = 1}^{\infty} \displaystyle\frac{1 + \mathfrak{b}_j z }{1 - \mathfrak{a}_j z}. 
\end{flalign*}

The convergence of the sum $\sum_{j = 1}^{\infty} \big( \mathfrak{a}_j + \mathfrak{b}_j \big)$ guarantees that this specialization exists; furthermore, the branching rule for Schur polynomials implies that specialization $\rho$ is Schur non-negative. 

The \emph{Schur measure} was introduced by Okounkov in \cite{IWRP} as a measure on $\mathbb{Y}$ weighted by products of Schur functions. We will define one of its specializations that is of particular interest to us. In what follows, we recall the definitions of $\kappa$, $q$, $\Lambda$, $s$, $u$, and $\beta_j$ from Theorem \ref{hlimit} and \eqref{stochasticparameters}. 

Let $\eta > 0$ be a real number; let $N > 0$ be an integer; and set $x = x_N = \lfloor \eta N \rfloor + 1$. Define $\mathfrak{A} = (s, s, \ldots , s)$, which consists of $x - 1$ copies of $s$; $\mathfrak{B} = \bigcup_{j = 1}^m \big\{ u \beta_j^{-1}, q u \beta_j^{-1}, q^2 u \beta_j^{-1}, \ldots \big\}$; and $Y = \big( u^{-1}, u^{-1}, \ldots , u^{-1} \big)$, which consists of $N$ copies of $u^{-1}$. 

Now set $\rho = \rho_{\mathfrak{A}; \mathfrak{B}}$, and define the probability measure $\textbf{SM}$ on $\mathbb{Y}$ that assigns weight 
\begin{flalign*}
\textbf{SM} (\lambda) = \textbf{SM}_{\rho; Y} \big( \{ \lambda \} \big) = Z^{-1} s_{\lambda} (\rho) s_{\lambda} (Y)
\end{flalign*}

\noindent to each $\lambda \in \mathbb{Y}$, where $Z = \prod_{j = 1}^n F_{\rho} (y_j)$ is the normalization constant chosen to ensure that $\sum_{\lambda \in \mathbb{Y}} \textbf{SM} (\lambda) = 1$ (this follows from the \emph{Cauchy identity}). 

Our interest in this measure is given by the following proposition. In what follows, $\ell (\lambda)$ denotes the \emph{length} (number of non-zero parts) of the partition $\lambda$, and we recall the notion of \emph{asymptotic equivalence} from Definition 4.2 of \cite{SHSVMMM}.

\begin{prop}

\label{lengthpartitionheight} 

Sample a partition $\lambda$ randomly from the Schur measure $\textbf{\emph{SM}}$ defined above, and consider the stochastic six-vertex model from Theorem \ref{hlimit}. Then, the sequences of random variables $\big\{ \mathfrak{H} (\eta N, N) \big\}_{N \ge 1}$ and $\big\{ N - \ell (\lambda) \big\}_{N \ge 1}$ are asymptotically equivalent. 

\end{prop}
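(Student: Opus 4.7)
The plan is to follow the approach developed in \cite{SHSVMMM}, where an asymptotic equivalence of this type was established for the stochastic six-vertex model with step initial data. The argument proceeds via a coupling that identifies, up to asymptotically negligible error, a particular transformation of the signature produced by the stochastic six-vertex model with the partition sampled from $\textbf{SM}_{\rho; Y}$.

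First, I would identify the distribution of the signature $p_N(\mathcal{E}) \in \Sign_N^+$ of Definition \ref{measuresignaturesvertexmodel}, induced by the stochastic higher spin vertex model specialized as in Section \ref{GeneralizedSpecialization}, as a \emph{spin Hall-Littlewood measure} in the sense of \cite{HSVMRSF}. Explicitly, this measure is proportional to a product of a spin Hall-Littlewood function with spectral parameters $(u, \ldots, u)$ ($N$ copies) and a dual spin Hall-Littlewood function whose spin and inhomogeneity parameters encode the $m$ columns carrying the generalized step Bernoulli initial data (corresponding to $s_i = a_i \to 0$ and $\xi_i = -\beta_i / (a_i u)$) together with the $x - 1$ additional columns with spin $s = q^{-1/2}$ and $\xi = 1$.

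Second, I would exploit the spin-$1/2$ specialization $s = q^{-1/2}$, which enforces that no two paths share a vertical edge and collapses the spin Hall-Littlewood measure into a form directly comparable to a Schur measure. The map from $\lambda_{\text{sig}} = p_N(\mathcal{E})$ to a partition $\lambda$ is given by the reflection $\lambda_i = \lfloor \eta N \rfloor + 1 - (\lambda_{\text{sig}})_{N+1-i}$ restricted to non-negative parts, corresponding to a particle-hole duality; under this map, $\ell(\lambda) = \#\{j : (\lambda_{\text{sig}})_j < \lfloor \eta N \rfloor + 1\} = N - \mathfrak{H}(\eta N, N)$, so that $N - \ell(\lambda) = \mathfrak{H}(\eta N, N)$. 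The matching of parameters falls out of the Cauchy-type generating function structure: the $x - 1$ copies of $s$ in $\mathfrak{A}$ correspond to the $x - 1$ vertex-model columns with spin weight $s$; the geometric progressions $\{q^k u \beta_j^{-1}\}_{k \geq 0}$ in $\mathfrak{B}$ arise from the $a_j \to 0$ degeneration carried out in Section \ref{GeneralizedSpecialization}, which precisely produces an infinite geometric series in each of the $m$ distinguished columns; and the $N$ copies of $u^{-1}$ in $Y$ correspond to the $N$ rows of the vertex model.

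The main obstacle is establishing the asymptotic equivalence (in the sense of Definition 4.2 of \cite{SHSVMMM}) between the spin Hall-Littlewood measure on signatures and the Schur measure $\textbf{SM}_{\rho; Y}$ on partitions. Since the specialization $\mathfrak{B}$ is infinite-dimensional, one cannot hope for an exact equality in distribution; the equivalence must instead be shown via a quantitative coupling argument. The natural route is to introduce Markov operators that intertwine the two measures and to control the resulting defect in terms of the parameter $s^2 = q^{-1}$, exactly as was done in \cite{SHSVMMM}; the presence of the $m$ extra columns (encoding the generalized step Bernoulli data) requires propagating the $\beta_j$ parameters through the entire argument and verifying that the relevant estimates remain uniform as $N \to \infty$. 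Once this coupling is established, the proposition follows immediately from the identification of $\ell(\lambda)$ with $N - \mathfrak{H}(\eta N, N)$ described above.
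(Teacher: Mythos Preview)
Your proposal is correct and follows essentially the same route as the paper, which simply invokes Corollary 4.11 of \cite{SHSVMMM} after noting (via Remark \ref{columnsgeneralizedinitial} and Section \ref{GeneralizedSpecialization}) that the stochastic six-vertex model with generalized step Bernoulli initial data is a specialization of the stochastic higher spin vertex model with step initial data. You are effectively sketching the internal argument of that corollary (spin Hall--Littlewood structure, particle--hole complementation, and the intertwining/coupling that yields asymptotic equivalence), whereas the paper treats it as a black box; no genuinely new idea is needed beyond propagating the $m$ extra columns through the existing framework, which is exactly what you identify.
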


\begin{proof}

This is a direct consequence of Corollary 4.11 of \cite{SHSVMMM}, after recalling the discussion from Remark \ref{columnsgeneralizedinitial} and Section \ref{GeneralizedSpecialization} that recasts the stochastic six-vertex model with generalized step Bernoulli initial data as a certain stochastic higher spin vertex model with step initial data. We omit further details. 
\end{proof}

Let us explain how Proposition \ref{lengthpartitionheight} can be used to establish Theorem \ref{hlimit}. As an example, we illustrate how this works for the second part of Theorem \ref{hlimit}, specialized to the case $\textbf{c} = 0^m = (0, 0, \ldots , 0)$. 

To that end, we set $\eta = \kappa^{-1} \Lambda^2$, $b_1 = b_2 = \cdots = b_m = b$, and $\beta_1 = \beta_2 = \cdots = \beta_m = \beta$. We would like to show that  
\begin{flalign*}
\displaystyle\lim_{N \rightarrow \infty} \mathbb{P} \left[ \displaystyle\frac{m N - \mathfrak{H} (\eta N, N)}{f N^{1 / 3}} \le s \right] = F_{\BBP; 0^m} (s),
\end{flalign*}

\noindent where $m = m_{\eta; V} = m_{\kappa^{-1} \Lambda^2; V}$ is defined by \eqref{m13}, and $f = f_{\eta; V} = f_{\kappa^{-1} \Lambda^2; V}$ is defined by \eqref{gv13}. 

In view of Proposition \ref{lengthpartitionheight}, it suffices to establish that 
\begin{flalign}
\label{lengthpartitionasymptotic}
\displaystyle\lim_{N \rightarrow \infty} \mathbb{P} \left[ \displaystyle\frac{(m - 1) N + \ell (\lambda)}{f N^{1 / 3}} \le s \right] = F_{\BBP; 0^m} (s).  
\end{flalign}

The remainder of this section will be devoted to a formal derivation of \eqref{lengthpartitionasymptotic}. What makes such a proof possible is that the configuration $\mathfrak{S} (\lambda) = \{ \lambda_k - k \}_{k \ge 1}$ forms a \emph{determinantal point process}; we refer to the survey \cite{DPP} (or Section 3 of \cite{IP}) for a precise definition of and more information about these processes. Its correlation kernel is known (see Theorem 2 of \cite{IWRP}; equations (21) and (22) of \cite{CPLGR}; or Remark 2 in Chapter 5 of \cite{IP}) to be given by a two-fold contour integral. Under our specialization, this kernel becomes 
\begin{flalign}
\label{determinantalkernel1}
\begin{aligned}
K (i, j) & = \displaystyle\frac{1}{2 \pi \textbf{i}} \displaystyle\oint \displaystyle\oint \displaystyle\frac{F_X (w^{-1}) F_{\rho} (v) }{F_{\rho} (w) F_X (v^{-1})} \displaystyle\frac{w^j d v d w}{(v - w) v^{i + 1}} \\
& = \displaystyle\frac{1}{2 \pi \textbf{i}} \displaystyle\oint \displaystyle\oint \left( \displaystyle\frac{1 - s^{-1} \kappa^{-1} v^{-1}}{1 - s^{-1} \kappa^{-1} w^{-1}} \right)^N \left( \displaystyle\frac{1 - s w}{1 - s v} \right)^{x - 1} \displaystyle\frac{w^j}{v^{i + 1}} \displaystyle\frac{\big( - s \kappa \beta^{-1} v; q \big)_{\infty}^m}{\big( -s \kappa \beta^{-1} w; q \big)_{\infty}^m} \displaystyle\frac{dv dw}{v - w}, 
\end{aligned}
\end{flalign}

\noindent where $v$ and $w$ are integrated along positively oriented, simple, closed loops satisfying the following two properties. First, the contour for $w$ is contained inside the contour for $v$. Second, both contours enclose $0$ and $s^{-1} \kappa^{-1}$, but leave outside $s^{-1}$ and $- q^{-j} s^{-1} \kappa^{-1} \beta$ for each $j \in \mathbb{Z}_{\ge 0}$. 

Now, we would like to understand the asymptotic behavior of $- \ell (\lambda)$. Since $- \ell (\lambda)$ is the smallest integer not contained in $\mathfrak{S} (\lambda)$, Kerov's complementation principle for determinantal point processes (see Proposition A.8 of \cite{AMSG}) implies that $\mathbb{P} \big[ - \ell (\lambda) > M \big] = \det \big( \Id - \widetilde{K} \big)_{L^2 (\{ M, M + 1, \ldots \})}$, for any integer $M$, where $\widetilde{K} (i, j) = \textbf{1}_{i = j} - K (i, j)$. In particular, since $\textbf{1}_{i = j}$ is the residue of the right side of \eqref{determinantalkernel1} at $w = v$, we observe that $- \widetilde{K} (i, j)$ is equal to the right side of \eqref{determinantalkernel1}, in which the contours for $v$ and $w$ are switched (so that the contour for $v$ is now contained inside the contour for $w$).

Further changing variables $\widetilde{v} = q s \kappa v$ and $\widetilde{w} = q s \kappa w$ in the right side of \eqref{determinantalkernel1}, we deduce that 
\begin{flalign}
\label{determinantalkernel2} 
\begin{aligned}
\widetilde{K} (i + (m - 1) N & , j + (m - 1) N) \\
& = \displaystyle\frac{(q s \kappa)^{i - j}}{2 \pi \textbf{i}} \displaystyle\oint \displaystyle\oint \exp \Big( T \big( G(\widetilde{w}) - G(\widetilde{v}) \big) \Big) \displaystyle\frac{\big( q^{- 1} \beta^{-1} \widetilde{v}; q \big)_{\infty}^m}{\big( q^{-1} \beta^{-1} \widetilde{w}; q \big)_{\infty}^m} \displaystyle\frac{\widetilde{w}^j d\widetilde{v} d \widetilde{w}}{(\widetilde{v} - \widetilde{w}) \widetilde{v}^{i + 1}}, 
\end{aligned}
\end{flalign}

\noindent where $G (z) = G_V (z)$ was defined in \eqref{gv13}, and where where $\widetilde{w}$ and $\widetilde{v}$ are integrated along positively oriented, simple, closed loops satisfying the following two properties. First, the contour for $\widetilde{v}$ is contained inside the contour for $\widetilde{w}$. Second, both contours enclose $0$ and $-q $, but leave outside $-q \kappa $ and $q^{1 - j} \beta$ for each $j \in \mathbb{Z}_{\ge 0}$. In particular, we can use the contours $\mathcal{C}_V$ (for $\widetilde{w}$) and $\Gamma_V$ (for $\widetilde{v}$) from Definition \ref{vertexnearcontours}.  

Recall that $\Re \big( G (\widetilde{w}) - G (\widetilde{v}) \big) < 0$ for all $\widetilde{w}$ and $\widetilde{v}$ away from the critical point $\psi = \beta q$ (see \eqref{derivativegv}) of $G$. Therefore, the integrand defining the kernel $\widetilde{K}$ decays exponentially in $N$ when either $\widetilde{w}$ or $\widetilde{v}$ is away from $q \beta$. Hence, we can localize around $q \beta$ and change variables 
\begin{flalign*}
\sigma = \displaystyle\frac{1}{f T^{1 / 3}}; \qquad \widetilde{v} = q \beta (1 + \sigma \widehat{v}) ; \qquad \widetilde{w} = q \beta (1 + \sigma \widehat{w}).  
\end{flalign*}

\noindent Thus, 
\begin{flalign}
\label{determinantalkernel3} 
\begin{aligned}
\widetilde{K} & ((m - 1) N - \sigma^{-1} r, (m - 1) N - \sigma^{-1} r' ) \\
& = \displaystyle\frac{(s \kappa \beta^{-1})^{ \sigma^{-1} (r - r')} \sigma}{2 \pi \textbf{i}} \displaystyle\oint \displaystyle\oint \exp \Big( T \big( G(\widehat{w}) - G(\widehat{v}) \big) \Big) \displaystyle\frac{\big( 1 + \sigma \widehat{v}; q \big)_{\infty}^m}{\big( 1 + \sigma \widehat{w}; q \big)_{\infty}^m} \displaystyle\frac{(1 + \sigma \widehat{v})^{\sigma^{-1} r - 1} d\widehat{v} d \widehat{w}}{ (\widehat{v} - \widehat{w}) (1 + \sigma \widehat{w})^{\sigma^{-1} r'} }.
\end{aligned}
\end{flalign}

\noindent Now denote 
\begin{flalign*}
\overline{K} (r, r') = \sigma^{-1} (s \kappa \beta^{-1})^{\sigma^{-1} (r - r')} \widetilde{K} \big( (m - 1) N - \sigma^{-1} r, (m - 1) N - \sigma^{-1} r' \big).
\end{flalign*}

\noindent Then, $\det \big( \Id - \widetilde{K} \big)_{L^2 (\{ M, M + 1, \ldots \} )} = \det \big( \Id - \overline{K} \big)_{L^2 (\{ (m - 1) N - M, (m - 1) N - M + \sigma, \ldots \})}$.

 By \eqref{gzpsi}, we deduce that 
\begin{flalign*}
T \big( G_V (\widehat{w}) - G_V (\widehat{v}) \big) = & \displaystyle\frac{\widehat{w}^3}{3} - \displaystyle\frac{\widehat{v}^3}{3} + o (1); \qquad \displaystyle\frac{\big( 1 + \sigma \widehat{v}; q \big)_{\infty}^m}{\big( 1 + \sigma \widehat{w}; q \big)_{\infty}^m} = \left( \displaystyle\frac{\widehat{v}}{\widehat{w}} \right)^m + o (1); \\
& \displaystyle\frac{\big( 1 + \sigma \widehat{v} \big)^{\sigma^{-1} r - 1}}{\big( 1 + \sigma \widehat{w} \big)^{\sigma^{-1} r'}} = \exp (r \widehat{v} - r' \widehat{w}) + o (1).
\end{flalign*}  

\noindent Hence, it follows that 
\begin{flalign*}
\displaystyle\lim_{N \rightarrow \infty} \overline{K} (r, r') = \displaystyle\frac{1}{2 \pi \textbf{i}} \displaystyle\oint \displaystyle\oint \exp \left( \displaystyle\frac{\widehat{w}^3}{3} - \displaystyle\frac{\widehat{v}^3}{3} - r \widehat{v} + r' \widehat{w} \right) \left( \displaystyle\frac{\widehat{v}}{\widehat{w}} \right)^m \displaystyle\frac{d \widehat{w} d \widehat{v}}{\widehat{w} - \widehat{v}} = K_{\BBP; 0^m}, 
\end{flalign*}

\noindent from which we deduce that 
\begin{flalign*}
\displaystyle\lim_{N \rightarrow \infty} \mathbb{P} \left[ -\ell (\lambda) \ge (m - 1) N - s f N^{1 / 3} \right] &= \displaystyle\lim_{N \rightarrow \infty} \det \big( \Id - \widetilde{K} \big)_{L^2 (\mathbb{Z} \cap [(m - 1) N - s f N^{1 / 3}, \infty ))} \\
& = \displaystyle\lim_{N \rightarrow \infty} \det \big( \Id - \overline{K} \big)_{L^2 (\{ s, s + \sigma , \ldots \})} \\
& = \det \big( \Id - K_{\BBP; 0^m} \big)_{L^2 (s, \infty)} = F_{\BBP; 0^m},
\end{flalign*}

\noindent which (at least on a formal level) establishes \eqref{lengthpartitionasymptotic}.

\end{document}